\numberwithin{equation}{section}
\newtheorem{thm}{Theorem}[section]
\newtheorem{prop}[thm]{Proposition}
\newtheorem{lem}[thm]{Lemma}
\newtheorem{cor}[thm]{Corollary}
\newtheorem{conj}[thm]{Conjecture}
\newcommand{\nn}{\nonumber}
\newcommand{\bra}[1]{\langle #1 |}        
\newcommand{\ket}[1]{{| #1 \rangle}}      
\newcommand{\br}[1]{{\langle #1 \rangle}}  
\newcommand{\C}{{\mathbb C}}
\newcommand{\Z}{{\mathbb Z}}
\newcommand{\A}{{\mathcal A}}
\newcommand{\B}{{\mathcal B}}
\newcommand{\E}{{\mathcal E}}
\newcommand{\F}{\mathcal F}
\newcommand{\cM}{\mathcal{M}}
\newcommand{\cN}{\mathcal{N}}
\newcommand{\cO}{\mathcal{O}}
\newcommand{\cP}{\mathcal{P}}
\newcommand{\cR}{\mathcal{R}}
\newcommand{\cT}{\mathcal{T}}
\newcommand{\cX}{\mathcal{X}}
\newcommand{\Bb}{\overline{\B}}
\newcommand{\Rb}{\overline{R}}
\newcommand{\chib}{{\chi}}
\newcommand{\gt}{\widetilde{\mathfrak{g}}}
\newcommand{\bs}{\boldsymbol}
\newcommand{\g}{\mathfrak{g}}
\newcommand{\gl}{\mathfrak{gl}}
\newcommand{\rg}{\mathfrak{r}}
\newcommand{\mb}{\mathbf{\mathfrak{m}}}
\newcommand{\slt}{\mathfrak{sl}_2}
\newcommand{\Tb}{\mathfrak{T}}
\newcommand{\af}{\textsf{A}}
\newcommand{\bla}{{\boldsymbol \la}}
\newcommand{\la}{\lambda}
\newcommand{\Ga}{\Gamma}
\newcommand{\ga}{\gamma}
\newcommand{\al}{\alpha}
\newcommand{\bc}{{\bf{c}}}
\newcommand{\bh}{{\bf{h}}}
\newcommand{\bm}{{\bf{m}}}
\newcommand{\bp}{{\bf{p}}}
\newcommand{\End}{\mathop{\rm End}}
\newcommand{\Hom}{\mathop{\rm Hom}}
\newcommand{\id}{{\rm id}}
\newcommand{\pdeg}{\mathop{\rm pdeg}}
\newcommand{\hdeg}{\mathop{\rm hdeg}}
\newcommand{\Ob}{\mathrm{Ob}\,}
\newcommand{\Rep}{\mathrm{Rep}}
\newcommand{\Tr}{{\rm Tr}}
\newcommand{\wt}{{\rm wt}}
\newcommand{\elwt}{\ell{\rm wt}}
\newcommand{\gge}{\geqslant}
\newcommand{\lle}{\leqslant}
\newcommand{\Cp}{C^{\perp}}
\newcommand{\ep}{e^{\perp}}
\newcommand{\fp}{f^{\perp}}
\newcommand{\hp}{h^{\perp}}
\newcommand{\ad}{\mathop{\mathrm{ad}}}
\newcommand{\Fin}{\cO^{fin}_{\B^\perp}}
\begin{document}
\begin{title}[finite type modules for quantum toroidal $\mathfrak{gl}_1$]
{finite type modules and Bethe Ansatz \\
for quantum toroidal $\mathfrak{gl}_1$}
\end{title}
\author{B. Feigin, M. Jimbo, T. Miwa and E. Mukhin}
\address{BF: National Research University Higher School of Economics, 
Russian Federation, 
Russia, Moscow,  101000,  Myasnitskaya ul., 20 
and Landau Institute for Theoretical Physics,
Russia, Chernogolovka, 142432, pr.Akademika Semenova, 1a
}
\email{bfeigin@gmail.com}
\address{MJ: Department of Mathematics,
Rikkyo University, Toshima-ku, Tokyo 171-8501, Japan}
\email{jimbomm@rikkyo.ac.jp}
\address{TM: Institute for Liberal Arts and Sciences,
Kyoto University, Kyoto 606-8316,
Japan}\email{tmiwa@kje.biglobe.ne.jp}
\address{EM: Department of Mathematics,
Indiana University-Purdue University-Indianapolis,
402 N.Blackford St., LD 270,
Indianapolis, IN 46202, USA}\email{emukhin@iupui.edu}

\begin{abstract} 
We study highest weight representations of the Borel subalgebra of 
the quantum toroidal $\gl_1$ algebra with finite-dimensional weight spaces. 
In particular, we develop the $q$-character theory for such modules.
We introduce and study the subcategory of `finite type' modules.
By definition, a module over the Borel subalgebra is finite type if
the Cartan like current $\psi^+(z)$ has a finite number of eigenvalues,
even though the module itself can be infinite dimensional.

We use our results to diagonalize the transfer matrix $T_{V,W}(u;p)$ 
analogous to those of the six vertex model. In our setting $T_{V,W}(u;p)$ acts in a tensor product $W$
of Fock spaces and $V$ is a highest weight module over the Borel subalgebra of quantum toroidal $\gl_1$ 
with finite-dimensional weight spaces. Namely we show that for a special choice of finite type modules $V$ the corresponding transfer 
matrices, $Q(u;p)$ and $\cT(u;p)$, 
are polynomials in $u$
and satisfy a two-term $TQ$ relation. We use this relation to prove 
the Bethe Ansatz equation for the 
zeroes of the eigenvalues of $Q(u;p)$.  Then 
we show that the eigenvalues of $T_{V,W}(u;p)$ are given by an appropriate substitution of eigenvalues of 
$Q(u;p)$ into the $q$-character of $V$.
\end{abstract}

\maketitle 

\section{Introduction}

The six vertex model 
is a well known representative example of quantum integrable systems. 
Its integrability is attributed to a large symmetry 
under the quantum loop algebra $U_q\widetilde\slt$. 
This setting easily generalizes to an arbitrary simple Lie algebra $\g$;
with each choice of $U_q\widetilde\g$ and its representation, 
an analog of the six vertex model is defined. 
Algebra $U_q\gt$ is a quantization of the algebra of 
loops $\g[x^{\pm1}]$ with values in $\g$. 
We may then na{\"i}vely ask the following question: 
what if we replace the loop algebra in one variable $\g[x^{\pm1}]$ 
with that of two variables $\g[x^{\pm1},y^{\pm1}]$?

Quantum version of loop algebras in two variables are 
the quantum toroidal algebras, introduced 
by Ginzburg, Kapranov and Vasserot \cite{GKV}. 
The last decade has seen novel developments 
in connection with geometric representation theory and gauge theory,
and there are now revived interests in this subject, see e.g.
\cite{FHHSY}, \cite{AFS}, \cite{MO}, \cite{NPS}.

In this article we are concerned with the 
quantum toroidal algebra of type $\gl_1$.
We denote it by $\E$. 
Algebra $\E$ has a universal $R$ matrix, which allows us to consider
a family of commuting transfer matrices  $T_{V,W}(u;p)$
analogous to those 
of the six vertex model $T_{6v}(u;p)$. 
Here $u$ is a `spectral parameter', $p$ is a `twist parameter', 
$V$ denotes the `auxiliary space', and  $W$ denotes the `quantum space' 
on which the transfer matrices act.  

In the original six vertex model, transfer matrices are operators derived from an auxiliary space 
$V=\C^2$,
which act on the quantum space $W=(\C^2)^{\otimes N}$.
All $\C^2$ are considered as 
two dimensional representations of $U_q(\widetilde\slt)$. In the case of 
the quantum toroidal algebra, 
the quantum space $W$ is an $N$ fold tensor product of Fock spaces $\F(v)^{\otimes N}$. 
The Fock module $\F(v)$ is  an infinite dimensional space which has a basis labeled by all partitions. 
The auxiliary space $V$ is any highest weight representation of the Borel subalgebra of quantum toroidal $\gl_1$ 
with finite-dimensional weight spaces.

Besides the intrinsic interest on its own, 
this problem has a close connection to a topic in 
conformal field theory (CFT).  
In a seminal series of papers \cite{BLZ1}--\cite{BLZ3}
Bazhanov, Lukyanov and Zamolodchikov introduced and studied 
a commutative subalgebra inside the enveloping algebra of the Virasoro algebra. 
This subalgebra gives commuting operators called integrals of motion (IM), 
which act on Virasoro Verma modules. 
Dorey and Tateo \cite{DT} and Bazhanov et al. \cite{BLZ4} 
discovered a remarkable connection between 
the spectra of the $Q$ operators, which are certain generating functions of IM, 
and spectral determinants of a certain family of one-dimensional Schr{\"o}dinger operators. 
Subsequently a $q$ analog of IM was considered by Kojima, Shiraishi, 
Watanabe and one of the present authors \cite{FKSW}. 
They introduced a family of operators $I_n$, $n=1,2,\cdots$, in terms of 
contour integrals involving currents of the 
deformed Virasoro algebra, and showed their commutativity by 
direct computation. 
As it turns out, their first Hamiltonian $I_1$ is obtained 
from the transfer matrix $T_{\F,W}(u;p)$, where $\F=\F(1)$,
for the algebra $\E$ 
by taking the first term in the expansion as $u\to 0$. 
Thus it is natural to expect that the six vertex type model 
associated with $\E$ gives the same integrable system defined by 
the $q$-deformed  IM of \cite{FKSW}. 
In the limit to CFT, 
Litvinov \cite{L} put forward a conjectural Bethe Ansatz equation 
which describes the spectrum of this system.  

In our previous work \cite{FJMM2}, we showed that the eigenvalues 
of $I_1$ are indeed given in terms of the sum of the Bethe roots.
In this paper we use a method different from the one in 
\cite{FJMM2}, and obtain the spectrum of the transfer matrix $T_{\F,W}(u;p)$,
and more generally $T_{V,W}(u;p)$, in full. 
Namely we adopt Baxter's method of $TQ$ relation \cite{Ba}, 
which we recall here.  

In the case of the six vertex model, the transfer matrix 
$T_{6v}(u;p)$ with a suitable normalization 
is a polynomial in the spectral parameter $u$.  
Baxter showed that there is another matrix
$Q_{6v}(u;p)$, which depends polynomially in $u$,  
commutes with $T_{6v}(u;p)$, and satisfies the relation
\begin{align*}
T_{6v}(u;p)Q_{6v}(u;p)=a(u)Q_{6v}(q^{-2} u;p)+p\, d(u)Q_{6v}(q^{2} u;p)\,, 
\end{align*}
where $a(u)$ and $d(u)$ are known scalar polynomials. 
This relation can be viewed as the one obeyed by 
eigenvalues of $T_{6v}(u;p)$ and $Q_{6v}(u;p)$. 
Denote the eigenvalues by the same letters. Then  
the $TQ$ relation implies the Bethe equation 
$0=a(\zeta_i)Q_{6v}(q^{-2} \zeta_i;p)+p\, d(\zeta_i)Q_{6v}(q^{2}\zeta_i;p)$ for  
the roots $\{\zeta_i\}$ of $Q_{6v}(u;p)$.
Once $Q_{6v}(u;p)$ is known, 
the corresponding eigenvalue $T_{6v}(u;p)$ is given in turn as 
\begin{align*}
T_{6v}(u;p)=a(u)\frac{Q_{6v}(q^{-2}u;p)}{Q_{6v}(u;p)}
\left(1+\mathfrak{a}_{6v}(u;p)\right)\,,
\quad \mathfrak{a}_{6v}(u;p)=p
\frac{d(u)}{a(u)}\frac{Q_{6v}(q^{2}u;p)}{Q_{6v}(q^{-2}u;p)}\,.
\end{align*}

Unlike the case of $U_q\widetilde\slt$, 
the $TQ$ relation in the toroidal case is written in terms of $Q(u;p)$ and 
a new, auxiliary matrix $\cT(u;p)$ different from $T_{\F,W}(u;p)$.
The relation takes the form
\begin{align}\label{MMM}
\cT(u;p) Q(u;p) =a(u)\prod_{s=1}^3Q(q^{-1}_s u;p)
+p\,d(u)\prod_{s=1}^3Q(q_su;p)\,,
\end{align}
where $q_s$ ($s=1,2,3$) are the parameters of the algebra $\E$
satisfying $q_1q_2q_3=1$.  
The original transfer matrix $T_{\F,W}(u;p)$ 
is then given by an infinite series
\begin{align}
T_{\F,W}(u;p)=\frac{Q(q_2^{-1}u;p)}{Q(u;p)}
\sum_{\lambda}
\prod_{\square\in\lambda}\mathfrak{a}(q^{-\square} u;p)\,,
\quad
\mathfrak{a}(u;p)
=p\frac{d(u)}{a(u)}\prod_{s=1}^3\frac{Q(q_su;p)}{Q(q^{-1}_s u;p)} \,.
\label{T-eigv}
\end{align}
Here
$\lambda$ 
runs over all partitions, 
$\square=(i,j)$ runs over the nodes
of $\lambda$ and $q^{\square}=q_3^{i-1}q_1^{j-1}$.
See Theorem \ref{thm:eigv} below. 
Construction of $Q(u;p)$ and $\cT(u;p)$, and 
formula \eqref{T-eigv} for the transfer matrix eigenvalues, 
are the main results of this paper. 

Our approach is based on representation theory. 
Bazhanov et al. \cite{BLZ3} showed that  
the matrix $Q_{6v}(u;p)$ is a transfer matrix 
of an appropriate representation of 
the Borel subalgebra of $U_q\widetilde\slt$.  
Frenkel and Hernandez \cite{FH} generalized this construction 
to an arbitrary quantum loop algebra $U_q\widetilde\g$ of 
non-twisted type, and obtained an expression of the transfer matrix
in terms of appropriate analogs of $Q_{6v}(u;p)$. 
In the present article, we construct the operator $Q(u;p)$
in the setting of the quantum toroidal algebra $\E$. 
We are able to add the next step: we also
construct an operator $\cT(u;p)$ which 
satisfies the two-term $TQ$ relation \eqref{MMM},
and therefore prove the Bethe equation for the 
zeroes of the eigenvalues of $Q(u;p)$, 
see Theorem \ref{thm:eigv}. 
  
Our construction is based on the study of infinite dimensional modules of the Borel subalgebra
on which the Cartan like generator $\psi^+(z)$ has
a finite number $k$ of distinct eigenvalues. 
We say that such a module is $k$-finite.
Operators $Q(u;p)$ and $\cT(u;p)$ are transfer matrices constructed from 
$1$-finite and $2$-finite modules, respectively. The two-term $TQ$ relation \eqref{NM-MMM} is a short exact sequence in the Grothendieck ring 
of representations of the Borel subalgebra.

We provide a grading 
similar to the one constructed in \cite{FH}, 
see Propositions \ref{prop:gradingM} and \ref{prop:gradingN},
 and use it for proving that $Q(u;p)$ and $\cT(u;p)$ are polynomials, see Propositions \ref{Q pol},  \ref{T pol}. 
Our construction of the grading and 
the proof of polynomiality are different from those in \cite{FH}.

In order to express the eigenvalues of arbitrary transfer matrices  
$T_{V,W}(u;p)$, we develop the theory of $q$-characters 
for representations of the Borel subalgebra. 
We prove some properties of the $q$-characters, 
see Section \ref{q char 2}, and use it to study finite type modules. 
We give a classification of $1$-finite modules, see Proposition \ref{1}. 
We also give conjectures including the one on the cluster algebra structure 
of the Grothendieck ring of the category of modules of finite type, see Section \ref{conj}. 
Then the matrix $T_{V,W}(u;p)$ (or the corresponding eigenvalues) is described by an appropriate 
substitution of $Q(u;p)$ (or eigenvalues of $Q(u;p)$) into the $q$-character of $V$, 
see Proposition \ref{prop:TQ} and Corollary \ref{V Q w}.

While we write our statements for the case of the quantum space $W$ being a tensor product of Fock spaces, one can easily repeat, with obvious changes, for the case when $W$ is an arbitrary tensor product of irreducible highest weight modules of the (dual) Borel subalgebra. 

\medskip

Similar two term relations hold in the case of quantum affine algebras
as well. We plan to discuss this issue
in a separate publication. 

\medskip

The text is organized as follows. 
In Section \ref{sec:toroidal-alg}, we collect basic definitions
and facts concerning the quantum toroidal $\gl_1$ algebra $\E$ 
and its Borel subalgebras.  
In Section \ref{sec:modules} we discuss modules over $\E$ and its 
Borel subalgebra $\B^\perp$, and the theory of $q$-characters.
Section \ref{sec:finite-type} is devoted to 
the study of finite type modules over $\B^\perp$.   
In the last Section \ref{sec:Bethe} we introduce the transfer matrices, establish the Bethe ansatz equation and write the spectra of transfer matrices. 
Several technical points are collected in the Appendix.

\section{Quantum toroidal $\gl_1$}\label{sec:toroidal-alg}

In this section we summarize basic definitions and facts 
concerning the quantum toroidal algebra of type $\gl_1$.

\subsection{Algebra $\E$}
Throughout the text we fix 
complex numbers $q, q_1,q_2,q_3$ satisfying $q_2=q^2$ and $q_1q_2q_3=1$. 
We assume further that, for integers $l,m,n\in\Z$,  
$q_1^lq_2^mq_3^n=1$ holds only if $l=m=n$. 

The quantum toroidal algebra of type $\gl_1$, which we denote by $\E$,  
is a bi-graded $\C$ algebra generated by 
$e_n,\ f_n$ ($n\in \Z$), $h_r$ ($r\in \Z\backslash\{0\}$)
and invertible elements $C,\ \Cp,\ D,\ D^\perp$, with bi-degrees
\begin{align*}
&\deg e_n=(1,n),\quad  \deg f_n=(-1,n),\quad  \deg h_r=(0,r)\,,
\\
&\deg x=(0,0)\quad (x=C,\ \Cp,\ D,\ D^\perp).
\end{align*}
For a homogeneous element $x\in\E$  
with $\deg x=(\nu_1,\nu_2)$, 
we say that $x$ has {\it principal degree} $\nu_1$ and {\it homogeneous degree} 
$\nu_2$, and write 
$\mathrm{pdeg}\, x=\nu_1$, $\mathrm{hdeg}\, x=\nu_2$. 
 
Elements $C,C^{\perp}$ are central in $\E$. Elements
$D,D^\perp$ count the degrees of an element $x\in\E$: 
$DxD^{-1}=q^{-\mathrm{hdeg}\, x}x$, 
$D^\perp x(D^\perp)^{-1}=q^{\mathrm{pdeg}\, x}x$.  
The rest of the defining relations are given as follows. 
Using the symbols
\begin{align*}
& g(z,w)=(z-q_1w)(z-q_2w)(z-q_3w),\\
&\kappa_r=(1-q_1^r)(1-q_2^r)(1-q_3^r)\,,
\end{align*}
we have 
\begin{gather*}
[h_{r},h_{s}]=
\delta_{r+s,0}\,\frac{1}{r} \frac{C^{r}-C^{-r}}{\kappa_r}\,,
\\ 
[h_{r},e_n]=-\frac{1}{r}\, e_{n+r}\, C^{(-r-|r|)/2}\,,
\quad 
[h_{r},f_n]=
\frac{1}{r}\, f_{n+r}\, C^{(-r+|r|)/2}\,,
\\ 
[e(z),f(w)]=\frac{1}{\kappa_1}
(\delta\bigl(\frac{Cw}{z}\bigr)\psi^+(w)
-\delta\bigl(\frac{Cz}{w}\bigr)\psi^-(z)),\\
g(z,w)e(z)e(w)+g(w,z)e(w)e(z)=0, \\
g(w,z)f(z)f(w)+g(z,w)f(w)f(z)=0,\\
[e_n,[e_{n-1},e_{n+1}]]=0\,,\quad 
[f_n,[f_{n-1},f_{n+1}]]=0\,,
\end{gather*}
for all $n\in \Z$ and $r,s\in\Z\backslash\{0\}$. 
Here we use 
generating series $e(z) =\sum_{n\in \Z} e_{n}z^{-n}$, $f(z) =\sum_{n\in\Z} f_{n}z^{-n}$, and $\psi^{\pm}(z)=\sum_{\pm n\ge0} \psi^\pm_{n}z^{-n}$ are given by
\begin{align*}
&\psi^{\pm}(z) = (\Cp)^{\mp 1} 
\exp\bigl(\sum_{r=1}^\infty \kappa_r h_{\pm r}z^{\mp r}\bigr)\,.
\end{align*}
The relations between the $h_r$'s and $e(z),f(z)$ can also be written as 
the bilinear relations
\begin{align*}
&g(C^{(1\pm1)/2}z,w)\psi^\pm(z)e(w)+g(w,C^{(1\pm1)/2}z)e(w)\psi^\pm(z)=0
\,,\\ 
&g(w,C^{(-1\pm1)/2}z)\psi^\pm(z)f(w)+g(C^{(-1\pm1)/2}z,w)f(w)\psi^\pm(z)=0\,.
\end{align*}

It is easy to see that the elements $e_0,f_0,h_{\pm1}$ 
together with $C,C^\perp,D,D^\perp$
generate $\E$. 

Let $\E=\oplus_{\nu_1,\nu_2\in\Z}\E_{\nu_1,\nu_2}$ be the decomposition 
of algebra $\E$ into bigraded subspaces. 
For $p\in \Z$, we set 
\begin{align*}
&\displaystyle{\E_{\gge p}= 
\bigoplus_{\nu_1\ge p\atop\nu_2\in\Z}\E_{\nu_1,\nu_2}}, \quad
\displaystyle{\E_{\lle p}=
\bigoplus_{\nu_1\le p\atop\nu_2\in\Z}\E_{\nu_1,\nu_2}}. 
\end{align*}

\subsection{Elliptic Hall algebra}

Algebra $\E$ is a quantum version of 
the Lie algebra of currents $\gl_1[x^{\pm1},y^{\pm1}]$ 
with a two dimensional central extension.    
It was originally introduced in \cite{BS} 
and was called elliptic Hall algebra, see also \cite{M}, \cite{FT}, \cite{FHSSY}.  
We give here an account of the definition in \cite{BS} following \cite{Ng2}. 

Let us prepare some terminology. 
We say that an element $\nu=(\nu_1,\nu_2)\in \Z^2\backslash\{(0,0)\}$ 
is coprime if $\nu_1,\nu_2$ are coprime integers. 
We equip $\Z^2$ with the lexicographic ordering $>$: 
$(\nu_1,\nu_2)>(\nu_1',\nu_2')$ if $\nu_1>\nu'_1$, or $\nu_1=\nu'_1$ and $\nu_2>\nu_2'$.
For a triangle $T$ with vertices $\nu,\nu',\nu''\in \Z^2$,  
we write $\mathrm{middle}(T)=\nu'$ if $\nu>\nu'>\nu''$. We say that 
$T$ is a quasi-empty triangle if there are 
no lattice points in its interior and on at least one of its edges.

The elliptic Hall algebra $\A$ is generated by 
elements $\bp_\nu$ ($\nu\in\Z^2\backslash\{(0,0)\}$)
and central elements  $\bc_{\nu}$ ($\nu\in \Z^2$) 
satisfying $\bc_0=1$, $\bc_{\nu+\nu'}=\bc_\nu\bc_{\nu'}$. 
The defining relations read as follows. 
\begin{align*}
&[\bp_{r\nu},\bp_{s\nu}]
=\frac{r}{\kappa_r}\delta_{r+s,0}\left(\bc_{r\nu}-\bc_{-r\nu}\right)
\quad \text{ if $\nu$ is coprime and $r,s\in\Z\backslash\{0\}$},
\\
&[\bp_\nu,\bp_{\nu'}]=\frac{\bc_{M}}{\kappa_1} \bh_{\nu+\nu'}
\quad \text{if $-\nu',0,\nu$ form a quasi-empty 
triangle $T$ oriented clockwise}\,.
\end{align*}
Here $M=\mathrm{middle}(T)$, and the symbol $\bh_{\nu}$ is 
defined by setting 
\begin{align*}
\sum_{n\ge0}\bh_{n \nu}z^{-n}=
\exp\Bigl(-\sum_{r\ge1} \kappa_r \bp_{r\nu}\frac{z^{-r}}{r}\Bigr)
\end{align*}
for all coprime $\nu$. 

The generators $\bp_{(\nu_1,\nu_2)}$ correspond to the 
elements $x^{\nu_1}y^{\nu_2}$ of the Lie algebra. 
The following PBW type basis is known. 
For a non-zero vector $\nu=(r\cos\theta,r\sin\theta)$ on the plane,
we define its argument by $\arg \nu=\theta$, where 
$r>0$ and $-\pi<\theta\le\pi$. 
We say that a monomial $\bc_\nu \bp_{\nu^{(1)}}\cdots \bp_{\nu^{(N)}}\in\A$ 
($\nu\in\Z^2$, $\nu^{(i)}=(\nu^{(i)}_1,\nu^{(i)}_2)\in\Z^2\backslash\{(0,0)\}$) 
is normal-ordered if 
$\pi\ge \arg \nu^{(1)}\ge\arg \nu^{(2)}\ge\cdots\ge\arg \nu^{(N)}>-\pi$, 
and if $\arg \nu^{(i)}=\arg \nu^{(i+1)}$ then 
$\nu^{(i)}_1\ge \nu^{(i+1)}_1$. 

\begin{thm}\cite{BS}\label{thm:pbw}
The set of all normal-ordered monomials is a basis of $\A$. 
\end{thm}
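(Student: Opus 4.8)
The plan is to establish the two halves of the statement separately, exploiting the directional (slope) structure of the relations.

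\textbf{Spanning.} First I would push every central $\bc_\nu$ to the far left, which is legitimate since the $\bc_\nu$ are central and multiply by $\bc_{\nu+\nu'}=\bc_\nu\bc_{\nu'}$; this reduces the problem to words $\bp_{\nu^{(1)}}\cdots\bp_{\nu^{(N)}}$ in the $\bp$'s alone, times a central monomial. On such words I would run a straightening algorithm: call position $i$ a \emph{descent} if the pair $(\nu^{(i)},\nu^{(i+1)})$ violates normal ordering, i.e.\ $\arg\nu^{(i)}<\arg\nu^{(i+1)}$, or the arguments are equal but $\nu^{(i)}_1<\nu^{(i+1)}_1$. To resolve an equal-argument descent I only need the Heisenberg relation $[\bp_{r\nu},\bp_{s\nu}]$, whose right-hand side is central (hence harmless, being carried to the left). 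The substantive case is an argument-descent, for which I need a commutation rule for a \emph{general} pair $\bp_\nu,\bp_{\nu'}$, not merely for those spanning a quasi-empty triangle.

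\textbf{The geometric lemma and termination.} The crux is the assertion that for non-proportional $\nu,\nu'$ one has
\begin{align*}
[\bp_\nu,\bp_{\nu'}]=\sum\bigl(\text{monomials }\bc_\mu\,\bp_{\mu^{(1)}}\cdots\bp_{\mu^{(m)}}\bigr),
\end{align*}
where every $\mu^{(j)}$ has $\arg\mu^{(j)}$ strictly between $\arg\nu$ and $\arg\nu'$. I would prove this by induction on the lattice area of the triangle with vertices $0,-\nu',\nu$: when the triangle is quasi-empty this is one of the defining relations, the correction living in the direction $\nu+\nu'=d\mu_0$ (unwind $\bh_{d\mu_0}$ into a polynomial in $\bp_{\mu_0},\dots,\bp_{d\mu_0}$, all of argument $\arg(\nu+\nu')$, which indeed lies between $\arg\nu$ and $\arg\nu'$); otherwise pick a lattice point on an edge or in the interior, use it to split the triangle into two of strictly smaller area, and combine the two resulting relations by the Jacobi identity in the associative algebra $\A$ together with the Heisenberg relations. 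Granting this, each straightening move replaces a word either by a shorter one or by words whose ``out-of-order'' generators have been pushed inward in argument. To see the algorithm terminates I would introduce a non-negative integer statistic $\mathrm{def}(m)$ on monomials that vanishes exactly on normal-ordered ones — morally, the non-convexity of the broken-line path $0,\ \nu^{(1)},\ \nu^{(1)}+\nu^{(2)},\dots,\ \sum_j\nu^{(j)}$, whose endpoint is fixed by the bidegree — and check that every monomial produced on the right of a move has strictly smaller $\mathrm{def}$; controlling the contributions of the $\bh_{\nu+\nu'}$-corrections in all directions that arise is where the combinatorial bookkeeping sits.

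\textbf{Linear independence.} Here I would use a sufficiently faithful model. One route: the triangular decomposition $\A=\A^{>}\otimes\A^{0}\otimes\A^{<}$, where $\A^{>}$ (resp.\ $\A^{<}$) is generated by the $\bp_\nu$ with $\arg\nu$ in the upper (resp.\ lower) open half-plane and $\A^{0}$ by the $\bp_{(\pm n,0)}$ and the $\bc$'s — this decomposition itself follows from the spanning argument applied to each sector, $\A^{0}$ being a Heisenberg algebra tensored with a commutative group algebra, for which a basis is transparent. It then suffices to prove linear independence of normal-ordered monomials in each factor, and for $\A^{>}$ one can use its realization inside a Feigin–Odesskii type elliptic shuffle algebra, separating the images of distinct normal-ordered monomials by leading symbols; alternatively one can test against the family of Fock modules used later in the paper and extract a contradiction from leading behaviour. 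A more structural variant — introduce a formal parameter and a filtration whose associated graded is the symmetric algebra of the central extension of $\gl_1[x^{\pm1},y^{\pm1}]$, where the classical PBW theorem applies — is appealing but I would not run it first, because the obvious filtration by the number of $\bp$-factors is \emph{not} respected by the relations (the term $\bh_{\nu+\nu'}$ can carry more factors than the commutator on its left when $\nu+\nu'$ is a large multiple of a primitive vector), so one must first identify the correct filtration, graded roughly by how far $\nu+\nu'$ is from primitive.

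\textbf{Main obstacle.} The technical heart is the geometric lemma in the spanning step: promoting the very special quasi-empty-triangle relations to commutators of arbitrary pairs while keeping the arguments of all correction terms pinned between $\arg\nu$ and $\arg\nu'$, and organizing the double induction (on triangle areas for the lemma, on the convexity statistic for the straightening) so that it genuinely terminates. Everything else — the central bookkeeping, the Heisenberg relations within a single direction, and the choice among shuffle-algebra, Fock-module or degeneration arguments for linear independence — is comparatively routine once this lemma is in hand.
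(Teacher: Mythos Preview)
The paper does not prove this theorem: it is quoted from \cite{BS} with no argument supplied. So there is no ``paper's own proof'' to compare against; you are effectively proposing an independent proof of a cited result.

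For context, the proof in \cite{BS} is not combinatorial in your sense. Burban and Schiffmann realize $\A$ (more precisely its positive half) as the Hall algebra of the category of coherent sheaves on an elliptic curve over a finite field. The normal-ordered monomials then correspond to the canonical basis of the Hall algebra indexed by isomorphism classes of sheaves, the slope ordering is the Harder--Narasimhan filtration, and linear independence is essentially free from the Hall algebra construction; what has to be checked is that the generators and relations you wrote down give a presentation. Your proposal instead starts from the presentation and argues combinatorially. This is closer in spirit to the treatments in \cite{S} and \cite{Ng2}, where the convexity lemma you call the ``geometric lemma'' is indeed the engine.

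On the substance of your sketch: the spanning half is essentially right, and your induction on triangle area to upgrade the quasi-empty relations to arbitrary commutators is the standard mechanism. The point you flag as the main obstacle---making the termination statistic precise---is genuine but routine once the lemma is in hand; the usual choice is the number of lattice points strictly below the broken-line path but above its convex hull. Your linear independence discussion is where the sketch is thinnest. The triangular decomposition $\A^{>}\otimes\A^{0}\otimes\A^{<}$ does not by itself reduce the problem, since you still need linear independence inside $\A^{>}$, and ``use the shuffle algebra'' or ``test against Fock modules'' presupposes exactly the injectivity statement that is doing the work. If you go the shuffle route you must actually prove that the map from the algebra presented by relations to the shuffle algebra is injective on the span of normal-ordered monomials, which is nontrivial and is the content of \cite{Ng}. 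The degeneration route you correctly distrust: there is no filtration making the associated graded a symmetric algebra without already knowing the PBW statement.
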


The following result tells how the algebras $\E$ and $\A$ are related.
Let 
$\E'=\langle e_n,f_n (n\in\Z), h_r (r\in\Z\backslash\{0\}), C,C^\perp\rangle$  
be the subalgebra of $\E$ obtained by `dropping' $D,D^\perp$. 
\begin{thm}\label{A=E}\cite{S}
There is an isomorphism of algebras $\E'\overset{\sim}{\to}\A$ such that 
\begin{align*}
&e_n\mapsto \bp_{(1,n)}\,,\ f_n\mapsto \bp_{(-1,n)}\,, \ 
C^{\pm r}h_{\pm r}\mapsto -\frac{1}{r}\bp_{(0,\pm r)}\,,\\
&C\mapsto \bc_{(0,1)}\,,\ C^\perp\mapsto \bc_{(-1,0)}\,,
\end{align*}
where $n\in \Z$, $r>0$.
\end{thm}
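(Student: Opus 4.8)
The plan is to define $\phi\colon\E'\to\A$ by the indicated assignment on generators, verify that the defining relations of $\E'$ hold among the images in $\A$ (so that $\phi$ extends to an algebra homomorphism), and then prove that $\phi$ is bijective. Since both algebras are presented by generators and relations, the only real work is relation-checking, and the asymmetry of the two presentations — $\E'$ has finitely many families of relations, $\A$ has the infinite family of quasi-empty triangle relations — suggests carrying out the verification only in the direction $\E'\to\A$, and then getting the reverse direction for free by a counting or inverse-construction argument.

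For the homomorphism property: the Heisenberg relations $[h_r,h_s]=\delta_{r+s,0}\frac1r\frac{C^r-C^{-r}}{\kappa_r}$ follow immediately from $[\bp_{r\nu},\bp_{s\nu}]=\frac r{\kappa_r}\delta_{r+s,0}(\bc_{r\nu}-\bc_{-r\nu})$ at $\nu=(0,1)$, using $C^{\pm r}h_{\pm r}\mapsto-\frac1r\bp_{(0,\pm r)}$ and centrality of $\bc_{(0,1)}$. The relations $[h_r,e_n]$, $[h_r,f_n]$ and the current bracket $[e(z),f(w)]=\frac1{\kappa_1}(\delta(Cw/z)\psi^+(w)-\delta(Cz/w)\psi^-(z))$ follow from the triangle relation $[\bp_\nu,\bp_{\nu'}]=\frac{\bc_M}{\kappa_1}\bh_{\nu+\nu'}$ applied to small triangles: when $\nu+\nu'$ is coprime the series $\bh_\bullet$ contributes only the term proportional to $\bp_{\nu+\nu'}$, producing the index shifts $e_n\mapsto e_{n+r}$, $f_n\mapsto f_{n+r}$; when $\nu+\nu'\in\Z(0,1)$ the whole series reproduces $\psi^\pm$. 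The quadratic relations $g(z,w)e(z)e(w)+g(w,z)e(w)e(z)=0$ and the analogue for $f$ become, after expansion in modes, finite identities among the $\bp_{(\pm1,*)}$ and the $\bh_{(0,*)}$ coming from the triangle $(-\nu',0,\nu)$, and one matches the polynomial $g$, built from $q_1,q_2,q_3$, against the resulting structure constants. The cubic Serre relations $[e_n,[e_{n-1},e_{n+1}]]=0$ are handled similarly, now with the extra bookkeeping of the degree-two generators $\bp_{(2,*)}$ arising from triangles $(-1,*),0,(1,*)$; the constraint $q_1q_2q_3=1$ is exactly what makes the offending terms cancel. I expect this verification of the quadratic and cubic relations — matching $g(z,w)$ and tracking the $\bh_\bullet$-contributions with their $\bc_M/\kappa_1$ normalizations and the correct triangle orientations — to be the technically demanding part of the argument.

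It remains to see that $\phi$ is bijective. Surjectivity is easy: the $\bp_{(0,r)}$ and the central $\bc_\nu$ are visibly in the image, and every $\bp_\nu$ with $|\nu_1|\gge2$ is obtained from the $\bp_{(\pm1,*)}$ by iterated brackets — write $\nu$ as a sum of two vectors forming a quasi-empty triangle with $0$ and induct on $|\nu_1|$ — so such $\bp_\nu$ also lie in the image. For injectivity I would choose one of two routes. Route (a): construct the inverse directly, defining $\psi\colon\A\to\E'$ on generators by $\bp_{(\pm1,n)}\mapsto e_n, f_n$, $\bp_{(0,\pm r)}\mapsto-r\,C^{\pm r}h_{\pm r}$, $\bc_{(0,1)}\mapsto C$, $\bc_{(-1,0)}\mapsto\Cp$, extending to $|\nu_1|\gge2$ recursively via one chosen bracket decomposition, checking this is well defined (here the Serre relations of $\E'$ re-enter) and a homomorphism by induction on the area of the triangle — a larger quasi-empty triangle splits along a boundary lattice point into two smaller ones, reducing the relation to already-established cases with the $\bh_\bullet$- and $\bc_M$-factors recombining by the definition of $\bh_\bullet$ — and then observing $\psi\phi=\id$ and $\phi\psi=\id$ on generators. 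Route (b): compare bases — $\A$ has the normal-ordered monomial basis of Theorem \ref{thm:pbw}, $\E'$ is spanned by the corresponding ordered monomials in $e_n,f_n,h_r,C,\Cp$, both algebras are bigraded with finite-dimensional homogeneous components, and $\phi$ carries the spanning set onto the basis, so it must be an isomorphism degree by degree.

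In either route the essential obstacle is the same: the relation bookkeeping of the second paragraph, i.e. showing that the quasi-empty triangle relations of $\A$ are equivalent, under the stated dictionary, to the $g$-relations and the cubic Serre relations of $\E'$. Route (a) effectively reproves those triangle relations inside $\E'$ from scratch, which is the most labor-intensive option; route (b) trades that for proving a PBW-type spanning statement for $\E'$, which is somewhat cleaner given Theorem \ref{thm:pbw}. I would follow route (b), doing the relation check only once, in the direction $\E'\to\A$.
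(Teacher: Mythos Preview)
The paper does not give its own proof of this theorem: it is stated with the citation \cite{S} (Schiffmann, \emph{Drinfeld realization of the elliptic Hall algebra}) and used as a black box. So there is nothing in the paper to compare your proposal against.

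That said, your outline is essentially the strategy of the cited reference. Schiffmann defines the map on generators, checks the $\E'$-relations inside $\A$ (the Heisenberg, $[h,e]$, $[h,f]$, $[e,f]$, the quadratic $g$-relations, and the cubic Serre relations), and then establishes bijectivity; the hard step, as you correctly identify, is matching the quadratic and cubic relations of $\E'$ with the quasi-empty triangle relations of $\A$. Your route (b) via the PBW basis of Theorem~\ref{thm:pbw} is the cleaner way to finish, and is in the spirit of \cite{S} and \cite{BS}. One caution: in route (b) you need a genuine spanning argument for $\E'$ by ordered monomials in the $\bp_\nu$-preimages, and this is not quite ``for free'' from the presentation of $\E'$ --- you either have to reprove a triangular decomposition for $\E'$ or, more simply, note that surjectivity of $\phi$ together with the PBW basis of $\A$ and finite-dimensionality of bigraded pieces already forces injectivity once you know $\phi$ is well defined and onto. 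Either way, your sketch is sound; just be aware that the paper itself defers the proof entirely to \cite{S}.
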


Hereafter we identify $\E'$ with $\A$ 
by the isomorphism stated in Theorem \ref{A=E}. 

The presentation of $\A$ clarifies some properties which are not 
obvious in that of $\E'$. 
Among other things, the natural action of $SL(2,\Z)$ on $\Z^2$
lifts to an action of the universal cover $\widetilde{SL}(2,\Z)$
on $\A$ by automorphisms, see \cite{BS,Ng2}. 
We shall use a particular automorphism  $\theta$ of $\E$ 
of order four \cite{BS,M}
\begin{align}
\theta&:e_0\mapsto h_{-1},\ 
h_{-1}\mapsto f_0,\ f_0\mapsto h_1,\ h_1\mapsto e_0,
\label{theta}\\
&\Cp\mapsto C\,,\ C\mapsto (\Cp)^{-1}\,,\
D^\perp \mapsto D\,,\ D\mapsto (D^\perp)^{-1}\,,
\nn
\end{align}
which corresponds to rotating the lattice clockwise by $90$ degrees.  
Its square is an involutive automorphism 
\begin{align}
\theta^2&:e_n\mapsto f_{-n},\ f_n\mapsto e_{-n},\ h_r\mapsto h_{-r}, 
\ x\mapsto x^{-1} \quad (x=C,\Cp,D,D^\perp).
\label{th2}
\end{align}

Quite generally, we write $x^\perp=\theta^{-1}(x)$ 
for an element $x\in \E$.  In this notation
\begin{align*}
&\ep_0=h_1,\quad \fp_0=h_{-1},\quad \hp_1=f_0,\quad \hp_{-1}=e_0\,,\\
&\ep_{-1}=e_1C^{-1},\ \ep_1=f_1C^\perp,\  
\fp_1=f_{-1}C,\ \fp_{-1}=e_{-1}(C^\perp)^{-1}\,.
\end{align*}
We refer to 
\begin{align}
e_n\,,\quad f_n\,,\quad h_r\,,
\quad
C\,,\ C^\perp\,,\ D\,, \ D^\perp\,
\label{horizontal}
\end{align}
as {\it horizontal generators}, and  
\begin{align}
e^\perp_n\,,\quad f^\perp_n\,,\quad h^\perp_r\,,
\quad
C\,,\ C^\perp\,,\ D\,, \ D^\perp\,
\label{vertical}
\end{align}
as {\it vertical generators}, see Fig. \ref{fig2}.

\begin{figure}[t]
\setlength{\unitlength}{.8mm}
\begin{picture}(120,120)(-40,-60)
\put(6,55){$\vdots$}\put(26,55){$\vdots$}\put(46,55){$\vdots$}
\put(17,58){\line(0,-1){65}}
\put(17,12){\line(1,0){20}}
\put(17,-7){\line(1,0){20}}
\put(37,12){\line(0,-1){67}}
\put(-56,54){\vector(0,-1){5}}
\put(-56,54){\vector(1,0){5}}
\put(-49,53){ $\hdeg$}
\put(-56,43){$\pdeg$}
\put(-16,54){\oval(10,8)}
\put(-18,52){$\Bb^\perp$}
\put(66,54){\oval(10,8)}
\put(64,52){$\Bb$}
\put(66,-52){\oval(10,8)}
\put(64,-54){$\B^\perp$}
\put(-16,-52){\oval(10,8)}
\put(-18,-54){$\B$}
\put(5,40){$\fp_2$}\put(25,40){$\hp_2$}
\put(45,40){$\ep_2$}
\put(-40,20){$\cdots$}\put(-20,20){$f_{-2}$}
\put(-2,20){$f_{-1}$, }\put(6,20){\ $\fp_1$}
\put(20,20){$f_0$, }\put(25,20){\ $\hp_1$}
\put(40,20){$f_1$, }\put(45,20){\ $\ep_1$}
\put(60,20){$f_2$}\put(80,20){$\cdots$}
\put(-40,00){$\cdots$}\put(-20,00){$h_{-2}$}
\put(-2,00){$h_{-1}$}\put(6,00){$, \fp_{0}$}
\put(24,00){$\bullet$}
\put(40,00){$h_1$}\put(45,00){$, \ep_0$}
\put(60,00){$h_2$}\put(80,00){$\cdots$}
\put(-40,-10){\line(1,0){75}}
\put(35,-10){\line(0,1){19}}
\put(15,-10){\line(0,1){19}}
\put(15,9){\line(1,0){65}}
\put(-40,-20){$\cdots$}\put(-20,-20){$e_{-2}$}
\put(-2,-20){$e_{-1}$}\put(6,-20){$, \fp_{-1}$}
\put(20,-20){$e_0$}\put(25,-20){$, \hp_{-1}$}
\put(40,-20){$e_1$}\put(45,-20){$, \ep_{-1}$}
\put(60,-20){$e_2$}\put(80,-20){$\cdots$}
\put(5,-40){$\fp_{-2}$}
\put(25,-40){$\hp_{-2}$}
\put(45,-40){$\ep_{-2}$}
\put(7,-55){$\vdots$}\put(27,-55){$\vdots$}\put(47,-55){$\vdots$}
\end{picture}
\label{fig2}
\caption{Horizontal/vertical generators and 
Borel subalgebras 
$\B$ (lower half), $\Bb$ (upper half), 
$\B^\perp$ (right half), $\Bb^\perp$ (left half). 
The elements $C, C^\perp,D,D^\perp$ 
placed at the center $\bullet$ are common to all these subalgebras.}
\end{figure}
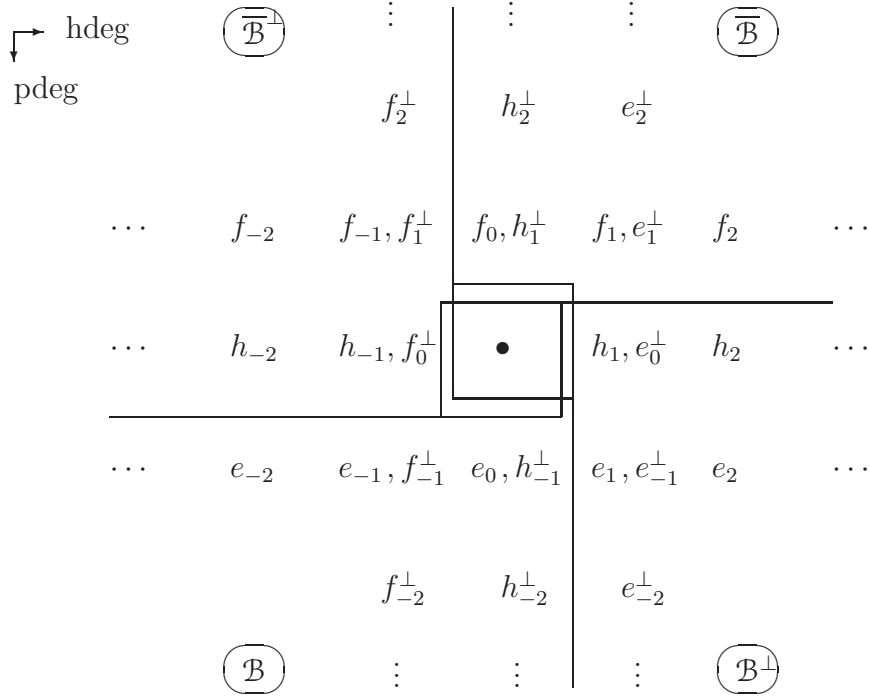

\subsection{Hopf algebra structures}

Algebra $\E$ is endowed with a topological 
Hopf algebra structure $(\E,\Delta,\varepsilon,S)$
defined in terms of the horizontal generators. 
For $x=C,\Cp,D,D^\perp$, we set
$\Delta\, x=x\otimes x$, $\varepsilon(x)=1$, $S(x)=x^{-1}$.
For the remaining generators we define 
\begin{align}
&\Delta e_n=\sum_{j\ge0}e_{n-j}\otimes 
\psi^{+}_j C^n+1\otimes e_n\,,
\label{copro1}\\
&\Delta f_n=f_n\otimes 1+
\sum_{j\ge0}\psi^{-}_{-j}C^n\otimes f_{n+j}\,,
\label{copro2}\\
&\Delta h_{r}=h_{r}\otimes 1+C^{-r}\otimes h_{r}\,,
\label{copro3}\\
&\Delta h_{-r}=h_{-r}\otimes C^r+1\otimes h_{-r}\,,
\label{copro4}\\
&\varepsilon(e_n)=\varepsilon(f_n)=\varepsilon(h_{\pm r})=0\,,
\label{counit}
\\
&S(e_n)=-\sum_{j\ge0}C^{-n+j}e_{n-j}\widetilde{\psi}^+_j\,,
\\
&S(f_n)=-\sum_{j\ge0}\widetilde{\psi}^-_{-j}C^{-n-j}f_{n+j}\,,
\\
&S(h_{\pm r})=-h_{\pm r}C^{\pm r}\,. 
\end{align}
Here we set 
 $\psi^{\pm}(z)^{-1}=\sum_{j\ge0}\widetilde{\psi}^{\pm}_{\pm j}z^{\mp j}$.

Replacing the horizontal generators with the corresponding vertical generators, 
we obtain another Hopf algebra structure $(\E, \Delta^\perp,\varepsilon^\perp,S^\perp)$.  
Both structures will play a role in the sequel. 

\subsection{Borel subalgebras}\label{subsec:Borel}
The lattice $\Z^2$ is partitioned as
$\Z^2=\{(0,0)\}\cup Z\cup \overline{Z}$, 
where 
$Z=\{\nu=(\nu_1,\nu_2)\in\Z^2\mid \text{$\nu_1>0$ or $\nu_1=0,\nu_2>0$}\}$, 
$\overline{Z}=-Z$. 
The corresponding elements $\bp_\nu$ generate subalgebras of $\E$, 
\begin{align*}
&\B=\langle \bp_\nu\ (\nu\in Z),\ C,\Cp, D, D^\perp \rangle \,,
\\
&\Bb=\langle \bp_\nu\ (\nu\in \overline{Z}),\ C,\Cp, D, D^\perp \rangle\,,
\end{align*}
which we call the Borel and the opposite Borel subalgebras, respectively,  
see Fig. \ref{fig2}. 
Both $(\B,\Delta,\varepsilon,S)$, $(\Bb,\Delta,\varepsilon,S)$ are  
Hopf subalgebras of $(\E,\Delta,\varepsilon,S)$. 
Using the presentation of the elliptic Hall algebra, we see that
algebra $\B$ is generated by 
$e_n$ ($n\in\Z$), $h_r$ ($r>0$), $C,\Cp, D, D^\perp$, 
and $\Bb$ by $f_n$ ($n\in\Z$), $h_{-r}$ ($r>0$), $C,\Cp, D, D^\perp$.

Quite generally, a bialgebra pairing on a bialgebra $A$
is a symmetric non-degenerate bilinear form $(~,~):A\times A\to \C$ 
with the properties
\begin{align*}
&(a,b_1b_2)=(\Delta(a), b_1\otimes b_2),\quad
(a,1)=\varepsilon(a)\,
\end{align*}
for any $a,b_1,b_2\in A$. 
With each such pair $(A,(~,~))$, there is an associated bialgebra
$DA$ called the Drinfeld double of $A$. 
As a vector space $DA=A\otimes A^{\mathrm{op}}$,  
where $A^{\mathrm{op}}$ is a copy of $A$ endowed with the opposite coalgebra
structure. Moreover 
$A^+=A\otimes 1$ and $A^-=1\otimes A^{\mathrm{op}}$ are sub bialgebras of $DA$, 
and the commutation relation
\begin{align*}
\sum (a_{(2)},b_{(1)})\, a^-_{(1)}b^+_{(2)}
=\sum (b_{(2)},a_{(1)})\, b^+_{(1)}a^-_{(2)}
\end{align*} 
is imposed for $a,b\in A$. Here $a^+=a\otimes 1$, $a^-=1\otimes a$, and 
we use the Sweedler notation 
$\Delta(a)=\sum a_{(1)}\otimes a_{(2)}$ for the coproduct.  

In the present case, we take $A$ to be the bialgebra
$(\B,\Delta,\varepsilon)$. 
It has a bialgebra pairing such that the non-trivial pairings of the generators are given by
\begin{align*}
&(e_m,e_n)=\frac{1}{\kappa_1}\delta_{m,n}\,,\quad
(h_r,h_{s})=\frac{1}{r\kappa_r}\delta_{r,s}\,,\\
&(C,D)=(C^\perp,D^\perp)=q^{-1}\,.
\end{align*}
This pairing respects bidegrees in the sense that $(a,b)=0$ unless $\deg a=\deg b$.
We identify $A^{\mathrm{op}}$ with $\Bb$ via the involution 
$\theta^2:\B\to \Bb$ given in \eqref{th2}. 
The Drinfeld double of $\B$ is then identified with $\B\otimes \Bb$. 
Its quotient by the relation $x\otimes 1=1\otimes x$ 
($x=C,C^\perp,D,D^\perp$) is isomorphic to the algebra $\E$ \cite{BS}.  

We can equally well use 
the `vertical' version of Borel and opposite Borel subalgebras
\begin{align*}
\B^\perp=\theta^{-1}(\B)\,, \quad  \Bb^\perp=\theta(\B)\,.
\end{align*}
Algebra $\E$ is also a quotient of the Drinfeld double of $(\B^\perp,\Delta^\perp,\varepsilon^\perp)$.

\section{Modules}\label{sec:modules}

This section contains preliminary material on representations of $\E$ and $\B^\perp$.  
We shall consider only modules on which $C$ acts by $1$. 
From here until the end of the paper, we drop $D$ from the algebra and 
use the same letters $\E$, $\B^\perp$, etc., to denote the respective 
quotient by $C-1$.  

\subsection{Category $\cO_\E$}

Let $V$ be an $\E$ module. For $a\in\C^\times$ and $n\in \Z$, we set
\begin{align*}
&V_{(a)}=\{v\in V\mid  \psi^+_0v=a v\}\,,\quad 
V_{n}=\{v\in V\mid D^\perp v=q^n v\}\,,
\end{align*}
and $V_{(a,n)}=V_{(a)}\cap V_n$. 
If $V_{(a,n)}\neq 0$ then we call $(a,n)$ a weight of $V$.  
We denote by $\wt(V)$ the set of weights of $V$. 
For a vector $v\in V_n$ we write $\pdeg v=n$. 

We consider a full subcategory $\cO_\E$ of the category of 
all $\E$ modules. 
We say that $V$ is an object of $\cO_\E$ if the following 
conditions are satisfied. 
\begin{enumerate}
\item There exists a finite subset $A\subset\C^\times$ 
such that $V=\bigoplus_{(a,n)\in A\times\Z}V_{(a,n)}$,
\item $\dim V_{(a,n)}<\infty$ for all $a,n$,
\item $V_{n}=0$ if $n\gg 0$.
\end{enumerate}
Thanks to (iii), the coproduct $\Delta^\perp$ is
well-defined on the tensor product of $V,W\in \Ob\cO_\E$. 
We write this tensor product module as $V\otimes_{\Delta^\perp} W$. 
Category $\cO_\E$ is a monoidal category 
with respect to $\otimes_{\Delta^\perp}$. 

Let $\rg_\E$ be the set of all rational functions 
$\Psi(z)\in \C(z)$ which are
regular at $z=0,\infty$ and satisfy $\Psi(0)\Psi(\infty)=1$. 
Let $\Psi^\pm(z)=\sum_{j\ge 0}\Psi^\pm_{\pm j}z^{\mp j}$ 
be the expansion of $\Psi(z)$ at $z^{\pm 1}=\infty$.

We say that an $\E$ module $V$ is 
a highest $\ell$-weight module with highest $\ell$-weight $\Psi\in\rg_\E$ 
if it is generated by a non-zero vector $v_0$ such that 
\begin{align*}
\bp_\nu v_0=0\ (\nu_1>0),\quad \psi^\pm(z)v_0=\Psi^\pm(z)v_0\,, 
\quad D^\perp v_0=v_0\,.
\end{align*}
The unique simple module with highest $\ell$-weight $\Psi$ 
belongs to $\cO_\E$. We  denote it by $L(\Psi)$. 
Modulo a shift of grading by $D^\perp$, 
any simple object of category $\cO_\E$ is 
of the form $L(\Psi)$ for some $\Psi\in\rg_\E$, see \cite{M}. 
\medskip

We are interested in the Grothendieck ring 
$\Rep_0\, \E$ of category $\cO_\E$. However,
many natural representations of $\E$ do not possess
finite composition series. 
In order to address this issue we pass to an 
appropriate completion of 
$\Rep_0\, \E$. 
Namely,
for $n\in \Z$, let $F^n$ denote the additive subgroup of 
$\Rep_0\, \E$ 
spanned by $[V]$, $V\in \cO_\E$, such that $V_m=0$ for $m>n$. 
This gives a filtration 
$\Rep_0\, \E=\cup_{n\in \Z}F^n$, 
$\cdots\supset F^1\supset F^0\supset F^{-1}\supset\cdots$, 
which satisfies $F^mF^n\subset F^{m+n}$, $F^m+F^n\subset F^{\min(m,n)}$. 
We set 
\begin{align}\label{complete}
\Rep\,\E ={\varprojlim_{n\to-\infty}} \ (\Rep_0\, \E)/F^n. \qquad 
\end{align}

\subsection{Examples of highest $\ell$-weight modules} 
To write the examples of highest $\ell$-weight modules 
we need the notation for partitions and plane partitions.

\medskip

A partition is a sequence of non-negative 
integers $\la=(\la_1,\la_2,\cdots)$ such that $\la_i\ge\la_{i+1}$ for all 
$i\ge1$, and $\la_i=0$ for $i$ large enough.
We write $\emptyset=(0,0,0,\dots)$.
The set of all partitions is denoted by $\cP$. 
We identify a partition $\la\in\cP$ with the set of points $(i,j)\in\Z^2$
such that $1\le j\le \la_i$, $i\ge1$. 
For $\la\in\cP$, we set $|\la|=\sum_{j\ge1}\la_j$  
and $\ell(\la)=\max\{j\ge1\mid \lambda_j>0\}$ ($\lambda\neq\emptyset$), 
$\ell(\emptyset)=0$. 

For $\la\in\cP$, $i\in \Z_{>0}$, we create a new sequence of non-negative integers $\la+{\bf 1}_i$ by the rule
$(\la+{\bf 1}_i)_j=\la_j+\delta_{i,j}$. We also use the cut partitions:
\begin{align*}
(\la)_{\le i-1}=(\la_1,\cdots,\la_{i-1},0,0,\cdots), 
\qquad (\la)_{\ge i+1}=(\la_{i+1},\cdots,\la_{\ell(\la)},0,0,\cdots)\,.
\end{align*}

\medskip 

A plane partition is a sequence of partitions
$\bla=(\la^{(1)},\la^{(2)},\cdots)$, $\la^{(k)}\in \cP$, 
such that $\la^{(k)}\supset\la^{(k+1)}$ for all $k$ 
and $\la^{(k)}=\emptyset$ for $k\gg0$. We denote the set of all plane partitions by $\bs\cP$ and for $\bs \la\in\bs\cP$ we set $|\bs\la|=\sum_i|\la^{(i)}|$.

We identify 
$\bla\in\bs\cP$ with the set of points $(i,j,k)\in\Z^3$ such that 
$1\le j\le \la^{(k)}_i$, $i,k\ge1$. 

For a plane partition $\bla=\bigl(\la^{(1)},\cdots,\la^{(N)}\bigr)$ we set
\begin{align*}
(\bla)_{k,i}=\bigl(\la^{(1)},\cdots,\la^{(k-1)},(\la^{(k)})_{\le i-1},
\emptyset,\emptyset,\cdots\bigr)\,,
\quad
(\bla)^{k,i}=\bigl((\la^{(k)})_{\ge i+1}, \la^{(k+1)},\cdots,\la^{(N)},
\emptyset,\emptyset,\cdots\bigr)\,,
\end{align*}
and
\begin{align*}
\bla+{\bf 1}^{(k)}_i=\bigl(\la^{(1)},\cdots,\la^{(k)}+{\bf 1}_i,\cdots,
\la^{(N)},\emptyset,\emptyset,\cdots\bigr)\,.
\end{align*}

For a node $\square=(i,j,k)\in\Z^3$ we write $q^{\square}=q_3^iq_1^jq_2^k$. We also use
\begin{align}\label{A}
\af\bigl(a\bigr)=\prod_{s=1}^3\frac{1-q_s^{-1}a}{1-q_s a}\,.
\end{align}
\medskip

The most basic
example of a highest $\ell$-weight module 
is the Macmahon module $\cM(u,K)$ \cite{FJMM1}.
The Macmahon module $\cM(u,K)$ has a basis $\{\ket\bla\}$ 
labeled by the set of all plane partitions with 
the action of $\E$ given by the following explicit formulas. Namely, we have
\begin{align}
\bra{\bla}\psi^{\pm}(z) \ket{\bla}=
K^{1/2}\frac{1-K^{-1}u/z}{1-u/z}
\prod_{\square \in \bla}\af\bigl(q^{-\square}u/z\bigr)^{-1}\,.
\label{Mac-psi}
\end{align}
If $\bla+{\bf 1}^{(k)}_i$ is a plane partition, then we have
\begin{align}&\bra{\bla+{\bf 1}^{(k)}_i}f(z) \ket{\bla}
=\frac{K^{1/2}}{1-q_1^{-1}}
\frac{1-q_2^{-k}u/z}{1-u/z}
\frac{1-q_3^{-i+1}q_2^{-k+1}u/z}
{1-q_3^{-i+1}q_2^{-k}u/z}
\prod_{\square \in (\bla)_{k,i}}\af\bigl(q^{-\square}u/z\bigr)^{-1}
\label{Mac-f}
\\
&\hspace{300.pt} \times\delta\bigl(q_3^{-i}q_1^{-\la^{(k)}_i-1}
q_2^{-k}u/z\bigr)\,,
\nn\\
&\bra{\bla}e(z) \ket{\bla+{\bf 1}^{(k)}_i}
=\frac{-1}{1-q_1}
\frac{1-K^{-1} u/z}{1-q_2^{-k}u/z}\frac{1-q_3^{-i}q_2^{-k}u/z}{1-q_3^{-i}q_2^{-k+1}u/z}
\prod_{\square \in (\bla)^{k,i}}\af\bigl(q^{-\square}u/z\bigr)^{-1}\ 
\label{Mac-e}
\\
&\hspace{300.pt} \times
\delta\bigl(q_3^{-i}q_1^{-\la^{(k)}_i-1}q_2^{-k}u/z\bigr)\,.
\nn
\end{align}
All other matrix coefficients are zero.

For generic  $K\in\C^\times$ (i.e. $K\neq q_3^mq_1^n$ for $m,n\in\Z$) 
the Macmahon module is irreducible,
\begin{align*}
\cM(u,K)=L\left(\Psi_{\cM(u,K)}\right)\,,\quad 
\Psi_{\cM(u,K)}(z)=K^{1/2}\frac{1-K^{-1}u/z}{1-u/z}\,.
\end{align*}
Clearly, the Macmahon modules $\cM(u,K)$, $u,K\in\C^*$, 
topologically generate the ring $\Rep\,\E$.
\medskip

Another important example of a highest $\ell$-weight module is the Fock module
\begin{align}
\F(u)=L\left(\Psi_{\F(u)}\right)\,,\quad 
\Psi_{\F(u)}(z)=q\frac{1-q_2^{-1}u/z}{1-u/z}\,.
\label{Fock}
\end{align}
The Fock module $\F(u)$ is the smallest highest $\ell$-weight $\E$ module.
It has a basis labeled by partitions. The action of $\E$ is obtained from formulas \eqref{Mac-psi},\eqref{Mac-f}, \eqref{Mac-e} for the action in the Macmahon module by specializing $K=q_2$ and forgetting all plane partitions $\bla=(\la^{(1)},\la^{(2)},\cdots)$ with $\la^{(2)}\neq \emptyset$.
\medskip

The Fock module can be described also 
in terms of the vertical generators.
The generators $\hp_r$ act as a Heisenberg algebra on $\F(u)$,  
\begin{align}
[\hp_r,\hp_s]=\frac{q^r-q^{-r}}{r\kappa_r}\delta_{r+s,0}
\quad (r,s\in\Z\backslash\{0\})
\,. 
\label{Heis}
\end{align}
For $r>0$, $h^\perp_r$'s act as creation operators
and $h^\perp_{-r}$'s as annihilation operators. 
Module $\F(u)$ is irreducible over this Heisenberg algebra.
The generators $\ep(z),\fp(z)$ act by vertex operators, 
\begin{align}
&\ep(z)=\frac{1-q^{-1}_2}{\kappa_1}\,
u \,
\exp\left(-\sum_{r=1}^\infty\frac{q^r\kappa_r}{1-q_2^r}\,\hp_{r}z^{-r}\right)
\exp\left(-\sum_{r=1}^\infty \frac{q_2^r\kappa_r}{1-q_2^r}\,\hp_{-r} z^{r}\right)\,,
\label{VO1}\\
&\fp(z)= \frac{1-q_2}{\kappa_1} u^{-1}\,
\exp\left(\sum_{r=1}^\infty \frac{\kappa_r}{1-q_2^r}\,\hp_{r} z^{-r}\right)
\exp\left(\sum_{r=1}^\infty \frac{q^r\kappa_r}{1-q_2^r}\,\hp_{-r} z^{r}\right)\,.
\label{VO2}
\end{align}

\medskip

\subsection{Dual modules}

Along with $\cO_\E$, we consider also a category $\cO^\vee_\E$. It is 
defined similarly as $\cO_\E$, replacing condition (iii) with
\begin{enumerate}
\setcounter{enumi}{2}
\item$\hskip-.8mm^\vee$\quad  \text{$V_{n}=0$ if $n\ll 0$}.
\end{enumerate}
We define a lowest $\ell$-weight module with lowest $\ell$-weigth $\Psi\in\rg_\E$ to be 
a cyclic $\E$ module $V=\E v_0$ such that 
\begin{align*}
\bp_\nu v_0=0\ (\nu_1<0),\quad \psi^\pm(z)v_0=\Psi^\pm(z)v_0\,,\quad
D^\perp v_0=v_0\,. 
\end{align*}
The unique simple module with lowest $\ell$-weight $\Psi\in\rg_\E$ belongs to $\cO^\vee_\E$ and 
is denoted by $L^{\vee}(\Psi)$. 

If $V\in\Ob\cO_\E$ and $W\in\Ob\cO^\vee_\E$, then 
the tensor product $V\otimes_{\Delta^\perp} W$ is a well defined $\E$ module.
(Note however that the tensor product in the opposite order $W\otimes_{\Delta^\perp} V$ is ill defined.) 
For an element $x\in \E$, the antipode $S^\perp(x)$ is well-defined on 
$W\in \Ob\cO^\vee_\E$.  
This allows us to define an $\E$ module structure on the graded dual space $W^*=\oplus_n W_n^*$ 
by setting $(xw^*)(w)=w^*\bigl(S^\perp(x)w\bigr)$, where $w^*\in W^*,\ w\in W,\ x\in \E$. 
The result is an object in $\cO_\E$, which we denote by $W^{*S^\perp}$.
Similarly, $(S^\perp)^{-1}(x)$ is well-defined on 
$V\in\Ob\cO_\E$ so that $V^{*(S^\perp)^{-1}}$ is defined. 
With the assignments 
$\Ob\cO^\vee_\E\to \Ob\cO_\E$, $W \mapsto W^{*S^\perp}$ and 
$\Ob\cO_\E\to \Ob\cO^\vee_\E$, $V \mapsto V^{*(S^\perp)^{-1}}$, 
we obtain contravariant functors which are inverse to each other. 

\begin{lem}\label{lem:duality}
For $\Psi\in\rg_\E$, we have
\begin{align}
L^{\vee}(\Psi)^{*S^\perp}=L(\Psi^{-1}),
\quad 
L(\Psi)^{*(S^{\perp})^{-1}}=L^{\vee}(\Psi^{-1}) \,.
\label{duality}
\end{align}
\end{lem}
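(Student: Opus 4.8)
The plan is to verify that the graded dual $L^\vee(\Psi)^{*S^\perp}$ is a highest $\ell$-weight module with highest $\ell$-weight $\Psi^{-1}$, and then invoke uniqueness of the simple quotient. First I would take $L^\vee(\Psi)$ with its lowest $\ell$-weight generating vector $v_0$, so that $\bp_\nu v_0=0$ for $\nu_1<0$, $\psi^\pm(z)v_0=\Psi^\pm(z)v_0$, $D^\perp v_0=v_0$, and let $v_0^*\in (L^\vee(\Psi))^*$ be the dual functional with $v_0^*(v_0)=1$ vanishing on all other graded components. The dual action is $(xw^*)(w)=w^*(S^\perp(x)w)$. I would compute $(\psi^\pm(z)v_0^*)(w)=v_0^*(S^\perp(\psi^\pm(z))w)$ using $S^\perp(\psi^\pm(z))=\psi^\pm(z)^{-1}$ (the perpendicular analog of $S(h_{\pm r})=-h_{\pm r}C^{\pm r}$ together with $C=1$), to get $\psi^\pm(z)v_0^*=\Psi^\pm(z)^{-1}v_0^*$. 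Since $\Psi^{-1}(z)$ has expansions $(\Psi^\pm(z))^{-1}$, this shows $v_0^*$ carries $\ell$-weight $\Psi^{-1}$, and $D^\perp v_0^*=v_0^*$ follows from $S^\perp(D^\perp)=(D^\perp)^{-1}$ and grading-reversal being compensated in the definition of the dual grading.

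Next I would check the highest-weight annihilation condition: for $\nu_1>0$ one needs $\bp_\nu v_0^*=0$, i.e. $v_0^*(S^\perp(\bp_\nu)w)=0$ for all $w$. Here $S^\perp(\bp_\nu)$ is, up to invertible Cartan-type factors and sign, a linear combination of the $\bp_{\nu'}$ with $\nu'_1=\nu_1>0$ (antipode preserves the principal degree in the relevant normalization), composed with $\psi$-factors; applied to any $w\in L^\vee(\Psi)$ it lands in graded components of principal degree shifted up by $\nu_1>0$, i.e. in $\bigoplus_{m}L^\vee(\Psi)_{(a,m)}$ with $m$ strictly above that of $v_0$. Wait — more carefully: the relevant statement is that $S^\perp(\bp_\nu)$ with $\nu_1>0$ is a sum of products of generators each of which raises the $\pdeg$-like grading, so it sends the whole module into the span of states orthogonal to $v_0^*$; hence $v_0^*(S^\perp(\bp_\nu)w)=0$. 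I would spell this out via the explicit antipode formulas \eqref{copro1}--\eqref{counit} transported by $\theta^{-1}$, noting that $S^\perp(e^\perp_n)$ is a sum of terms $C^{\cdots}e^\perp_{n-j}\widetilde\psi^{+\perp}_j$ all of principal degree $+1$, and similarly for the higher $\bp_\nu$ obtained as iterated brackets. The upshot is that $v_0^*$ generates a highest $\ell$-weight submodule with highest $\ell$-weight $\Psi^{-1}$.

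To conclude that this submodule is all of $L^\vee(\Psi)^{*S^\perp}$ and is simple, I would argue that $L^\vee(\Psi)$ is generated by $v_0$, hence its dual is cogenerated by $v_0^*$ — more precisely, the submodule $\E v_0^*$ of the dual, being nonzero in the top graded piece, has nonzero image under restriction to every finite-dimensional graded piece; using that $L^\vee(\Psi)$ is \emph{irreducible} (it is the simple module $L^\vee(\Psi)$), one gets that the natural pairing between $\E v_0^*$ and $L^\vee(\Psi)$ is nondegenerate, forcing $\E v_0^*=L^\vee(\Psi)^{*S^\perp}$ and its irreducibility. Therefore $L^\vee(\Psi)^{*S^\perp}\cong L(\Psi^{-1})$ by uniqueness of the simple highest $\ell$-weight module. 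The second identity in \eqref{duality} follows by the same computation with $S^\perp$ replaced by $(S^\perp)^{-1}$, which only changes the invertible $C$-powers (trivial since $C=1$) and exchanges the roles of highest and lowest, using that $W\mapsto W^{*S^\perp}$ and $V\mapsto V^{*(S^\perp)^{-1}}$ are mutually inverse functors as noted above; alternatively apply the first identity to $L^\vee(\Psi^{-1})$ and dualize back.

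The main obstacle is the bookkeeping for the antipode under the perpendicular Hopf structure: one must confirm both that $S^\perp(\psi^\pm(z))=\psi^\pm(z)^{-1}$ after setting $C=1$ and, crucially, that $S^\perp(\bp_\nu)$ for $\nu_1>0$ genuinely lies in $\E_{\ge 1}\cdot(\text{Cartan})$ so that it kills $v_0^*$; this requires either a direct induction on $\nu_1$ using the relations $e^\perp_{-m-1}=[e^\perp_0,e^\perp_{-m}]$ etc., or the general fact that the antipode is a coalgebra anti-homomorphism preserving the grading group. Everything else — the weight computation, the grading shift in the dual, and the uniqueness argument — is routine.
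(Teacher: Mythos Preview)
Your overall strategy is sound, but there is a real gap in the key step. You assert that $S^\perp(\psi^\pm(z))=\psi^\pm(z)^{-1}$ by ``the perpendicular analog of $S(h_{\pm r})=-h_{\pm r}C^{\pm r}$''. That analog says $S^\perp(h^\perp_{\pm r})=-h^\perp_{\pm r}(C^\perp)^{\pm r}$, a statement about the \emph{vertical} Cartan generators $h^\perp_r$, not about the horizontal $h_r$ that build $\psi^\pm(z)$. The horizontal currents $\psi^\pm(z)$ are not group-like for $\Delta^\perp$: by Lemma~\ref{Delta-psi} one only has $\Delta^\perp\psi^\pm(z)\equiv\psi^\pm(z)\otimes\psi^\pm(z)\bmod \E_{\gge1}\otimes\E_{\lle-1}$, so $S^\perp(\psi^\pm(z))$ is not simply $\psi^\pm(z)^{-1}$ as an element of $\E$. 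Your approach can be salvaged: apply the antipode identity $m(S^\perp\otimes\id)\Delta^\perp(\psi^+(z))=1$ to $v_0$, use Lemma~\ref{Delta-psi}, and observe that the correction terms in $\E_{\lle-1}$ annihilate the lowest $\ell$-weight vector $v_0$; this yields $S^\perp(\psi^+(z))v_0=\Psi^+(z)^{-1}v_0$, which is what you actually need. But this is not the argument you gave, and it hinges on the nontrivial Lemma~\ref{Delta-psi}.

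The paper's proof sidesteps this computation. It first uses that $W\mapsto W^{*S^\perp}$ is an equivalence, so $L^\vee(\Psi)^{*S^\perp}$ is automatically simple, hence equal to some $L(\Phi)$; there is no need to verify annihilation or cyclicity by hand. To identify $\Phi$, it invokes the canonical $\E$-linear evaluation map $L(\Phi)\otimes_{\Delta^\perp}L^\vee(\Psi)\to\C$ (a general Hopf-algebra fact), and then applies Lemma~\ref{Delta-psi} directly to $\Delta^\perp\psi^\pm(z)(w_0^*\otimes w_0)$, where the correction terms vanish because $w_0^*$ is highest and $w_0$ is lowest. This gives $\Phi(z)\Psi(z)=1$ in one line, without ever computing $S^\perp$ on horizontal generators. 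Both routes ultimately rest on Lemma~\ref{Delta-psi}, but the paper's is cleaner and avoids the false intermediate claim.
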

\begin{proof}
It is enough to prove the first equality. 
Let $W=L^\vee(\Psi)$. 
Since $W^{*S^\perp}$ is simple, we can write $W^{*S^\perp}=L(\Phi)$ with some $\Phi\in\rg_\E$. 
Let $w_0$ be the lowest $\ell$-weight vector of $W$ 
and $w_0^*$ the highest $\ell$-weight vector of $W^{*S^\perp}$. 
There exists an $\E$ linear map
$W^{*S^\perp}\otimes_{\Delta^\perp} W\to \C$ onto the trivial module $\C$, 
sending $w_0^*\otimes w_0$ to $1$. 
On the other hand, by Lemma \ref{Delta-psi} we have 
$\Delta^\perp \psi^{\pm}(z)(w_0^*\otimes w_0)
=(\psi^\pm(z)w_0^*)\otimes (\psi^\pm(z)w_0)$. 
It follows that $1=\Phi(z)\Psi(z)$. 
\end{proof}
\medskip

\noindent{\it Remark.}\quad 
The Fock module $\F_u$ considered in our previous papers \cite{FFJMM1}, \cite{FJMM1}  
is the dual Fock module
\begin{align*}
\F^\vee(u)=L^\vee\left(q^{-1}\frac{1-q_2u/z}{1-u/z}\right)
=\bigl(\F(q_2u)\bigr)^{*(S^\perp)^{-1}}\,.
\end{align*}
\qed

\subsection{Category $\cO_{\B^\perp}$}
Consider now modules over the Borel subalgebra $\B^\perp$. 
Category $\cO_{\B^\perp}$ is defined by the same conditions (i)--(iii) as for category $\cO_\E$,  
with $\B^\perp$ modules in place of $\E$ modules. We denote by $\Rep\,\B^\perp$ the same completion of the
Grothendieck ring of $\cO_{\B^\perp}$ as for  $\Rep\,\E$, see \eqref{complete}.

In order to define highest $\ell$-weight modules, we introduce the following subalgebras of $\B^\perp$.
\begin{align}
&\B^\perp_+=\langle \bp_\nu\ (\nu_1>0,\ \nu_2>0) \rangle \,, \qquad \B^\perp_-=\langle \bp_\nu\ (\nu_1<0,\ \nu_2\geq 0) \rangle 
\label{Bpm}\\
&\B^\perp_0=\langle h_r\ (r>0),\ C,\Cp, D, D^\perp \rangle \,,
\label{B0}
\end{align}
We have $\B^\perp_0\B^\perp_+=\B^\perp\cap\B$, $\B^\perp_-=\B^\perp\cap\Bb$, and 
the triangular decomposition holds:
\begin{align*}
\B^\perp\simeq \B^\perp_-\otimes \B^\perp_0\otimes \B^\perp_+\,. 
\end{align*}

Let $\rg_{\B^\perp}$ denote the set of all rational functions
$\Psi(z)\in\C(z)$ which are regular and non-zero at $z=\infty$.
Let $\Psi^+(z)=\sum_{m\ge0}\Psi^+_mz^{-m}\in\C[[z^{-1}]]$ 
be its expansion at $z=\infty$.

A cyclic $\B^\perp$ module $V=\B^\perp v_0$ is a highest $\ell$-weight module 
of highest $\ell$-weight $\Psi\in \rg_{\B^\perp}$ if
\begin{align*}
\B^\perp_+v_0=\C v_0\,,\quad \psi^+(z)v_0=\Psi^+(z)v_0\,,
\quad D^\perp v_0=v_0\,.
\end{align*}
Similarly $V=\B^\perp v_0$ is a lowest $\ell$-weight module with lowest $\ell$-weight $\Psi\in\rg_{\B^\perp}$ if 
\begin{align*}
\B^\perp_- v_0=\C v_0,\quad \psi^+(z)v_0=\Psi^+(z)v_0\,,
\quad D^\perp v_0=v_0\,. 
\end{align*}

The unique simple module with highest (resp. lowest) $\ell$-weight 
$\Psi$ is denoted by
$L(\Psi)$ (resp. $L^\vee(\Psi)$).  
The duality relation 
\eqref{duality} 
holds true for them as well. 

We have the following result analogous 
to the one for $\E$ modules \cite{M}:
\begin{lem}
Let $V$ be a simple $\B^\perp$ module. Then $V\in\Ob\cO_{\B^\perp}$ if and only if $V=L(\Psi)$ for some $\Psi\in \rg_{\B^\perp}$. 
\qed
\end{lem}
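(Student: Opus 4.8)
The plan is to mimic the proof of the analogous statement for $\E$ modules from \cite{M}, adapting the argument to the Borel subalgebra $\B^\perp$. The forward direction is the nontrivial one: given a simple $V\in\Ob\cO_{\B^\perp}$, I must produce a highest $\ell$-weight vector and identify $V$ with $L(\Psi)$. First I would use condition (iii): since $V_n=0$ for $n\gg0$, there is a maximal $n_0$ with $V_{n_0}\neq0$, and every vector in $V_{n_0}$ is annihilated by all raising operators that strictly increase $\pdeg$. Using the triangular decomposition $\B^\perp\simeq\B^\perp_-\otimes\B^\perp_0\otimes\B^\perp_+$ from the excerpt, the subalgebra $\B^\perp_+=\langle\bp_\nu\ (\nu_1>0,\nu_2>0)\rangle$ is spanned by elements of strictly positive principal degree (hence strictly positive $\pdeg$-shift via $D^\perp$), so $\B^\perp_+$ kills $V_{n_0}$. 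Thus any nonzero $v\in V_{n_0}$ already satisfies $\B^\perp_+v=0\subset\C v$.

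Next I would arrange for $v_0$ to be a simultaneous eigenvector of the commutative Cartan part $\B^\perp_0=\langle h_r\ (r>0),C,\Cp,D,D^\perp\rangle$, equivalently of the series $\psi^+(z)$. By condition (i)–(ii), $V_{n_0}=\bigoplus_{a}V_{(a,n_0)}$ is a finite-dimensional space on which $\psi^+_0$ (i.e. the $a$-grading) already acts semisimply with finitely many eigenvalues; restrict to one generalized eigenspace. On that space the remaining operators $h_r$ ($r>0$) act, and since they commute among themselves and preserve the finite-dimensional weight space $V_{(a,n_0)}$, there is a common eigenvector — one should note $\psi^+(z)v_0=\Psi^+(z)v_0$ for the corresponding generating series, and $D^\perp v_0=q^{n_0}v_0$; after a grading shift by $D^\perp$ we may take $n_0=0$, i.e. $D^\perp v_0=v_0$. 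One must check $\Psi^+(z)$ is the expansion at $z=\infty$ of a genuine element $\Psi(z)\in\rg_{\B^\perp}$ (regular and nonzero at $z=\infty$): regularity at infinity is automatic since $\psi^+(z)=\sum_{m\ge0}\psi^+_m z^{-m}$, and nonvanishing follows from invertibility of $\Cp$ (equivalently $\psi^+_0$ acts by the nonzero scalar $a$). The submodule $\B^\perp v_0$ is then highest $\ell$-weight of highest $\ell$-weight $\Psi$; by simplicity of $V$ it equals $V$, and by uniqueness of the simple highest $\ell$-weight module (stated just above the Lemma) $V\cong L(\Psi)$.

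For the converse, $L(\Psi)$ with $\Psi\in\rg_{\B^\perp}$ lies in $\cO_{\B^\perp}$: condition (iii) holds because $L(\Psi)=\B^\perp_-\B^\perp_0 v_0=\B^\perp_- v_0$ and $\B^\perp_-=\langle\bp_\nu\ (\nu_1<0,\nu_2\ge0)\rangle$ consists of elements of non-positive $\pdeg$-shift, so no grading above $0$ is created; finiteness of the weight set and of weight multiplicities (conditions (i)–(ii)) follow from the PBW-type basis of $\B^\perp$ (Theorem \ref{thm:pbw} applied to the relevant cone) together with the fact that $\psi^+_0$ acts on $\B^\perp_{-\nu}v_0$ by a scalar determined by $\Psi$ and the root $\nu$ — the argument here is the standard one reproducing the Verma-module weight combinatorics, and then passing to the simple quotient only shrinks the weight spaces.

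The main obstacle I anticipate is the bookkeeping that makes $\B^\perp_+$ genuinely a "raising" subalgebra with respect to the $D^\perp$-grading used in conditions (i)–(iii): one must verify that every generator $\bp_\nu$ with $\nu_1>0,\nu_2>0$ has strictly positive $\pdeg$, so that condition (iii) forces the existence of a vector killed by $\B^\perp_+$ — and symmetrically that $\B^\perp_-$ lowers, so $L(\Psi)$ satisfies (iii). This is where the precise interplay between the two gradings (principal vs.\ homogeneous degree) and the defining partition $Z=\{\nu_1>0\}\cup\{\nu_1=0,\nu_2>0\}$ of the lattice must be handled carefully; the rest is a routine adaptation of the $\E$-module argument of \cite{M}.
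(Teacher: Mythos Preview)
The paper gives no proof here (the \qed\ is immediate, with only a pointer to the $\E$-module analogue in \cite{M}), so there is nothing to compare against. Your outline follows the expected pattern but has two genuine gaps, and the ``main obstacle'' you flag (that $\B^\perp_+$ raises and $\B^\perp_-$ lowers principal degree) is in fact the trivial part, immediate from the bidegrees.

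First, in the forward direction you never establish that the eigenvalue series $\Psi^+(z)=\sum_{m\ge0}\Psi^+_mz^{-m}$ is \emph{rational}. Checking that it is a power series in $z^{-1}$ with nonzero constant term is not enough: $\rg_{\B^\perp}$ consists by definition of rational functions, and rationality is the substantive point. The missing step: since $\dim V_{n_0-1}<\infty$, the vectors $f_jv_0$ ($j\ge0$) satisfy a nontrivial relation $\sum_{j=0}^Nc_jf_jv_0=0$; applying $e_r$ for $r\ge1$ (which kills $v_0$ since $e_r\in\B^\perp_+$) and using $[e_r,f_j]=\kappa_1^{-1}\psi^+_{r+j}$ gives the linear recurrence $\sum_jc_j\Psi^+_{r+j}=0$ for all $r\ge1$, which forces $\Psi^+(z)$ to be rational. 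Second, in the backward direction your Verma-module bound fails outright. Because $D$ has been dropped, the weight spaces $V_{(a,n)}$ are graded only by principal degree, and already at principal degree $-1$ the PBW monomials $f_0,f_1,f_2,\ldots$ in $\B^\perp_-$ are infinitely many; hence the Verma module of highest $\ell$-weight $\Psi$ has infinite-dimensional weight spaces and ``passing to the simple quotient only shrinks them'' proves nothing. Here one must actually use that $\Psi$ is rational: writing $\Psi(z)=c\prod_i(1-a_i/z)/\prod_j(1-b_j/z)$, realize $L(\Psi)$ as a subquotient of $L(c)\otimes_{\Delta^\perp}\bigl(\bigotimes_iM^+(a_i)\bigr)\otimes_{\Delta^\perp}\bigl(\bigotimes_jM^-(b_j)\bigr)$ and invoke the finite characters of $M^\pm(u)$ obtained from the Macmahon construction and duality.
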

\medskip 

Consider highest $\ell$-weight $\B^\perp$ modules 
\begin{align*}
M^+(u)=L\bigl(1-u/z\bigr)\,,
\quad 
M^-(u)=L\Bigl(\frac{1}{1-u/z}\Bigr)\,.
\end{align*}
Clearly, modules $M^+(u)$ and $M^-(u)$, $u\in\C^\times$, 
topologically generate the ring $\Rep\, \B^\perp$.

We call $M^+(u)$ (resp. $M^-(u)$) the 
{\it positive (resp. negative) fundamental module}.
\medskip

We have the restriction functor $Res:\cO_\E\to \cO_{\B^{\perp}}$  
which sends an $\E$ module to its restriction to $\B^\perp$.  
\begin{lem}\label{lem:simple-to-simple}
Functor $Res:\cO_\E\to \cO_{\B^{\perp}}$ sends 
simple objects to simple objects.
Hence the notation $L(\Psi)$ has an unambiguous meaning for $\Psi\in\rg_\E$. 
\end{lem}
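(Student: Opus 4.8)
The plan is to reduce the statement to a concrete comparison of highest $\ell$-weight data. Let $\Psi \in \rg_\E$ and consider the simple $\E$-module $L(\Psi)$. Restricting to $\B^\perp$, the highest $\ell$-weight vector $v_0$ still satisfies $\psi^+(z)v_0 = \Psi^+(z)v_0$ and $D^\perp v_0 = v_0$, and $\B^\perp_+ v_0 = \C v_0$ because $\B^\perp_+ \subset \langle \bp_\nu : \nu_1 > 0\rangle$ kills $v_0$. Hence $\Res L(\Psi)$ contains $\B^\perp v_0$ as a highest $\ell$-weight $\B^\perp$-submodule with highest $\ell$-weight $\Psi \in \rg_{\B^\perp}$ (note $\rg_\E \subset \rg_{\B^\perp}$ since functions in $\rg_\E$ are regular and non-zero at $z=\infty$). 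So it suffices to show two things: first, that $\B^\perp v_0$ is in fact all of $L(\Psi)$, i.e. $v_0$ generates $L(\Psi)$ already over $\B^\perp$; and second, that this $\B^\perp$-module is simple.

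First I would establish that $v_0$ generates $L(\Psi)$ over $\B^\perp$. Since $L(\Psi)$ is simple over $\E$, it is generated by $v_0$ over $\E$, so it is enough to see that the $\B^\perp$-submodule $\B^\perp v_0$ is stable under the remaining generators of $\E$, namely the horizontal (or the complementary vertical) generators that do not lie in $\B^\perp$. The cleanest route is to use the triangular decomposition $\E \simeq \Bb^\perp_- \otimes \cdots \otimes \B^\perp$ coming from the Drinfeld double picture in Section~\ref{subsec:Borel}: $\E$ is spanned by products $y \cdot b$ with $b \in \B^\perp$ and $y$ a PBW monomial in the negative vertical part $\bp_\nu$, $\nu_1 < 0$ — and on the highest $\ell$-weight vector the coproduct formulas (analogues of \eqref{copro1}--\eqref{copro4}) show that acting by such $y$ first and then by $\B^\perp$ is redundant. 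Concretely one checks, using the category-$\cO_\E$ grading condition (iii) ($V_n = 0$ for $n \gg 0$) together with the weight structure, that every weight space of $L(\Psi)$ is already reached by $\B^\perp v_0$; the grading bounds force any vector obtained by applying a "lowering" generator outside $\B^\perp$ to be re-expressible via $\B^\perp$ acting on lower-degree vectors, by downward induction on $\pdeg$.

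Next, simplicity of $\Res L(\Psi)$ as a $\B^\perp$-module. Suppose $U \subset \Res L(\Psi)$ is a nonzero $\B^\perp$-submodule. Since $U \in \Ob \cO_{\B^\perp}$ (conditions (i)--(iii) are inherited), $U$ has a vector annihilated by $\B^\perp_+$ of extremal $\pdeg$, which is then automatically a highest $\ell$-weight vector; but in $L(\Psi)$ the only such candidate, by the explicit weight combinatorics, is a multiple of $v_0$ — more carefully, because $L(\Psi)$ is $\E$-simple its $\ell$-weight spaces above the top are one-dimensional in the relevant sense, and any $\B^\perp_+$-highest vector must sit in the top $\pdeg$ layer $\C v_0$. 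Hence $v_0 \in U$, and by the generation statement above $U = L(\Psi)$. Combining the two parts gives $\Res L(\Psi) = L(\Psi)$ as a simple $\B^\perp$-module, so the notation is unambiguous.

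The main obstacle is the generation step: showing $v_0$ generates $L(\Psi)$ over the \emph{smaller} algebra $\B^\perp$ rather than over $\E$. Naively $\E$ is much bigger than $\B^\perp$, so one needs the Drinfeld double / triangular decomposition to control exactly which "extra" generators appear and to see — via the coproduct formulas and the one-sided nature of $\cO_\E$ (condition (iii)) — that their action does not escape $\B^\perp v_0$. I expect this to require a careful induction on the homogeneous grading, handling the vertical negative generators $\bp_\nu$ with $\nu_1 < 0$, $\nu_2 < 0$ (which lie in $\Bb^\perp$, outside $\B^\perp$) and using relations in $\E$ to commute them past $\B^\perp$ back onto $v_0$; this is the only genuinely non-formal point, and everything else (inheritance of the category conditions, the highest-weight identities, the extraction of a $\B^\perp_+$-highest vector) is routine.
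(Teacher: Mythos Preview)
Your proposal correctly identifies the two things to show --- that $v_0$ generates $L(\Psi)$ over $\B^\perp$, and that the restriction has no proper nonzero submodule --- but both arguments contain the same essential gap. In the simplicity step you assert that ``any $\B^\perp_+$-highest vector must sit in the top $\pdeg$ layer $\C v_0$'', appealing to ``explicit weight combinatorics'' and the claim that ``$\ell$-weight spaces above the top are one-dimensional''. Neither claim is justified, and the second is false in general. The issue is precisely that a vector might be annihilated by $e_n$ for all $n>0$ (hence $\B^\perp$-singular) without being annihilated by $e_n$ for $n\le 0$ (hence not $\E$-singular); your argument assumes this cannot happen without saying why. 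The generation step has the mirror image of this problem: you must show that the action of $f_n$ for $n<0$ (which lies outside $\B^\perp$) does not escape $\B^\perp v_0$, and your ``careful induction on the homogeneous grading'' and ``commute them past $\B^\perp$'' are placeholders, not an argument.

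The paper's proof supplies the missing mechanism, and it is short. Fix $n$ and consider the operators $e_m|_{V_n}\colon V_n\to V_{n+1}$ for $m>0$. Since $\Hom_\C(V_n,V_{n+1})$ is finite-dimensional, these satisfy a nontrivial linear relation $\sum_{j=a}^{b} c_j\, e_j|_{V_n}=0$ with $c_ac_b\neq 0$ and $0<a\le b$. Taking the commutator with $h_{\pm1}$ (which sends $e_j\mapsto e_{j\pm1}$) translates the relation by any integer, turning it into a linear recursion that expresses every $e_k|_{V_n}$, $k\in\Z$, as a combination of $\{e_m|_{V_n}\}_{m>0}$. The same reasoning gives $f_k|_{V_n}\in\mathrm{span}\{f_m|_{V_n}\}_{m\ge0}$. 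This single observation simultaneously proves that $\B^\perp$-singular implies $\E$-singular and that $\E$-cyclic implies $\B^\perp$-cyclic, closing both of your gaps at once. No triangular decomposition or coproduct formulas are needed.
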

\begin{proof}
The following proof is adapted from the one
given in Proposition 3.5 of \cite{HJ}.

Let $V=\oplus_{n\in\Z}V_n$ be an object in $\cO_\E$. 
Fix $n$, and consider the set of operators $e_m:V_n\to V_{n+1}$ ($m>0$).
Since $\Hom_\C(V_n,V_{n+1})$ is finite dimensional, 
we have a linear relation  $\sum_{j=a}^b c_j e_j|_{V_n}=0$ 
where $c_j\in\C$ ($0<a\le j\le b$) and $c_ac_b\neq0$. 
Taking commutators with 
$h_{\pm1}$ we obtain $\sum_{j=a}^b c_j e_{j+r}|_{V_n}=0$ for all $r\in \Z$. 
It follows that operators 
$e_k|_{V_n}$ ($k\in\Z$) 
belong to the linear span of $\{e_m|_{V_n}\}_{m>0}$.
By the same argument, operators $f_k|_{V_n}$ ($k\in\Z$) 
belong to the linear span of $\{f_m|_{V_n}\}_{m\ge0}$. 

It is clear now that any singular vector of $V$ with respect to $\B^\perp$
is also singular with respect to $\E$. It is also clear that
if $V$ is cyclic with respect to $\E$ then it is cyclic with respect to 
$\B^\perp$. The assertion of the lemma follows from these.
\end{proof}

Since $\rg_{\B^\perp}\neq \rg_\E$, there are $\B^\perp$ modules which are not obtained by
restricting $\E$ modules. For example, positive and negative fundamental modules are not restrictions of $\E$ modules.

\medskip

We use the linear maps
\begin{align}
s_u:L(\Psi(z/u))\to L(\Psi(z))\,,
\quad 
\tau_a:L(a\Psi(z))\to L(\Psi(z))\,,
\label{s-tau}
\end{align}
where $\Psi\in\rg_{\B^\perp}$, $u,a\in \C^\times$, given as follows. 

Both maps send highest $\ell$-weight vectors to highest 
$\ell$-weight vectors.
In addition, $s_u$ satisfies 
\begin{align*}&s_u\circ x= 
u^{\hdeg x}
x \circ s_u\quad (\text{for homogeneous $x\in \B^\perp$}),
\end{align*}
and the map $\tau_a$ satisfies
\begin{align*}
&\tau_a\circ e_n^\perp=e_n^\perp \circ \tau_a\quad (n\in\Z),\\
&\tau_a\circ h^\perp_r=a^r h_r^\perp \circ \tau_a\quad (r>0),\\
&\tau_a\circ C^\perp =a^{-1} C^\perp \circ \tau_a\,.
\end{align*}
We have $\tau_a\circ e_n =e_n\circ \tau_a$ $(n>0)$,
$\tau_a\circ f_n =a f_n\circ \tau_a$ $(n\ge0)$
and $\tau_a\circ \psi^+(z)=a \psi^+(z) \circ \tau_a$. 

\begin{lem} 
For any $\Psi\in\rg_{\B^\perp}$, the conditions above define maps 
$s_u$ and $\tau_a$.
Moreover, $s_u$ and $\tau_a$ are linear isomorphisms.
\qed
\end{lem}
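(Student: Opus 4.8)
The plan is to construct the maps $s_u$ and $\tau_a$ directly on the level of highest $\ell$-weight modules, using the universal property, and then to check invertibility by constructing inverses of the same type. First I would recall that for each $\Psi \in \rg_{\B^\perp}$ the simple module $L(\Psi)$ is the unique simple quotient of the universal (Verma-type) highest $\ell$-weight module $M(\Psi)$, obtained by inducing the one-dimensional $\B^\perp_0 \B^\perp_+$-module $\C v_0$ on which $\B^\perp_+$ acts trivially, $\psi^+(z)$ acts by $\Psi^+(z)$, and $D^\perp$ acts by $1$, up to $\B^\perp$. The triangular decomposition $\B^\perp \simeq \B^\perp_- \otimes \B^\perp_0 \otimes \B^\perp_+$ recalled above makes $M(\Psi)$ well defined, and $L(\Psi) = M(\Psi)/(\text{maximal proper submodule})$.

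For $s_u$: I would define a new action of $\B^\perp$ on the underlying vector space of $L(\Psi(z))$ by the rule $x \cdot_u v = u^{\hdeg x}\, x\, v$ for homogeneous $x$. Since $D^\perp$ is degree $0$ and the relations of $\B^\perp$ are homogeneous with respect to $\hdeg$ (every defining relation in the presentation of $\E$, hence of $\B^\perp$, is $\hdeg$-homogeneous), this rescaled action is again a $\B^\perp$-action; call the resulting module $L(\Psi(z))^{[u]}$. The highest $\ell$-weight vector $v_0$ still satisfies $\B^\perp_+ v_0 = \C v_0$ and $D^\perp v_0 = v_0$, but now $\psi^+_m \cdot_u v_0 = u^m \psi^+_m v_0 = u^m \Psi^+_m v_0$, i.e. the generating series becomes $\Psi^+(z/u)$. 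Since $L(\Psi(z))^{[u]}$ is simple (the rescaling is an automorphism of the lattice of submodules), it must equal $L(\Psi(z/u))$, and the identity map on the underlying vector space is precisely the isomorphism $s_u$. Its inverse is $s_{u^{-1}}$ in the same sense, so $s_u$ is a linear isomorphism.

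For $\tau_a$: the listed conditions specify how $\tau_a$ intertwines the \emph{vertical} generators $e^\perp_n, h^\perp_r, C^\perp$ (which, by the triangular decomposition and the identifications $\ep_0 = h_1$, etc., together with $C, C^\perp, D^\perp$ generate $\B^\perp$). I would define a twisted action on the vector space of $L(a\Psi(z))$ by $e^\perp_n \bullet v = e^\perp_n v$, $h^\perp_r \bullet v = a^{-r} h^\perp_r v$, $C^\perp \bullet v = a\, C^\perp v$, and $D^\perp \bullet v = D^\perp v$, and check that this respects the defining relations among the vertical generators. The Heisenberg-type relation $[h^\perp_r, h^\perp_s] = \delta_{r+s,0}(\cdots)$ only involves $r+s = 0$ terms, so the factors $a^{-r} a^{-s} = 1$ there; the relation $[h^\perp_r, e^\perp_n] = -\tfrac1r e^\perp_{n+r}$ and its $f^\perp$ analogue pick up a single factor $a^{-r}$ on the left and must match the right, which forces the compensating normalisation in the image — this is exactly the bookkeeping that produces $\psi^+(z) \mapsto a\,\psi^+(z)$ via $\psi^+(z) = (\Cp)^{-1}\exp(\sum \kappa_r h_r z^{-r})$ after translating vertical relations back to the horizontal $\psi^+$ through $\theta^{\pm 1}$; the Serre-type relations are $h^\perp$-free and so are untouched. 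One then checks on $v_0$ that the twisted module is highest $\ell$-weight with $\psi^+(z)$ eigenvalue $a^{-1}\cdot a\Psi^+(z) = \Psi^+(z)$ — wait, more carefully: the twist multiplies the $\Cp$-eigenvalue appropriately so that $\psi^+(z) v_0$ becomes $\Psi^+(z) v_0$, matching $L(\Psi(z))$. Simplicity of the twisted module (again the twist is invertible on submodules) identifies it with $L(\Psi(z))$, and the identity map is $\tau_a$; its inverse is the analogous twist by $a^{-1}$.

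The main obstacle I anticipate is the second point for $\tau_a$: verifying that the twist by powers of $a$ on the vertical generators is consistent with \emph{all} the defining relations — in particular the cross relations between $e^\perp, f^\perp$ and the $h^\perp_r$, and the interplay with $C^\perp$ — and that the resulting effect on $\psi^+(z)$ is exactly multiplication by $a$ rather than by some other power of $a$. This is a finite check but requires carefully tracking the normalisation through the exponential defining $\psi^\pm(z)$ in terms of the $h_r$, and through the isomorphism $\theta$ relating vertical and horizontal data; getting the exponent of $a$ right (so that the claimed identities $\tau_a \circ f_n = a f_n \circ \tau_a$ for $n \ge 0$ and $\tau_a \circ \psi^+(z) = a\,\psi^+(z)\circ \tau_a$ both come out) is the delicate part. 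Everything else — existence via the universal property, simplicity-preservation under an invertible twist, and invertibility by exhibiting the inverse twist — is routine.
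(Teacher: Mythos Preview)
Your approach is correct and is the natural one; the paper simply marks the lemma with $\qed$ and gives no argument, treating it as routine. Both maps arise from algebra automorphisms of $\B^\perp$ (rescaling $x\mapsto u^{\hdeg x}x$ for $s_u$; the twist $e^\perp_n\mapsto e^\perp_n$, $h^\perp_r\mapsto a^{-r}h^\perp_r$, $C^\perp\mapsto aC^\perp$ for $\tau_a$) which pull back the module structure on $L(\Psi)$ to one with the required highest $\ell$-weight, after which simplicity identifies the twisted module and the identity map furnishes the isomorphism.

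One small correction in your $\tau_a$ check: the commutator you quote should read $[h^\perp_r,e^\perp_n]=-\tfrac1r\,e^\perp_{n+r}(C^\perp)^{-r}$, the image under $\theta^{-1}$ of $[h_r,e_n]=-\tfrac1r\,e_{n+r}C^{-r}$ (note $\theta^{-1}(C)=C^\perp$). That factor $(C^\perp)^{-r}$ is exactly what absorbs the $a^{-r}$ from the twist on $h^\perp_r$ once $C^\perp$ is rescaled by $a$, so the relation is preserved with no further compensation needed. Also, the $f^\perp_n$ do not belong to $\B^\perp$, so there is no ``$f^\perp$ analogue'' to verify; the remaining relations among $e^\perp$'s (quadratic and Serre) and the commutativity of the $h^\perp_r$ for $r>0$ involve neither $h^\perp$--$C^\perp$ interaction nor mixed signs, and are trivially preserved. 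With this bookkeeping in place the obstacle you anticipate dissolves, and your derivation of $\tau_a\circ\psi^+(z)=a\,\psi^+(z)\circ\tau_a$ follows immediately from $\psi^+_0=(C^\perp)^{-1}$ together with the fact that each $h_r$ ($r>0$), lying in $\B^\perp_+$ when expressed via commutators of $e^\perp_n$'s, commutes with the twist.
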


\subsection{The $q$-characters}

Let $\Z[\C^\times]$ be the group ring 
of the multiplicative group $\C^\times$. 
We use the letter $x_a$ to denote the element $a\in\C^\times$, 
so that $x_{ab}=x_ax_b$. 
For an object $V$ of $\cO_{\B^\perp}$, we define its character by
\begin{align*}
\chib(V)=\sum_{(a,n)\in \wt(V)}\dim V_{(a,n)}\ x_a t^n\,. 
\end{align*}
The character $\chib(V)$
belongs to the ring 
$\cX_0=\Z[\{x_a\}_{a\in\C^\times},t][[t^{-1}]]$, 
and the map $\chib:\Rep\,\B^\perp\to \cX_0$ gives a ring homomorphism. 

The $q$-character is a refined notion of the character. It
is defined to be the generating function of the generalized  
eigenvalues of the commuting family of operators $\psi^+(z)$.  
In order to give the formal definition we prepare some notation. 

For $\Psi\in\rg_{\B^\perp}$ we set $V_{(\Psi,n)}=V_\Psi\cap V_{(a,n)}$, 
where $a=\Psi^+_0$ and 
\begin{align*}
V_\Psi&=\Bigl\{v\in V\mid \text{$\exists N$ such that 
$(\psi^+_{j}-\Psi^+_{j})^Nv=0$ 
for all $j\ge1$} 
\Bigr\}. 
\end{align*}
If $V_{(\Psi,n)}\neq 0$, we say that $(\Psi,n)$ is an $\ell$-weight of $V$. 
The set of $\ell$-weights of $V$ is denoted by $\elwt(V)$. 
We say that a subspace $W\subset V$ is $\ell$-weighted if $W=\oplus_{(\Psi,n)\in\elwt(V)} (V_{(\Psi,n)}\cap W)$.

We introduce indeterminates $X_a$ labeled by $a\in\C^\times$, 
and set, cf. \eqref{A},
\begin{align}
A_a=\prod_{s=1}^3\frac{X_{q^{-1}_s a}}{X_{q_s a}}\,.
\label{Aa}
\end{align}
Note that $\{A_a\}_{a\in\C^\times}$ are algebraically independent.

For an element $\Psi\in\rg_{\B^\perp}$, we define $\bm(\Psi)\in\cX$ as follows.
\begin{align*}
\bm(\Psi)=x_a\,\frac{\prod_{i=1}^l X_{a_i}}{\prod_{j=1}^m X_{b_j}} 
\quad
\text{if 
$\Psi(z)=a\,\frac{\prod_{i=1}^l(1-a_i/z)}{\prod_{j=1}^m(1-b_j/z)}$}\,.
\end{align*}
Clearly $\bm$ is a group isomorphism from the multiplicative group 
$\rg_{\B^\perp}$ onto the group of monomials in $\{x_a,X_a^{\pm1}\}_{a\in\C^\times}$. 

We define the $q$-character of an object $V$ of $\cO_{\B^\perp}$ 
by setting 
\begin{align*}
\chi_q(V)=\sum_{(\Psi,n)\in \elwt(V)} \dim V_{(\Psi,n)}\cdot
\bm(\Psi)\,t^n\,.
\end{align*}
It turns out that $\chi_q(V)$ belongs to the ring 
\begin{align*}
\cX=\Z[\{X^{\pm1}_a,x_a\}_{a\in\C^\times},t][[t^{-1}]]
\end{align*}
consisting of formal series in $t^{-1}$, whose coefficients
are polynomials in $X^{\pm1}_a,x_a$ ($a\in\C^\times$) and $t$. 
To see this it is enough to prove it in the case $V=L(\Psi)$
($\Psi\in\rg_{\B^\perp}$). This will be done in Lemma \ref{lem:q-char V}.

Under the ring homomorphism $\overline{\phantom{X}}:\cX\to\cX_0$ 
given by $\overline{X_a}=1$, $\overline{x_a}=x_a$, $\bar{t}=t$, 
the $q$ character specializes to the (ordinary) character,  
$\overline{\chi_q(V)}=\chib(V)$.

\medskip

\begin{prop}\label{inj}
The $q$ character map $\chi_q:\Rep\, \B^\perp\to \mathcal{X}$ is an injective ring homomorphism. 
In particular, $\Rep\, \B^\perp$ is a commutative ring.  
\end{prop}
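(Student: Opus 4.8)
The plan is to prove the two claims — that $\chi_q$ is a ring homomorphism, that it is injective, and to deduce commutativity — essentially in that order, using the fact that $\Rep\,\B^\perp$ is topologically generated by the fundamental modules $M^{\pm}(u)$ together with the standard decomposition-into-$\ell$-weight-spaces arguments.

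First I would establish that $\chi_q$ is additive and multiplicative. Additivity on short exact sequences is immediate from the fact that for $0\to V'\to V\to V''\to 0$ each generalized $\psi^+(z)$-eigenspace behaves additively, so $\chi_q$ descends to a group homomorphism $\Rep\,\B^\perp\to\cX$ (one must check it is continuous for the $F^n$-filtration, which is clear since $\chi_q$ respects the $t$-grading). For multiplicativity, the key point is that for $V,W\in\cO_{\B^\perp}$ the operator $\Delta^\perp\psi^+(z)$ acting on $V\otimes_{\Delta^\perp}W$ is, modulo strictly lower principal-degree corrections, the tensor product $\psi^+(z)\otimes\psi^+(z)$; more precisely one uses a triangularity/filtration argument (order $\ell$-weights by principal degree) to show that on an appropriate associated graded, $\Delta^\perp\psi^+(z)$ acts as $\psi^+(z)\otimes\psi^+(z)$, hence the generalized eigenvalue of the tensor product at a vector of $\ell$-weight $(\Psi',n')\otimes(\Psi'',n'')$ is $\Psi'(z)\Psi''(z)$. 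Since $\bm$ is a group isomorphism this gives $\chi_q(V\otimes_{\Delta^\perp}W)=\chi_q(V)\chi_q(W)$, and therefore $\chi_q$ is a ring homomorphism.

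Next, injectivity. Here I would argue as follows: an element of $\Rep\,\B^\perp$ is a (possibly infinite, $t$-adically convergent) $\Z$-linear combination of classes $[L(\Psi)]$ of simple modules; suppose a nonzero such combination maps to $0$ under $\chi_q$. The highest $\ell$-weight of $L(\Psi)$ contributes the monomial $\bm(\Psi)t^{?}$ with coefficient $1$, and all other $\ell$-weights of $L(\Psi)$ are obtained from $\bm(\Psi)$ by multiplying by monomials in the $A_a^{-1}$ (this is the content of the grading/lowering structure to be proved in Proposition~\ref{prop:gradingM} and the $q$-character computation of Lemma~\ref{lem:q-char V}, which I may invoke). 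Choosing the class with maximal highest $\ell$-weight in a suitable partial order on $\rg_{\B^\perp}$ (ordering by the monomial $\bm(\Psi)$, where $\bm(\Psi')\preceq\bm(\Psi)$ iff the ratio is a monomial in the $A_a^{-1}$, refined by $t$-degree), the maximal term cannot be cancelled by any other simple appearing, contradiction. This is the Frenkel–Reshetikhin-type argument and is the standard route; it requires knowing the dominance structure of $\chi_q(L(\Psi))$, which comes from the earlier sections. Finally, commutativity of $\Rep\,\B^\perp$ follows at once: $\cX$ is a commutative ring and $\chi_q$ is an injective ring homomorphism into it, so the multiplication on $\Rep\,\B^\perp$ (induced by $\otimes_{\Delta^\perp}$) must be commutative.

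The main obstacle is the multiplicativity step: one must control $\Delta^\perp\psi^+(z)$ on a tensor product carefully enough to see it is "upper-triangular" with diagonal $\psi^+(z)\otimes\psi^+(z)$ with respect to a principal-degree (or height) filtration, and one must make sure the infinite-tensor-product / completion subtleties (that $\otimes_{\Delta^\perp}$ is only defined because of condition~(iii)) do not spoil convergence in $\cX$. Concretely, I expect to isolate a lemma describing the coproduct of $\psi^+(z)$ (of the type of Lemma~\ref{Delta-psi}) and then run the triangularity argument; once that lemma is in hand the rest is bookkeeping. The injectivity argument, by contrast, is routine given the dominance properties of $\chi_q$ of simple modules proved earlier.
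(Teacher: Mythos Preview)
Your multiplicativity argument is exactly the paper's: the key input is Lemma~\ref{Delta-psi}, which says $\Delta^\perp\psi^+(z)\equiv\psi^+(z)\otimes\psi^+(z)$ modulo $\E_{\gge1}\otimes\E_{\lle-1}$, and from this the generalized $\psi^+(z)$-eigenvalues multiply on $V\otimes_{\Delta^\perp}W$.

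For injectivity you take a genuinely different route from the paper. You invoke the dominance structure of $\chi_q(L(\Psi))$ --- that every $\ell$-weight lies below $\bm(\Psi)$ in an $A_a^{-1}$-order --- citing Lemma~\ref{lem:q-char V} and Proposition~\ref{prop:gradingM}, and then run the standard Frenkel--Reshetikhin maximal-monomial argument. The paper instead works directly with the modules: given $[V_0]-[V_0']$ in the kernel, it picks a monomial $\bm(\Psi)t^n$ of highest $t$-power in $\chi_q(V_0)=\chi_q(V_0')$, notes that a genuine $\psi^+$-eigenvector at the top principal degree of $V_0$ (respectively $V_0'$) generates a highest $\ell$-weight submodule with simple quotient $L(\Psi)$, peels $[L(\Psi)]$ off both sides, and iterates (the highest $t$-power strictly decreases, so in the completion this terminates). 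The paper's argument is more elementary and self-contained: it needs only that the top principal-degree part of $L(\Psi)$ is one-dimensional, and avoids any forward reference to Section~\ref{sec:finite-type}. Your approach is correct --- there is no circularity, since neither Lemma~\ref{lem:q-char V} nor Proposition~\ref{prop:gradingM} uses Proposition~\ref{inj} --- but it imports machinery proved only later. In fact your argument simplifies once you notice that, after refining by $t$-degree as you do, the $A^{-1}$-dominance is irrelevant: at the maximal $t$-power the only contribution of each $[L(\Psi)\{n\}]$ to $\chi_q$ is the single monomial $\bm(\Psi)t^n$, and distinct $\Psi$'s give distinct monomials since $\bm$ is injective. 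Stripped down this way, your argument becomes essentially the paper's.
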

\begin{proof}
The multiplicative property 
$\chi_q(V\otimes_{\Delta^\perp} W)=\chi_q(V)\chi_q(W)$ 
follows from Lemma \ref{Delta-psi}. 

To show the injectivity, let $[V_0]-[V_0']\in \Rep \B^\perp$ be an element
in the kernel of $\chi_q$ where $V_0,V_0'\in\Ob\cO_{\B^\perp}$. 
Choose a term $\mb(\Psi)t^n$ in $\chi_q(V_0)=\chi_q(V'_0)$ which has 
the highest power of $t$, and let 
$v_0\in V_0$ and $v_0'\in V_0'$ be eigenvectors of $\psi^+(z)$ 
corresponding to the eigenvalue $\Psi(z)$. 
Then the submodule of $V_0$ generated by $v_0$ 
has a quotient isomorphic to $L=L(\Psi)$. 
The same is true for $v_0'$ and $V'_0$. 
Hence we can write $[V_0]=[L]+[V]$, 
$[V'_0]=[L]+[V']$ with some $V,V'\in\Ob\cO_{\B^\perp}$. 
Repeating this procedure, 
we obtain a sequence of objects
 $W_k,V_k,V'_k\in \Ob\cO_{\B^\perp}$ ($k\ge0$)
such that 
$[V_{k-1}]=[W_k]+[V_{k}]$, $[V'_{k-1}]=[W_k]+[V_k']$, 
and that the highest power of $t$ in $\chi_q(V_k)=\chi_q(V_k')$
is less than that of $\chi_q(V_{k-1})$. 
It follows that $[V_0],[V'_0]$ are both equal to $[\oplus_{k}W_k]$. 
\end{proof}

\subsection{Examples of characters and $q$-characters}
We give a few characters and $q$-characters.

\medskip

The character and the $q$-character of the one-dimensional 
module $L(a)$, $a\in\C^\times$, are given by the formulas
$\chib\bigl(L(a)\bigr)=\chi_q\bigl(L(a)\bigr)=x_a$.

\medskip

Denote the Macmahon plane partition function by $\chi_0$:
\begin{align*}
\chi_0=\prod_{j=1}^\infty\frac{1}{(1-t^{-j})^j}.
\end{align*}
The character and the $q$-character of the Macmahon module are given by
\begin{align}
&\chib\bigl(\mathcal{M}(u,K)\bigr) =x_{K^{1/2}}\times \chi_0\,,\notag\\ 
&\chi_q\bigl(\mathcal{M}(u,K)\bigr) =x_{K^{1/2}}
\frac{X_{K^{-1}u}}{X_{u}}\sum_{\bs \lambda\in\bs\cP}
t^{-|\bs\lambda|} \prod_{\square\in\bs \lambda}A^{-1}_{q^{-\square} u}\,.
\label{M q-char}
\end{align}

These formulas follow from formula \eqref{Mac-psi} for the eigenvalues of 
$\psi^\pm(z)$ in the module $\mathcal{M}(u,K)$. 

\medskip

Similarly, the character and the $q$-character of the Fock module are given by
\begin{align}\label{F q-char}
&\chib\bigl(\F(u)\bigr) =x_{q}\times\frac{1}
{\prod_{j=1}^\infty(1-t^{-j})}\,,\\ 
&\chi_q\bigl(\F(u)\bigr) =x_{q}
\frac{X_{q_2^{-1}u}}{X_{u}}\sum_{\lambda\in\cP}
t^{-|\lambda|} \prod_{\square\in\lambda}A^{-1}_{q^{-\square} u}\,.
\end{align}

\medskip

To compute the character and the $q$-character of the
negative fundamental module, we first prove the following.
\begin{prop}\label{limit}
The negative fundamental module $M^-(u)$ is a limit of the Macmahon module modified by a one-dimensional module, 
\begin{align*}
M^-(u)=\lim_{K\to\infty} \left(L(K^{-1/2})\otimes_{\Delta^\perp}\cM(u,K)\right).
\end{align*}
\end{prop}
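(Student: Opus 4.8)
The plan is to realize $M^-(u)$ explicitly as a limit of the modules $L(K^{-1/2})\otimes_{\Delta^\perp}\cM(u,K)$ using the basis $\{\ket{\bla}\}$ of plane partitions together with the explicit formulas \eqref{Mac-psi}--\eqref{Mac-e}. First I would observe that tensoring $\cM(u,K)$ with the one-dimensional module $L(K^{-1/2})$ renormalizes the action so that, using the coproduct $\Delta^\perp$ and the map $\tau_a$, the highest $\ell$-weight becomes
\begin{align*}
K^{-1/2}\cdot\Psi_{\cM(u,K)}(z)=\frac{1-K^{-1}u/z}{1-u/z}\xrightarrow{K\to\infty}\frac{1}{1-u/z}=\Psi_{M^-(u)}(z)\,.
\end{align*}
So the highest $\ell$-weight vector $\ket{\emptyset}$ has the correct limiting eigenvalue. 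The key point is then to set up the limit at the level of matrix coefficients: specialize $K\to\infty$ in the (renormalized versions of) \eqref{Mac-psi}, \eqref{Mac-f}, \eqref{Mac-e} and check that the limit exists term by term, defining an honest $\B^\perp$ action on the span of $\{\ket{\bla}\}$.

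The main steps I would carry out are: (1) make precise what ``limit'' means — namely, pick bases so that the structure constants (matrix coefficients of $e^\perp_n$, $f^\perp_n$, $h^\perp_r$, or equivalently of $e(z),f(z),\psi^+(z)$ restricted to $\B^\perp$) are rational functions of $K$ that are regular at $K=\infty$, and define the limiting module $M$ as the space $\bigoplus_{\bla}\C\ket{\bla}$ with the $K\to\infty$ structure constants; (2) verify that in the factors $K^{1/2}$ appearing in \eqref{Mac-psi}--\eqref{Mac-f} the tensor-product renormalization by $L(K^{-1/2})$ exactly cancels the divergent $K^{\pm1/2}$, while $\af(q^{-\square}u/z)$ has no $K$, so all matrix coefficients of the generators we keep converge — in particular $\bra{\bla}\psi^\pm(z)\ket{\bla}\to\prod_{\square\in\bla}\af(q^{-\square}u/z)^{-1}\cdot(1-u/z)^{-1}$; (3) check that the resulting $M$ is a highest $\ell$-weight $\B^\perp$ module generated by $\ket{\emptyset}$ with highest $\ell$-weight $(1-u/z)^{-1}$, so there is a surjection $M^-(u)\twoheadrightarrow M$; (4) prove this surjection is an isomorphism, equivalently that $M$ is irreducible as a $\B^\perp$ module, or at least that $\dim M_{(\Psi,n)}=\dim M^-(u)_{(\Psi,n)}$ for all weights. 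For (4) I would compare characters: the plane-partition basis gives $\chib(M)=x_{1}\cdot\chi_0$ after the shift, and I would need an independent computation (or an argument via the $q$-character and Proposition \ref{inj}) that $M^-(u)$ has exactly this character.

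I would organize the bookkeeping by working with the vertical generators $e^\perp_n, f^\perp_n, h^\perp_r$ directly, since those (together with $C^\perp, D^\perp$) generate $\B^\perp$, and translating \eqref{Mac-f}, \eqref{Mac-e} into their action via the dictionary $e^\perp_{-1}=e_1 C^{-1}$, $f^\perp_1=f_{-1}C$, etc.; alternatively one can phrase everything in terms of the currents $e(z),f(z),\psi^+(z)$ and simply note that the span of $\{e^\perp_n\}$-reachable vectors inside the limit module is all of $\bigoplus_\bla \C\ket\bla$. The $\ell$-weight spaces $M_{(\Psi,n)}$ are finite-dimensional and the grading by $D^\perp$ is bounded above (only $n\le 1$ occurs, with $M_n$ spanned by $\ket\bla$ with $|\bla|=1-n$), so $M\in\Ob\cO_{\B^\perp}$ and conditions (i)--(iii) are automatic once convergence is established.

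\textbf{The main obstacle} I expect is step (4): showing the natural surjection $M^-(u)\to M$ is injective. The limit construction only gives an upper bound $[\,M^-(u)\,]\le[\,M\,]$ in the Grothendieck group a priori (a limit of simples need not be simple, and could acquire extra submodules), so one must rule out that $M^-(u)$ is strictly smaller — i.e. that some $\ket\bla$ becomes a null or decomposable vector in the limit. I would handle this by exhibiting enough lowering operators: show that repeated application of the limiting $f^\perp_n$'s (coming from the $K\to\infty$ limit of \eqref{Mac-f}, whose leading matrix coefficients are nonzero — the factor $(1-q_2^{-k}u/z)/(1-u/z)$ and the prefactor $1/(1-q_1^{-1})$ survive the limit) reaches every basis vector $\ket\bla$ from $\ket\emptyset$, hence $M$ is cyclic and, combined with the character count, equal to $M^-(u)$. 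Equivalently, one can verify the $q$-character of $M^-(u)$ has the ``correct'' leading term and invoke injectivity of $\chi_q$ (Proposition \ref{inj}); the honest content is the non-vanishing of the relevant limiting structure constants, which is a finite check on \eqref{Mac-f}--\eqref{Mac-e}.
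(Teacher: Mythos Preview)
Your overall approach matches the paper's: take the $K\to\infty$ limit of the explicit matrix coefficients \eqref{Mac-psi}--\eqref{Mac-e} on the plane-partition basis after renormalizing by $L(K^{-1/2})$. Two things need fixing.

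First, the arrow in step (3) is backwards. A highest $\ell$-weight module $M$ with highest $\ell$-weight $(1-u/z)^{-1}$ surjects onto its simple quotient $M^-(u)=L\bigl((1-u/z)^{-1}\bigr)$, not conversely. Your own phrasing in the last paragraph (``rule out that $M^-(u)$ is strictly smaller'') only makes sense with the map $M\twoheadrightarrow M^-(u)$; if the surjection really went $M^-(u)\to M$ it would automatically be injective since $M^-(u)$ is simple.

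Second, and more seriously, your plan for step (4) is circular. Cyclicity of $M$ together with a character count would give $M\simeq M^-(u)$ only if you already knew $\chib(M^-(u))=\chi_0$; but in the paper that formula (Corollary \ref{q char M-}) is a \emph{consequence} of this proposition, not an input, and there is no independent route to it in the paper's logic. The paper avoids the character comparison entirely and proves directly that the limit module $M$ is irreducible, using two observations you did not make explicit: (a) the eigenvalues of $\psi^+(z)$ on distinct basis vectors $\ket{\bla}$ are all distinct (simple joint spectrum), and (b) the nonzero matrix coefficients of $e_r$ ($r>0$) as well as $f_n$ ($n\ge0$) remain nonzero as $K\to\infty$ --- for $e(z)$ the only $K$-dependence in \eqref{Mac-e} is the harmless factor $1-K^{-1}u/z\to 1$. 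With (a) and (b), any nonzero submodule contains some $\ket{\bla}$, hence $\ket{\emptyset}$ via the $e_r$'s, hence every $\ket{\bla'}$ via the $f_n$'s. You checked the $f$-direction but neither the $e$-direction nor the simple-spectrum fact; those are the missing ingredients.
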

\begin{proof}
Extending the $\B^\perp$ action on $L(K^{-1/2})$,  
we define the action of the generators of $\E$ on it
by setting $e(z)=f(z)=0$ and $\psi^\pm(z)=K^{-1/2}$. 
Then all relations of $\E$ are satisfied with the exception of 
$\psi_0^+\psi_0^-=1$. 
We use this action on $L(K^{-1/2})$ and the coproduct $\Delta^\perp$ 
to define the action of all generators of $\E$ on 
 $M=L(K^{-1/2})\otimes_{\Delta^\perp}\cM(u,K)$. 

The underlying vector space of $M$
has a basis labeled by plane partitions.   
From the formulas \eqref{Mac-psi}, \eqref{Mac-f}, \eqref{Mac-e}, 
we see that the matrix coefficients of $f(z)$ acting in $M$
are independent of $K$, while 
those of $e(z)$ and $\psi^{+}(z)$ are polynomials in $K^{-1}$. 
Hence all generators have well defined limits as $K\to\infty$
and give a structure of a $\B^\perp$ module on $M$. 

We note that the limit of $\psi_0^-$ is zero, in particular 
it is not invertible, 
so we do not get the structure of an $\E$-module on $M$. 

Operator $\psi^+(z)$ have simple joint spectrum, and the
non-zero matrix coefficients of $e_r,f_{n}$ ($r>0,n\ge0$)
remain non-zero in the limit. 
It follows that $M$ is an irreducible $\B^\perp$ module 
with highest $\ell$-weight $(1-u/z)^{-1}$, and hence coincides with $M^-(u)$. 
\end{proof}

\begin{cor}\label{q char M-}
The character and the $q$-character of 
the negative  fundamental module are given by
\begin{align}
&\chib\bigl(M^-(u)\bigr) =\chi_0\,,\notag\\ 
&\chi_q\bigl(M^-(u)\bigr) =\frac{1}{X_{u}}\sum_{\bs \lambda\in\bs\cP}
t^{-|\bs\lambda|} 
\prod_{\square\in\bs \lambda}A^{-1}_{q^{-\square} u}\,.\label{M- q-char}
\end{align}
\end{cor}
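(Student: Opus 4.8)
The plan is to deduce Corollary \ref{q char M-} directly from Proposition \ref{limit} together with the multiplicativity of the (ordinary and $q$-)character map and the already-computed (co)characters of $\cM(u,K)$ and $L(K^{-1/2})$. First I would apply $\chib$ and $\chi_q$ to the tensor product $L(K^{-1/2})\otimes_{\Delta^\perp}\cM(u,K)$. Since both maps are ring homomorphisms (Proposition \ref{inj} for $\chi_q$, and the analogous statement for $\chib$), and since $\chib(L(K^{-1/2}))=\chi_q(L(K^{-1/2}))=x_{K^{-1/2}}$, we get from \eqref{M q-char} and the formula above it that
\begin{align*}
\chib\bigl(L(K^{-1/2})\otimes_{\Delta^\perp}\cM(u,K)\bigr)
&=x_{K^{-1/2}}\,x_{K^{1/2}}\,\chi_0=\chi_0\,,\\
\chi_q\bigl(L(K^{-1/2})\otimes_{\Delta^\perp}\cM(u,K)\bigr)
&=x_{K^{-1/2}}\,x_{K^{1/2}}\,\frac{X_{K^{-1}u}}{X_u}
\sum_{\bs\lambda\in\bs\cP}t^{-|\bs\lambda|}\prod_{\square\in\bs\lambda}A^{-1}_{q^{-\square}u}\,,
\end{align*}
so that the $x$-prefactors already collapse to $1$ for every $K$.

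Next I would take the limit $K\to\infty$. For the character this is immediate: the right-hand side $\chi_0$ is independent of $K$, and by Proposition \ref{limit} the left-hand side tends to $\chib(M^-(u))$, giving the first formula. For the $q$-character the only $K$-dependence on the right is through the factor $X_{K^{-1}u}$; as $K\to\infty$ we have $K^{-1}u\to 0$, and in the ring $\cX$ the monomial $\bm(1-K^{-1}u/z)=X_{K^{-1}u}$ of the invertible rational function $1-K^{-1}u/z\in\rg_{\B^\perp}$ degenerates. Concretely, I would argue at the level of $\ell$-weights: by Proposition \ref{limit} the $\psi^+(z)$-eigenvalues of $M=L(K^{-1/2})\otimes_{\Delta^\perp}\cM(u,K)$ converge coefficientwise to those of $M^-(u)$, and a plane partition $\bs\lambda$ which contributes the $\ell$-weight $K^{1/2}\cdot K^{-1/2}\cdot\frac{1-K^{-1}u/z}{1-u/z}\prod_{\square\in\bs\lambda}\af(q^{-\square}u/z)^{-1}$ in $M$ contributes $\frac{1}{1-u/z}\prod_{\square\in\bs\lambda}\af(q^{-\square}u/z)^{-1}$ in the limit, whose associated monomial is $X_u^{-1}\prod_{\square\in\bs\lambda}A^{-1}_{q^{-\square}u}$. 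Summing over $\bs\lambda\in\bs\cP$ with the grading weights $t^{-|\bs\lambda|}$ yields \eqref{M- q-char}.

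The one genuine subtlety — the step I would be most careful about — is the interchange of the limit $K\to\infty$ with the infinite sum over plane partitions, i.e.\ justifying that the coefficientwise limit of $\chi_q\bigl(M\bigr)$ in the topology of $\cX=\Z[\{X_a^{\pm1},x_a\},t][[t^{-1}]]$ is the stated series. This is clean because the grading by $\pdeg$ (equivalently, by $|\bs\lambda|$) is bounded above and has finite-dimensional graded pieces: for each fixed power $t^{-N}$ only the finitely many plane partitions with $|\bs\lambda|=N$ contribute, each with an $\ell$-weight that stabilizes as $K\to\infty$ (the Macmahon matrix coefficients of $\psi^+(z)$ in \eqref{Mac-psi} are rational in $K$ and regular at $K=\infty$, as already observed in the proof of Proposition \ref{limit}). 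Hence the limit may be taken termwise in each $t$-degree, which is exactly convergence in $\cX$, and no analytic estimate beyond this finiteness is needed. I would conclude by remarking that the specialization $\overline{\chi_q(M^-(u))}=\chib(M^-(u))$ is consistent, as $\overline{A_a}=1$ forces the $q$-character formula to collapse to $\chi_0=\sum_{\bs\lambda}t^{-|\bs\lambda|}=\prod_{j\ge1}(1-t^{-j})^{-j}$.
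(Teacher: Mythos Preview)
Your proof is correct and follows essentially the same approach as the paper's own proof, which simply states that $\chi_q(M^-(u))$ is obtained by multiplying \eqref{M q-char} by $x_{K^{-1/2}}$ and letting $K\to\infty$. Your additional care in arguing at the level of $\ell$-weights (to handle the formal symbol $X_{K^{-1}u}$ as $K^{-1}u\to0$) and in justifying the termwise limit via the finite-dimensionality of each graded piece is a welcome elaboration of what the paper leaves implicit.
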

\begin{proof}
By Proposition \ref{limit},  $\chi_q\bigl(M^-(u)\bigr)$ 
is obtained by taking the $K\to\infty$ limit of product of $x_{K^{-1/2}}$ and \eqref{M q-char}. 
\end{proof}
\medskip

\begin{prop}\label{M+ char}
The character of the positive  fundamental module is given by
\begin{align*}
&\chib\bigl({M}^+(u)\bigr) =\chi_0\,.
\end{align*}
\end{prop}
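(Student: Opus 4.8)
The plan is to realise $M^+(u)$ as a limit of twisted Macmahon modules, exactly in the spirit of Proposition~\ref{limit}, and to read off the character from that of the Macmahon module. For generic $K\in\C^\times$, extend the action on the one–dimensional $\B^\perp$–module $L(K^{-1/2})$ to an action of the generators of $\E$ by $e(z)=f(z)=0$, $\psi^\pm(z)=K^{-1/2}$ (as in the proof of Proposition~\ref{limit}), and set $M_K=L(K^{-1/2})\otimes_{\Delta^\perp}\cM(uK,K)$. By the multiplicativity of $\psi^\pm(z)$ under $\Delta^\perp$ (Lemma~\ref{Delta-psi}) and \eqref{Mac-psi}, the highest $\ell$–weight of $M_K$ is $K^{-1/2}\cdot K^{1/2}\,\frac{1-u/z}{1-uK/z}=\frac{1-u/z}{1-uK/z}$, which tends to $1-u/z$ as $K\to 0$. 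The claim to establish is $M^+(u)=\lim_{K\to0}M_K$, and then the character is immediate.

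First I would show that, in the plane partition basis $\{\ket{\bla}\}$ of $\cM(uK,K)$, the matrix coefficients of the generators of $\B^\perp$ acting on $M_K$ have finite limits as $K\to0$, so that the limit is a $\B^\perp$–module carried by $\mathrm{span}\{\ket{\bla}\}$ with $\pdeg\ket{\bla}=-|\bla|$. Using \eqref{Mac-psi}--\eqref{Mac-e} and $\Delta^\perp$, after the twist the matrix coefficients of $\psi^+(z)$, $e_n$ $(n\ge1)$, $f_n$ $(n\ge0)$ and $h_r$ $(r\ge1)$ — the horizontal generators that lie in $\B^\perp$ — are polynomials in $K$; in the limit $e_n=0$ for $n\ge2$ and $f_n=0$ for $n\ge1$, while $e_1$ and $f_0$ keep nonzero matrix coefficients relating $\ket{\bla}$ to $\ket{\bla-\square}$ and to $\ket{\bla+\square}$ respectively, for all removable/addable boxes. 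One then checks that the remaining (vertical) generators $e^\perp_n$ $(n\in\Z)$ and $h^\perp_r$ $(r>0)$ also act with finite limits. As in Proposition~\ref{limit} the limit of $\psi^-_0$ is not invertible, so the limit module is a $\B^\perp$–module but not an $\E$–module. Next I would identify it with $M^+(u)$: the limiting $\psi^+(z)$ has simple joint spectrum and highest $\ell$–weight $1-u/z$, and the surviving matrix coefficients (of $e_1$, $f_0$ and the vertical generators) make it irreducible, hence it is $L(1-u/z)=M^+(u)$. Since the underlying graded vector space is $\mathrm{span}\{\ket{\bla}\}$ for every $K$, the character passes to the limit, and using $\chib(\cM(uK,K))=x_{K^{1/2}}\chi_0$ and $\chib(L(K^{-1/2}))=x_{K^{-1/2}}$ one gets $\chib(M^+(u))=\lim_{K\to0}x_{K^{-1/2}}x_{K^{1/2}}\chi_0=x_1\chi_0=\chi_0$.

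The hard part will be proving that the \emph{full} $\B^\perp$–action, not just the horizontal generators, survives the limit. In Proposition~\ref{limit} the first Macmahon parameter is held fixed, whereas here it is $uK\to0$; consequently the $\delta$–functions in $e(z),f(z)$ have $K$–dependent support, and the matrix coefficients of the negative–mode horizontal generators (for instance $f_{-1}$, or $\psi^-_{-j}$ with $j\ge1$) diverge as $K\to0$. These do not lie in $\B^\perp$; but $\B^\perp$ is \emph{not} generated by the surviving horizontal generators alone — the vertical generators $h^\perp_r$ with $r\ge2$ and $e^\perp_n$ with $n\le-2$ (which have homogeneous degree $\le0$ and cannot be expressed through $e_n\ (n\ge1)$, $f_n\ (n\ge0)$, $h_r\ (r\ge1)$) have to be treated separately, and one must verify that the divergent pieces cancel when these act on $M_K$. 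The same point also makes the irreducibility step delicate, since $e_1$ and $f_0$ by themselves do not connect all the $\ket{\bla}$. A route that avoids these issues is to bound the character from both sides: $\chib(M^+(u))\le\chi_0$ by exhibiting a plane–partition–indexed spanning set of $M^+(u)$ (using the grading developed in Section~\ref{sec:finite-type}), and $\chib(M^+(u))\ge\chi_0$ by producing $\chi_0$–many linearly independent vectors directly from the defining action on the highest $\ell$–weight vector.
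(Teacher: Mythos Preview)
Your main approach --- realising $M^+(u)$ as $\lim_{K\to0} L(K^{-1/2})\otimes_{\Delta^\perp}\cM(uK,K)$ --- is natural, but as you yourself note it leaves the central step unproved: that the full $\B^\perp$-action has a finite limit. This is genuinely harder than in Proposition~\ref{limit} because the evaluation parameter $uK$ tends to $0$; the vertical generators $h^\perp_r$ ($r\ge2$), which via \eqref{psi-ff} are built from $f_{-1}$ (divergent like $K^{-1}$) and $f_1$ (vanishing like $K$), require a cancellation you do not carry out, and the irreducibility of the limit is equally unresolved. Your fallback route is also incomplete: the upper bound $\chib(M^+(u))\ll\chi_0$ can indeed be extracted along the lines of the argument in Proposition~\ref{prop:presen-M}, but your lower bound --- ``producing $\chi_0$-many linearly independent vectors directly'' --- is not an argument, and it is precisely what is at stake.

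The paper's proof is entirely different and essentially one line. By Lemma~\ref{lem:duality}, $M^+(u)=\bigl(L^\vee((1-u/z)^{-1})\bigr)^{*S^\perp}$. Taking the graded dual replaces $t$ by $t^{-1}$ in the character; on the other hand $\chib\bigl(L^\vee(\Psi)\bigr)\bigl|_{t\to t^{-1}}=\chib\bigl(L(\Psi)\bigr)$, since twisting by the involution $\theta^2$ of \eqref{th2} converts a highest into a lowest $\ell$-weight module while inverting the $D^\perp$-grading. Hence $\chib(M^+(u))=\chib(M^-(u))=\chi_0$, the last equality by Corollary~\ref{q char M-}. No limit construction is needed, and nothing from Section~\ref{sec:finite-type} is used.
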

\begin{proof}
We have $M^+(u)=(L^{\vee}((1-u/z)^{-1}))^{*S^\perp}$
by \eqref{duality}, and it is clear that
\begin{align*}
\chib\bigl((L^{\vee}((1-u/z)^{-1}))^{*S^\perp}\bigr)
=\chib\bigl(L^{\vee}((1-u/z)^{-1})\bigr)\Bigl|_{t\to t^{-1}}
=\chib\bigl(L((1-u/z)^{-1}))\bigr)\,.
\end{align*}
Hence $\chib\bigl(M^+(u)\bigr)=\chib\bigl(M^-(u)\bigr)$.
\end{proof}

The $q$-character of positive fundamental module is very different 
from the $q$-character of negative fundamental module. 
We compute it later in 
Proposition \ref{prop:1-finite} (see also Corollary \ref{q char 1}). 
\medskip

\section{Finite type modules}\label{sec:finite-type} 
For a positive integer $k$, we say that a $\B^\perp$ module $V$
is {\it $k$-finite} if $\psi^+(z)$ has $k$ distinct eigenvalues on $V$. 
Tensor product of a $k_1$-finite module and a $k_2$-finite module
is at most $k_1k_2$-finite. 
We say $V$ is of finite type if it is $k$-finite for some $k$. 
Certainly finite dimensional modules are of finite type.  
But there exist also infinite dimensional modules of finite type. 

Denote by $\Fin$ the full subcategory of $\cO_{\B^\perp}$
consisting of all finite type modules.  
This category is the subject of this section.

\subsection{Modules with polynomial highest $\ell$-weight}
Consider highest $\ell$-weight $\B^\perp$
modules whose highest $\ell$-weights are polynomials in $z^{-1}$:
\begin{align}
M=L(\Psi),\quad \Psi(z)\in  \rg_{\B^\perp}\cap \C[z^{-1}]. 
\label{pol-hwt}
\end{align}
Abusing slightly the language, 
we say that $M$ has a polynomial highest $\ell$-weight.
Such modules have special properties which play a key role in 
the subsequent construction. 

Recall that the positive fundamental module $M^+(u)$ has a polynomial highest $\ell$-weight.

\medskip

The first property is polynomiality of currents acting on $M$.  
Introduce the notation for half currents
\begin{align*}
e_>(z)=\sum_{n=1}^\infty e_nz^{-n},\quad
f_{\ge}(z)=\sum_{n=0}^\infty f_n z^{-n}\quad \in \B^\perp[[z^{-1}]]\,.
\end{align*}

\begin{prop}\label{prop:poly-cur}
Let $M$ be as in \eqref{pol-hwt}. 
Then for each vector $w\in M$ we have
\begin{align}
e_{>}(z)w,\ f_{\ge}(z)w\in M\otimes \C[z^{-1}]\,,
\quad   \psi^+(z)w\in \Psi(z)\cdot M\otimes \C[z^{-1}].
\label{poly-cur}
\end{align}
\end{prop}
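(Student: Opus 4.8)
The plan is to establish the three claims in \eqref{poly-cur} by reducing everything to a statement about the highest $\ell$-weight vector and then propagating it through the module using the defining relations. Since $M=L(\Psi)$ is generated by its highest $\ell$-weight vector $v_0$, every $w\in M$ is of the form $w=Xv_0$ for some $X\in\B^\perp_-\B^\perp_0$ (using the triangular decomposition and $\B^\perp_+v_0=\C v_0$). So it suffices to understand how $e_>(z)$, $f_{\ge}(z)$ and $\psi^+(z)$ interact with the generators of $\B^\perp_-$ and $\B^\perp_0$, namely the $f_n$ ($n\ge 0$) and $h_r$ ($r>0$), starting from their action on $v_0$.

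First I would record the base case. On $v_0$ we have $\psi^+(z)v_0=\Psi(z)v_0$, and $\Psi(z)\in\C[z^{-1}]$ by hypothesis, so $\psi^+(z)v_0\in\Psi(z)\cdot Mv_0\subset \Psi(z)\cdot M\otimes\C[z^{-1}]$ trivially; also $e_>(z)v_0=0$ since all $e_n v_0=0$ for $n\ge 1$ (indeed $\bp_\nu v_0=0$ for $\nu_1>0$ covers these), so that term is a polynomial (zero); and $f_{\ge}(z)v_0=\sum_{n\ge 0}f_nz^{-n}v_0$ — here I need that only finitely many $f_nv_0$ are nonzero. This last point is exactly the crux: it is NOT automatic, and it is where polynomiality of $\Psi$ enters. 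I expect one argues via the commutation relation between $\psi^+(z)$ and $f(w)$ — the bilinear relation $g(w,z)\psi^+(z)f(w)+g(z,w)f(w)\psi^+(z)=0$ (with $C=1$) — applied to $v_0$: writing $f(w)v_0=\sum_n (f_nv_0)w^{-n}$ and using $\psi^+(z)v_0=\Psi(z)v_0$, one gets a relation forcing the generating series $\sum_{n}(f_nv_0)w^{-n}$ to have poles only where $\Psi$ does, i.e. to be rational with a specific denominator, and since $\Psi$ is a \emph{polynomial} in $z^{-1}$ the relevant denominator is trivial, so $f_{\ge}(w)v_0$ is polynomial in $w^{-1}$ (the negative modes $f_{-n}v_0$, $n>0$, are governed separately and lie outside $\B^\perp_-$ anyway, or vanish).

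Next I would run the inductive step. Suppose \eqref{poly-cur} holds for $w$; I want it for $f_n w$ ($n\ge 0$) and for $h_r w$ ($r>0$). For $\psi^+(z)$: use the bilinear relations $g(z,w)\psi^+(z)f(w)+g(w,z)f(w)\psi^+(z)=0$ and $\Delta h_r=h_r\otimes 1+1\otimes h_r$-type relations $[h_r,e_n]$, $[h_r,f_n]$ (with $C=1$ these simplify: $[h_r,e_n]=-\frac1r e_{n+r}$, $[h_r,f_n]=\frac1r f_{n+r}$) to commute $\psi^+(z)$ past $f_n$ and $h_r$; the commutators only shift modes and multiply by rational functions of $z$ \emph{with no new poles beyond those of $g$}, which after clearing are polynomial, so $\psi^+(z)f_nw$ and $\psi^+(z)h_rw$ still lie in $\Psi(z)\cdot M\otimes\C[z^{-1}]$ — here one uses that $g(z,w)$ is polynomial and that moving $\psi^+(z)$ through a finite word in the $f$'s and $h$'s only introduces finitely many polynomial factors and finite mode-shifts. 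Similarly for $e_>(z)$ and $f_{\ge}(z)$: commute $e_>(z)$ past $f_n$ using $[e(z),f(w)]=\frac1{\kappa_1}(\delta(w/z)\psi^+(w)-\delta(z/w)\psi^-(z))$ — the $\delta$-function picks out a single mode, and the $\psi^+(w)$ term acts by a polynomial (by the already-established $\psi^+$ statement), while $\psi^-(z)$ contributes only nonnegative powers of $z$ which do not interfere with the "polynomial in $z^{-1}$" claim — and commute $e_>(z)$ past $h_r$ via $[h_r,e(z)]=-\frac1r z^{-r}e(z)$-type relations which preserve polynomiality in $z^{-1}$ since $r>0$ lowers the $z^{-1}$-degree; analogously $f_{\ge}(z)$ past $f_n$ (quadratic $ff$ relation, which is polynomial) and past $h_r$ ($[h_r,f(z)]=\frac1r z^{-r}f(z)$, again $r>0$).

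The main obstacle, as flagged, is the base case: proving that $f_{\ge}(z)v_0$ is a genuine \emph{polynomial} in $z^{-1}$ (equivalently, $f_nv_0=0$ for $n$ sufficiently large), and more subtly, that the polynomiality degree can be controlled uniformly enough that the inductive step does not accumulate unbounded denominators. I would handle this by extracting from the $\psi^+$–$f$ bilinear relation acting on $v_0$ the precise statement that the denominator of the rational series $f_{\ge}(z)v_0$ divides a product built from $\Psi$, and invoking that $\Psi\in\C[z^{-1}]$ kills it; then in the induction, track that each application of a generator multiplies the controlling denominator by at most a fixed polynomial factor $g$, so everything stays polynomial on $M\otimes\C[z^{-1}]$ (not merely on $M\otimes\C(z)$). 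A secondary subtlety is making the "commute the current past a finite word" argument rigorous as an identity of formal series in $z^{-1}$ valued in $M\otimes\C[z^{-1}]$ — but this is the kind of bookkeeping that the relations handle cleanly once the base case is in hand.
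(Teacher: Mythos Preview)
Your overall inductive strategy matches the paper's, but there is a genuine gap in the inductive step: the elements $f_n$ ($n\ge 0$) together with $h_r$ ($r>0$) do \emph{not} generate $\B^\perp_-\B^\perp_0$. Recall that $\B^\perp_-=\langle \bp_{(\nu_1,\nu_2)}:\nu_1<0,\ \nu_2\ge 0\rangle$; by the PBW theorem, the homogeneous component of degree $(-r,0)$ in $\B^\perp_-$ has dimension equal to the number of partitions of $r$, while the subalgebra generated by $\{f_n\}_{n\ge 0}$ contributes only the single monomial $f_0^r$ there. The paper states explicitly that $\B^\perp_-$ is generated by $f_n$ ($n\ge 0$) \emph{and} $\psi^{+,\perp}_r$ ($r>0$), and the latter are given (for $r\ge 2$) by $\psi^{+,\perp}_r=\kappa_1(-C^\perp)^{r-1}\ad f_{-1}\,(\ad f_0)^{r-2}f_1$, which involves $f_{-1}\notin\B^\perp$ and cannot be expressed as a word in the $f_n$ with $n\ge 0$. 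Since $h_r$ acts on $v_0$ by a scalar and $[h_r,f_n]\in\C f_{n+r}$, adding the $h_r$'s does not enlarge the span. Thus your induction never reaches vectors of the form $\psi^{+,\perp}_r v'$, and these are exactly where the paper's argument does the real work: commuting $e_n$, $f_n$, $\psi^+_n$ (and $\psi^+(a)$) past $\psi^{+,\perp}_r$ requires a separate technical lemma controlling $[x_m,e^\perp_r]$ modulo $\cN^\perp_{\lle}\cdot f_j+\cN^\perp_{\lle}\cdot\psi^+_j$ with $j$ arbitrarily large, together with the observation that only the \emph{commutators} $\ad f_{-1}$, $\ad f_0$ preserve the relevant subalgebras even though $f_{-1}$ itself lies outside $\B^\perp$.

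A second, smaller issue is your base case for $f_{\ge}(z)v_0$. The bilinear relation $g(w,z)\psi^+(z)f(w)+g(z,w)f(w)\psi^+(z)=0$ applied to $v_0$ gives information about $\psi^+(z)$ acting on $f(w)v_0$, not directly a bound on the support of $f(w)v_0$ itself; your claim that it forces the denominator of $f_{\ge}(z)v_0$ to divide something built from $\Psi$ is not substantiated. The paper's argument is cleaner and different: for $n>\deg_{z^{-1}}\Psi$ one checks directly that $e_m f_n v_0=\kappa_1^{-1}\psi^+_{m+n}v_0=0$ for all $m\ge 1$, and that $e^\perp_{-r}f_nv_0=0$ ($r\ge 2$) and $[e_0,e_2]f_nv_0=0$ for degree reasons; since these elements generate $\B^\perp_+$ and $M$ is simple, $f_nv_0=0$.
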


The second property is the existence of a tensor product
with respect to $\Delta$. 
\begin{prop}\label{prop:Delta}
Let $V\in\Ob\cO_\E$ be an $\E$ module, and let $M$ be as in  \eqref{pol-hwt}. 
Then the coproduct $\Delta$ gives a 
structure of $\B^\perp$ module on
the tensor product $V\otimes M$.
Denoting this module by $V\otimes_{\Delta} M$ 
we have 
$V\otimes_{\Delta} M\simeq V\otimes_{\Delta^\perp} M$. 
\end{prop}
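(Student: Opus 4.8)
The plan is to show first that the coproduct $\Delta$ (defined via the horizontal generators) actually lands in $V\otimes M$ when applied to elements of $\B^\perp$, and then to exhibit an explicit isomorphism with $V\otimes_{\Delta^\perp} M$. The subtle point is that $\Delta$ is defined in terms of the \emph{horizontal} generators $e_n, f_n, h_r$, while $\B^\perp$ is generated by the \emph{vertical} generators $e_n^\perp, f_n^\perp, h_r^\perp$; we must check that the horizontal coproduct formulas, when restricted to $\B^\perp$ and applied to a factor $V\in\Ob\cO_\E$ in the first slot and $M$ (a module over $\B^\perp$ only, with polynomial highest $\ell$-weight) in the second slot, make sense despite the infinite sums in \eqref{copro1}--\eqref{copro4}. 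First I would observe that it suffices to check well-definedness on the algebra generators $e_0, f_0, h_{\pm 1}$ of $\E$ together with $C,C^\perp,D^\perp$, or rather on a generating set of $\B^\perp$ expressed through these; the key input is Proposition \ref{prop:poly-cur}, which guarantees $e_>(z)w, f_{\ge}(z)w\in M\otimes\C[z^{-1}]$ and $\psi^+(z)w\in\Psi(z)\cdot M\otimes\C[z^{-1}]$ for $w\in M$. Because $\Psi(z)$ is a polynomial in $z^{-1}$, the series $\psi^+(z)w$ has only finitely many nonzero coefficients, so in the coproduct formulas the sums $\sum_{j\ge 0}(\cdots)\otimes\psi_j^+C^n$ acting on $V\otimes M$ truncate in the $M$-slot after finitely many terms — hence they converge, and analogously for $f_n$ using that $\psi^-$ has the reciprocal polynomial behavior. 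On the $V$-side, condition (iii) of category $\cO_\E$ (namely $V_n=0$ for $n\gg 0$) controls the sums in the first tensor factor in the usual way, exactly as it does for $\otimes_{\Delta^\perp}$.

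Second, having established that $\Delta$ gives a genuine $\B^\perp$-action on $V\otimes M$, I would construct the isomorphism $V\otimes_\Delta M\simeq V\otimes_{\Delta^\perp} M$. The natural candidate is to use the universal $R$-matrix of $\E$, or more precisely the relation between the horizontal and vertical Hopf structures. Since both $(\E,\Delta)$ and $(\E,\Delta^\perp)$ arise as quotients of Drinfeld doubles and are related by a twist (the two coproducts are conjugate via an $R$-matrix-type element $\mathcal{R}$ living in a suitable completion of $\B^\perp\otimes\Bb^\perp$, or of $\B^\perp\widehat\otimes(\B^\perp)^{\mathrm{op}}$), one expects an intertwiner $\mathcal{R}_{VM}\colon V\otimes_{\Delta^\perp} M\to V\otimes_\Delta M$ given by the action of this element. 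The polynomiality in Proposition \ref{prop:poly-cur} together with the grading conditions of $\cO_\E$ should ensure that $\mathcal{R}_{VM}$ is well-defined (the infinite product defining $\mathcal{R}$ truncates when evaluated on $V\otimes M$) and invertible, so that it furnishes the desired isomorphism of $\B^\perp$-modules.

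The main obstacle I anticipate is precisely the convergence and invertibility of this twist operator $\mathcal{R}_{VM}$: one must be careful about which completion the universal $R$-matrix lives in and verify that, on the specific tensor product $V\otimes M$ with $V\in\Ob\cO_\E$ and $M$ of polynomial highest $\ell$-weight, the relevant sums/products are finite. A cleaner route — and the one I would ultimately pursue to sidestep heavy $R$-matrix bookkeeping — is to prove the isomorphism directly: show that $V\otimes_\Delta M$ is a highest $\ell$-weight $\B^\perp$-module (or admits a filtration whose subquotients match those of $V\otimes_{\Delta^\perp} M$) by identifying its highest $\ell$-weight vector and checking the two modules have the same $q$-character, then invoke an appropriate rigidity/uniqueness statement. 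Given Proposition \ref{inj} (injectivity of $\chi_q$), it would suffice to verify $\chi_q(V\otimes_\Delta M)=\chi_q(V\otimes_{\Delta^\perp} M)$; both equal $\chi_q(V)\chi_q(M)$ because the eigenvalues of $\psi^+(z)$ on a tensor product are products of eigenvalues on the factors, a property that holds for \emph{either} coproduct by the same computation as in the proof of Proposition \ref{inj} (using the analog of Lemma \ref{Delta-psi} for $\Delta$). Then constructing an explicit nonzero $\B^\perp$-homomorphism in one direction and comparing characters upgrades this to an isomorphism. The technical heart is thus reduced to the well-definedness argument of the first paragraph plus the $\Delta$-analog of the $\psi$-coproduct formula.
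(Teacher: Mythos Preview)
Your plan has the right two-step shape, and the paper follows it: first show that $\Delta(\B^\perp)$ acts on $V\otimes M$, then intertwine with $\Delta^\perp$ via an $R$-matrix factor. But there are two genuine gaps.

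\textbf{Well-definedness.} The issue is not only that the sums in \eqref{copro1}--\eqref{copro2} truncate on $M$ (they do, by Proposition \ref{prop:poly-cur}, and this part of your sketch is fine). The deeper point is that one must show $\Delta(\B^\perp)\subset \E\,\widehat{\otimes}\,\B^\perp$ as a left coideal, so that the second tensor factor actually lands in $\B^\perp$ and can act on $M$. Checking this on the horizontal generators $e_n$ $(n>0)$, $f_n$ $(n\ge0)$, $h_r$ $(r>0)$ is easy, but these do \emph{not} generate $\B^\perp$: one also needs the elements $h^\perp_r$ (equivalently $\psi^{+,\perp}_r$). The only available expressions for these in horizontal terms involve $f_{-1}$, which lies \emph{outside} $\B^\perp$; see e.g.\ $\psi^{+,\perp}_r=\kappa_1(-C^\perp)^{r-1}\ad f_{-1}(\ad f_0)^{r-2}f_1$. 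One must therefore argue that, although $\Delta f_{-1}$ by itself does not lie in $\E\widehat\otimes\B^\perp$, the specific commutators defining $e^\perp_r$ and $\psi^{+,\perp}_r$ do. The paper does this explicitly (Lemma \ref{lem:Delta}) by tracking that $\ad f_0$ and $\ad f_{-1}$ preserve suitable subalgebras $\cN^\perp_{\lle}\subset\B^\perp$. Your proposal to ``check on $e_0,f_0,h_{\pm1}$'' cannot work as stated, since $e_0,h_{-1}\notin\B^\perp$.

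\textbf{The isomorphism.} Your first instinct---use an $R$-matrix twist---is exactly what the paper does. The precise statement (Lemma \ref{Delta-Delta}) is $\Delta^{op}(x)=\cR_-\,\Delta^{\perp,op}(x)\,\cR_-^{-1}$, so $F=\sigma(\cR_-)$ gives the intertwiner $V\otimes_\Delta M\to V\otimes_{\Delta^\perp}M$. The convergence worry you raise is resolved by the grading $M=\oplus_{m\ge0}M[m]$ of Proposition \ref{prop:gradingM}: each term $y'_{\nu_1,-\nu_2}\otimes x'_{-\nu_1,\nu_2}$ in $\sigma(\cR_-)$ satisfies $x'_{-\nu_1,\nu_2}M[m]\subset\sum_{j\le m-\nu_2+N\nu_1}M[j]$, forcing finitely many nonzero terms on any fixed $v\otimes w$. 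Your alternative $q$-character route is incomplete: matching $\chi_q$ gives equality in $\Rep\,\B^\perp$, not an isomorphism of modules, and ``constructing an explicit nonzero homomorphism'' is exactly the step you have not done---the only natural candidate is the $R$-matrix twist you were trying to avoid.
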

The third property is concerned with the structure of submodules 
of $V\otimes_{\Delta} M$. 
\begin{prop}\label{prop:submod}
Let $V\in\Ob\cO_\E$ be an $\E$ module, and let $M$ be as in  \eqref{pol-hwt}. 
Assume that $V$ is irreducible with highest $\ell$-weight vector $v_0$. 
Then any proper submodule of $V\otimes_{\Delta}M$
has the form $V^{(0)}\otimes M$, where $V^{(0)}$ 
is an $\ell$-weighted linear subspace of $V$ which does not contain $v_0$.
\end{prop}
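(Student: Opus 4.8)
\textbf{Proof proposal for Proposition \ref{prop:submod}.}

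The plan is to combine the polynomiality property of Proposition \ref{prop:poly-cur} with the concrete shape of the coproduct $\Delta$ on the horizontal generators, exploiting the fact that the second tensor factor $M$ has a \emph{polynomial} highest $\ell$-weight $\Psi(z)\in\C[z^{-1}]$. First I would observe that $M=L(\Psi)=\B^\perp v'_0$ is generated over $\B^\perp_-$ by its highest $\ell$-weight vector $v'_0$, since $\B^\perp_+$ and $\B^\perp_0$ act on $v'_0$ by scalars; concretely $M$ is spanned by vectors of the form $f_{n_1}\cdots f_{n_r}v'_0$ with $n_i\ge 0$ (using the triangular decomposition and that $f_{\ge}(z)$-modes together with $h_r^\perp$, $e_n^\perp$ generate $\B^\perp_-$; one reduces to the $f_n$, $n\ge 0$, exactly as in the proof of Lemma \ref{lem:simple-to-simple}). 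The key structural input is then the ``lower triangularity'' of $\Delta$: from \eqref{copro1}--\eqref{copro4}, for any homogeneous $x$ in $\B^\perp$ acting on $V\otimes_\Delta M$ one has $\Delta(x) = x\otimes(\text{scalar on }v'_0\text{-line}) + (\text{terms landing in }V\otimes \B^\perp_{<0}M) $ modulo lower filtration pieces of $M$; more precisely $\Delta e_n = \sum_{j\ge 0} e_{n-j}\otimes \psi^+_j + 1\otimes e_n$ and $\Delta f_n = f_n\otimes 1 + \sum_{j\ge 0}\psi^-_{-j}\otimes f_{n+j}$, so the ``leading'' term always acts on the first factor while the correction multiplies something by the grading-lowering operators $e_n, f_{n+j}$ on $M$.

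\medskip

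Next I would set up the filtration on $M$ by ``$f$-degree'': let $M^{[d]}$ be the span of $f_{n_1}\cdots f_{n_s}v'_0$ with $s\le d$ and $n_i\ge 0$, so $M = \bigcup_d M^{[d]}$ and $M^{[0]}=\C v'_0$. Given a proper submodule $N\subset V\otimes_\Delta M$, I want to show $N = V^{(0)}\otimes M$ for an $\ell$-weighted subspace $V^{(0)}\subset V$ with $v_0\notin V^{(0)}$. Take any nonzero $w\in N$ and write it using a basis of $V$ and the above spanning set of $M$; consider the component of top $f$-degree $d$ in the $M$-factor, which can be written $w = \sum_\alpha x_\alpha\otimes m_\alpha + (\text{lower } f\text{-degree})$ with $m_\alpha\in M^{[d]}$ linearly independent modulo $M^{[d-1]}$ and $x_\alpha\in V$. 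Because $\Delta$ of the generators acts on $V\otimes_\Delta M$ with the first factor receiving the ``principal'' action and the $M$-factor only getting its grading lowered by the correction terms, applying suitable products of $e_n$'s (to kill every $m_\alpha$ eventually, since $M\in\cO$ has $M_n=0$ for $n\gg 0$, equivalently $e$-modes are locally nilpotent on the $f$-generated part) lets me strip $m_\alpha$ down to multiples of $v'_0$ while keeping the resulting vector in $N$; here I would use Proposition \ref{prop:Delta} so that I may freely compute with either $\Delta$ or $\Delta^\perp$, whichever is more convenient for the nilpotency argument. The upshot is that $N\cap(V\otimes v'_0)\ne 0$.

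\medskip

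Once I know $N$ meets $V\otimes v'_0$, I define $V^{(0)} := \{v\in V\mid v\otimes v'_0\in N\}$. This is clearly a linear subspace; it is $\ell$-weighted because $\psi^+(z)$ acts on $V\otimes_\Delta M$ diagonally on the $v'_0$-line (by Lemma \ref{Delta-psi}, $\Delta^\perp\psi^+(z)(v\otimes v'_0) = (\psi^+(z)v)\otimes(\Psi^+(z)v'_0)$, and by Proposition \ref{prop:Delta} the same holds for $\Delta$ up to the identification), so $N\cap(V\otimes v'_0)$ is a sum of generalized $\psi^+$-eigenspaces. It does not contain $v_0$: otherwise $v_0\otimes v'_0\in N$ would generate all of $V\otimes_\Delta M$ (being a highest $\ell$-weight vector, and $V\otimes_\Delta M$ is a highest $\ell$-weight module because $V$ is irreducible and $M$ is highest $\ell$-weight), contradicting properness. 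Finally I must check $N = V^{(0)}\otimes M$. The inclusion $\supseteq$ follows by acting with $\B^\perp_-$ on $V^{(0)}\otimes v'_0\subset N$: using $\Delta f_n(v\otimes v'_0) = f_n v\otimes v'_0 + \sum_{j\ge 0}(\psi^-_{-j}v)\otimes f_{n+j}v'_0$ and induction on $f$-degree one generates all of $V^{(0)}\otimes M$, provided $V^{(0)}$ is stable under the ``leading'' operators $f_n, e_n^\perp, h_r^\perp$ (which it is, by the same leading-term analysis applied to $w = v\otimes v'_0\in N$: the leading component of $\Delta g\cdot w$ in $f$-degree $0$ is $(g\cdot v)\otimes v'_0$ for $g$ a generator of $\B^\perp_-$). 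The reverse inclusion $\subseteq$ is the previous paragraph's stripping argument made uniform: any $w\in N$ has all its ``$V$-coefficients'' (in the expansion along the spanning set of $M$) lying in $V^{(0)}$, shown by downward induction on $f$-degree.

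\medskip

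\textbf{Main obstacle.} I expect the delicate point to be the stripping/local-nilpotency step — making rigorous that by applying products of $e_n$-modes one can reduce the $M$-factor of an arbitrary $w\in N$ to the $v'_0$-line \emph{without} losing track of which $V$-vectors appear, and that the map ``$w\mapsto$ its leading $V$-coefficients'' respects the $\B^\perp_-$-action on the nose. This is exactly where the polynomiality of $\Psi$ (hence of $\psi^+(z)$, $e_>(z)$, $f_\ge(z)$ on $M$ via Proposition \ref{prop:poly-cur}) is essential: it guarantees the correction terms in $\Delta$ are \emph{finite} sums with coefficients that are genuine operators on $M$ lowering a well-defined degree, so the triangular bookkeeping terminates. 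I would likely package this as a lemma: the associated graded of $V\otimes_\Delta M$ with respect to the $f$-degree filtration on $M$ is, as a $\B^\perp$-module ``in leading order'', a direct sum of copies of $V$ indexed by a basis of $M$, and $N$ is a graded subobject; then the proposition reduces to the statement that every graded $\B^\perp$-submodule of $V\otimes_{\text{gr}}M$ with $V$ irreducible is of the claimed product form, which is the irreducibility of $V$ together with $\ell$-weightedness.
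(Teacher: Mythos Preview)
There is a genuine gap in your stripping/associated-graded argument, and it is precisely at the point you flagged as the main obstacle. Your plan is to filter $M$ by $f$-degree (equivalently by $-\pdeg$), act by $\Delta(e_n)$, and read off the top piece; but the submodule $N$ is not a priori graded for this filtration, so projecting an element of $N$ to its top (or bottom) $M$-degree component need not produce an element of $N$. Concretely, $\Delta e_n=\sum_{j\ge0}e_{n-j}\otimes\psi^+_j+1\otimes e_n$ has the ``correction'' $1\otimes e_n$ sending $V\otimes M_{-d}$ to $V\otimes M_{-d+1}$, while $\Delta f_n=f_n\otimes 1+\sum_{j\ge0}\psi^-_{-j}\otimes f_{n+j}$ has the ``correction'' sending $V\otimes M_{-d}$ to $V\otimes M_{-d-1}$: the two go in \emph{opposite} directions, so the filtration $V\otimes M^{[d]}$ is not $\B^\perp$-stable, and your packaging lemma (``$N$ is a graded subobject of $V\otimes_{\mathrm{gr}}M$'') simply fails. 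A secondary issue is that your spanning claim for $M$ is not correct: $\B^\perp_-$ is generated by the $f_n$ ($n\ge0$) \emph{and} the $\psi^{+,\perp}_r$ ($r\ge1$), and the latter are not in the subalgebra generated by $\{f_n\}_{n\ge0}$ (their expression \eqref{psi-ff} involves $f_{-1}\notin\B^\perp$); the reduction \`a la Lemma~\ref{lem:simple-to-simple} used both $h_1$ and $h_{-1}$ and does not go through here.

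What the paper supplies is exactly the missing splitting: it first proves (Lemma in the appendix) that for any submodule $W\subset V\otimes_\Delta M$ one has $(1\otimes x)W\subset W$ for all $x\in\B^\perp_+$, and separately $(f_p\otimes 1)W\subset W$ and $\bigl(\sum_{j\ge0}\psi^-_{-j}\otimes f_{p+j}\bigr)W\subset W$. The proof is an $\ell$-weight argument: since $M$ is $1$-finite (this is where polynomiality of $\Psi$ is really used, via Corollary~\ref{M-1}), the term $1\otimes e_{>}(z)$ preserves the $\ell$-weight of an $\ell$-weight vector $w\in W$, whereas by Lemma~\ref{Y} the term $\bigl(e(z)\otimes\psi^+(z)\bigr)_{>}$ changes it; hence both pieces of $\Delta e_{>}(z)\,w\in W$ lie in $W$ separately. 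Once you know $(1\otimes x)W\subset W$, your ``stripping to $v'_0$'' becomes legitimate: pick $x\in\B^\perp_+$ with $x\,m_\alpha=m_0$ and apply $1\otimes x$ directly, without scrambling the $V$-coefficients. The inclusion $V^{(0)}\otimes M\subset W$ similarly needs $(f_p\otimes 1)W\subset W$ together with a separate inductive argument to handle $\psi^{+,\perp}_r$ and to show $\psi^-_{-j}V^{(0)}\subset V^{(0)}$ (note $\psi^-_{-j}\notin\B^\perp$). Your proposal does not use $1$-finiteness of $M$ anywhere, and without it the argument cannot be completed along these lines.
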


Proofs of these Propositions are technical, so we defer them to Appendix.  
Proposition \ref{prop:poly-cur} appears as Lemma \ref{lem:poly-cur}, 
Proposition \ref{prop:Delta}
as Lemma \ref{lem:Delta} and Corollary \ref{cor:Delta-Delta},
and  
Proposition \ref{prop:submod}
as Lemma \ref{lem:submod}, 
respectively.

\subsection{Grading on $M$}

Frenkel and Hernandez showed that positive fundamental modules 
for quantum affine algebras as vector spaces admit a grading  with favorable
properties (see \cite{FH}, Theorem 6.1).
This was a key step in their proof of polynomiality of $Q$ operators.
We show here that an analogous grading exists in the case 
of various modules of quantum toroidal $\gl_1$ algebra. 
In this section, our goal is Proposition \ref{prop:gradingM} below which
constructs the grading for modules with polynomial highest $\ell$-weights.

In this subsection, we set
\begin{align}
&M=L\bigl(\Psi\bigr)\,,\quad \Psi(z)=\prod_{j=1}^N(1-u_j/z)\,,
\label{MPsi}\\
&V=\F(u_1)\otimes_{\Delta^\perp}\cdots\otimes_{\Delta^\perp}\F(u_N)\,. 
\label{FFF}
\end{align}
We denote by $w_M$ the highest $\ell$-weight vector of $M$,
and by $\ket{\emptyset}_V$ the tensor product of $\ket{\emptyset}\in \F(u_i)$.
By Proposition \ref{prop:Delta}, 
the tensor product $V\otimes_\Delta M$ 
with respect to $\Delta$ is a well-defined $\B^\perp$ module.
We begin with some lemmas.

\begin{lem}\label{lem:SVM}
Let $V_{\lle -1}$ denote the subspace of $V$ in \eqref{FFF} consisting of vectors
of principal degree $\le -1$. 
Then the subspace $S=V_{\lle-1}\otimes M$ is a $\B^\perp$ submodule of 
$V\otimes_\Delta M$.
\end{lem}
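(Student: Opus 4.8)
The plan is to show directly that $S = V_{\lle -1}\otimes M$ is stable under the action of the generators of $\B^\perp$, namely $e_n$ ($n\in\Z$), $h_r$ ($r>0$), $C^\perp$ and $D^\perp$. The element $D^\perp$ acts diagonally with respect to the principal grading, so $S$ is automatically $D^\perp$-stable; $C^\perp$ acts by a scalar, so there is nothing to check there. The content is therefore in the currents $e(z)$ and $h_r$ ($r>0$). First I would note that the principal grading of $V\otimes_\Delta M$ is the sum of the principal gradings of $V$ and $M$: since $M$ is a highest $\ell$-weight module over $\B^\perp$ with $\B^\perp_+ w_M = \C w_M$, all vectors of $M$ have principal degree $\le 0$, and more precisely $M = \bigoplus_{m\lle 0} M_m$. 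Meanwhile the Fock module $\F(u_i)$ is graded by principal degree $\le 0$ as well (it is a highest $\ell$-weight $\E$-module on which the positive half-currents raise principal degree), so $V = \bigoplus_{m\lle 0} V_m$ and $V_{\lle -1} = \bigoplus_{m\lle -1} V_m$.

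Next I would examine how the generators shift principal degree. The generator $h_r$ has $\pdeg h_r = 0$, so it preserves the principal grading of $V\otimes_\Delta M$; hence it preserves $V_{\lle -1}\otimes M$ provided it preserves $V_{\lle -1}$ inside $V\otimes_\Delta M$. From the coproduct formula \eqref{copro3}, $\Delta h_r = h_r\otimes 1 + C^{-r}\otimes h_r = h_r\otimes 1 + 1\otimes h_r$ (since $C=1$), so $h_r$ acts as $h_r\otimes 1 + 1\otimes h_r$; the first summand preserves $V_{\lle -1}$ (it acts within $V$ with pdeg $0$), and the second acts trivially on the $V$-factor, so $S$ is $h_r$-stable. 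The positive generator $e_n$ has $\pdeg e_n = 1$, so it raises principal degree by $1$ on $V\otimes_\Delta M$; thus $e_n\cdot(V_{\lle -1}\otimes M) \subseteq V_{\lle 0}\otimes M$, which is larger than $S$, so the degree count alone is not enough — this is the crux. From \eqref{copro1}, $\Delta e_n = \sum_{j\ge 0} e_{n-j}\otimes \psi^+_j C^n + 1\otimes e_n = \sum_{j\ge 0} e_{n-j}\otimes \psi^+_j + 1\otimes e_n$. The term $1\otimes e_n$ keeps the $V$-factor in $V_{\lle -1}$, so it is harmless. The dangerous terms are $\sum_{j\ge 0} e_{n-j}\otimes \psi^+_j$ acting on $v\otimes w$ with $v\in V_{\lle -1}$: the only way to leave $S$ is to land in $V_0\otimes M$, which forces $e_{n-j}v\in V_0$, i.e. $v\in V_{-1}$ and we are applying $e_{n-j}$ with pdeg raising $V_{-1}$ to $V_0$.

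So the heart of the argument, and the step I expect to be the main obstacle, is to show that for $v\in V_{-1}$ the resulting element $\sum_{j\ge 0}(e_{n-j}v)\otimes(\psi^+_j w)$ in fact vanishes or lies back in $S$ — which cannot happen by brute degree count, so it must use the specific structure of $V = \F(u_1)\otimes_{\Delta^\perp}\cdots\otimes_{\Delta^\perp}\F(u_N)$ and $M = L(\Psi)$ with $\Psi(z) = \prod_j(1-u_j/z)$. The natural mechanism is the following: $V_{-1}$ is spanned by $e^\perp$-type (equivalently, by the single-box vectors in each Fock factor), and one computes the pairing of the $\ell$-weight of $V$ against that of $M$ so that the "matching" highest $\ell$-weights $1-u_j/z$ conspire to cancel. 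Concretely, I would use Proposition \ref{prop:Delta} to replace $\otimes_\Delta$ by $\otimes_{\Delta^\perp}$, so that $V\otimes M \simeq \F(u_1)\otimes_{\Delta^\perp}\cdots\otimes_{\Delta^\perp}\F(u_N)\otimes_{\Delta^\perp} M$, and then realize $M = L(\prod_j(1-u_j/z))$ as a submodule (or subquotient) of the dual Fock chain via the duality \eqref{duality} and the explicit vertex-operator formulas \eqref{VO1}--\eqref{VO2}; the claim that $e(z)$ cannot raise the $V$-degree from $-1$ to $0$ then becomes a statement about the leading term of a product of vertex operators, where the factor $(1-u_j/z)$ in the highest $\ell$-weight of $M$ kills the would-be pole/residue that would produce a degree-$0$ vector. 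Alternatively — and this is the cleaner route — one proves the statement purely representation-theoretically: in $V\otimes_{\Delta^\perp} M$, the subspace spanned by vectors of $V$-principal-degree $\le -1$ is $\B^\perp_+$-stable because $\B^\perp_+$ is generated by $\bp_\nu$ with $\nu_1>0,\nu_2>0$, which act on the $V$-factor by operators of strictly positive principal degree combined with operators of non-positive principal degree on the $M$-factor, and one checks using the coproduct that the $e_n$-part which acts purely on $V$ never has image in $V_0$ when restricted to $V_{-1}$ because $V_0 = \C\ket{\emptyset}_V$ is one-dimensional and $\ket{\emptyset}_V$ is a highest $\ell$-weight vector annihilated by all $e_m$ with $m>0$ and, via the commutation relations with $h_{\pm 1}$ as in the proof of Lemma \ref{lem:simple-to-simple}, by all $e_m$; hence $e_{n-j} v$ for $v\in V_{-1}$, being a multiple of $\ket{\emptyset}_V$ only if $e_{n-j}$ sends $V_{-1}\to V_0$, must actually pair with $\psi^+_j w$ in a way that reassembles the highest $\ell$-weight relation $\psi^+(z) w_M = \Psi^+(z) w_M$ and cancels because $\Psi(z)$ is precisely the product of the Fock highest $\ell$-weights' numerators. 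I would write this out carefully, tracking the $\delta$-function supports in \eqref{Mac-f}--\eqref{Mac-e} specialized to the Fock case, and conclude $e(z)\cdot S \subseteq S$, completing the proof that $S$ is a $\B^\perp$-submodule.
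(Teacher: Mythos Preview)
Your proposal has the right intuition at its core --- the delta function $\delta(u_i/z)$ coming from $e(z)$ acting on a single-box vector in $V_{-1}$ should be killed by the factor $(1-u_i/z)$ in the $\psi^+$-eigenvalue on $M$ --- but there are two genuine gaps that prevent the argument from going through as written.

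First, you have misidentified the generators of $\B^\perp$. The elements $e_n$ for $n\le 0$ do \emph{not} lie in $\B^\perp$ (for instance $e_0=h^\perp_{-1}$ has $\hdeg=-1$), so checking $e_n$ for all $n\in\Z$ and $h_r$ for $r>0$ is neither necessary nor sufficient. The correct decomposition, used in the paper, is $\B^\perp=\B^\perp_-\B^\perp_0\B^\perp_+$. Since $\B^\perp_-$ and $\B^\perp_0$ do not raise principal degree, they preserve $S$ trivially; the content is in $\B^\perp_+$, which is generated by $e_n$ ($n\ge1$), $e^\perp_{-r}$ ($r\ge2$), and $[e_0,e_2]$. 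Your argument addresses only the first family and says nothing about $e^\perp_{-r}$ or $[e_0,e_2]$, whose $\Delta$-images are more complicated (they are iterated commutators involving both $e$'s and $\psi^+$'s, cf.\ \eqref{XYXY}).

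Second, and more seriously, your cancellation mechanism is stated only for $w=w_M$: you invoke ``the highest $\ell$-weight relation $\psi^+(z)w_M=\Psi^+(z)w_M$''. But the vector $w\in M$ in $v\otimes w\in S$ is arbitrary, and $\psi^+(z)$ need not act on a general $w$ by the scalar $\Psi(z)$. What is actually needed --- and what the paper uses --- is Proposition~\ref{prop:poly-cur}: for \emph{every} $w\in M$ one has $\psi^+(z)w\in\Psi(z)\cdot M\otimes\C[z^{-1}]$, so $\psi^+(u_i)w=0$ for all $w$. This is a nontrivial property of modules with polynomial highest $\ell$-weight (proved in Appendix~\ref{subsec:poly-cur}), and without it the term $(e(z)v\otimes\psi^+(z)w)_{>}$ cannot be shown to vanish. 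Once you have this, the argument for $e_n$ ($n\ge1$) is exactly as you sketch; the same vanishing $\psi^+(u_i)=0$ then also handles $e^\perp_{-r}$ and $[e_0,e_2]$, since every term in their $\Delta$-expansion that lands in $V_0\otimes M$ carries a $\psi^+(z_{j})$ in the second factor evaluated at some $u_i$.
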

\begin{proof}
Since $\B^\perp_-$ and $\B^\perp_0$ do not increase the principal degree, it is enough to 
show that $\B^\perp_+S\subset S$. The algebra $\B^\perp_+$ is generated by $e_n$ ($n\ge1$), $e^\perp_{-r}$ ($r\ge2$)
and $[e_0,e_2]$. We show $\Delta(x)S\subset S$ where $x$ is one of these elements. 

First consider the case of $e_n$ ($n\ge1$). We have for $v\otimes w\in S$
\begin{align*}
&\Delta(e_{>}(z))(v\otimes w)=(e(z)v\otimes\psi^+(z)w)_{>}+v\otimes e_{>}(z)w\,,
\end{align*}
where $\bigl(\sum_{n\in\Z}a_nz^{-n}\bigr)_{>}=\sum_{n>0}a_nz^{-n}$.
The second term  in the right hand side belongs to $S$. 
The first term also does if $\pdeg v<-1$. 
Suppose $\pdeg v=-1$. We may assume that $v=\ket{\emptyset}\otimes\cdots\otimes \ket{\square}\otimes\cdots\otimes \ket{\emptyset}$,
where $\ket{\square}\in\F(u_i)$ is a vector of degree $-1$. 
Using \eqref{Dperp-e1} in Appendix, 
we find 
\begin{align*}
e(z)v\in\C \ket{\emptyset}_V\times \delta(u_i/z)\prod_{j=i+1}^N\frac{q-q^{-1}u_j/z}{1-u_j/z}\,.
\end{align*}
On the other hand, 
Proposition \ref{prop:poly-cur} tells that $\psi^+(z)w$ is divisible by $\Psi(z)$.
Hence the first term vanishes. 

Next consider $\Delta(e^\perp_{-r})(v\otimes w)$ ($r\ge2$). 
This vector is a sum of terms of the form \eqref{XYXY} applied to $v\otimes w$. 
We are concerned only with terms whose first component is proportional to $\ket{\emptyset}_V$. 
Then the first component produces a delta function $\delta(u_i/z_{j_1'})$, 
while the second component contains $\psi^+(z_{j_1'})$. (Note that from the bilinear relation between 
$\psi^+(z)$ and $e(z)$ we have 
$(\ad e_0)^k \bigl(\psi^+(z)\bigr)\in \B^\perp[z^{-1}]\psi^+(z)\B^\perp[z^{-1}]$ for any $k\ge1$.)
Hence all such terms vanish. 

The case of $[e_0,e_2]$ is quite similar. 
\end{proof}

\begin{lem}\label{lem:gradation}
There exists a linear operator $\Phi\in \End M$ with the following properties. 
\begin{align}
&\text{$\Phi$ preserves the principal grading,}
\label{grad1}\\
&\Phi\circ x=q_2^{-\nu_2} x\circ \Phi
\quad (x\in \B^\perp_+\cap\E_{\nu_1,\nu_2})\,,
\label{grad2}\\
&\Phi \circ \overline{\psi}^+(z)=\overline{\psi}^+(q_2z)\circ \Phi
\quad \text{where $\overline{\psi}^+(z)=\Psi(z)^{-1}\psi^+(z)$.}
\label{grad3}
\end{align}
Moreover $\Phi$ is diagonalizable. Its eigenvalues 
have the form $q_2^{m}$ ($m\in\Z_{\ge0}$), and the eigenspace of 
$q_2^0$ is spanned by $w_M$.  
\end{lem}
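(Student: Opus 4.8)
\emph{Proof plan.} The plan is to construct $\Phi$ by transporting the $\B^\perp$-module structure through the quotient of $V\otimes_\Delta M$ by the submodule $S=V_{\lle-1}\otimes M$ of Lemma \ref{lem:SVM}, and then correcting by the reparametrisation maps \eqref{s-tau}. Put $\bar M=(V\otimes_\Delta M)/S$. Since $V$ is a tensor product of Fock modules, $V=\bigoplus_{n\le 0}V_n$ with $V_0=\C\ket{\emptyset}_V$; hence $\{\ket{\emptyset}_V\otimes w\mid w\in M\}$ is a vector-space complement of $S$ in $V\otimes M$, and $\iota\colon w\mapsto[\ket{\emptyset}_V\otimes w]$ is a linear isomorphism $M\xrightarrow{\ \sim\ }\bar M$ preserving principal degree. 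The first point is that $\iota$ intertwines the action of $\B^\perp_+$ and satisfies $\iota\circ\overline{\psi}^+(z)=\overline{\psi}^+_{\bar M}(z)\circ\iota$ with $\overline{\psi}^+_{\bar M}(z)=(\Psi_V\Psi)(z)^{-1}\psi^+(z)$, where $\Psi_V$ is the highest $\ell$-weight of $V$; this is checked on the generators of $\B^\perp_+$ using $e_m\ket{\emptyset}_V=0$ (valid because $\ket{\emptyset}_V$ has top principal degree in $V$) together with the coproduct formulas \eqref{copro1}--\eqref{copro4} and $\Delta\psi^+(z)=\psi^+(z)\otimes\psi^+(z)$.

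Next I would identify $\bar M$. The vector $\iota(w_M)$ satisfies the highest $\ell$-weight conditions with weight $\Psi_V(z)\Psi(z)=q^{N}\prod_{j=1}^{N}\bigl(1-q_2^{-1}u_j/z\bigr)=q^{N}\Psi(q_2z)$. By Lemma \ref{lem:SVM} and Proposition \ref{prop:submod}, $S$ is the largest proper submodule of $V\otimes_\Delta M$: any proper submodule is $V^{(0)}\otimes M$ with $V^{(0)}$ an $\ell$-weighted subspace of $V$ avoiding $\ket{\emptyset}_V$, and $V_{\lle-1}$ is the largest such. Hence $\bar M$ is irreducible, $\bar M\simeq L\bigl(q^{N}\Psi(q_2\,\cdot)\bigr)$. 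Using this identification (normalised on highest $\ell$-weight vectors) together with the linear isomorphisms $\tau_{q^{N}}\colon L\bigl(q^{N}\Psi(q_2\,\cdot)\bigr)\to L\bigl(\Psi(q_2\,\cdot)\bigr)$ and $s_{q_2^{-1}}\colon L\bigl(\Psi(q_2\,\cdot)\bigr)\to L(\Psi)=M$ of \eqref{s-tau}, I set
\begin{align*}
\Phi \;:=\; s_{q_2^{-1}}\circ\tau_{q^{N}}\circ\iota \;\in\; \End M .
\end{align*}

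Each required property is then obtained by composing known equivariances. All three factors preserve principal degree, giving \eqref{grad1}. For \eqref{grad2} one combines the $\B^\perp_+$-equivariance of $\iota$, the fact that $\tau_{q^{N}}$ commutes with the action of $\B^\perp_+$, and $s_{q_2^{-1}}\circ x=q_2^{-\hdeg x}x\circ s_{q_2^{-1}}$ for homogeneous $x\in\B^\perp$. For \eqref{grad3} one uses that $\iota$ turns $\overline{\psi}^+(z)$ into $(q^{N}\Psi(q_2z))^{-1}\psi^+(z)$, that $\tau_{q^{N}}$ removes the factor $q^{N}$ (because $\tau_{q^{N}}\circ\psi^+(z)=q^{N}\psi^+(z)\circ\tau_{q^{N}}$), and that $s_{q_2^{-1}}\circ\psi^+(z)=\psi^+(q_2z)\circ s_{q_2^{-1}}$, which produces the shift $z\mapsto q_2z$. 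Being a composite of bijections, $\Phi$ is invertible; by \eqref{grad1} it preserves each finite-dimensional component $M_n$, and $\Phi w_M=w_M$ since every factor sends highest $\ell$-weight vector to highest $\ell$-weight vector, so $\C w_M=M_0$ lies in the $q_2^{0}$-eigenspace. Finally, suppose $\Phi|_{M_{-d}}$ with $d\ge 1$ had a Jordan pair $(v,u)$, where $u=(\Phi-q_2^{m})v\ne0$ is an eigenvector. Since $M$ is simple and $d\ge1$, $u$ is not annihilated by $\B^\perp_+$, so some homogeneous $x\in\B^\perp_+$ has $xu\ne0$; by \eqref{grad2}, $(xv,xu)$ is then a Jordan pair of the same size lying strictly closer to the top. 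Iterating leads to a Jordan pair in $M_0=\C w_M$, contradicting $\Phi|_{M_0}=\mathrm{id}$. Hence $\Phi$ is diagonalizable, and the same $\B^\perp_+$-lifting applied to honest eigenvectors, tracking eigenvalues through \eqref{grad2}, shows that the eigenvalue on $M_{-d}$ is $q_2^{0}$ for $d=0$ and $q_2^{m}$ with $m\ge1$ for $d\ge1$.

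The main obstacle is the identification $\bar M\simeq L\bigl(q^{N}\Psi(q_2\,\cdot)\bigr)$ of the second step: this is precisely where the polynomiality of the highest $\ell$-weight enters, via Proposition \ref{prop:submod} together with the structural Propositions \ref{prop:poly-cur} and \ref{prop:Delta}. A subsidiary point needing care is the $\B^\perp_+$-equivariance of $\iota$ on the non-obvious generators $\ep_{-r}$ ($r\ge2$) and $[e_0,e_2]$ of $\B^\perp_+$, which again rests on $e_m\ket{\emptyset}_V=0$, and the verification that the relations defining $\tau_{q^{N}}$ commute with all of $\B^\perp_+$.
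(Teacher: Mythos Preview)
Your construction of $\Phi$ as the composite
$M\xrightarrow{\iota}\bar M=(V\otimes_\Delta M)/S\xrightarrow{\sim}L(\widetilde\Psi)\xrightarrow{\tau,s}M$
is exactly the paper's $\Phi_1\circ\Phi_2\circ\Phi_3$, and your verifications of \eqref{grad1}--\eqref{grad3} by tracking how each factor interacts with $\B^\perp_+$ and $\psi^+(z)$ are correct (the point that $\tau_{q^N}$ commutes with $\B^\perp_+$ follows from $\B^\perp_+\subset\langle e^\perp_n\rangle$).

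There is one genuine gap. To obtain $\bar M\simeq L(\widetilde\Psi)$ you invoke Proposition \ref{prop:submod} to conclude that $S$ is the maximal proper submodule of $V\otimes_\Delta M$. But Proposition \ref{prop:submod} is stated (and proved in the Appendix as Lemma \ref{lem:submod}) under the hypothesis that $V$ is an \emph{irreducible} $\E$-module, whereas here $V=\F(u_1)\otimes_{\Delta^\perp}\cdots\otimes_{\Delta^\perp}\F(u_N)$ with no assumption on the $u_j$, so $V$ need not be simple. The paper sidesteps this entirely by a character count: $\iota(w_M)$ is a highest $\ell$-weight vector of weight $\widetilde\Psi$, so the cyclic submodule it generates surjects onto $L(\widetilde\Psi)$; since $\iota$ and $s_{q_2}^{-1}\circ\tau_{q^N}$ are principal-degree-preserving bijections one has
\[
\chib(L(\widetilde\Psi))=\chib(M)=\chib(\bar M),
\]
forcing both the inclusion and the surjection to be isomorphisms. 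This replaces your appeal to Proposition \ref{prop:submod} with a one-line dimension comparison and needs nothing about $V$ beyond $V_0=\C\ket{\emptyset}_V$.

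Your Jordan-pair lifting argument for diagonalizability is correct and is essentially dual to the paper's: the paper observes that the right module $M^*$ is generated from $w_M^*$ by $\B^\perp_+$, so every $w_M^*x_1\cdots x_k$ is an eigenvector of $\Phi$ with eigenvalue $q_2^{\sum\hdeg x_i}\in q_2^{\Z_{\ge0}}$. Both routes rely on \eqref{grad2} and on $M$ being simple; yours stays in $M$, the paper's makes the non-negativity of $m$ immediate.
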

\begin{proof}
We retain the notation of Lemma \ref{lem:SVM}. 
In the quotient $\bigl(V\otimes_\Delta  M\bigr)/S$,  
the image of $\ket{\emptyset}_V\otimes w_M$ generates a
submodule with irreducible quotient $L(\widetilde{\Psi})$, 
where $\widetilde{\Psi}(z)=q^N\Psi(q_2z)$. 
By \eqref{s-tau}, we have an isomorphism as a vector space 
\begin{align*}
\Phi_1=s_{q_2}^{-1}\circ\tau_{q^N}:L(\widetilde{\Psi}) \longrightarrow L(\Psi)=M\,.
\end{align*}
By comparing the characters, we find that there is an isomorphism
of $\B^\perp$ modules
\begin{align*}
\Phi_2:\bigl(V\otimes_\Delta  M\bigr)/S 
\overset{\sim}{\longrightarrow} L(\widetilde{\Psi})\,.
\end{align*}
Let further $\Phi_3$ denote the composition of the natural maps
\begin{align*}
M\longrightarrow \C\ket{\emptyset}_V\otimes M\hookrightarrow
V\otimes_\Delta M\longrightarrow \bigl(V\otimes_\Delta  M\bigr)/S\,.
\end{align*}
We set $\Phi=const. \Phi_1\circ\Phi_2\circ\Phi_3$,  
choosing $const.\in\C^\times$ so that  $\Phi w_M=w_M$.
By construction $\Phi$ preserves the principal grading, and we have 
\begin{align*}
&\Phi\circ e_r= q_2^{-r} e_r\circ \Phi\quad (r>0)\,,
\\
&\Phi\circ e^\perp_{-r}= q_2^{-1} e^\perp_{-r}\circ \Phi\quad (r>0)\,,
\\
&\Phi\circ [e_0,e_2]= q_2^{-2} [e_0,e_2]\circ \Phi\,,
\\
&\Phi\circ \psi^+(z)=\psi^+(q_2z)\circ\Phi\times 
\frac{\Psi(z)}{\Psi(q_2z)}\,.
\end{align*}
We have proved \eqref{grad1}, \eqref{grad2}, \eqref{grad3}.

In order to prove that $\Phi$ is diagonalizable, we use that the 
dual right module $M^{*}$ is generated from the lowest $\ell$-weight vector $w_M^*$
by $\B^\perp_+$.  We have $w_M^*\Phi=w_M^*$. 
Formula \eqref{grad2} means that any non-zero vector of the form
$w_M^*x_1\cdots x_k$, where $x_i\in \B^\perp_+$ are homogeneous elements, 
is an eigenvector of $\Phi$ with
eigenvalue $q_2^{\sum_{i=1}^k\hdeg x_i}$. Since they span 
 $M^{*}$, we conclude that $\Phi$ is diagonalizable on $M$, 
and that all eigenvalues have the form $q_2^m$, $m\in\Z_{\ge0}$. 
In particular, the eigenspace for $m=0$ is spanned by $w_M$. 
\end{proof}

\begin{prop}\label{prop:gradingM}
Let $M$ be as in \eqref{MPsi}. Then it admits a grading 
$M=\oplus_{m\ge0} M[m]$ as vector space,  with the following properties: for all $m\ge0$ we have 
\begin{align}
&x M[m]\subset M[m-\hdeg x] \quad (\forall x\in \B^\perp_+)\,,
\label{xM}\\
&\overline{\psi}^+_n M[m]\subset M[m-n]\quad (\forall n\ge0)\,,
\label{psiM}\\
&y M[m]\subset \sum_{j=0}^{-N\pdeg y}M[m-\hdeg y+ j] \quad 
(\forall y\in \B^\perp_-)\,,
\label{yM}\\
&M[m]=\oplus_{n\le0}M[m]\cap M_{n}\,,
\quad M[0]=\C w_M\,.
\label{M[m]-princ}
\end{align}
Here $x,y$ are assumed to be homogeneous elements, and 
we set $\Psi(z)^{-1}\psi^+(z)=\sum_{n\ge0}\overline{\psi}^+_n z^{-n}$. 
\end{prop}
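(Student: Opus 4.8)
The plan is to define the grading on $M$ directly from the diagonalizable operator $\Phi$ constructed in Lemma~\ref{lem:gradation}, and then translate each property of $\Phi$ into the corresponding statement about the graded pieces. Concretely, since $\Phi$ is diagonalizable with eigenvalues of the form $q_2^m$ ($m\in\Z_{\ge0}$), I set $M[m]$ to be the $q_2^m$-eigenspace of $\Phi$; the assumption that $q_1^lq_2^mq_3^n=1$ forces $m=0$ (made at the start of Section~\ref{sec:toroidal-alg}) guarantees these eigenvalues are distinct, so this is an honest vector-space decomposition $M=\oplus_{m\ge0}M[m]$. The equality $M[0]=\C w_M$ and the fact that the grading refines the principal grading are then immediate from the corresponding clauses of Lemma~\ref{lem:gradation}.

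Next I would verify \eqref{xM} and \eqref{psiM}. For a homogeneous $x\in\B^\perp_+$ with $\hdeg x=\nu_2$, relation \eqref{grad2} reads $\Phi\circ x=q_2^{-\nu_2}x\circ\Phi$; applying both sides to $v\in M[m]$ gives $\Phi(xv)=q_2^{-\nu_2}q_2^m\,xv=q_2^{m-\nu_2}xv$, i.e. $xv\in M[m-\nu_2]=M[m-\hdeg x]$, which is \eqref{xM}. For \eqref{psiM} I expand \eqref{grad3}: writing $\overline{\psi}^+(z)=\sum_{n\ge0}\overline{\psi}^+_nz^{-n}$, the identity $\Phi\circ\overline{\psi}^+(z)=\overline{\psi}^+(q_2z)\circ\Phi$ compares coefficients of $z^{-n}$ to give $\Phi\circ\overline{\psi}^+_n=q_2^{-n}\overline{\psi}^+_n\circ\Phi$, and the same eigenvalue computation yields $\overline{\psi}^+_nM[m]\subset M[m-n]$. (Note $M[j]=0$ for $j<0$, so these inclusions are automatically consistent.)

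The one genuinely new point — and the step I expect to be the main obstacle — is the estimate \eqref{yM} for $y\in\B^\perp_-$, since $\Phi$ was only built to intertwine the $\B^\perp_+$-action and $\psi^+(z)$, not the lowering operators. The idea is to use Proposition~\ref{prop:poly-cur}: for $y$ a product of $f_n$'s with $n\ge0$ (these, together with $\B^\perp_0$, generate $\B^\perp_-$ after accounting for $e^\perp_{\pm1}=\ldots$, but in fact $\B^\perp_-$ is generated by $f_n$, $n\ge0$), the half-current $f_{\ge}(z)w$ lies in $M\otimes\C[z^{-1}]$ and, more precisely, has $z$-degree controlled by the degree $N$ of $\Psi$. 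I would argue on $M^*$ instead: from \eqref{grad2}, $M^*$ is spanned by $w_M^*x_1\cdots x_k$ with $x_i\in\B^\perp_+$ homogeneous, each such vector lying in the graded piece of degree $\sum\hdeg x_i$; pairing $yv$ against these and using that the matrix coefficients of the $f_n$'s on $M$, computed via the coproduct and Proposition~\ref{prop:poly-cur}, are supported in a band of principal-degree width at most $-N\pdeg y$, one gets that $yv$ has nonzero components only in degrees $m-\hdeg y+j$ for $0\le j\le -N\pdeg y$. Equivalently, one can realize $M$ as a subquotient of $\F(u_1)\otimes_{\Delta^\perp}\cdots\otimes_{\Delta^\perp}\F(u_N)$ as in \eqref{FFF} and read off the bound from the explicit Fock-space formulas \eqref{Mac-f} together with the delta-function support $\delta(q_3^{-i}q_1^{-\lambda_i^{(k)}-1}q_2^{-k}u/z)$. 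Carrying out this bookkeeping carefully — keeping track of how $\Phi$ moves under the lowering action through the quotient $(V\otimes_\Delta M)/S$ of Lemma~\ref{lem:SVM} — is where the real work lies; everything else is a direct transcription of Lemma~\ref{lem:gradation}.
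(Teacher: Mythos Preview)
Your setup of the grading via the eigenspaces of $\Phi$, and your verification of \eqref{xM}, \eqref{psiM}, and \eqref{M[m]-princ}, are correct and match the paper exactly.

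The gap is entirely in \eqref{yM}. First, a concrete error: $\B^\perp_-$ is \emph{not} generated by the $f_n$ with $n\ge0$. It also contains the elements $h^\perp_r$ (equivalently $\psi^{+,\perp}_r$) for $r>0$, which have bidegree $(-r,0)$ and cannot be reached from the $f_n$'s alone; these need a separate argument. Second, the strategies you sketch (polynomiality of half-currents, Fock realization, ``band of width $-N\pdeg y$'') do not visibly produce the bound. The bound \eqref{yM} is in the $\Phi$-grading, not the principal grading, and Proposition~\ref{prop:poly-cur} gives information about the $z$-degree of $f_\ge(z)w$, which is a statement about homogeneous degree, not about $\Phi$-eigenvalues. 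There is no obvious bridge between the two.

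The paper's argument does start from your idea of pairing with $M^*$, but it turns it into a precise reduction. Since $M^*$ is generated from $w_M^*$ by $\B^\perp_+$, to locate $yv$ in the $\Phi$-grading it suffices to control $w_M^*x\cdot yv$ for homogeneous $x\in\B^\perp_+$. Choosing $x$ with $xv=0$ (possible by irreducibility once $\pdeg y<0$), this becomes $w_M^*\,[x,y]\,v$. So \eqref{yM} is reduced to the statement that $[x,y]M[m]\subset\sum_{j=0}^{-N\pdeg y}M[m-\hdeg(xy)+j]$ for all homogeneous $x\in\B^\perp_+$. This is then checked on generators: for $y=f_n$ one uses $[e_r,f_n]=\kappa_1^{-1}\psi^+_{r+n}$ together with \eqref{psiM} (in the form $\psi^+_nM[m]\subset\sum_{j=0}^N M[m-n+j]$), and shows more generally that $[x,f_n]\in\B^\perp_+H$ where $H=\sum_{j\ge0}\C\psi^+_j$; for $y=h^\perp_k$ one argues inductively using $[e^\perp_{-r},h^\perp_k]\in\C e^\perp_{-r+k}$ and the stability of the statement under $\ad h_1$. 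None of this is in your proposal, and it is where the actual content of the proof lies.
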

\begin{proof}
Let $M[m]$ denote the eigenspace of $\Phi$ relative to the eigenvalue 
$q_2^m$. Then $M=\oplus_{m=0}^\infty M[m]$. 
The properties \eqref{xM}, \eqref{psiM} and \eqref{M[m]-princ} 
are immediate consequences of Lemma \ref{lem:gradation}. 
Note that \eqref{psiM} implies 
\begin{align}
\psi^+_n M[m]\subset \sum_{j=0}^N M[m-n+j].
\label{psiM2}
\end{align} 

Let us prove \eqref{yM}. 
For each $y\in\B^\perp_-$, 
statement \eqref{yM} is reduced to the following statement:
\begin{align}
&\text{For all $m\ge0$ and all homogeneous elements $x\in \B^\perp_+$ we have}\label{yM2}\\
&[x,y] M[m]\subset \sum_{j=0}^{-N\pdeg y}M[m-\hdeg (xy)+ j] \,.\nn
\end{align}
To see that \eqref{yM2} implies \eqref{yM}, 
take $v\in M[m]\cap M_n$ and write $yv=\sum_{l\ge0}w_l$, $w_l\in M[l]$. 
Without loss of generality we may assume that $\pdeg y<0$. 
Since $M$ is irreducible, for any $l_0$ such that $w_{l_0}\neq0$ we can take 
an $x\in \B^\perp_+$ satisfying $xv=0$ and $x w_{l_0}=w_M$. 
On the other hand, \eqref{yM2} implies 
\begin{align*}
\sum_{l\ge0}xw_l=xy v=[x,y]v\ \in \sum_{j=0}^{-N\pdeg y}M[m-\hdeg (xy)+ j].
\end{align*} 
Since $x w_{l_0}\in M[l_0-\hdeg x]$, we must have $m-\hdeg y\le l_0\le m-\hdeg y-N\pdeg y$. 
This implies \eqref{yM}. 

Note that
if \eqref{yM} holds for $y_1,y_2$ then it holds for $y_1y_2$. 
Hence it suffices to consider the case where $y$ is a generator of 
$\B^\perp_-$. 

Consider the case $y=f_n$ ($n\ge0$). 
If $x=e_r$ ($r\ge1$), then $[e_r,f_n]=\kappa_1^{-1}\psi^+_{r+n}$, hence
\eqref{psiM} applies. Next we take $x=e^\perp_{-r}$ ($r\ge2$) or $x=[e_0,e_2]$. 
If we set $H=\sum_{j\ge0}\C \psi^+_j$, then 
from the bilinear relation between $e$'s and $\psi^+$'s we obtain 
$\B^\perp_+H=H\B^\perp_+$
and $\ad e_0\bigl(\B^\perp_+H\bigr)\subset \B^\perp_+H$. 
With the aid of these relations we find
that $[e^\perp_{-r},f_n]\in \B^\perp_+H$ 
and $[[e_0,e_2],f_n]\in \B^\perp_+H$. 
Then \eqref{xM} and \eqref{psiM} apply, and therefore \eqref{yM2} holds for $y=f_n$. 
In particular, if \eqref{yM2} is true for $y$ then it is true also for $[f_0,y]$. 
Hence \eqref{yM2} holds for all $y$ in the subalgebra 
$\cN^\perp_{-}=\langle\bp_{-\nu_1,\nu_2}\, (\nu_1,\nu_2\ge1)\rangle$. 

It remains to show \eqref{yM2} for 
$y=\psi^\perp_k$ ($k\ge1$), or equivalently for 
$y=h^\perp_k$ ($k\ge1$) which are simpler to  work with. 
Let us consider the case $x=e^\perp_{-r}$ ($r\ge2$) by induciton on $r$. 
The commutator $[x,h^\perp_k]$ is proporional to $e^\perp_{-r+k}$. 
If $r\ge k$ then \eqref{xM} and \eqref{psiM} apply, 
and if $r<k$ then the induction hypothesis applies. 
To verify the cases $x=e_n$ ($n\ge2$) and $[e_0,e_2]$, it is sufficient to 
note the following. 
Suppose that the second line of \eqref{yM2} holds for
$x\in \B^\perp_+$ and $y=h^\perp_k$, then  
the same holds for $[h_1,x]$. 
This is because 
$[[h_1,x],h^\perp_k]\in \C [h_1,[x,h^\perp_k]]+\C [x,e^\perp_k]$
and $e^\perp_k\in\cN^\perp_{-}$. 

Proof of \eqref{yM} is now complete.
\end{proof}
\medskip

\noindent{\it Remark.}\quad Let $\br{h_n}_M$ denote the eigenvalue
of $h_n$ on the highest $\ell$-weight vector of $M$, and set
$\bar{h}_{n,M}=h_n-\br{h_n}_M$. Then \eqref{psiM} is equivalent to
\begin{align*}
\bar{h}_{n,M} M[m]\subset M[m-n]\quad
(\forall n\ge1)\,. 
\end{align*}
\qed

\subsection{$1$-finite modules}\label{subsec:positive}

We use Proposition \ref{prop:gradingM} 
to study the structure of the module $M$.

We start with positive fundamental modules.
\begin{prop}\label{prop:1-finite}
Module $M^+(u)$ is $1$-finite. 
We have
\begin{align}
\chi_q(M^+(u)) 
=X_u\times \chib(M^+(u))=X_u\times \chi_0\,.
\label{q-char M+}
\end{align}
\end{prop}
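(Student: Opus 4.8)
\textbf{Proof proposal for Proposition~\ref{prop:1-finite}.}

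The plan is to apply the grading $M = \oplus_{m\ge 0} M[m]$ of Proposition~\ref{prop:gradingM} in the special case $N=1$, $\Psi(z) = 1-u/z$. First I would establish that $M = M^+(u)$ is $1$-finite, i.e.\ that $\psi^+(z)$ acts by the single eigenvalue $\Psi(z) = 1-u/z$ on all of $M$. For this, observe from \eqref{psiM} that $\overline{\psi}^+_n M[m] \subset M[m-n]$, so $\overline{\psi}^+_n$ lowers the $\Phi$-grading by $n$; in particular $\overline{\psi}^+_n$ vanishes on $M[0] = \C w_M$ for $n\ge 1$, which is the statement that $\psi^+(z)w_M = \Psi(z)w_M$. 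To get the same on all of $M$, I would combine \eqref{psiM} with the fact that $M[m]$ for $m\ge 1$ is spanned by vectors $y_1\cdots y_r w_M$ with $y_i\in\B^\perp_-$ homogeneous, and use the commutation relations to push $\overline\psi{}^+_n$ through: the key point (already used in the proof of Proposition~\ref{prop:gradingM}) is that the bilinear relation between $\psi^+(z)$ and $e(z)$, together with $[e_r,f_n] = \kappa_1^{-1}\psi^+_{r+n}$, forces $\overline\psi{}^+_n$ to act nilpotently with grading drop exactly $n$. Since the principal degree on $M$ is bounded above by $0$ (by \eqref{M[m]-princ}) while $\pdeg$ is unbounded below, a more efficient route is: each weight space $M_{(a,n)}$ is finite-dimensional, and $\overline\psi{}^+_j$ commutes with $D^\perp$ and with $\psi^+_0$, so the generalized eigenvalue decomposition of $M$ under $\{\psi^+_j\}$ refines the grading; the operators $\overline\psi{}^+_j$ being strictly grading-lowering on the finite-dimensional $\Phi$-eigenspace filtration $\oplus_{m\le m_0} M[m]$ are nilpotent, hence have generalized eigenvalue $0$, giving $\ell$-weight $\Psi$ everywhere.

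Next, granting $1$-finiteness, the $q$-character collapses: every $\ell$-weight of $M^+(u)$ equals $(\Psi, n)$ for various $n\le 0$, so $\chi_q(M^+(u)) = \bm(\Psi)\sum_n \dim M_{(\Psi,n)} t^n = \bm(1-u/z)\cdot \chib(M^+(u))$. Since $\Psi(z) = 1-u/z$, we have $\bm(\Psi) = X_u$ by the definition of $\bm$, which gives the first equality in \eqref{q-char M+}. The second equality, $\chib(M^+(u)) = \chi_0$, is exactly Proposition~\ref{M+ char}, which has already been proved via the duality \eqref{duality} and the computation of $\chib(M^-(u))$ in Corollary~\ref{q char M-}.

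The main obstacle I anticipate is the first half---rigorously showing that $\psi^+(z)$ has a \emph{single} eigenvalue rather than merely that its generalized eigenvalues all share the same leading term $\Psi^+_0 = 1$. The grading of Proposition~\ref{prop:gradingM} is tailored to exactly this: \eqref{psiM} says $\overline\psi{}^+_n$ is homogeneous of degree $-n$ with respect to the $\Phi$-grading, and since that grading is bounded below (by $M[0]$) and each $M[m]$ is finite-dimensional (being a sum of finitely many weight spaces $M_{(\Psi^+_0,n)}$), the operator $\overline\psi{}^+_n$ for $n\ge 1$ is locally nilpotent, hence acts as $0$ in the associated graded and therefore contributes no nonzero generalized eigenvalue. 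I would spell this out carefully, checking that the $\Phi$-grading is compatible with the principal grading so that each $M[m]$ indeed meets only finitely many weight spaces and is finite-dimensional. Once this is in place, everything else is bookkeeping with $\bm$ and a citation of Proposition~\ref{M+ char}.
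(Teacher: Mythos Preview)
Your approach is essentially the same as the paper's: use the grading $M=\oplus_{m\ge0}M[m]$ from Proposition~\ref{prop:gradingM} together with \eqref{psiM} to see that $\overline{\psi}^+_n$ ($n\ge1$) is strictly grading-lowering, hence nilpotent on each (finite-dimensional) principal degree component $M^+(u)_n$, so $\psi^+(z)$ has the single eigenvalue $\Psi(z)=1-u/z$; then $\chi_q=X_u\cdot\chib$ and Proposition~\ref{M+ char} finishes. The paper phrases this more tersely---choosing a basis of $M^+(u)_n$ adapted to the pieces $M[m]\cap M^+(u)_n$ and observing that $\overline{\psi}^+(z)$ is triangular with $1$'s on the diagonal---but the content is identical.

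Two small cleanups: your first paragraph's digression about pushing $\overline{\psi}^+_n$ through products $y_1\cdots y_r w_M$ via commutation relations is unnecessary and should be dropped; the grading argument already handles everything. Also, the claim that ``each $M[m]$ is finite-dimensional'' is not what you need (and not obviously true); the relevant finiteness is that of each $M^+(u)_n$, which holds because $M^+(u)\in\Ob\cO_{\B^\perp}$, combined with the compatibility \eqref{M[m]-princ}. With those trimmed, your argument matches the paper's.
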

\begin{proof}
For each $n\le 0$, we take a basis of $M^+(u)_{n}$ by choosing a basis 
from each component $M[m]\cap M^+(u)_n$, $m\ge0$.  
In this basis, $\overline{\psi}^+(z)=\psi^+(z)/(1-u/z)$ is represented 
by a triangular matrix with $1$ on the diagonal
by Proposition \ref{prop:gradingM}.  
Therefore $\psi^+(z)$ 
has only one eigenvalue $1-u/z$, and 
$\chi_q\bigl(M^+(u)\bigr)$ has the stated form.
\end{proof}
\medskip

\noindent {\it Remark.}\quad 
It is instructive to think of  the result \eqref{q-char M+} 
as a formal limit of the $q$-character of the Macmahon module. 
Namely, rescaling $u$ to $Ku$ in \eqref{M q-char} and 
demanding the rule 
$\displaystyle{\lim_{K\to0}X_{Ku}=\lim_{K\to0}A_{Ku}=1}$, 
we have formally
\begin{align*}
\chi_q\bigl(M^+(u)\bigr)=
\lim_{K\to 0} x_{K^{1/2}}^{-1}\chi_q\bigl(\cM(Ku,K)\bigr)\,.
\end{align*}
\medskip

\begin{cor}\label{M-1}
Let $M=L(\Psi)$, $\Psi=\prod_{j=1}^N(1-u_j/z)$. Then $M$ is 1-finite. 
\qed
\end{cor}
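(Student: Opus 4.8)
The plan is to deduce Corollary \ref{M-1} directly from Proposition \ref{prop:1-finite} by reducing the general polynomial highest $\ell$-weight module $M=L(\Psi)$ with $\Psi=\prod_{j=1}^N(1-u_j/z)$ to a tensor product of positive fundamental modules. First I would observe that $\Psi=\prod_{j=1}^N\Psi_j$, where $\Psi_j(z)=1-u_j/z$ is the highest $\ell$-weight of $M^+(u_j)$. Since $\chi_q$ is a ring homomorphism (Proposition \ref{inj}), the $q$-character of the tensor product $M^+(u_1)\otimes_{\Delta^\perp}\cdots\otimes_{\Delta^\perp}M^+(u_N)$ is the product $\prod_{j=1}^N\chi_q(M^+(u_j))=\prod_{j=1}^N X_{u_j}\chi_0^N$, by \eqref{q-char M+}. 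In particular every monomial appearing in this $q$-character has the same $X$-part, namely $\prod_{j=1}^N X_{u_j}=\bm(\Psi)$ up to the $x_a$ factor, so $\psi^+(z)$ has a single eigenvalue $\Psi(z)$ on the tensor product. Hence $M^+(u_1)\otimes_{\Delta^\perp}\cdots\otimes_{\Delta^\perp}M^+(u_N)$ is $1$-finite.

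Next I would note that $M=L(\Psi)$ occurs as the irreducible quotient of the submodule of this tensor product generated by the tensor product of the highest $\ell$-weight vectors: that vector is annihilated by $\B^\perp_+$ and is an eigenvector of $\psi^+(z)$ with eigenvalue $\Psi(z)$, so it generates a highest $\ell$-weight submodule whose unique irreducible quotient is $L(\Psi)=M$. A subquotient of a $1$-finite module is $1$-finite, because $\psi^+(z)$ acts on any subquotient with eigenvalues drawn from the original set, and here that set is the singleton $\{\Psi(z)\}$. Therefore $M$ is $1$-finite, which is the assertion.

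Alternatively — and this is perhaps cleaner given what is already in hand — I would argue straight from the grading of Proposition \ref{prop:gradingM}, exactly as in the proof of Proposition \ref{prop:1-finite}: choose in each principal degree $n\le 0$ a basis adapted to the decomposition $M=\oplus_{m\ge0}M[m]$; by \eqref{psiM} the operator $\overline\psi^+(z)=\Psi(z)^{-1}\psi^+(z)$ is then upper triangular with all diagonal entries equal to $1$, since $\overline\psi^+_0=1$ and $\overline\psi^+_n$ strictly lowers $m$. Consequently $\psi^+(z)$ has the single eigenvalue $\Psi(z)$ on $M$, so $M$ is $1$-finite. This makes no reference to tensor products at all and is essentially a one-line deduction from the already-established grading.

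The only point requiring any care — and the reason the tensor-product route needs the $q$-character input rather than a naive appeal to "tensor product of $1$-finite is $k_1k_2$-finite" — is verifying that the single eigenvalue is genuinely $\Psi(z)$ and not some product that could split; the generic position hypothesis on $q_1,q_2,q_3$ and the identification $\bm$ of $\rg_{\B^\perp}$ with monomials guarantee that the product of the $X$-monomials is again the monomial of a single rational function, so no splitting occurs. Since the grading argument bypasses this entirely, I expect the proof to be essentially immediate; there is no substantive obstacle, and the statement is included mainly to record the consequence of Proposition \ref{prop:gradingM} for general polynomial highest $\ell$-weights.
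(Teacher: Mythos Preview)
Your proposal is correct. Your second argument---applying the grading of Proposition \ref{prop:gradingM} (which is already stated for general polynomial $\Psi$) and repeating verbatim the triangularity argument from the proof of Proposition \ref{prop:1-finite}---is exactly what the paper intends: the corollary carries only a \qed because Proposition \ref{prop:gradingM} was set up for $M=L\bigl(\prod_j(1-u_j/z)\bigr)$ from the start, so nothing new is needed. Your first route via the tensor product $\bigotimes_j M^+(u_j)$ is also valid and independent; it trades the grading for the multiplicativity of $\chi_q$, and in fact anticipates Corollary \ref{q char 1}, but it is slightly more roundabout here since the grading is already available for the module $M$ itself.
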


The positive fundamental modules have the following description.
\begin{prop}\label{prop:presen-M}
Module $M^+(u)$ is the unique 
cyclic $\B^\perp$ module generated by $v_0$ satisfying 
\begin{align}
\B^\perp_+v_0=\C v_0\,,\quad \psi^+(z)v_0=(1-u/z)v_0,\quad  
\bp_{-n,l} v_0=0\quad (1\le n\le l).
\label{M-rel}
\end{align}
\end{prop}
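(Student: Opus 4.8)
The plan is to show two things: first, that $M^+(u)$ does satisfy the relations \eqref{M-rel}; and second, that any cyclic $\B^\perp$ module generated by a vector $v_0$ satisfying \eqref{M-rel} is a quotient of $M^+(u)$, and in fact is isomorphic to $M^+(u)$ — which, since $M^+(u)$ is irreducible, forces the quotient to be the whole module. The first point is essentially immediate: by definition $M^+(u)=L(1-u/z)$ has a highest $\ell$-weight vector $v_0$ with $\B^\perp_+v_0=\C v_0$ and $\psi^+(z)v_0=(1-u/z)v_0$; the only thing to check is the extra relations $\bp_{-n,l}v_0=0$ for $1\le n\le l$. But these are exactly the relations \eqref{M-rel} already encountered when defining $\B^\perp_-$ and describing $M^+(u)=(L^\vee((1-u/z)^{-1}))^{*S^\perp}$ via Proposition \ref{M+ char}; alternatively, one reads them off the grading of Proposition \ref{prop:gradingM} applied to $M^+(u)$ (the case $N=1$), where $v_0\in M[0]$ and \eqref{yM} with $\pdeg y<0$ would send $v_0$ into $M[m]$ with $m<0$, hence to zero, unless $\hdeg y\ge0$ — and $\bp_{-n,l}$ with $1\le n\le l$ has $\pdeg=-n<0$ but by $SL(2,\Z)$ considerations these are precisely the vertical-type lowering operators that annihilate the highest vector. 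I would make this precise by noting $\bp_{-n,l}$ for $1\le n\le l$ lies in the subalgebra generated by $f_0$ and the $h^\perp_k$ ($k\ge1$) together with lower commutators, all of which kill $v_0$: indeed $f_0 v_0=\hp_1^\perp$-type, and one checks directly from the coproduct-free defining relations of $\E$ (or from the elliptic Hall presentation) that $\bp_{-n,l}v_0=0$.

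Next, let $V'=\B^\perp v_0$ be any cyclic module with $v_0$ satisfying \eqref{M-rel}. Since $v_0$ is a highest $\ell$-weight vector of highest $\ell$-weight $1-u/z$, there is a surjection from the universal (Verma-type) highest $\ell$-weight module onto $V'$, and hence $V'$ has $L(1-u/z)=M^+(u)$ as its unique irreducible quotient. So it suffices to show $V'$ is irreducible, equivalently that the relations \eqref{M-rel} already force $V'$ to be ``as small as $M^+(u)$.'' The key structural input is the triangular decomposition $\B^\perp\simeq\B^\perp_-\otimes\B^\perp_0\otimes\B^\perp_+$ together with the extra relations: since $\B^\perp_+v_0=\C v_0$ and $\B^\perp_0 v_0=\C v_0$ (the latter because $\psi^+(z)v_0$, hence all $h_r v_0$ with $r>0$, are scalar multiples of $v_0$), we get $V'=\B^\perp_- v_0=U(\cN^\perp_-)\,v_0$ where $\cN^\perp_-=\langle\bp_{-\nu_1,\nu_2}:\nu_1\ge1,\nu_2\ge0\rangle$, and within $\cN^\perp_-$ the relations $\bp_{-n,l}v_0=0$ ($1\le n\le l$) cut down the spanning set. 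The clean way to finish is a character/dimension count: I would compare $\chib(V')$ (or even just a degreewise upper bound on $\dim V'_{(a,n)}$ obtained from a PBW basis of $\cN^\perp_-$ modulo the relations \eqref{M-rel}) against $\chib(M^+(u))=\chi_0$ from Proposition \ref{M+ char}; since $V'$ surjects onto $M^+(u)$, we have $\chib(V')\ge\chib(M^+(u))$ coefficientwise, and if the upper bound from the relations matches $\chi_0$ exactly, the surjection must be an isomorphism.

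Concretely, I would organize the dimension count using the $SL(2,\Z)$ symmetry: under $\theta$, the Borel $\B^\perp$ is conjugate to $\B$, and the relations \eqref{M-rel} translate to saying $v_0$ is a highest $\ell$-weight vector for a ``horizontal-type'' Borel inside $\B^\perp$ with a one-dimensional action of the complementary currents — effectively $V'$ looks like a module generated over a Heisenberg-type subalgebra plus the operators $f_n$ ($n\ge0$), and the Serre-type and quadratic relations of $\E$ cut the PBW basis of $\cN^\perp_-/(\text{relations})$ down to one indexed by plane partitions (this is the same combinatorics that produces $\chi_0=\prod_{j\ge1}(1-t^{-j})^{-j}$ for the Macmahon module). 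The main obstacle, and the part requiring real work, is precisely this last combinatorial/PBW step: showing that modulo the relations \eqref{M-rel} the subalgebra $\cN^\perp_-$ acts on $v_0$ with a spanning set no larger than a plane-partition-indexed basis. I would handle it by exhibiting an explicit ordered monomial spanning set — e.g. monomials in $\bp_{-n,l}$ with $n>l\ge0$ (those not killed outright) arranged by the normal-ordering of Theorem \ref{thm:pbw}, then using the quadratic elliptic Hall relations to straighten and the relations \eqref{M-rel} to eliminate the remaining overlaps — and checking the resulting generating function is $\chi_0$. Since $V'\twoheadrightarrow M^+(u)$ and $\chib(M^+(u))=\chi_0$, matching of the two forces $V'\cong M^+(u)$, and uniqueness of the cyclic module with these properties follows.
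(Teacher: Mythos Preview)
Your overall strategy is exactly the paper's: use the grading of Proposition \ref{prop:gradingM} to verify the relations \eqref{M-rel} in $M^+(u)$, then bound the character of the universal cyclic module by $\chi_0$ via PBW and conclude by comparison with Proposition \ref{M+ char}. Two points need correction.

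First, your ``make this precise'' step for the relations is wrong. The elements $f_0=h^\perp_1$ and $h^\perp_k$ ($k\ge1$) all have $\hdeg=0$, so the subalgebra they generate cannot contain $\bp_{-n,l}$ with $l\ge1$; moreover none of them annihilate $v_0$ (they are creation operators in $\B^\perp_-$, with $n>l$). The grading argument you mention first is the correct one, and it is short: for $y=\bp_{-n,l}$ with $1\le n\le l$, \eqref{yM} with $N=1$ gives $y\,v_0\in\sum_{j=0}^{n}M[-l+j]$, all of whose graded pieces vanish except possibly $M[0]$ when $n=l$; but $M[0]=\C v_0$ sits in principal degree $0$ while $y\,v_0$ has principal degree $-n<0$, so $y\,v_0=0$.

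Second, the PBW count is simpler than you suggest: no extra straightening is needed. In the normal ordering of Theorem \ref{thm:pbw} the generators $\bp_{-n,l}$ with $n\le l$ (argument in $(\pi/2,3\pi/4]$) always sit to the right of those with $n>l$ (argument in $(3\pi/4,\pi]$). Hence any normal-ordered monomial containing a factor with $n\le l$ has such a factor in the rightmost position, and that factor kills $v_0$. The surviving spanning set is exactly the ordered monomials in $\{\bp_{-n,l}:0\le l<n\}$, whose generating function is $\prod_{n\ge1}(1-t^{-n})^{-n}=\chi_0$.
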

\begin{proof}
Suppose $1\le n\le l$, and let $v=\bp_{-n,l} v_0$. 
By \eqref{yM} we have $v\in \sum_{j=0}^n M[-l+j]$. 
Since $M[m]=0$ for $m<0$, $M[0]=\C v_0$ and $n\ge 1$, we obtain $v=0$. 
Therefore the relations \eqref{M-rel} are satisfied in $M^+(u)$. 

Let $\widetilde{M}^+(u)$ denote the cyclic
$\B^\perp$ module defined by the relations \eqref{M-rel}.
Then there is a surjective morphism $\widetilde{M}^+(u)\to {M}^+(u)$.
Since $\widetilde{M}^+(u)$ is spanned by ordered monomials of 
$\bp_{-n,l}$ with $0\le l<n$ applied to $v_0$
(see Theorem \ref{thm:pbw}), we have 
\begin{align*}
\chib\bigl(\widetilde{M}^+(u)\bigr)\ll \chi_0\,.
\end{align*}
The right hand side equals 
$\chib\bigl({M}^+(u)\bigr)$ by Proposition \ref{M+ char}. Hence the two modules coincide. 
\end{proof}

Finally, we show that arbitrary tensor products of fundamental modules 
are irreducible. In the case of quantum affine algebras, this result is due to \cite{FH}. 
Our proof below is different from theirs and is equally applicable to that case.

 \begin{prop}\label{prop:irred-tensor}
For any $u_1,\cdots,u_N\in\C^\times$, the tensor product 
$M^+(u_1)\otimes_{\Delta^\perp}\cdots\otimes_{\Delta^\perp} M^+(u_N)$
is irreducible. 
Hence it is isomorphic to $L\bigl(\prod_{j=1}^N(1-u_j/z)\bigr)$.
\end{prop}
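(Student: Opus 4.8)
The plan is to prove irreducibility by induction on $N$, the key input being Proposition \ref{prop:submod} applied with $V = M^+(u_1)\otimes_{\Delta^\perp}\cdots\otimes_{\Delta^\perp}M^+(u_{N-1})$ in place of an irreducible $\E$-module. However, since $M^+(u_i)$ are not restrictions of $\E$-modules, Proposition \ref{prop:submod} as stated does not directly apply; so first I would need either a version of that Proposition for $V$ an irreducible highest $\ell$-weight $\B^\perp$-module with polynomial $\ell$-weight, or — more economically — I would use the grading machinery of Proposition \ref{prop:gradingM} directly. Indeed, the tensor product $M = M^+(u_1)\otimes_{\Delta^\perp}\cdots\otimes_{\Delta^\perp}M^+(u_N)$ is itself a module with polynomial highest $\ell$-weight $\Psi(z)=\prod_{j=1}^N(1-u_j/z)$ (its highest $\ell$-weight vector being the tensor product of the $v_0$'s, using the coproduct formula for $\psi^+$), so Proposition \ref{prop:gradingM} endows it — or rather its irreducible quotient — with a grading.

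The cleaner route, which I would take, is a character count. Let $L=L(\Psi)$ with $\Psi=\prod_{j=1}^N(1-u_j/z)$ be the irreducible quotient of $M = M^+(u_1)\otimes_{\Delta^\perp}\cdots\otimes_{\Delta^\perp}M^+(u_N)$. Then $\chib(L)\le \chib(M)$, and by the multiplicativity of the character, $\chib(M)=\prod_{j=1}^N\chib(M^+(u_j))=\chi_0^N$ by Proposition \ref{M+ char}. On the other hand, by Corollary \ref{M-1} the module $L$ is $1$-finite, and one can compute $\chib(L)$ from the grading: by Proposition \ref{prop:gradingM}, $L$ is spanned by applying ordered monomials in $\B^\perp_-$-generators $\bp_{-n,l}$ to $w_L$, and the relations $\bp_{-n,l}w_L=0$ for $1\le n\le l$ (proved exactly as in Proposition \ref{prop:presen-M}) cut this down. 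The surviving monomials are ordered products of $\bp_{-n,l}$ with $0\le l<n$, which gives $\chib(L)\le \chi_0^N$ only if... — and here is where the naive count is \emph{not} tight enough, since $\chi_0^N$ is far larger than what a single positive fundamental module contributes. So a pure character argument needs more care.

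Therefore I expect the real argument to go through Proposition \ref{prop:submod}, and the main obstacle is extending it from irreducible $\E$-modules to the modules at hand. The plan: proceed by induction on $N$. For $N=1$ the module $M^+(u_1)$ is irreducible by definition. For the inductive step, write the tensor product as $V\otimes_{\Delta^\perp}M^+(u_N)$ where $V=M^+(u_1)\otimes_{\Delta^\perp}\cdots\otimes_{\Delta^\perp}M^+(u_{N-1})$ is irreducible by hypothesis. One then needs an analogue of Propositions \ref{prop:Delta} and \ref{prop:submod} with $V$ an irreducible $\B^\perp$-module of polynomial highest $\ell$-weight (rather than an $\E$-module): that any proper submodule of $V\otimes_\Delta M^+(u_N)$ is of the form $V^{(0)}\otimes M^+(u_N)$ with $V^{(0)}$ an $\ell$-weighted subspace of $V$ not containing $v_0$. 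Granting this, a proper submodule $S$ would force $V^{(0)}$ to be a proper $\ell$-weighted subspace; but applying $e_>(z)$ and $f_\ge(z)$ (which by Proposition \ref{prop:poly-cur} act polynomially and, on $V\otimes_\Delta M^+(u_N)$, mix the two tensor factors with no vanishing obstruction since $\Psi_{M^+(u_N)}(z)=1-u_N/z$ has a single simple zero) one shows $V^{(0)}$ is $\B^\perp$-stable in $V$, hence zero by irreducibility of $V$. The hard part will be checking the submodule-structure statement in this slightly more general setting; I would do it by mimicking the Appendix proof of Lemma \ref{lem:submod}, using the grading on $M^+(u_N)$ from Proposition \ref{prop:gradingM} and the polynomiality of the currents to rule out ``off-diagonal'' submodule components, exactly as in Lemma \ref{lem:SVM}.
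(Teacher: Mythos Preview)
Your inductive strategy via Proposition~\ref{prop:submod} has a genuine obstruction that cannot be patched by ``mimicking'' the Appendix proof. The problem is structural: Proposition~\ref{prop:Delta} (Lemma~\ref{lem:Delta}) shows that $\Delta(\B^\perp)\subset \E\widehat{\otimes}\B^\perp$, so for $V\otimes_\Delta M$ to be a $\B^\perp$-module the \emph{first} factor $V$ must carry the full $\E$-action (in particular $e_0$, $f_{-1}$, $\psi^-$). Your $V=M^+(u_1)\otimes_{\Delta^\perp}\cdots\otimes_{\Delta^\perp}M^+(u_{N-1})$ is only a $\B^\perp$-module and, as the paper notes explicitly, is \emph{not} the restriction of an $\E$-module. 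Even if you work with $\Delta^\perp$ throughout, the engine of Lemma~\ref{lem:submod} is Lemma~\ref{lem:different} (via Lemma~\ref{Y}): in an $\E$-module, $e_mv$ and $f_mv$ land in $\ell$-weight spaces \emph{different} from that of $v$. But your $V$ is $1$-finite, so every vector has the same $\ell$-weight and that separation mechanism collapses entirely. The argument you propose to imitate simply has no traction here.

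The paper's proof avoids this by a duality trick that exploits the asymmetry between $M^+$ and $M^-$. The tensor product $M$ is $1$-finite (tensor product of $1$-finite modules). Its dual $M^\vee=(M)^{*(S^\perp)^{-1}}$ is a tensor product of lowest-weight analogues $M^{-\vee}(u_i)$, whose $q$-character, by the lowest-weight version of Corollary~\ref{q char M-}, is $\bm(\Psi)^{-1}$ times a series in which every nontrivial term is a genuine product of $A_a$'s. If $M^\vee$ had a proper submodule $M^\vee_1$, then either $M^\vee_1$ or $M^\vee/M^\vee_1$ would contain a singular vector of $\ell$-weight $\Phi^{-1}\neq\Psi^{-1}$; passing to the orthogonal complement $M_1\subset M$ and using Lemma~\ref{lem:duality}, this forces $\Phi$ to occur as an $\ell$-weight of $M$, contradicting $1$-finiteness. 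So $M^\vee$ is irreducible, hence $M$ is. The key idea you are missing is this interplay: $M$ has only one $\ell$-weight, while $M^\vee$ has many, and duality transports a hypothetical decomposition of one to an impossible decomposition of the other.
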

\begin{proof}
Set
$M=M^+(u_1)\otimes_{\Delta^\perp}\cdots\otimes_{\Delta^\perp} M^+(u_N)$.
By Corollary \ref{M-1}, $M$ is $1$-finite. 
The dual module $M^\vee=(M)^{*(S^\perp)^{-1}}$ is isomorphic to 
$M^{-\vee}(u_N)\otimes_{\Delta^\perp}\cdots\otimes_{\Delta^\perp} 
M^{-\vee}(u_1)$.  
By an analog of Corollary \ref{q char M-} for lowest 
$\ell$-weight modules, we have 
$\chi_q\bigl(M^\vee\bigr)=\mb(\Psi)^{-1}\bigl(1+\cdots\bigr)$,
where $\Psi(z)=\prod_{j=1}^N(1-u_j/z)$ 
and $\cdots$ stands for a sum of non-trivial 
monomials in the $A_{a}$'s. 

Suppose that $M^\vee$ has a non-trivial submodule $M^\vee_1$.   
From the structure of $\chi_q(M^\vee)$ mentioned above, 
we see that either $M^\vee_1$ or $M^\vee/M^\vee_1$ has a singular vector
of $\ell$-weight 
$\Phi(z)^{-1}$ different from $\Psi(z)^{-1}$. 
Let $M_1\subset M$ denote the orthogonal complement of $M^\vee_1$. 
From Lemma 3.1, we conclude that either $M/M_1$ or $M_1$ has
the $\ell$-weight $\Phi(z)$. 
Hence $M$ has two different 
$\ell$-weights $\Psi(z)$ and $\Phi(z)$. This is a contradiction.
Therefore $M^\vee$ is irreducible, and hence $M$ is irreducible. 
\end{proof}

\begin{cor}\label{q char 1} Let $M=L(\Psi)$, 
$\Psi=\prod_{j=1}^N(1-u_j/z)$. Then the $q$-character of $M$ is given by
\begin{align*}
\chi_q(M)=\chi_0^N \ \prod_{j=1}^N X_{u_j}.
\end{align*}
\qed
\end{cor}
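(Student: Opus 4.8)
The plan is to deduce Corollary \ref{q char 1} directly from Propositions \ref{prop:1-finite} and \ref{prop:irred-tensor}, together with the multiplicativity of the $q$-character established in Proposition \ref{inj}. First I would invoke Proposition \ref{prop:irred-tensor}, which identifies $M = L\bigl(\prod_{j=1}^N(1-u_j/z)\bigr)$ with the tensor product $M^+(u_1)\otimes_{\Delta^\perp}\cdots\otimes_{\Delta^\perp}M^+(u_N)$. Since $\chi_q$ is a ring homomorphism with respect to $\otimes_{\Delta^\perp}$ (this is the content of the multiplicative property $\chi_q(V\otimes_{\Delta^\perp}W)=\chi_q(V)\chi_q(W)$ proved in Proposition \ref{inj}), we get
\begin{align*}
\chi_q(M)=\prod_{j=1}^N\chi_q\bigl(M^+(u_j)\bigr).
\end{align*}

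Next I would substitute the formula for each factor from Proposition \ref{prop:1-finite}, namely $\chi_q(M^+(u_j)) = X_{u_j}\times\chi_0$, where $\chi_0=\prod_{i=1}^\infty(1-t^{-i})^{-i}$ is the (purely $t$-dependent) Macmahon plane partition function. Multiplying these together yields $\chi_q(M)=\chi_0^N\prod_{j=1}^N X_{u_j}$, which is exactly the claimed expression. Note that the $X$-monomials $X_{u_j}$ commute with everything and no $A_a$-factors appear, reflecting the fact that $M$ is $1$-finite (by Corollary \ref{M-1}): $\psi^+(z)$ has the single eigenvalue $\prod_j(1-u_j/z)$, so the $q$-character is a single $\bm(\Psi)$-monomial times the character $\chi_0^N$.

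I do not anticipate a genuine obstacle here: this corollary is purely formal once Propositions \ref{prop:1-finite} and \ref{prop:irred-tensor} are in hand, and all the real work (the grading construction of Proposition \ref{prop:gradingM}, the irreducibility argument via duality and Lemma 3.1) has already been carried out. The only point requiring a word of care is that $\chi_q$ is applied in the completed Grothendieck ring $\Rep\,\B^\perp$, where the infinite series in $t^{-1}$ converge in the filtration topology; since each $\chi_q(M^+(u_j))$ lies in $\cX$ and $\cX$ is a ring of formal power series in $t^{-1}$, the finite product is well-defined and the identity holds in $\cX$. This is automatic from the setup and needs no separate argument.
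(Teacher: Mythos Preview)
Your proposal is correct and is precisely the intended argument: the paper states this corollary with a bare \qed immediately after Proposition~\ref{prop:irred-tensor}, and the proof is exactly the combination of Proposition~\ref{prop:irred-tensor}, the multiplicativity of $\chi_q$ from Proposition~\ref{inj}, and the formula $\chi_q(M^+(u))=X_u\chi_0$ from Proposition~\ref{prop:1-finite} that you spell out.
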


\subsection{$2$-finite modules}

Now we proceed to discussing $2$-finite modules. 
Introduce the module 
\begin{align*}
N^+(u)=L\bigl(\Psi_{N^+(u)}\bigr)\,,
\quad \Psi_{N^+(u)}(z)=\frac{\prod_{i=1}^3(1-q_i^{-1}u/z)}{1-u/z}.
\end{align*}
Our goal in this subsection is to prove Theorem \ref{thm:N} below.

We make use of a construction similar to the one used in 
the proof of Lemma \ref{lem:SVM}. 
Let $\F(u)$ be the Fock module \eqref{Fock}
with the highest weight vector  $\ket{\emptyset}$.  
Let $\F(u)_{\lle-2}$ be the subspace of $\F(u)$
of principal degree $\le -2$. 
Set further $M(u)=M^+(q_3^{-1}u)\otimes_{\Delta^\perp} M^+(q_1^{-1}u)$. 
By Proposition \ref{prop:irred-tensor}, 
$M(u)$ is a $1$-finite module $L(\Psi)$
with highest $\ell$-weight $\Psi(z)=(1-q_3^{-1}u/z)(1-q_1^{-1}u/z)$. 
We consider the module 
$V=\F(u)\otimes_{\Delta} M(u) $ and its 
linear subspace $S=\F(u)_{\lle-2}\otimes M(u)$. 
In the following, $\ket{\square}$ stands for the vector $\ket{\lambda}\in\F(u)$ for the partition $\lambda=(1)$.

\begin{lem}\label{lem:N-con}
Notation being as above, $S$ is a $\B^\perp$ submodule of $V$. 
\end{lem}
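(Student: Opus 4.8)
The strategy mirrors that of Lemma~\ref{lem:SVM}, the only new feature being that we now cut at principal degree $\le -2$ rather than $\le -1$. Since $\B^\perp_-$ and $\B^\perp_0$ do not increase the principal degree, it suffices to show $\B^\perp_+ S\subset S$, and since $\B^\perp_+$ is generated by $e_n$ ($n\ge1$), $e^\perp_{-r}$ ($r\ge2$) and $[e_0,e_2]$, it is enough to check $\Delta(x)S\subset S$ for each of these generators. First I would treat $x=e_n$: for $v\otimes w\in S$ one has
\begin{align*}
\Delta(e_>(z))(v\otimes w)=\bigl(e(z)v\otimes\psi^+(z)w\bigr)_>+v\otimes e_>(z)w\,,
\end{align*}
and the second term lies in $S$ automatically. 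The first term lies in $S$ whenever $\pdeg v<-2$, so the only case to analyze is $\pdeg v=-2$, i.e.\ $v=\ket{\square}$ or $v$ proportional to the degree-$(-2)$ vector $\ket{(2)}$ or $\ket{(1,1)}$ in $\F(u)$.

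The key point is a matching between the poles produced by $e(z)$ acting on such $v$ and the zeroes of $\Psi(z)$ dividing $\psi^+(z)w$. By Proposition~\ref{prop:poly-cur}, $\psi^+(z)w\in\Psi(z)\cdot M(u)\otimes\C[z^{-1}]$ with $\Psi(z)=(1-q_3^{-1}u/z)(1-q_1^{-1}u/z)$, so the first component of $e(z)v$ must contribute a factor that cancels at most a simple pole — more precisely, whenever the $\ket{\emptyset}_V$-component of $e(z)v$ appears, it carries $\delta$-functions supported at $q_3^{-1}u/z$ and/or $q_1^{-1}u/z$ (the nodes that can be removed from $\lambda=(2)$ or $(1,1)$), and these are exactly the zeroes of $\Psi(z)$. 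Hence the product $(e(z)v\otimes\psi^+(z)w)_>$, projected onto the $\ket{\emptyset}_V$-component, vanishes. For the non-$\ket{\emptyset}_V$ components of $e(z)v$ one still has $\pdeg\ge -1$ but the resulting tensor need not be in $S$ — however, such components come from $e(z)$ removing a box and landing on a degree-$(-1)$ partition, i.e.\ still contain a $\delta$-function from the explicit formulas \eqref{Mac-e}, and the same divisibility argument applies; alternatively, these lower components are cancelled by a parallel argument or absorbed into $S$ after one checks $\pdeg = -1$ contributions separately against the simple zero of $\Psi$. Here I would invoke the explicit Fock action descending from \eqref{Mac-e} together with \eqref{Dperp-e1} in the Appendix.

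For $x=e^\perp_{-r}$ ($r\ge2$) and $x=[e_0,e_2]$ the argument is the one already used in Lemma~\ref{lem:SVM}: $\Delta(e^\perp_{-r})(v\otimes w)$ is a sum of terms of the form \eqref{XYXY}, and the only terms that could escape $S$ are those whose first component is proportional to $\ket{\emptyset}_V$; such a component produces a $\delta$-function $\delta(u_i/z_{j'_1})$ fixing an evaluation point, while the second component carries a factor $\psi^+(z_{j'_1})$ (using that $(\ad e_0)^k(\psi^+(z))\in\B^\perp[z^{-1}]\psi^+(z)\B^\perp[z^{-1}]$), and divisibility of $\psi^+w$ by $\Psi$ forces the contribution to vanish. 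The case $[e_0,e_2]$ is entirely parallel.

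\textbf{Main obstacle.} The delicate point is the degree bookkeeping at $\pdeg v = -2$: unlike in Lemma~\ref{lem:SVM}, where after $e(z)$ acts one lands in principal degree $0$ (spanned by $\ket{\emptyset}_V$) and the $\delta$-function argument is immediate, here $e(z)$ can lower $v$ from degree $-2$ to degree $-1$ without hitting $\ket{\emptyset}_V$, producing a term of principal degree $-1$ in the first tensor factor that is \emph{not} in $S$. One must verify that every such term still carries enough vanishing — via a residual $\delta$-function in \eqref{Mac-e} or via the single zero of $\Psi(z)$ against the half-current $\psi^+(z)w$ — to be killed. Making this cancellation airtight, by carefully tracking which box is removed and matching its content $q^{-\square}u$ against the factors of $\Psi(z)$, is where the real work lies; everything else is a transcription of the proof of Lemma~\ref{lem:SVM}.
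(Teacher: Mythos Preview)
Your approach is the same as the paper's, and the key mechanism you identify — that the $\delta$-functions produced by removing a box from $(2)$ or $(1,1)$ sit precisely at the zeroes $q_1^{-1}u$, $q_3^{-1}u$ of $\Psi(z)$, so that the factor $\psi^+(z)w$ kills the offending term — is exactly the point. However, your write-up is confused about \emph{which} component is at stake, and this confusion makes what should be a two-line check look like an open problem.

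For $x=e_n$: when $\pdeg v=-2$, the operator $e(z)$ raises principal degree by exactly one, so $e(z)v$ lands \emph{entirely} in degree $-1$; there is no $\ket{\emptyset}$-component at all. Your ``first case'' (the $\ket{\emptyset}_V$-component) is vacuous, and your ``obstacle'' (the $\ket{\square}$-component) is the \emph{only} case. Concretely, $e(z)\ket{(2)}\in\C\,\delta(q_1^{-1}u/z)\ket{\square}$ and $e(z)\ket{(1,1)}\in\C\,\delta(q_3^{-1}u/z)\ket{\square}$; since $\psi^+(q_1^{-1}u)w=\psi^+(q_3^{-1}u)w=0$ by Proposition~\ref{prop:poly-cur}, the term $\bigl(e(z)v\otimes\psi^+(z)w\bigr)_>$ vanishes outright. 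That is the whole argument — there is nothing further to make ``airtight.'' (Also, your sentence listing $\ket{\square}$ among the degree-$(-2)$ vectors is a slip.)

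For $x=e^\perp_{-r}$ and $x=[e_0,e_2]$: your claim that ``the only terms that could escape $S$ are those whose first component is proportional to $\ket{\emptyset}_V$'' is wrong here, since $S$ now cuts at degree $\le-2$ rather than $\le-1$; terms with first component $\ket{\square}$ also escape. The fix is the same: in the expansion \eqref{XYXY}, any term whose first component lands in degree $\ge-1$ must contain at least one factor $e(z_{j})$ that carries $v$ across the threshold from degree $-2$ to degree $-1$, hence produces a $\delta(q_s^{-1}u/z_{j})$ with $s\in\{1,3\}$; the matching factor $\psi^+(z_{j})$ (appearing as $Y^\pm$) in the second component then vanishes at that point. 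So the argument of Lemma~\ref{lem:SVM} goes through verbatim once you track the correct threshold.
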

\begin{proof}
Obviously $S$ is invariant under the action of $\B^\perp_-\B^\perp_0$. 
We show that $\B^\perp_+ S\subset S$. 
Let $v\in \F(u)_{\lle -2}$ and $w\in M(u)$. Then we have
\begin{align}
e_>(z)(v\otimes w)=(e(z)v\otimes \psi^+(z)w)_{>}+v\otimes e_{>}(z)w\,.
\label{e-v-w}
\end{align}
The second term in the right hand side belongs to $S$.
The first term also does unless $\pdeg v=-2$. 
If this is the case, then
$e(z)v\in (\C\delta(q_1^{-1}u/z)+\C\delta(q_3^{-1}u/z))\ket{\square}$. 
On the other hand, by Lemma \ref{lem:poly-cur} and 
the definition of $M(u)$,  
$\psi^+(q_3^{-1}u)=\psi^+(q_1^{-1}u)=0$ hold on $M(u)$. 
Therefore the first term vanishes and \eqref{e-v-w} belongs to $S$. 

By the same argument as in the proof of 
Lemma \ref{lem:SVM}, 
we see also that $e^\perp_{-r}(v\otimes w)\in S$ for $r>0$
and $[e_0,e_2](v\otimes w)\in S$. 
\end{proof}

\medskip
\begin{thm}\label{thm:N}
Module $N^+(u)$ is $2$-finite. 
Its $q$-character is given by 
\begin{align}
\chi_q(N^+(u))= \chi_0^2 \left(\frac{\prod_{i=1}^3 X_{q_i^{-1}u}}{X_u}
+t^{-1} \frac{\prod_{i=1}^3 X_{q_iu}}{X_{u}}\right)=\frac{\prod_{i=1}^3 X_{q_i^{-1}u}}{X_u}
\left(1+t^{-1} A_u^{-1}\right) \chi_0^2 \,.
\label{chiNM}
\end{align}
\end{thm}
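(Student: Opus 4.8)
The plan is to run, for $N^+(u)$, the same strategy that was used above for $M^+(u)$ in Proposition~\ref{prop:1-finite}, now based on the module $V=\F(u)\otimes_\Delta M(u)$ and the submodule $S=\F(u)_{\lle-2}\otimes M(u)$ of Lemma~\ref{lem:N-con}. First I would analyze the quotient $V/S$. As a vector space it is $\bigl(\F(u)_0\oplus\F(u)_{-1}\bigr)\otimes M(u)$, that is, two copies of $M(u)$: one generated by the image of $\ket{\emptyset}_V\otimes w_{M(u)}$ and one by $\ket{\square}\otimes w_{M(u)}$. Using Lemma~\ref{lem:poly-cur} (the polynomiality of $\psi^+(z),e_>(z),f_{\ge}(z)$ on $M(u)$, which has polynomial highest $\ell$-weight $\Psi(z)=(1-q_3^{-1}u/z)(1-q_1^{-1}u/z)$) together with the coproduct formulas \eqref{copro1}--\eqref{copro4}, I would compute the $\B^\perp$-action on $V/S$ explicitly. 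The highest $\ell$-weight vector $\ket{\emptyset}_V\otimes w_{M(u)}$ carries $\ell$-weight $q\,(1-q_2^{-1}u/z)\cdot\Psi(z)$; multiplying $e_0$ (or the appropriate $e_n$) moves it to the degree $-1$ line, and one checks that, modulo $S$, the cyclic submodule generated by $\ket{\emptyset}_V\otimes w_{M(u)}$ has as its only composition factors the two simple modules with highest $\ell$-weights obtained from the two ``rows'' of plane partitions of $\F(u)$ restricted to $\{0,-1\}$.

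Next I would identify these two factors. The top one is $L\bigl(q\,(1-q_2^{-1}u/z)\,\Psi(z)\bigr)$; applying $s$ and $\tau$ as in \eqref{s-tau} and comparing with $\Psi_{N^+(u)}(z)=\dfrac{\prod_{i=1}^3(1-q_i^{-1}u/z)}{1-u/z}$, I expect, after the rescalings dictated by the grading operator $\Phi$ of Lemma~\ref{lem:gradation} applied to $M(u)$ (which is $1$-finite by Corollary~\ref{M-1}), to land exactly on a shift of $N^+(u)$. More precisely: $N^+(u)$ is the subquotient of $V/S$ cut out by taking the submodule generated by the degree-$0$ vector and quotienting by the image of the degree-$(-1)$ vector's submodule. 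The key computation is that $f(z)$ sends $\ket{\emptyset}_V\otimes w_{M(u)}$ to $\ket{\square}\otimes w_{M(u)}$ with a coefficient proportional to $\delta(u/z)$ times a rational factor coming from \eqref{Mac-f} and $\Psi(z)$, and that this map is an isomorphism onto the ``degree $-1$'' piece as a module over $\B^\perp_0\B^\perp_+$; this forces the structure.

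Then the $q$-character follows. Just as in Proposition~\ref{prop:1-finite}, I would choose a basis of each $\ell$-weight space of $N^+(u)$ refining the grading $N^+(u)=\oplus_m N^+(u)[m]$ furnished by Proposition~\ref{prop:gradingM} (applicable once $N^+(u)$ is realized as a subquotient of modules with polynomial highest $\ell$-weight; alternatively one uses the grading on $M(u)$ and $\F(u)$ directly). In this basis $\overline{\psi}^+(z)=\Psi_{N^+(u)}(z)^{-1}\psi^+(z)$ is upper-triangular, and by \eqref{psiM} its diagonal entries are $1$ on the degree-$0$ part and a single other value $\bm$ on the degree-$(-1)$ part; computing that value from the explicit action on $\ket{\square}\otimes w_{M(u)}$ gives $t^{-1}A_u^{-1}$ times the leading monomial, i.e. exactly $\chi_q(N^+(u))=\dfrac{\prod_{i=1}^3X_{q_i^{-1}u}}{X_u}\bigl(1+t^{-1}A_u^{-1}\bigr)\chi_0^2$, the two factors of $\chi_0$ coming from the two copies of $M^+$ inside $M(u)$ via Corollary~\ref{q char 1}. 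Having two distinct eigenvalues of $\psi^+(z)$ and no more (by the triangularity argument) is precisely $2$-finiteness.

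\textbf{Main obstacle.} The hard part will be pinning down the precise module structure of $V/S$: showing that the cyclic submodule generated by $\ket{\emptyset}_V\otimes w_{M(u)}$ in $V/S$ has $N^+(u)$ (up to a grading shift) as the relevant subquotient, rather than something larger, and that the degree-$(-1)$ piece contributes exactly the single extra monomial $t^{-1}A_u^{-1}$ with multiplicity matching $\chi_0^2$. This requires the careful bookkeeping of the coproduct-induced action on $\F(u)_0\oplus\F(u)_{-1}$ tensored with $M(u)$, in particular controlling how $f_0$ (and the currents $e^\perp,f^\perp$) interact with the polynomial factor $\Psi(z)$ — the same kind of delicate cancellation seen in the proofs of Lemma~\ref{lem:SVM} and Lemma~\ref{lem:N-con}. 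Once that structure is established, the character comparison and the triangularity of $\overline{\psi}^+(z)$ are routine.
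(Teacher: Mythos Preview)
Your setup is the same as the paper's --- work with $V/S$ where $V=\F(u)\otimes_\Delta M(u)$ and $S=\F(u)_{\lle-2}\otimes M(u)$ --- but your picture of the module structure of $V/S$ is off, and this makes the plan more complicated than necessary.

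You describe $N^+(u)$ as a proper \emph{subquotient} of $V/S$: generate a submodule from $\ket{\emptyset}\otimes w_{M(u)}$, then quotient by a further submodule, and speak of two composition factors. In fact the entire quotient $V/S$ is already irreducible and equals $\tau_{q^{-1}}\bigl(N^+(u)\bigr)$ on the nose. The paper proves irreducibility directly: writing $N_0=\C\ket{\emptyset}\otimes M(u)$ and $N_1=\C\ket{\square}\otimes M(u)$, one computes that the generators of $\B^\perp_+$ act on each $N_i$ essentially through their action on $M(u)$ (equations \eqref{x-phi1}, \eqref{x-phi2}), and that $f_n$ acts on each $N_i$ through a conjugate of its action on $M(u)$ (equations \eqref{y-phi1}, \eqref{y-phi2}). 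From this and the irreducibility of $M(u)$ (Proposition~\ref{prop:irred-tensor}), any non-zero submodule $W$ must contain $\ket{\emptyset}\otimes w_{M(u)}$ or $\ket{\square}\otimes w_{M(u)}$; one checks each implies the other, and then $W=N_0\oplus N_1$. So the ``main obstacle'' you flag --- isolating the right subquotient --- disappears once irreducibility is established.

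For the $q$-character, your plan to invoke a grading and the triangularity of $\overline{\psi}^+(z)$ is both unnecessary and circular: Proposition~\ref{prop:gradingM} applies only to modules with \emph{polynomial} highest $\ell$-weight, which $N^+(u)$ does not have, and the grading on $N^+(u)$ is Proposition~\ref{prop:gradingN}, proved \emph{after} the present theorem using its conclusion. The direct route is simpler. Since $M(u)$ is $1$-finite (Corollary~\ref{M-1}), $\psi^+(z)$ has the single generalized eigenvalue $\Psi_M(z)=(1-q_3^{-1}u/z)(1-q_1^{-1}u/z)$ on $M(u)$. The coproduct gives $\psi^+(z)\bigl(\ket{*}\otimes w\bigr)=a_i(z)\,\ket{*}\otimes\psi^+(z)w$, where $a_0(z),a_1(z)$ are the $\psi^+(z)$-eigenvalues on $\ket{\emptyset},\ket{\square}\in\F(u)$. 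Thus $N_0,N_1$ are precisely the two generalized eigenspaces of $\psi^+(z)$ with eigenvalues $a_i(z)\Psi_M(z)$; this is $2$-finiteness. Each $N_i$ is a copy of $M(u)$ as a vector space, so contributes $\chi_0^2$ to the character by Corollary~\ref{q char 1}, and the two monomials in \eqref{chiNM} are read off from $a_0(z)\Psi_M(z)$ and $a_1(z)\Psi_M(z)$.
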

\begin{proof}
Set $N=V/S$. As a linear space we have
$N=N_0\oplus N_1$, where 
$N_0=\C\ket{\emptyset}\otimes M(u)$,  
$N_1=\C\ket{\square}\otimes M(u)$ 
(we omit writing $\bmod\,  S$). 
Let $P_i:N\to N_i$ ($i=0,1$) be the projection. 

We have
\begin{align*}
&\psi^+(z)\bigl(\ket{\emptyset}\otimes w\bigr) 
=a_0(z)\ket{\emptyset}\otimes \psi^+(z) w\,,
\quad
\psi^+(z)\bigl(\ket{\square}\otimes w\bigr) 
=a_1(z)\ket{\square}\otimes \psi^+(z)w\,,
\end{align*}
where
\begin{align}
&a_0(z)=q\frac{1-q_2^{-1}u/z}{1-u/z}\,,
\quad 
a_1(z)=a_0(z)\prod_{i=1}^3\frac{1-q_iu/z}{1-q_i^{-1}u/z}\,.
\label{ai}
\end{align}
Hence $N_0,N_1$ are generalized eigenspaces of $\psi^+(z)$ with eigenvalues
\begin{align}
\Psi_0(z)=q\frac{\prod_{i=1}^3(1-q_i^{-1}u/z)}{1-u/z}\,, 
\quad
\Psi_1(z)=q\frac{\prod_{i=1}^3(1-q_iu/z)}{1-u/z}\,, 
\label{N-eigv}
\end{align}
respectively.

It is easy to see that for all $x=e_n$ ($n\ge1$),  
$e^\perp_{-r}$ ($r\ge2$) and $[e_0,e_2]$ we have
\begin{align}
&x\bigl(\ket{\emptyset}\otimes w\bigr) 
=\ket{\emptyset}\otimes x w\,,
\label{x-phi1}\\
&P_1 x\bigl(\ket{\square}\otimes w\bigr) 
=
\ket{\square}\otimes x w\,.
\label{x-phi2}
\end{align}
Similarly we compute
\begin{align*}
&f_{\gge}(z) \bigl(\ket{\square}\otimes w\bigr) 
=\bigl(\psi^-(z)\ket{\square}\otimes f(z)w\bigr)_{\gge} 
=\ket{\square}\otimes \bigl(a^{-}_1(z)f(z) w\bigr)_{\gge}\,,
\\
&P_0f_{\gge}(z) \bigl(\ket{\emptyset}\otimes w\bigr) 
=\bigl(\psi^-(z)\ket{\emptyset}\otimes f(z) w\bigr)_{\gge}
=\ket{\emptyset}\otimes \bigl(a^{-}_0(z)f(z) w\bigr)_{\gge}\,,
\end{align*}
where $a^{-}_i(z)$ denotes the expansion of the
rational function $a_i(z)$ at $z=0$. 
Let $\br{h_r}_{M}$ be the eigenvalue of $h_r$ on 
the highest weight vector $w_0$ of $M(u)$. 
Set $\bar{h}_{r,M}=h_r-\br{h_r}_{M}$ and 
\begin{align*}
&U_0
=\exp\bigl(\sum_{r=1}^\infty(1-q_2^r)\bar{h}_{r,M} u^{-r}\bigr)\,,
\quad 
U_1
=
U_0
\exp\bigl(-\sum_{r=1}^\infty \kappa_r\bar{h}_{r,M} u^{-r}\bigr)\,.
\end{align*}
Since $\bar{h}_{r,M}$'s are nilpotent, 
operators $U_i$
have a well defined action on $M(u)$, and  
we can write $a^{-}_i(z)f(z)=q^{-1} U_i f(z) U_i^{-1}$. 
Namely, for $y=f_n$ with $n\ge0$ we obtain
\begin{align}
&y \bigl(\ket{\square}\otimes w\bigr) 
=q^{-1}\ket{\square}\otimes U_1 yU_1^{-1} w\,,
\label{y-phi1}
\\
&P_0 y \bigl(\ket{\emptyset}\otimes w\bigr) 
=q^{-1}\ket{\emptyset}\otimes  U_0 yU_0^{-1} w\,. 
\label{y-phi2}
\end{align}
With a similar computation we have the same equations
for $y=\psi^{+,\perp}_r$ ($r>0$). 

Now we show that $N$ is irreducible. Let $W\subset N$ be a non-zero
$\B^\perp$ submodule. We have the decomposition into $\ell$-weight spaces
$W=W_0\oplus W_1$, $W_i=W\cap N_i$. 
Using \eqref{x-phi1}, \eqref{x-phi2}, and 
the irreducibility of $M(u)$ which follows from 
Proposition \ref{prop:irred-tensor}, we obtain either 
$\ket{\emptyset}\otimes w_0\in W_0$ 
or $\ket{\square}\otimes w_0\in W_1$.
It is easy to see that one implies the other, and hence both are satisfied. 
Since $U_i \B^\perp_-U_i^{-1} w_0=M(u)$, \eqref{y-phi1},\eqref{y-phi2}
imply that $W_i=N_i$, $i=0,1$. Hence $W=N$. 

Comparing the highest $\ell$-weight, we conclude that
$N=\tau_{q^{-1}}\bigl(N^+(u)\bigr)$. 
The assertion about the $q$-character is clear from 
\eqref{N-eigv} and the  proof above. 
\end{proof}

\medskip

\noindent{\it Remark.}\quad
Formula \eqref{chiNM} is equivalent to an identity in the Grothendieck ring $\Rep\, \B^\perp$,  
\begin{align}
[N^+(u)][M^+(u)]=\prod_{i=1}^3[M^+(q_i^{-1}u)]+
\Bigl(\prod_{i=1}^3[M^+(q_iu)]\Bigr)\{-1\}\,.
\label{NM-MMM}
\end{align}
Here $[V]$ means the class of $V$ in $\Rep\,\B^\perp$, and 
$V\{d\}$ stands for the module where $D^\perp$ acts as $q^dD^\perp$. 
The short exact sequence corresponding to \eqref{NM-MMM} reads
\begin{align*}
0\longrightarrow \bigl(\otimes_{i=1}^3 M^+(q_i u)\bigr)\{-1\}
\longrightarrow N^+(u)\otimes M^+(u)
\longrightarrow \otimes_{i=1}^3 M^+(q_i^{-1}u)
\longrightarrow 0\,
\end{align*}
where $\otimes=\otimes_{\Delta^\perp}$. 
\qed
\medskip

We show that the module $N^+(u)$ also possesses 
the grading similar to the one established in 
Proposition \ref{prop:gradingM} for modules with 
polynomial highest $\ell$-weight.  

\begin{prop}\label{prop:gradingN}
Let $N^+(u)=N_0\oplus N_1$ be the decomposition into $\ell$-weight
spaces, where $N_0$ corresponds to the highest $\ell$-weight. 
Then there exist gradings 
$N_0=\oplus_{m=0}^\infty N_0[m]$, 
$N_1=\oplus_{m=1}^\infty N_1[m]$
as vector space, 
with the following properties:
\begin{align}
&x N_0[m]\subset N_0[m-\hdeg x]\quad (x\in \B^\perp_+)\,,
\label{xN0}
\\
&x N_1[m]\subset N_1[m-\hdeg x]+\sum_{j=0}^{m}N_0[j]\quad (x\in \B^\perp_+)\,,
\label{xN1}\\
&\overline{\psi}^+_{r,i} N_i[m]\subset N_i[m-r]\quad (r>0,\ i=0,1)\,,
\label{psiN}\\
&y N_0[m]\subset \sum_{j=0}^{m-\hdeg y-2\pdeg y}N_0[j]
+ \sum_{j=1}^{m-2\pdeg y-1}N_1[j]\quad (y\in \B^\perp_-)\,,
\label{yN0}\\
&y N_1[m]\subset  \sum_{j=1}^{m-\hdeg y-2\pdeg y}N_1[j]
\quad (y\in \B^\perp_-)\,.
\label{yN1}
\end{align}
Here $x,y$ are assumed to be homogeneous, and 
\begin{align*}
\Psi_i(z)^{-1}\psi^+(z)=\sum_{r\ge0}\overline{\psi}^{+}_{r,i}z^{-r}\,,
\end{align*}
where $\Psi_i(z)$ are given by \eqref{N-eigv}.
\end{prop}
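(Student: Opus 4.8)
The plan is to construct the gradings on $N_0$ and $N_1$ by transporting the grading on $M(u)$ through the explicit realization used in the proof of Theorem \ref{thm:N}. Recall that $N^+(u)=N/S$ where $N=V/S$, $V=\F(u)\otimes_\Delta M(u)$ and $N_0=\C\ket{\emptyset}\otimes M(u)$, $N_1=\C\ket{\square}\otimes M(u)$ modulo $S$. Since $M(u)=M^+(q_3^{-1}u)\otimes_{\Delta^\perp}M^+(q_1^{-1}u)$, by Proposition \ref{prop:gradingM} (applied to $M(u)\cong L(\Psi)$ with $\Psi(z)=(1-q_3^{-1}u/z)(1-q_1^{-1}u/z)$, so $N=2$ there) the module $M(u)$ carries a grading $M(u)=\oplus_{m\ge0}M(u)[m]$ satisfying \eqref{xM}--\eqref{M[m]-princ}. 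I would then \emph{define} $N_0[m]=\ket{\emptyset}\otimes M(u)[m]$ and $N_1[m]=\ket{\square}\otimes M(u)[m-1]$; the shift by $1$ in the index for $N_1$ is dictated by the fact that $\ket{\square}$ has principal degree $-1$ and by the requirement $N_1=\oplus_{m\ge1}N_1[m]$.

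The next step is to verify \eqref{xN0}--\eqref{yN1} by reading off the action of generators from the formulas in the proof of Theorem \ref{thm:N}. For \eqref{xN0} and the first term of \eqref{xN1} I would use \eqref{x-phi1}, \eqref{x-phi2}: the generators $e_n$ $(n\ge1)$, $e^\perp_{-r}$ $(r\ge2)$, $[e_0,e_2]$ act on the $M(u)$-factor exactly as in $M(u)$, so \eqref{xM} transports directly (with the index shift in the $N_1$ case absorbing the degree bookkeeping). The correction term $\sum_{j=0}^m N_0[j]$ in \eqref{xN1} comes from $P_0 x(\ket{\square}\otimes w)$, which lands in $\ket{\emptyset}\otimes M(u)$; here the point is only that this lies in $N_0$ with index bounded by $m$, which is automatic since $M(u)[j]=0$ for $j>$ the relevant bound — actually one must check the index bound is simply $\le m$, which follows because the extra contribution cannot raise the $\Phi$-eigenvalue. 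For \eqref{psiN} I would use the explicit $\psi^+(z)$ action together with \eqref{psiM}. For \eqref{yN0}, \eqref{yN1} I would use \eqref{y-phi1}, \eqref{y-phi2}, noting that $y\in\B^\perp_-$ acts as $q^{-1}\ket{\square}\otimes U_1 y U_1^{-1}w$ (resp.\ the $P_0$-projection on the $\ket{\emptyset}$-component), where $U_i$ are the exponentials of $\bar h_{r,M}$ built in the proof; since $\bar h_{r,M}M(u)[m]\subset M(u)[m-r]$ by the Remark after Proposition \ref{prop:gradingM}, conjugation by $U_i$ only \emph{lowers} the grading, so the bound \eqref{yM} for $M(u)$ (with $N=2$, giving the $-2\pdeg y$ terms) carries over, and the $P_0$-component in \eqref{yN0} produces the extra $\sum_{j=1}^{m-2\pdeg y-1}N_1[j]$ term.

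I expect the main obstacle to be the precise bookkeeping of the index shifts and the range of the summations — in particular, confirming that the $P_0 x$ correction in \eqref{xN1} and the $P_0 y$ correction in \eqref{yN0} really land in the stated index ranges, and that conjugation by $U_i=\exp(\sum_r(\cdots)\bar h_{r,M}u^{-r})$ does not spoil the estimate \eqref{yM} inherited from $M(u)$. The conceptual content is entirely in Propositions \ref{prop:gradingM} and the structure results of Theorem \ref{thm:N}; what remains is a careful but routine transport-of-structure argument. One clean way to organize it is to first establish, as in the proof of Proposition \ref{prop:gradingM}, that there is a diagonalizable operator $\Phi_N\in\End N^+(u)$ built from $\Phi$ on $M(u)$ (acting as $\Phi$ on the $M(u)$-factor and as $q_2^0$ resp.\ $q_2^1$ on $\ket{\emptyset}$ resp.\ $\ket{\square}$), prove the four intertwining relations \eqref{grad1}-type for $\Phi_N$ against $\B^\perp_+$, $\overline\psi^+(z)$, and then read \eqref{xN0}--\eqref{yN1} off as before; this avoids having to repeat the irreducibility/PBW argument and isolates the one genuine computation, namely that $\B^\perp_-$ acts with the stated degree spread. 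I would present it in that order: define $\Phi_N$, state and check its intertwining properties, then derive the five displayed inclusions.
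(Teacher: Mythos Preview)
Your approach is exactly the paper's: define $N_0[m]=\ket{\emptyset}\otimes M(u)[m]$ and $N_1[m]=\ket{\square}\otimes M(u)[m-1]$ via Proposition \ref{prop:gradingM}, then read off \eqref{xN0}--\eqref{yN1} from \eqref{xM}--\eqref{yM} combined with the formulas \eqref{x-phi1}--\eqref{y-phi2}. One slip: the extra $N_1$ summand in \eqref{yN0} comes from the $P_1$ (not $P_0$) projection of $y$ acting on $\ket{\emptyset}\otimes w$; the paper handles these cross terms---especially $P_1\psi^{+,\perp}_r N_0[m]$---by an explicit multiple-commutator expansion, and this is the one place where the ``bookkeeping'' is genuinely more than a one-line check.
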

\begin{proof}
As in the proof of Theorem \ref{thm:N}, 
We use the realization of $N^+(u)$ as a quotient of 
$\F(u)\otimes_{\Delta} M(u)$, 
$M(u)=M^+(q_3^{-1}u)\otimes_{\Delta^\perp}M^+(q_1^{-1}u)$.
We have $N_0=\ket{\emptyset}\otimes M(u)$
and 
$N_1=\ket{\square}\otimes M(u)$. 
Using the grading $M(u)=\oplus_{m=0}^\infty M[m]$ 
in Proposition \ref{prop:gradingM}, 
we define 
\begin{align*}
N_0[m]= \ket{\emptyset}\otimes M[m]\,,
\quad 
N_1[m]= \ket{\square}\otimes M[m-1]\,.
\end{align*}
Then \eqref{psiN} holds by definition. 
From \eqref{xM}, \eqref{x-phi1} and \eqref{x-phi2} we obtain 
$P_i x N_i[m]\subset N_i[m-\hdeg x]$ for $i=0,1$ and 
$x=e_n,e^\perp_{-r},[e_0,e_2]$. 
Take $w\in M[m-1]$. Using $e(z)\ket{\square}=\alpha\delta(u/z)\ket{\emptyset}$
$(\alpha\in\C^\times)$ we compute
\begin{align*}
P_0\, \Delta(e_{>}(z))\bigl(\ket{\square}\otimes w\bigr)
=\alpha \frac{u/z}{1-u/z}\ket{\emptyset}\otimes \psi^+(u)w\,.
\end{align*}
Using \eqref{psiM2}, we get $P_0e_nN_1[m]\subset \sum_{j=0}^m N_0[j]$. 
We have also
\begin{align*}
P_0\, \Delta\bigl((\ad e_0)^{r-1}e_1\bigr)\bigl(\ket{\square}\otimes w\bigr)
&\in \sum_{j=1}^{r-1}\sum_{p\ge1}\C \ket{\emptyset}\otimes
(\ad e_0)^{j-1}\ad \psi^+_p(\ad e_0)^{r-1-j}e_1 \cdot w
\\
&+\C
\ket{\emptyset}\otimes \sum_{p\ge1} (\ad e_0)^{r-1}\psi^+_p \cdot w\,,
\end{align*}
which implies $P_0 e^\perp_{-r}N_1[m]\subset\sum_{j=0}^{m-1}N_0[j]$. 
Arguing similarly for $[e_0,e_2]$, 
we find that \eqref{xN0}, \eqref{xN1} are satisfied for  $x\in \B^\perp_+$. 
The general case $x\in \B^\perp_+$ follows from this by applying
$\ad h_1$ and using \eqref{psiN}. 

It remains to show \eqref{yN0} and \eqref{yN1}. 
From \eqref{yM}, 
\eqref{y-phi1} and \eqref{y-phi2}, we obtain
\begin{align*}
P_i y N_i[m]\subset \sum_{j=0}^{m-\hdeg y-2\pdeg y}N_i[j]
\quad (i=0,1,\ y=f_n,\psi^{+,\perp}_r).
\end{align*}
Also, by a computation similar to the one for $P_0e_nN_1[m]$ given above,  
we can treat the case $P_1 f_n N_0[m]$.
Let us consider $P_1\psi^{+,\perp}_rN_0[m]$. 
For $w\in M[m]$ we are to compute 
\begin{align*}
P_1[\Delta f(z_1),\cdots [\Delta f(z_{r-1}),\Delta f(z_r)]\cdots] 
\bigl(\ket{\emptyset}\otimes w\bigr)
\end{align*}
and take the coefficient of $z_1^1z_2^0\cdots z_{r-1}^0z_r^{-1}$. 
Writing $\ad X=L(X)-R(X)$, where $L(X)$ (resp. $R(X)$)
signifies the left (resp. right) multiplication, 
we obtain sums of terms
\begin{align*}
&\bigl(\prod_{k=j+1}^ra_0^-(z_k)-\prod_{k=j+1}^ra_1^-(z_k)\bigr) 
\ket{\square}\otimes
A_1\cdots A_{j-1} \ad f(z_{j+1})\cdots \ad f(z_{r-1})
f(z_{r})\cdot w\quad (1\le j\le r-1),
\end{align*}
and $\ket{\square}\otimes A_1\cdots A_{r-1}1\cdot w$ 
for $j=r$. 
Here $a_i^-(z)$ are the expansions of $a_i(z)$ in \eqref{ai} at $z=0$, and 
$A_k=a^-_1(z_k)L(f(z_k))-a^-_0(z_k)R(f(z_k))$. 
From these we can 
check that $P_1\psi^{+,\perp}_rN_0[m]\subset \sum_{j=1}^{m+2r-1}N_1[j]$. 
\end{proof}

\subsection{More on $q$-characters}\label{q char 2}
We discuss the combinatorics of $q$-characters.

The following lemma, which explains the role of $A_a$, 
is a direct analog of Lemma 3.1 in \cite{Y}.
\begin{lem}\label{Y}
Let $V\in\Ob \cO_{\E}$ and let $\Psi,\Phi$ be $\ell$-weights of $V$. 
Assume that $\bigl(f(z) V_{(\Psi,n)}\bigr)\cap V_{(\Phi,m)}\neq 0$. 
Then $m=n-1$, and 
there exists an $a\in\C^\times$ such that $\bm(\Phi)=\bm(\Psi)A^{-1}_a$. 
Moreover, there exist bases $\{v_k\}$ and $\{w_l\}$ of the
$\ell$-weight spaces $V_{(\Psi,n)}$ and $V_{(\Phi,m)}$, respectively, 
such that 
\begin{align*}
f(z)v_k=\sum_l P_{k,l}\bigl(\partial_a\bigr)\delta(a/z) w_l+\cdots\,.
\end{align*}
Here $P_{k,l}(\partial_a)$ is a polynomial in $\partial_a=\partial/\partial a$ 
of degree at most $k+l-2$, and $\cdots$ stands for a sum of 
terms which belong to $\ell$-weight spaces other than $V_{(\Phi,m)}$. 
\end{lem}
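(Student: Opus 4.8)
The plan is to analyze the action of $f(z)$ on the finite-dimensional $\ell$-weight space $V_{(\Psi,n)}$ using the defining bilinear relations between $\psi^\pm(z)$ and $f(z)$. First I would observe that $f(z)$ lowers the principal degree by $1$ and that $f_m$ has homogeneous degree $m$, so if $f(z)V_{(\Psi,n)}$ has a nonzero component in $V_{(\Phi,m)}$ then automatically $m=n-1$ from the $D^\perp$-grading. The heart of the matter is to pin down the relation between $\Phi$ and $\Psi$ and the pole structure. The key input is the relation $g(w,z)\psi^+(z)f(w)+g(z,w)f(w)\psi^+(z)=0$ (with $C=1$), which on the generalized eigenspace $V_{(\Psi,n)}$, where $\psi^+(z)$ acts as $\Psi(z)$ up to nilpotents, forces $f(w)$ applied to such a vector to intertwine $\Psi^+(w)$ with $\Phi^+(w)$ via the ratio $g(w,z)/g(z,w)$ evaluated appropriately; tracking the rational function $\Phi(z)/\Psi(z)$ this should come out to be exactly $A_a^{-1}$ for a single $a$, i.e. $\bm(\Phi)=\bm(\Psi)A_a^{-1}$, where $a$ is the location of the delta-function support of the relevant matrix coefficient of $f(z)$.

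Next I would set up the nilpotent structure carefully. On $V_\Psi$ the operators $\psi^+_j-\Psi^+_j$ are nilpotent, so I can choose a Jordan-type basis $\{v_k\}$ of $V_{(\Psi,n)}$ adapted to the filtration by powers of these nilpotents — concretely, ordering so that larger $k$ sits deeper in the nilpotent filtration — and similarly $\{w_l\}$ for $V_{(\Phi,m)}$. The bilinear relation, when expanded in the nilpotent parts, becomes a recursion: the leading (semisimple) part of $f(z)$ between the two eigenspaces is supported on a single delta function $\delta(a/z)$ (since $\Phi(z)/\Psi(z)=A_a^{-1}$ has a specific pole, the residue localizes $z$ at $a$), and the corrections coming from the nilpotent parts of $\psi^+$ produce derivatives $\partial_a$ of that delta function. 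Counting how many times a nilpotent correction can be applied gives the degree bound: a correction on the source side contributes up to $k-1$ derivatives and on the target side up to $l-1$, so the total polynomial $P_{k,l}(\partial_a)$ has degree at most $k+l-2$. The terms landing in other $\ell$-weight spaces are absorbed into the $\cdots$.

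The main obstacle, I expect, will be making the derivative-counting argument rigorous: one has to show that the delta function $\delta(a/z)$ and its $\partial_a$-derivatives span exactly the space of distributions that can appear, and that the nilpotency depth of a basis vector $v_k$ controls the order of the $\partial_a$ that multiplies it. This requires being careful about the precise form of the bilinear relation $g(w,z)\psi^+(z)f(w)+g(z,w)f(w)\psi^+(z)=0$: one divides by $g(z,w)$, expands $g(w,z)/g(z,w)$ near $z=a$, and uses that this ratio has a simple zero or pole structure governed by $A_a$. The identity $\delta(a/z) P(z/a) = \delta(a/z)P(1)$ modulo derivatives, and more precisely $z\,\partial_z$ acting on $\delta(a/z)$ equals $a\,\partial_a$ up to sign, lets one convert multiplication by rational functions vanishing/blowing up at $a$ into differential operators in $a$. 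Since the claim is the exact analog of Lemma 3.1 of \cite{Y}, I would follow that argument closely, checking that the toroidal structure function $g$ plays the same role there as the relevant factor does in the quantum affine case, and that the only change is $A_a$ now being a product over $s=1,2,3$ rather than a single factor.

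I would then conclude by noting that the $m=n-1$ statement and the existence of some $a$ with $\bm(\Phi)=\bm(\Psi)A_a^{-1}$ are forced purely by the semisimple part, while the statement about bases $\{v_k\},\{w_l\}$ and the degree bound on $P_{k,l}$ follows from the recursion on the nilpotent filtration; the argument applies verbatim to $V\in\Ob\cO_\E$ since finite-dimensionality of weight spaces is all that is used.
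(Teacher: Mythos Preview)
Your proposal is correct and matches the paper's approach: the paper does not give an independent proof but simply states that the lemma is a direct analog of Lemma~3.1 in \cite{Y}, and your sketch follows exactly that argument, with the only modification being that the structure function $g(z,w)$ replaces the quantum affine factor so that $A_a$ is now a product over $s=1,2,3$.
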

A similar lemma holds for the action of $e(z)$ 
with the replacement of $A^{-1}_a$ by $A_a$ and of equation $m=n-1$ by $m=n+1$. 
\medskip

In the following, for $\Psi(z)=p(z)/q(z)\in \rg_{\B^\perp}$ with $p(z),q(z)\in\C[z^{-1}]$,  
we set $d(\Psi)=\deg_{z^{-1}} p-\deg_{z^{-1}} q$. 
\begin{lem}\label{lem:q-char V}
Let $\Psi\in\rg_{\B^\perp}$ and $V=L(\Psi)$. 
Then the $q$-character of $V$ has the form
\begin{align}\label{1+A}
\chi_q(V)=\bm(\Psi)(1+\sum_i {\bs m_i})\,\chi_0^{\max(d(\Psi),0)},
\end{align} 
where 
each monomial $\bs m_i$ is a product of $t^{-1}A^{-1}_a$'s.
\end{lem}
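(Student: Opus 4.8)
The plan is to prove Lemma \ref{lem:q-char V} by reducing the statement to two ingredients: the structure of $\chi_q$ forced by Lemma \ref{Y}, and the computation of the ``leading'' part of $\chi_q(V)$ coming from the top of the principal grading. First I would observe that $V=L(\Psi)$ is generated by a highest $\ell$-weight vector $v_0$ with $\psi^+(z)v_0=\Psi(z)v_0$ and $D^\perp v_0=v_0$, so the monomial $\bm(\Psi)\,t^0$ occurs in $\chi_q(V)$ with coefficient $1$. Since $\B^\perp_-$ strictly lowers the principal degree (the generators $f_n$, $h^\perp_r$, etc.\ all have $\pdeg\le 0$, and those spanning $\B^\perp_-$ have $\pdeg<0$ or act through $\psi^+$), every other $\ell$-weight $(\Phi,n)$ of $V$ has $n<0$. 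Applying Lemma \ref{Y} (the $f(z)$ version) repeatedly along a path from $v_0$ to any vector in $V_{(\Phi,n)}$, each step multiplies the monomial by some $t^{-1}A_a^{-1}$; hence $\bm(\Phi)t^n=\bm(\Psi)\prod t^{-1}A_{a_j}^{-1}$. This already gives $\chi_q(V)=\bm(\Psi)\bigl(1+\sum_i \bs m_i\bigr)$ with each $\bs m_i$ a product of $t^{-1}A_a^{-1}$'s, establishing membership in $\cX$ as promised after the definition of $\chi_q$ — except that I have not yet accounted for the factor $\chi_0^{\max(d(\Psi),0)}$, which records a multiplicity, not merely a set of monomials.

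The second, more substantive step is to pin down the multiplicity $\chi_0^{\max(d(\Psi),0)}$. Here I would split into cases according to the sign of $d(\Psi)=\deg_{z^{-1}}p-\deg_{z^{-1}}q$ where $\Psi=p/q$. When $d(\Psi)\le 0$, I claim the coefficient of $\bm(\Psi)t^0$ is exactly $1$ and there is no extra factor; this follows because $\chi_0^0=1$ and the bound is $\max(d(\Psi),0)=0$. When $d:=d(\Psi)>0$, the idea is to compare $V$ with a module of polynomial highest $\ell$-weight. Writing $\Psi=q(z)^{-1}\cdot \bigl(p(z)\bigr)$, I would factor the highest $\ell$-weight as $\Psi=\Psi_{\mathrm{pol}}\cdot\Psi_{\mathrm{rat}}$ where $\Psi_{\mathrm{pol}}(z)=\prod_{j=1}^{d}(1-u_j/z)$ collects $d$ of the zeros (so $\Psi_{\mathrm{pol}}$ is a degree-$d$ polynomial in $z^{-1}$) and $\Psi_{\mathrm{rat}}$ satisfies $d(\Psi_{\mathrm{rat}})=0$. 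By Corollary \ref{q char 1}, $\chi_q\bigl(L(\Psi_{\mathrm{pol}})\bigr)=\chi_0^{\,d}\prod_{j=1}^d X_{u_j}=\chi_0^{\,d}\,\bm(\Psi_{\mathrm{pol}})$, and since $\chi_q$ is multiplicative (Proposition \ref{inj}), $\chi_q\bigl(L(\Psi_{\mathrm{pol}})\otimes_{\Delta^\perp}L(\Psi_{\mathrm{rat}})\bigr)=\chi_0^{\,d}\bm(\Psi_{\mathrm{pol}})\chi_q\bigl(L(\Psi_{\mathrm{rat}})\bigr)$. It then remains to check that $L(\Psi)$ is a composition factor of this tensor product occurring with the right multiplicity, and that the $\chi_0^{\,d}$ prefactor is attributable precisely to $L(\Psi)$ and not to other composition factors with the same top $\ell$-weight — this is where Proposition \ref{prop:submod} (structure of submodules of $V\otimes_\Delta M$) and the grading Proposition \ref{prop:gradingM} enter, guaranteeing that the top $\ell$-weight space of $L(\Psi_{\mathrm{pol}})$ is one-dimensional and that the $\chi_0$-worth of vectors of the same $\ell$-weight all survive to $L(\Psi)$.

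A cleaner route, which I would prefer to carry out, bypasses the factorization and instead uses directly the grading $M[m]$ of Proposition \ref{prop:gradingM} for the polynomial case and an extension of that argument. Concretely: fix the $\ell$-weight subspace $V_{(\Psi,n)}$ of $L(\Psi)$; the monomials $\bs m_i$ with $\bs m_i=1$ (i.e.\ those contributing to the $\bm(\Psi)$-isotypic part) all come from vectors annihilated by $e_>(z)$ modulo lower $\ell$-weights, and by the triangularity of $\overline{\psi}^+(z)=\Psi(z)^{-1}\psi^+(z)$ (cf.\ the proof of Proposition \ref{prop:1-finite}) the $\bm(\Psi)$-part of $\chi_q(V)$ equals the character of the ``$\overline{\psi}^+=1$'' subspace. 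I would then identify this subspace: it is a cyclic module over the subalgebra generated by $\overline{\psi}^+_n$ and $h^\perp_{-r}$, whose Hilbert series is exactly $\chi_0^{\,\max(d,0)}$ — the $d$ extra free directions correspond to the $d$ ``$\bp_{-n,l}$ with $0\le l<n$'' type generators that survive, exactly as in the proof of Proposition \ref{prop:presen-M}. The main obstacle will be precisely this last identification: showing that the multiplicity of the trivial monomial in $\chi_q(L(\Psi))$ is $\chi_0^{\,\max(d(\Psi),0)}$ and not something larger or smaller. I expect this requires combining the upper bound from a PBW-type spanning argument (as in Proposition \ref{prop:presen-M}) with a matching lower bound obtained either from the limit of the Macmahon module (Proposition \ref{limit} and its rescaled version in the Remark after Proposition \ref{prop:1-finite}) or from the explicit tensor-product realization; reconciling these two estimates for a general rational $\Psi$, rather than just the polynomial case already handled, is the delicate point.
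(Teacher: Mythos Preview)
Your first step has a real gap: Lemma~\ref{Y} is stated and proved only for objects of $\cO_\E$, where the full current $f(z)$ acts. For $\Psi\in\rg_{\B^\perp}\setminus\rg_\E$ the module $L(\Psi)$ is only a $\B^\perp$-module; the lowering algebra $\B^\perp_-$ is generated by $f_n$ ($n\ge0$) together with $\psi^{+,\perp}_r$ ($r\ge1$), and the latter are \emph{not} expressible inside $\B^\perp$ as polynomials in $f_{\ge}(z)$ (formula~\eqref{psi-ff} involves $f_{-1}\notin\B^\perp$). So you cannot conclude that every $\ell$-weight of $L(\Psi)$ differs from $\Psi$ by a product of $\af(a/z)^{-1}$'s merely by walking along $f$-edges and invoking Lemma~\ref{Y}. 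This also leaves your case $d(\Psi)\le0$ unproved: saying ``$\chi_0^0=1$'' does not establish the shape of the monomials $\bs m_i$.

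The paper's argument is shorter and bypasses this issue entirely. It never applies Lemma~\ref{Y} to a $\B^\perp$-module; instead it realizes $L(\Psi)$ as a subquotient of a tensor product whose $q$-character is already known in the required form. For $d(\Psi)\le0$ one uses a tensor product of Macmahon modules and negative fundamentals $M^-(u)$, whose $q$-characters \eqref{M q-char}, \eqref{M- q-char} are explicitly $\bm(\cdot)(1+\sum t^{-|\bla|}\prod A^{-1})$. For $d(\Psi)>0$ one writes $\Psi=\Psi_1\Psi_2$ with $\Psi_1\in\rg_\E$ and $\Psi_2$ polynomial of degree $d$, and works in $L(\Psi_1)\otimes_\Delta L(\Psi_2)$ --- note the order: the irreducible $\E$-module must come first for Proposition~\ref{prop:submod} to apply, which is opposite to your $L(\Psi_{\mathrm{pol}})\otimes_{\Delta^\perp}L(\Psi_{\mathrm{rat}})$. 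Proposition~\ref{prop:submod} then says every proper submodule has the form $V^{(0)}\otimes L(\Psi_2)$ with $V^{(0)}$ an $\ell$-weighted subspace not containing $v_0$; since $L(\Psi_2)$ is $1$-finite with $\chi_q=\bm(\Psi_2)\chi_0^{\,d}$, the $q$-character of \emph{any} subquotient automatically factors as (a sub-sum of $\chi_q(L(\Psi_1))$)$\,\times\,\bm(\Psi_2)\chi_0^{\,d}$. The $\chi_0^{\,d}$ factor for $L(\Psi)$ thus comes for free --- no multiplicity bookkeeping, no appeal to Proposition~\ref{prop:gradingM}, and no matching of upper and lower bounds. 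Your approach~1 is in spirit the paper's route, but you have reversed the tensor factors and then tried to repair a difficulty that, with the correct order, never arises.
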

\begin{proof}
If $d(\Psi)\le 0$, then $V$ is a subquotient of a
tensor product of several Macmahon modules $\cM(u,K)$ and negative 
fundamental modules $M^-(u)$. 
The $q$-characters of $\cM(u,K)$ and $M^-(u)$ 
have the required form, see \eqref{M q-char} and \eqref{M- q-char}. 
Hence the lemma is true in this case. 

If $d(\Psi)>0$, then $V$ is a subquotient of a
tensor product of several Macmahon modules with a 
module which has a polynomial highest $\ell$-weight.
Then the lemma follows from Proposition \ref{prop:submod}.
\end{proof}
We have the classification of $1$-finite modules.
\begin{prop}\label{1}
Let $\Psi\in\rg_{\B^\perp}$ and $V=L(\Psi)$. Then $V$ is 
$1$-finite if and only if $V$ has polynomial highest $\ell$-weight. 
\end{prop}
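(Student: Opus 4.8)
The plan is to prove the two implications separately. For the implication that a polynomial highest $\ell$-weight forces $1$-finiteness: if $\Psi\in\rg_{\B^\perp}\cap\C[z^{-1}]$, then $\Psi(z)=c\prod_{j=1}^N(1-u_j/z)$ with $c\in\C^\times$, and by Corollary \ref{q char 1} (after the rescaling $\tau_c$ of \eqref{s-tau} absorbing $c$) one has $\chi_q(L(\Psi))=\bm(\Psi)\chi_0^N$, so $\bm(\Psi)$ is the only $\ell$-weight and $\psi^+(z)$ has a single eigenvalue. For the converse, let $V=L(\Psi)$ be $1$-finite, so $\bm(\Psi)$ is its only $\ell$-weight. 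By Lemma \ref{lem:q-char V} — whose general term $\bm(\Psi)\bs m_i$ is a nontrivial product of $t^{-1}A^{-1}_a$'s, hence a new $\ell$-weight by algebraic independence of the $A_a$ — we get $\chi_q(V)=\bm(\Psi)\chi_0^{\max(d(\Psi),0)}$. If $d(\Psi)\le0$ this means $\dim V=1$; but on a one-dimensional $\B^\perp$-module all commutators act by $0$, while $[e_n,f_m]=\kappa_1^{-1}\psi^+_{n+m}$ for $n\ge1$, $m\ge0$ (these being elements of $\B^\perp$), so $\psi^+_k=0$ for $k\ge1$ and $\Psi$ is constant, hence polynomial. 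It remains to show that if $d(\Psi)=N>0$ then $\Psi$ has no pole in $\C^\times$.

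Suppose it does. Since $d(\Psi)>0$, $\Psi$ has more zeros than poles in $\C^\times$, so one pairs each pole (with multiplicity) with a distinct zero and factors $\Psi=\Theta\cdot\Psi_{\mathrm{pol}}$, where $\Theta$ is a product of Macmahon highest $\ell$-weights $K_j^{1/2}(1-K_j^{-1}b_j/z)/(1-b_j/z)$ carrying exactly the poles of $\Psi$ (so $\Theta\in\rg_\E$, $d(\Theta)=0$, $\Theta\ne1$) and $\Psi_{\mathrm{pol}}\in\rg_{\B^\perp}\cap\C[z^{-1}]$ has degree $N$; as $\Psi$ is in lowest terms, the poles $b_j$ of $\Theta$ are not zeros of $\Psi_{\mathrm{pol}}$. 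Put $M=L(\Psi_{\mathrm{pol}})$ and let $L(\Theta)$ be the simple $\E$-module in $\cO_\E$ with highest $\ell$-weight $\Theta$ (Lemma \ref{lem:simple-to-simple}). By Proposition \ref{prop:Delta}, $L(\Theta)\otimes_\Delta M$ is a $\B^\perp$-module; its vector $v_0\otimes w_0$ has $\ell$-weight $\Psi$, and by Proposition \ref{prop:submod} the submodule it generates is not proper (proper submodules look like $V^{(0)}\otimes M$ with $v_0\notin V^{(0)}$), so $L(\Theta)\otimes_\Delta M$ is a highest $\ell$-weight module with highest $\ell$-weight $\Psi$ and $L(\Psi)=(L(\Theta)\otimes_\Delta M)/(V^{(0)}_{\max}\otimes M)$ for the maximal such $V^{(0)}_{\max}$. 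Comparing $q$-characters by multiplicativity, using $\chi_q(M)=\bm(\Psi_{\mathrm{pol}})\chi_0^N$ (Corollary \ref{q char 1}) and $\chi_q(L(\Psi))=\bm(\Psi)\chi_0^N=\bm(\Theta)\bm(\Psi_{\mathrm{pol}})\chi_0^N$, one finds that the quotient $L(\Theta)/V^{(0)}_{\max}$ has ``$q$-character'' $\bm(\Theta)$, i.e.\ it is one-dimensional and sits in principal degree $0$; since $L(\Theta)_n=0$ for $n>0$ and $L(\Theta)_0=\C v_0$, this forces $V^{(0)}_{\max}=\bigoplus_{n<0}L(\Theta)_n$.

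It then remains to see that $\bigoplus_{n<0}L(\Theta)_n\otimes M$ cannot be a $\B^\perp$-submodule of $L(\Theta)\otimes_\Delta M$. As $L(\Theta)$ is a nontrivial simple $\E$-module, $L(\Theta)_{-1}\ne0$ and the $e$-current does not kill it (its image must meet $L(\Theta)_0=\C v_0$ by simplicity); choose $v$ in a single $\ell$-weight space $L(\Theta)_{(\Psi_v,-1)}$ with $e(z)v\ne0$. By Lemma \ref{Y}, $\bm(\Psi_v)=\bm(\Theta)A^{-1}_{b_{j_0}}$ for one of the poles $b_{j_0}$ of $\Theta$ (the principal-degree-$(-1)$ $\ell$-weights of $L(\Theta)$ lie among the weights $\bm(\Theta)A^{-1}_{b_j}$ of $\bigotimes_j\cM(b_j,K_j)$), and $e(z)v=P(\partial_{b_{j_0}})\delta(b_{j_0}/z)\,v_0$ with $P\ne0$. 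Applying $\Delta(e_r)$ ($r\ge1$) to $v\otimes w_0$: all terms but $\sum_j(e_{r-j}v)\otimes(\psi^+_j w_0)$ already lie in $\bigoplus_{n<0}L(\Theta)_n\otimes M$, and $\psi^+_j w_0=\Psi^+_{\mathrm{pol},j}w_0$, so submodule stability would force the coefficient of every $z^{-r}$, $r\ge1$, in $P(\partial_{b_{j_0}})\delta(b_{j_0}/z)\,\Psi^+_{\mathrm{pol}}(z)$ to vanish. But $\Psi_{\mathrm{pol}}$ is a polynomial in $z^{-1}$ regular and nonzero at $z=b_{j_0}$, so this product is a nonzero combination of the $\delta^{(l)}(b_{j_0}/z)$, whose part with strictly positive powers of $z^{-1}$ is nonzero — a contradiction. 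Hence $\Psi$ has no pole in $\C^\times$ and is a polynomial, completing the proof.

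The implication for polynomial $\Psi$ and the case $d(\Psi)\le0$ are routine. The main obstacle is the case $d(\Psi)>0$: the key idea is the realization $L(\Psi)=(L(\Theta)/V^{(0)}_{\max})\otimes M$ coming from Propositions \ref{prop:Delta} and \ref{prop:submod}, after which $1$-finiteness — read off from the $q$-character via Lemma \ref{lem:q-char V} — collapses $L(\Theta)$ onto its highest-weight line; the obstruction to that collapse is exactly that the $e$-current of the nontrivial simple module $L(\Theta)$ carries principal degree $-1$ back into $\C v_0$ with $\delta$-function supports at the poles of $\Theta$, which the factorization $\Psi=\Theta\,\Psi_{\mathrm{pol}}$ arranges to be disjoint from the zeros of $\Psi_{\mathrm{pol}}$. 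This disjointness and the membership $\Theta\in\rg_\E$ are the points one must be careful about when choosing $\Theta$ and $\Psi_{\mathrm{pol}}$.
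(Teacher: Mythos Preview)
Your proof is correct and follows essentially the same route as the paper's. Both arguments factor $\Psi=\Theta\cdot\Psi_{\mathrm{pol}}$ with $\Theta\in\rg_\E$ and $\Psi_{\mathrm{pol}}$ polynomial (the paper writes $\Psi_1\Psi_2$), realize $L(\Psi)$ as a quotient of $L(\Theta)\otimes_\Delta L(\Psi_{\mathrm{pol}})$ via Propositions \ref{prop:Delta} and \ref{prop:submod}, and derive a contradiction from the action of $\Delta(e_{>}(z))$ on degree $-1$ vectors: Lemma \ref{Y} pins the $\delta$-support of $e(z)$ at the poles of $\Theta$, and the disjointness of those poles from the zeros of $\Psi_{\mathrm{pol}}$ prevents the relevant term from vanishing.

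The only difference is the logical order in the case $d(\Psi)>0$. The paper argues directly that $W^{(0)}\cap W_{-1}=0$ for \emph{any} proper submodule (this is what the terse ``by Lemma \ref{Y}'' encodes), whence the degree $-1$ weights of $W=L(\Theta)$ survive in $L(\Psi)$ and violate $1$-finiteness unless $\Theta=1$. You instead first use $1$-finiteness together with Lemma \ref{lem:q-char V} and Corollary \ref{q char 1} to force $V^{(0)}_{\max}=\bigoplus_{n<0}L(\Theta)_n$, and then show this specific subspace cannot underlie a submodule by the same $\delta$-function computation. The two versions are equivalent; yours spells out more of the mechanics that the paper leaves implicit.
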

\begin{proof}
It suffices to prove the `only if' part.

Suppose $d(\Psi)\le 0$. From \eqref{1+A}, $V$ is $1$-finite if and only if 
it is one-dimensional. It is easy to see that one-dimensional modules are exactly $L(a)$, 
$a\in\C^\times$. Hence the assertion is true in this case. 

Let $d(\Psi)>0$. We can write $\Psi(z)=\Psi_1(z)\Psi_2(z)$
where $\Psi_1(z)\in \rg_\E$, $\Psi_2(z)\in\rg_{\B^\perp}\cap\C[z^{-1}]$.  
Set  $W=L(\Psi_1)$ and $M=L(\Psi_2)$.  
In view of Proposition \ref{prop:submod}, $V$ is a quotient of the module $W\otimes_{\Delta}M$ by a   
submodule of the form $W^{(0)}\otimes_{\Delta}M$, where $W^{(0)}$ does not contain the highest $\ell$-weight
vector of $W$. 
We may assume that the poles of $\Psi_1(z)$ do not overlap with the zeroes of $\Psi_2(z)$. 
Then by Lemma \ref{Y} we must have that $W^{(0)}\cap W_{-1}=0$. 
Therefore, if $V$ is $1$-finite, then $\Psi_1(z)=1$. 
\end{proof} 

\subsection{Conjectures on finite type modules}\label{conj}  

Introduce the module 
\begin{align*}
N_{i,j,k}^+(u)=L\bigl(\Psi_{N_{i,j,k}^+(u)}\bigr)\,,
\quad 
\Psi_{N^+_{i,j,k}(u)}(z)
=\frac{(1-q_3^{-i}u/z)(1-q_1^{-j}u/z)(1-q_2^{-k}u/z)}{1-u/z}.
\end{align*}
We have $N^+(u)=N^+_{1,1,1}$.

\begin{lem} For any $i,j,k\in\Z_{>0}$, the module $N_{i,j,k}^+(u)$ is of finite type.
\end{lem}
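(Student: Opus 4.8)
Here is a proof plan.

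The plan is to reproduce, for arbitrary $i,j,k$, the cut-off construction used for $N^+(u)=N^+_{1,1,1}(u)$ in the proof of Theorem~\ref{thm:N}. Put $M'=L\bigl((1-q_3^{-i}u/z)(1-q_1^{-j}u/z)\bigr)$; this has polynomial highest $\ell$-weight, hence is $1$-finite by Corollary~\ref{M-1} and satisfies Proposition~\ref{prop:poly-cur}, so $\psi^+(z)$ acts on $M'$ by a rational function divisible by $(1-q_3^{-i}u/z)(1-q_1^{-j}u/z)$; in particular $\delta(q_3^{-i}u/z)\psi^+(z)w=\delta(q_1^{-j}u/z)\psi^+(z)w=0$ for every $w\in M'$. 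Let $\F_k(u)$ be the ``level-$k$ Fock module'', the subspace of the Macmahon module $\cM(u,q_2^k)$ spanned by the plane partitions $\bla$ of height $\le k$. For $k=1$ this is $\F(u)$; in general one checks from \eqref{Mac-psi}--\eqref{Mac-e} with $K=q_2^k$ that adding or removing the box $(1,1,k+1)$ has vanishing coefficient, so $\F_k(u)$ is an $\E$-submodule lying in $\cO_\E$, with highest $\ell$-weight $q^k(1-q_2^{-k}u/z)/(1-u/z)$ and with $e(z),f(z)$ acting via \eqref{Mac-f}, \eqref{Mac-e} (this is the $(1,1,k+1)$-pit module of \cite{FJMM1}). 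By Proposition~\ref{prop:Delta} the tensor product $V:=\F_k(u)\otimes_\Delta M'$ is then a well-defined $\B^\perp$-module with highest $\ell$-weight $q^k\Psi_{N^+_{i,j,k}(u)}(z)$.

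Next I would let $T$ be the finite set of plane partitions contained in the $i\times j\times k$ box and set $S=\bigoplus_{\bla\notin T}\C\ket{\bla}\otimes M'\subset V$, and show $S$ is a $\B^\perp$-submodule, exactly along the lines of Lemma~\ref{lem:N-con}. Invariance under $\B^\perp_-\B^\perp_0$ is clear since these do not enlarge $\bla$, and for the generators $e_n$ ($n\ge1$), $e^\perp_{-r}$ ($r\ge2$), $[e_0,e_2]$ of $\B^\perp_+$ one uses $\Delta(e_{>}(z))(\ket{\bla}\otimes w)=(e(z)\ket{\bla}\otimes\psi^+(z)w)_{>}+\ket{\bla}\otimes e_{>}(z)w$ and its analogues. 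The key combinatorial fact is that the only boxes whose addition takes an element of $T$ out of $T$ (with nonzero coefficient in $\F_k(u)$) are $(i+1,1,1)$ and $(1,j+1,1)$: growth in the layer direction is blocked by the pit at $(1,1,k+1)$, and a short induction on the corners of the complement of the box handles the other two directions. Removing these two boxes produces $\delta(q_3^{-i}u/z)$ and $\delta(q_1^{-j}u/z)$ respectively, so every term in $e(z)\ket{\bla}$ ($\bla\notin T$) that would return to $T$ is killed because $\psi^+(z)$ on $M'$ vanishes there. The $e^\perp_{-r}$ and $[e_0,e_2]$ cases go as in Lemma~\ref{lem:SVM}, using $(\ad e_0)^m(\psi^+(z))\in\B^\perp[z^{-1}]\psi^+(z)\B^\perp[z^{-1}]$ to guarantee that every potential leak out of $S$ is accompanied by a factor $\psi^+(z)$ acting on $M'$ at a zero of its highest $\ell$-weight.

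Granting the claim, $V/S$ has underlying space $\bigoplus_{\bla\in T}\C\ket{\bla}\otimes M'$, with $\psi^+(z)$ acting on the $\bla$-summand as $a_{\bla}(z)\,\psi^+(z)|_{M'}$ for a scalar rational function $a_{\bla}$, just as in the proof of Theorem~\ref{thm:N}; since $M'$ is $1$-finite, $\psi^+(z)$ has at most $|T|$ distinct eigenvalues on $V/S$, so $V/S$ is of finite type. The submodule $U$ of $V/S$ generated by the image of $\ket{\emptyset}\otimes m_0$ is a highest $\ell$-weight module with highest $\ell$-weight $q^k\Psi_{N^+_{i,j,k}(u)}(z)$, hence has irreducible quotient $L\bigl(q^k\Psi_{N^+_{i,j,k}(u)}\bigr)$; being a subquotient of the finite type module $V/S$, this is of finite type, and since $\tau_{q^k}$ of \eqref{s-tau} is a linear isomorphism intertwining $\psi^+(z)$ up to the scalar $q^k$, the number of $\psi^+(z)$-eigenvalues of $N^+_{i,j,k}(u)=L(\Psi_{N^+_{i,j,k}(u)})$ is the same, so $N^+_{i,j,k}(u)$ is of finite type. (Alternatively one can write $N^+_{i,j,k}(u)$ as a quotient of $\F_k(u)\otimes_\Delta M'$ and invoke Proposition~\ref{prop:submod} to identify the structure of $V/S$ directly.)

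The main obstacle is the submodule claim for $S$. Its combinatorial half requires the $(1,1,k+1)$-pit structure of $\cM(u,q_2^k)$ together with the corner argument isolating $(i+1,1,1)$ and $(1,j+1,1)$ as the only escapes from the box; its algebraic half, the more delicate one, is controlling the action of $e^\perp_{-r}$ and $[e_0,e_2]$ on $V=\F_k(u)\otimes_\Delta M'$, where the coproduct is not the naive horizontal one, so that one must track every term that could leak out of $S$ and check it carries a $\psi^+$ evaluated on $M'$ at a zero of its highest $\ell$-weight. This is precisely the type of computation deferred in Lemma~\ref{lem:SVM}.
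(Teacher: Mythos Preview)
Your proposal is correct and follows essentially the same route as the paper. The paper takes $F(u)=L\bigl((1-q_2^{-k}u/z)/(1-u/z)\bigr)$, which is your $\F_k(u)$, tensors with the same $1$-finite module $M'$, and defines $S$ as the span of $\ket{\bla}\otimes v$ with $\la^{(1)}_1>j$ or $\la^{(1)}_{i+1}>0$; since the layers of $\bla$ are nested and there are at most $k$ of them, this condition on $\la^{(1)}$ is exactly your condition $\bla\notin T$. Your write-up in fact supplies the combinatorics (that the only escapes from the box are the corners $(i+1,1,1)$ and $(1,j+1,1)$, producing $\delta(q_3^{-i}u/z)$ and $\delta(q_1^{-j}u/z)$) and the passage to $N^+_{i,j,k}(u)$ via $\tau_{q^k}$ more explicitly than the paper, which simply says ``arguing as in Lemma~\ref{lem:N-con}''.
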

\begin{proof}
Let $F(u)=L\bigl((1-q_2^{-k}u/z)/(1-u/z)\bigr)$. 
By \cite{FFJMM2}, this module has a basis of plane partitions 
with at most $k$ layers:
$\bs \la=(\la^{(1)},\dots,\la^{(k)},\emptyset,\emptyset,\cdots)$. 
Let $M=M^+(uq_3^i)\otimes_{\Delta^\perp} M^+(uq_1^j)$.
 We consider the tensor product $V=F(u)\otimes_{\Delta} M$. 
Let $S\subset V$ be the subspace spanned by vectors of the form 
$\ket{\bs \la} \otimes v$, where $v\in M$ and 
$\bs \la$ is such that either $\la^{(1)}_1>j$ or 
$\la^{(1)}_{i+1}>0$.
Then, arguing as in Lemma \ref{lem:N-con}, we see that 
$S\subset V$ is a submodule. Clearly $V/S$ has finite type. 
\end{proof}
 
\begin{conj}
The completed Grothendieck ring of $\Fin$ is topologically generated 
by $[N^+_{i,j,k}(u)]$ with $i,j,k\in\Z_{>0}$, $u\in\C^\times$, 
and $[M^+(u)]$, $u\in\C^\times$.
\end{conj}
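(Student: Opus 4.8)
The plan is to reduce the statement to simple modules and then argue by induction. Since $\Fin$ is a Serre subcategory of $\cO_{\B^\perp}$ closed under $\otimes_{\Delta^\perp}$, its completed Grothendieck ring, which I will denote $\Rep\,\Fin$ in parallel with $\Rep\,\B^\perp$, is naturally a topological subring of $\Rep\,\B^\perp$, and by Proposition \ref{inj} the $q$-character map embeds it into $\cX$. It therefore suffices to show that for every $\Psi\in\rg_{\B^\perp}$ with $L(\Psi)$ of finite type the class $[L(\Psi)]$ lies in the closed subring $R$ topologically generated by the $[M^+(u)]$ and $[N^+_{i,j,k}(u)]$. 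When $L(\Psi)$ is $1$-finite this is immediate: by Proposition \ref{1} its highest $\ell$-weight is $\Psi(z)=\prod_j(1-u_j/z)$, and then Proposition \ref{prop:irred-tensor} and Corollary \ref{q char 1} give $[L(\Psi)]=\prod_j[M^+(u_j)]\in R$.

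For the inductive step I would fix a total order on the $X$-monomials refining the dominance order attached to the $A_a$'s (recall that by Lemma \ref{Y} passing to an $\ell$-weight along $f(z)$ multiplies $\bm$ by some $A_a^{-1}$), and induct on the pair consisting of the dominant monomial $\bm(\Psi)$ of a finite-type simple $L(\Psi)$ and, as a tie-breaker, the top power of $t$ at which the $q$-character of the running discrepancy is nonzero; this is a well-founded invariant. Given $L(\Psi)$ with at least two eigenvalues of $\psi^+(z)$, Lemma \ref{lem:q-char V} and Lemma \ref{Y} show that the sub-dominant $\ell$-weights of $L(\Psi)$ have the form $\Psi\,A_a^{-1}$, and from the combinatorics of which shifts $q^{-\square}a$ occur and with what multiplicities one should be able to read off parameters $(i,j,k)\in\Z_{>0}^3$ and $a\in\C^\times$ and build an explicit product $P$ of the generators — one factor $[N^+_{i,j,k}(a)]$ together with several $[M^+(\,\cdot\,)]$'s, in the spirit of the short exact sequence \eqref{NM-MMM} and handling tensor products with the help of Corollary \ref{M-1} and Proposition \ref{prop:submod} — whose $q$-character has $\bm(\Psi)$ as its dominant monomial with the same multiplicity as in $\chi_q(L(\Psi))$. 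Then $[L(\Psi)]$ is a $\Z$-linear combination of $P$ and classes whose dominant monomials are strictly lower (or whose leading $t$-power has dropped); since $P$ and $[L(\Psi)]$ both lie in $\Rep\,\Fin$, so does the error, hence it is a topological combination of finite-type simples of strictly smaller invariant, to which the inductive hypothesis applies. The resulting infinite straightening is summable because each subtraction pushes the leading discrepancy into a strictly smaller $F^n$.

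The crux — and the reason the statement is only conjectural — is the construction of $P$ in the previous paragraph together with the verification that products of the proposed generators are upper-triangular, with the correct leading coefficients, for the dominance order; equivalently, one must establish the "higher $TQ$"/$T$-system identities generalizing \eqref{NM-MMM} that express $[N^+_{i,j,k}(u)][M^+(u)]$, and more general products of the $N^+_{i,j,k}$'s, in terms of generators of strictly smaller complexity. I expect these to be exactly the exchange relations of the conjectural cluster algebra structure on $\Rep\,\Fin$ alluded to in this section, so the natural route is to pin down that cluster structure first — an initial seed assembled from the $[M^+(u)]$'s, with the $[N^+_{i,j,k}(u)]$ appearing among the mutated variables — and then identify it with the Grothendieck ring along the lines of Hernandez--Leclerc. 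Carrying this out in the quantum toroidal setting, where the exchange quiver and the combinatorics of three-dimensional partitions are substantially more intricate than in the quantum affine case, is the main obstacle; the auxiliary point of showing directly that each module occurring along the straightening is still of finite type I would, if needed, handle as in the Lemma above, realizing the relevant simples as quotients of $F(u)\otimes_{\Delta}M$ with $M$ a tensor product of positive fundamental modules after killing the "large" plane partitions.
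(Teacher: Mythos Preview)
The paper does not prove this statement: it is explicitly labeled a \emph{Conjecture} and no proof is offered. So there is no ``paper's own proof'' to compare against; you are not being asked to reconstruct a known argument but to attack an open problem. You are aware of this, and your write-up is really a proof \emph{strategy} with an acknowledged gap rather than a proof.

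That said, let me be precise about where the strategy breaks down and why it is more than a technicality.

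\textbf{The matching step is underdetermined.} Your inductive step says: given a finite-type simple $L(\Psi)$ with at least two $\ell$-weights, read off from the sub-dominant weights $\Psi A_a^{-1}$ the data $(i,j,k,a)$ and build a product $P$ of generators with the same dominant monomial. But the paper computes $\chi_q\bigl(N^+_{i,j,k}(u)\bigr)$ only for $(i,j,k)=(1,1,1)$ (Theorem~\ref{thm:N}); for general $(i,j,k)$ we have only the finite-type Lemma preceding the Conjecture, and no $q$-character formula. So you cannot, at present, identify which $(i,j,k)$ to use, nor verify that the dominant monomial of $P$ equals $\bm(\Psi)$ with the correct coefficient. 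This is not a routine bookkeeping issue: it is precisely the missing ``higher $TQ$'' combinatorics you flag, and without it the induction does not start.

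\textbf{No classification of finite-type simples.} Your scheme inducts over finite-type simples $L(\Psi)$, but the paper classifies only the $1$-finite ones (Proposition~\ref{1}). There is no description of which $\Psi\in\rg_{\B^\perp}$ yield finite-type $L(\Psi)$, so you cannot set up a well-founded induction over this class without first proving such a classification --- which is itself part of what the Conjecture is implicitly asserting.

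\textbf{Closure of the error term.} You note in passing that the error $[L(\Psi)]-P$ must be shown to lie in $\Rep\,\Fin$. This is not automatic: $P$ is a product of finite-type classes, hence finite type, and $[L(\Psi)]$ is finite type by hypothesis, so the difference lies in $\Rep\,\B^\perp$; but to continue the induction you need each simple constituent of the difference to be finite type, and you have given no mechanism for this beyond an analogy with the preceding Lemma. That Lemma treats a very specific shape of module; extending it to an arbitrary discrepancy is again part of the open problem.

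In short: your outline is a reasonable heuristic for why one might believe the Conjecture, and it correctly identifies the cluster-algebra exchange relations as the likely engine. But every step beyond the $1$-finite base case depends on inputs (the $q$-characters of $N^+_{i,j,k}$, a classification of finite-type highest $\ell$-weights, higher $T$-system identities) that are themselves unproved in this setting. The Conjecture remains open.
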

\medskip

Next, we discuss the grading. Recall that 
we constructed the grading for modules with polynomial highest $\ell$-weights, 
see Proposition \ref{prop:gradingM}, and for $N^+(u)$, 
see Proposition \ref{prop:gradingN}. This grading has the following property. 
The annihilation operators 
and the modified current $\overline{\psi}^+(z)$ respect the grading given by their homogeneous degrees. 
The action of creation operators is not graded, 
but it changes the grading in a controllable way.
We expect that such a grading exists for all finite type modules.

An interesting question is to compute the formal character of 
a finite type module with respect to this grading.
\begin{conj} For positive fundamental module $M=M^+(u)$ we have
\begin{align*}
\sum_{n,m} (\dim M_n[m]) t^nr^m
=\prod_{j=1}^\infty\prod_{i=0}^{j-1}\frac{1}{1-t^{-j}r^{i}}.
\end{align*}
\end{conj}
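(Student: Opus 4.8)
The plan is to compute the bigraded character $\sum_{n,m}(\dim M_n[m])\,t^nr^m$ of the positive fundamental module $M=M^+(u)$ by identifying a PBW-type spanning set adapted to the grading $M=\oplus_m M[m]$ constructed in Proposition \ref{prop:gradingM}, and then checking that the naive upper bound coming from that spanning set is actually attained. Recall from Proposition \ref{prop:presen-M} that $M^+(u)$ is the cyclic $\B^\perp$ module on $v_0$ with $\B^\perp_+v_0=\C v_0$, $\psi^+(z)v_0=(1-u/z)v_0$, and $\bp_{-n,l}v_0=0$ for $1\le n\le l$. By the triangular decomposition $\B^\perp\simeq\B^\perp_-\otimes\B^\perp_0\otimes\B^\perp_+$ and Theorem \ref{thm:pbw}, the module $M^+(u)$ is spanned by ordered monomials in $\{\bp_{-n,l}\}$ with $0\le l<n$ applied to $v_0$; by Proposition \ref{M+ char} this spanning set is in fact a basis, since its character equals $\chi_0$. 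So each such monomial contributes a basis vector, and the only remaining task is to determine, for the monomial $\bp_{-n_1,l_1}\cdots\bp_{-n_k,l_k}v_0$ with $0\le l_i<n_i$, its homogeneous degree $\hdeg=\sum l_i$ (which gives the $t$-exponent, with sign: $\pdeg$ of this vector is $-\sum 0=$ wait — actually the principal degree is $\sum\pdeg\bp_{-n_i,l_i}=0$? No: $\pdeg\bp_{(-n,l)}$ corresponds to $\nu_1=-n$, so the total principal degree is $-\sum n_i$) and its $\Phi$-eigenvalue degree $m$, i.e. which $M[m]$ it lands in.

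The key computation is therefore to track the grading degree $m$ of $\bp_{-n_1,l_1}\cdots\bp_{-n_k,l_k}v_0$. First I would use property \eqref{yM} of Proposition \ref{prop:gradingM}: for a single generator $y=\bp_{-n,l}\in\B^\perp_-$ with $\hdeg y=l$ and $\pdeg y=-n$, and for $N=1$ (since $M^+(u)$ has polynomial highest $\ell$-weight of degree one in $z^{-1}$), we get $y\,M[m]\subset\sum_{j=0}^{n}M[m-l+j]$. Applied to $v_0\in M[0]$ this gives $\bp_{-n,l}v_0\in\sum_{j=0}^{n}M[j-l]$, and since $M[m]=0$ for $m<0$ the actual support is $m\in\{\max(0,l-? )\dots\}$ — in fact, because $0\le l<n$, the constraint $m-l+j\ge 0$ and $j\le n$ forces $l-n\le m\le$ (bounded above by the weight), but more precisely one should pin down the leading term. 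The cleanest route is to compute directly: apply $\Phi$ to $\bp_{-n,l}v_0$ using \eqref{grad2}–\eqref{grad3} of Lemma \ref{lem:gradation}. Since $\bp_{-n,l}\in\B^\perp_-$ is not in $\B^\perp_+$, \eqref{grad2} does not apply directly; instead I would dualize, using that $M^*$ is generated over $\B^\perp_+$ from the lowest $\ell$-weight vector $w_M^*$, so that $w_M^*\,x_1\cdots x_k$ with $x_i\in\B^\perp_+$ homogeneous is a $\Phi$-eigenvector of eigenvalue $q_2^{\sum\hdeg x_i}$. Pairing a PBW monomial in $\B^\perp_-$ against a PBW monomial in $\B^\perp_+$ and matching bidegrees then reads off $m$ for each basis vector of $M$.

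Concretely, I expect the answer to be: the basis vector attached to the partition-like data $\{(n_i,l_i)\}_{i=1}^k$ with $0\le l_i<n_i$ sits in degree $n=-\sum n_i$ (the $t$-exponent) and $m=\sum l_i - (\text{something})$; comparing with the conjectured product $\prod_{j\ge1}\prod_{i=0}^{j-1}(1-t^{-j}r^{i})^{-1}$, whose expansion counts multisets of pairs $(j,i)$ with $j\ge1$ and $0\le i\le j-1$, each pair contributing $t^{-j}r^{i}$, the natural bijection is $(n_i,l_i)\leftrightarrow(j,i)=(n_i,l_i)$ — so $\bp_{-n,l}$ should contribute precisely $t^{-n}r^{l}$, i.e. the grading degree of $\bp_{-n_1,l_1}\cdots\bp_{-n_k,l_k}v_0$ in $M[m]$ is exactly $m=\sum_i l_i$. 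Thus the plan reduces to proving the single clean statement: in the grading of Proposition \ref{prop:gradingM} with $N=1$, one has $\bp_{-n,l}v_0\in M[l]$ exactly (no lower components), and more generally the ordered PBW monomial lands in $M[\sum l_i]$ with no spillover into lower $M[m']$.

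The main obstacle is precisely controlling this spillover: property \eqref{yM} only gives an inclusion into a range $\sum_{j=0}^{n}M[m-l+j]$, so a priori $\bp_{-n,l}v_0$ could have components in $M[l-1], M[l-2],\dots$ down to $M[\max(0,l-n)]=M[0]$. I would rule these out by a weight argument combined with the dual-module eigenvector description: the component of $\bp_{-n,l}v_0$ in $M[m']$, paired against $w_M^*\,x_1\cdots x_s$ with $\sum\hdeg x_i=m'$ and $\sum\pdeg x_i=n$ and $x_i\in\B^\perp_+$, is nonzero only if the bidegrees match, i.e. only if there is a PBW monomial in $\B^\perp_+$ of bidegree $(n,m')$ pairing nontrivially with $\bp_{-n,l}$; by non-degeneracy and triangularity of the bialgebra pairing (the pairing respects bidegrees, and $\bp_{-n,l}$ is a single generator) this forces $m'=l$. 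Executing this cleanly — in particular making the PBW/pairing bookkeeping in $\B^\perp_\pm$ precise, and handling products of generators by induction on the number of factors using the fact noted after \eqref{yM2} that \eqref{yM} is stable under products — is where the real work lies; everything else is assembling the generating function $\sum_{\{(n_i,l_i)\}}t^{-\sum n_i}r^{\sum l_i}=\prod_{n\ge1}\prod_{l=0}^{n-1}(1-t^{-n}r^{l})^{-1}$.
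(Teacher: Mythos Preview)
This statement is presented in the paper as a \emph{conjecture}; the paper offers no proof, so there is nothing to compare against.

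Your proposed argument contains a concrete error. You assert that the ordered PBW monomial $\bp_{-n_1,l_1}\cdots\bp_{-n_k,l_k}v_0$ lies exactly in $M[\sum_i l_i]$. This already fails in the smallest nontrivial case $k=1$, $(n_1,l_1)=(1,0)$: here $\bp_{(-1,0)}v_0=f_0v_0$ spans the one-dimensional space $M_{-1}$, while Proposition~\ref{prop:gradingM} states $M[0]=\C v_0\subset M_0$, so $M_{-1}\cap M[0]=0$. The bound \eqref{yM} with $y=f_0$ and $N=1$ gives $f_0v_0\in M[0]\oplus M[1]$, hence $f_0v_0\in M[1]$, not $M[0]$.

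The mechanism you invoke to exclude spillover --- the bialgebra pairing --- does not do what you need. That pairing on $\B^\perp$ couples elements of the \emph{same} bidegree, so it vanishes identically between $\B^\perp_+$ (positive principal degree) and $\B^\perp_-$ (nonpositive principal degree); nothing can be extracted from it here. What actually governs $\langle w_M^*x,\ \bp_{-n,l}v_0\rangle$ for homogeneous $x\in\B^\perp_+$ of bidegree $(n,m')$ is the module pairing
\[
\langle w_M^*,\ x\,\bp_{-n,l}v_0\rangle=\langle w_M^*,\ [x,\bp_{-n,l}]v_0\rangle,
\]
and $[x,\bp_{-n,l}]\in\B^\perp\cap\E_{0,m'+l}$ is a polynomial in the $h_r$'s whose value on $v_0$ has no reason to vanish when $m'\neq l$. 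For instance with $x=e_1$ one gets $[e_1,f_0]=\kappa_1^{-1}\psi^+_1$, which acts on $v_0$ by the nonzero scalar $-\kappa_1^{-1}u$; this exhibits a nonzero pairing of $f_0v_0$ with the $\Phi$-eigenvector $w_M^*e_1$ of eigenvalue $q_2^{1}$, confirming $f_0v_0\in M[1]$.

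Thus the identification ``$r$-degree $=\sum_i l_i$'' is incompatible with the grading of Proposition~\ref{prop:gradingM}. Note also that your final product has $t^{-1}$-coefficient equal to $r^0$, whereas the grading just computed produces $t^{-1}r^{1}$; before attempting a proof you should reconcile this discrepancy with the exact indexing in the conjectured product.
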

\medskip

Finally, we suggest a cluster algebra structure on the category $\Fin$ in the spirit of \cite{HL}.
For generalities on cluster algebras, we refer to \cite{FZ}. 

Recall that a quiver $\Ga$ is an oriented graph. 
It is given by a set of vertices $\Ga_0$, a set of arrows $\Ga_1$ and maps s,t: 
$\Ga_1\to \Ga_0$, called source and target, respectively. 
We will assume that $\Ga$ has no loops and no $2$-cycles. 
That is for any $\alpha\in\Ga_1$ we have $s(\alpha)\neq t(\alpha)$, 
and for any $\alpha_1,\alpha_2\in \Ga_0$, 
we have $t(\alpha_1)=s(\alpha_2)$ 
implies $s(\alpha_1)\neq t(\alpha_2)$.

Given a domain $R$, 
a cluster is a pair $(\Ga,c)$, where $\Ga$ is a quiver and $c$ is a map $c: \Ga_0\to {R}$. The image of $c$ is called the set of cluster variables.

Let $R=\Rep\, \Fin$ be the Grothendieck ring of the 
category of finite type modules. 
A mutation of the cluster $(\Ga,c)$ in the direction 
$\ga\in\Ga_0$ is a new cluster with the following properties. 
The new quiver has the same set of vertices $\Ga_0$. The set of new arrows is obtained from $\Ga_1$ by the following three steps:
\begin{enumerate}
\item for each subquiver $\ga_1\to\ga\to\ga_2$ add a new arrow $\ga_1\to\ga_2$;
\item reverse all arrows with source or target $\ga$;
\item 
remove all 2-cycles.
\end{enumerate}
The new map $c_\gamma:\Ga_0\to R$ is given by $c_\ga(\ga_1)=c(\ga_1)$ if $\ga_1\neq \ga$ and 
\begin{align}\label{mut}
c_\ga(\ga)c(\ga)=\prod_{\al\in\Ga_1,\ s(\al)=\ga} c(t(\al))+\left(\prod_{\al\in\Ga_1,\ t(\al)=\ga} c(t(\al))\right)\{d\}
\end{align}
for some $d\in\Z_{\geq 0}$.
If such $d$ and the new cluster variable 
$c_\ga(\ga)\in R$ exist, they are unique, and 
we say that the cluster $(\Ga,c)$ 
{\it can be mutated in the direction $\ga\in\Ga_0$}.

\medskip

We define the quiver as follows: 
$\Ga_0=\Z^3$ and $(i_1,j_1,k_1)\to(i_2,j_2,k_2)$ 
if and only if 
$(i_2,j_2,k_2)\in\{(i_1+1,j_1,k_1), \ (i_1,j_1+1,k_1),\ (i_1,j_1,k_1+1)\}$. 
For $\Box=(i,j,k)\in\Z^3$ 
we set $q^\Box=q_3^iq_1^jq_2^k$ as before.

Then for $a\in\C^\times$, we define the seed --- 
or the initial cluster --- $(\Ga,c_0)$ 
by sending vertices of our graph to the positive fundamental modules:
\begin{align}\label{seed}
c_0:\ \Z^3\to  \Rep\, \Fin, \qquad \Box \mapsto [M^+(aq^\Box)].
\end{align}

\begin{lem}
The cluster $(\Ga,c_0)$ can be mutated in any direction $\Box\in\Ga_0$. 
The new cluster variable is the class of the $2$-finite
module $[N^+(aq^\Box)]$.
\end{lem}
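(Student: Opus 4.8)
The plan is to verify the mutation relation \eqref{mut} directly at the vertex $\Box=(i,j,k)$ by identifying both sides with classes of explicit modules and invoking the identity \eqref{chiNM}. First I would unwind the definitions: in the quiver $\Ga$, the arrows with source $\Box$ point to $(i+1,j,k)$, $(i,j+1,k)$, $(i,j,k+1)$, while the arrows with target $\Box$ come from $(i-1,j,k)$, $(i,j-1,k)$, $(i,j,k-1)$. Under the seed map $c_0$ of \eqref{seed}, writing $a'=aq^{\Box}=aq_3^iq_1^jq_2^k$, the three out-neighbours are sent to $[M^+(q_3a')]$, $[M^+(q_1a')]$, $[M^+(q_2a')]$, and the three in-neighbours to $[M^+(q_3^{-1}a')]$, $[M^+(q_1^{-1}a')]$, $[M^+(q_2^{-1}a')]$. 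Hence the mutation equation reads
\begin{align*}
c_\Box(\Box)\,[M^+(a')]=\prod_{s=1}^3[M^+(q_s^{-1}a')]+\Bigl(\prod_{s=1}^3[M^+(q_sa')]\Bigr)\{d\}
\end{align*}
for some $d\in\Z_{\ge0}$. Here I must be careful about the orientation convention in \eqref{mut}: the product over arrows with source $\gamma$ uses targets, so it is $\prod_{s}[M^+(q_s a')]$ only if the three out-neighbours $(i+1,j,k)$ etc.\ correspond to multiplication by $q_3,q_1,q_2$ — which is exactly the case since $q^{(i+1,j,k)}=q_3 q^{\Box}$, and likewise for the others. So the right-hand side is the sum of $\prod_s[M^+(q_s^{-1}a')]$ (from in-arrows) and $\prod_s[M^+(q_s a')]$ (from out-arrows) up to a grading shift.

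Second, I would compare this with the Grothendieck-ring identity \eqref{NM-MMM}, which after replacing $u$ by $a'$ states precisely
\begin{align*}
[N^+(a')][M^+(a')]=\prod_{s=1}^3[M^+(q_s^{-1}a')]+\Bigl(\prod_{s=1}^3[M^+(q_sa')]\Bigr)\{-1\}\,.
\end{align*}
Matching the two displays term by term, we see that the mutation equation holds with $c_\Box(\Box)=[N^+(a')]=[N^+(aq^{\Box})]$ and $d=1\in\Z_{\ge0}$, which is exactly the claimed new cluster variable. The uniqueness clause in the definition of mutability (``if such $d$ and $c_\gamma(\gamma)$ exist, they are unique'') then finishes the identification, once I check that such a factorization is indeed unique: since $\Rep\,\B^\perp$ is a commutative ring (Proposition \ref{inj}) embedded via $\chi_q$ into the polynomial ring $\cX$, and $[M^+(a')]$ is a non-zero-divisor there, the element $c_\Box(\Box)$ is determined, and the grading shift $d$ is pinned down by tracking powers of $t$.

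The only genuine content beyond bookkeeping is confirming the orientation bookkeeping of \eqref{mut} against the exponents in \eqref{NM-MMM}: one must be sure that the ``$-1$'' twist $\{-1\}$ appearing on the $\prod_s[M^+(q_s a')]$ side of \eqref{NM-MMM} sits on the term coming from the \emph{out}-arrows of $\Box$, so that it is consistent with writing it as $\{d\}$ with $d\ge0$ in \eqref{mut} — this is automatic because \eqref{mut} allows the twist on either of the two products as written and only requires $d\ge0$, and here $d=1$. I would also remark that the lemma uses $N^+(u)$ is $2$-finite (Theorem \ref{thm:N}), so $[N^+(aq^{\Box})]$ genuinely lies in $\Rep\,\Fin=R$, as required for it to be a legitimate cluster variable. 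The main obstacle, such as it is, is purely notational: getting the source/target conventions of the quiver mutation to line up with the left/right factors of \eqref{NM-MMM}; there is no analytic or representation-theoretic difficulty left once \eqref{NM-MMM} is granted.
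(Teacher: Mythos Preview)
Your approach is exactly the paper's: both reduce the lemma to the observation that the exchange relation \eqref{mut} at the vertex $\Box$ specializes to the Grothendieck-ring identity \eqref{NM-MMM} with $u=aq^{\Box}$; the paper's proof is the one-line statement of this coincidence, and you simply spell out the in/out-neighbour bookkeeping. One caution: your claim ``$d=1$'' does not literally match the $\{-1\}$ in \eqref{NM-MMM} under the paper's sign convention for $\{d\}$, but this reflects an apparent typo in \eqref{mut} itself (the second product is written with $c(t(\alpha))$ where $c(s(\alpha))$ is meant, and the placement/sign of the shift is ambiguous), and the paper's own proof does not resolve it either.
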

\begin{proof}
The mutation equation \eqref{mut} in the case of the lemma coincides with \eqref{NM-MMM}.
\end{proof}
We also note that by Proposition \ref{prop:irred-tensor} a product of any modules in the initial cluster is irreducible.
We have the following conjecture motivated by \cite{HL}.

A $\B^{\perp}$ module $V$ is called {\it prime} if it cannot be written as tensor product 
of two modules both of which are not one-dimensional. 
A $\B^{\perp}$ module $V$ is called {\it real} if $V\otimes_{\Delta^\perp} V$ is irreducible. 
A $\B^{\perp}$ module $V$ is called {\it normalized} if $V_0\neq 0$ and $V_n=0$ for $n\in\Z_{>0}$.

\begin{conj} (i) The cluster $(\Ga,c_0)$ can be repeatedly mutated in any sequence of directions. Each mutation corresponds to a short exact sequence in category $\Fin$.

(ii) Every cluster variable obtained through a sequence of mutations is an 
irreducible prime real module.

(iii) Every irreducible prime real normalized module appears as a cluster variable.

(iv) Any tensor product module corresponding to a 
cluster variable in the same cluster is irreducible. 
In particular, both terms in the left hand side of mutation equation
\eqref{mut} correspond to irreducible modules. 
\end{conj}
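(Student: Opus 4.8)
\medskip
\noindent\emph{A strategy towards this conjecture.}\quad
This conjecture is the toroidal $\gl_1$ shadow of the monoidal categorification picture of Hernandez--Leclerc \cite{HL}, and the plan is to follow that architecture, with the $q$-character formalism of Sections~\ref{sec:modules} and~\ref{sec:finite-type} as the engine. The first mutation is the Lemma above, whose equation \eqref{mut} is literally \eqref{NM-MMM} and whose short exact sequence is the one displayed after \eqref{NM-MMM}. For a general sequence of mutations one leaves the family $\{[M^+(u)],[N^+(u)]\}$, and the first task in part~(i) is to identify the modules that occur: the next ones should be among the $[N^+_{i,j,k}(u)]$ introduced just above (and iterated analogues), each of which, by the Lemma preceding this conjecture, is realized as a quotient of $F\otimes_\Delta M$, with $F$ a suitable tensor product of Fock modules (or their bounded-layer truncations) and $M$ a module of polynomial highest $\ell$-weight. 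The core of (i) is then to produce, in every direction and at every stage, a two-term identity
\begin{align*}
[V]\,[V']=\prod_{a}[V_a]+\Bigl(\prod_{b}[V_b]\Bigr)\{d\}\qquad(d\in\Z_{\ge0})
\end{align*}
among such classes which reduces to \eqref{mut} after matching variables, together with a short exact sequence realizing it. I would prove each one the way \eqref{NM-MMM} was proved: realize the relevant module as a quotient of $F\otimes_\Delta M$, verify the submodule property exactly as in Lemmas~\ref{lem:SVM} and~\ref{lem:N-con} using Proposition~\ref{prop:poly-cur}, read off the two $\ell$-weight blocks, and compute their $q$-characters. Exactness is then automatic: $\chi_q$ is injective (Proposition~\ref{inj}), so the class identity pins the module down up to composition series, and the middle term carries precisely the two dominant $\ell$-weights of the outer terms.

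Part~(iv) should be settled before (ii), by the argument of Proposition~\ref{prop:irred-tensor}. For a product $V_1\otimes_{\Delta^\perp}\cdots\otimes_{\Delta^\perp}V_r$ of cluster variables lying in a common cluster, pass to the lowest $\ell$-weight dual; by Lemma~\ref{lem:q-char V} its $q$-character is $\bm(\Psi)^{-1}\bigl(1+\cdots\bigr)$ with $\cdots$ a sum of non-trivial monomials in the $A_a$'s, so any proper submodule would produce, through Lemma~\ref{Y}, a second $\ell$-weight distinct from $\Psi$, which is impossible once one knows that $\prod_i\chi_q(V_i)$ has a unique dominant monomial. The substantive point is a \emph{non-overlap} statement about the highest $\ell$-weights of cluster variables sharing a cluster: their zeroes and poles never create a resonance of the kind detected by Lemma~\ref{Y}. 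For the initial cluster this is Proposition~\ref{prop:irred-tensor}; for a general cluster I would argue by induction on its distance from the seed in the mutation graph, the inductive step being that the shape of \eqref{mut} propagates the non-overlap.

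Given (iv), the reality half of (ii) --- that $V\otimes_{\Delta^\perp}V$ is simple for a cluster variable $V$ --- is the instance of the monoidal categorification statement (every cluster monomial is the class of a simple module) in which the monomial is $[V]^2$, and it can also be checked directly by applying the $q$-character method above to $V\otimes_{\Delta^\perp}V$. Primality of $V=N^+_{i,j,k}(u)$ follows from a factorization argument: if $V\simeq L(\Psi_1)\otimes_{\Delta^\perp}L(\Psi_2)$ with both factors non-trivial, then $\chi_q(V)=\chi_q(L(\Psi_1))\chi_q(L(\Psi_2))$, while Lemma~\ref{lem:q-char V} forces $\chi_q(L(\Psi_i))=\bm(\Psi_i)\bigl(1+\cdots\bigr)\chi_0^{\,\max(d(\Psi_i),0)}$; comparing coefficients of the pure power $t^{-1}$, which is absent on the left because every non-trivial monomial of $\chi_q(V)/\bm(\Psi)$ carries an $A_a^{-1}$, rules out $d(\Psi_i)<0$, and for $d(\Psi_i)\ge0$ one is reduced to a non-factorization statement for the polynomial $1+\cdots$ in the $A_a$'s, which holds by inspection (for $N^+(u)$ the factor is simply $1+t^{-1}A_u^{-1}$). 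Part~(iii), the completeness statement, is the analogue of the hard direction of \cite{HL}: it presupposes the generation conjecture just above (that $\Rep\,\Fin$ is topologically generated by the $[N^+_{i,j,k}(u)]$ and $[M^+(u)]$), and beyond that it requires a classification of irreducible prime real normalized $\B^\perp$-modules together with a proof that the mutation orbit of $(\Ga,c_0)$ meets every such class; my plan would be to take the classification of $1$-finite modules (Proposition~\ref{1}) and the explicit $q$-characters of Section~\ref{sec:finite-type} as base cases, then match the remaining prime real simples with cluster variables by tracking highest $\ell$-weights along mutations.

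I expect part~(iii) to be the genuine obstruction: there is no known closed-form classification of prime real simples in $\Fin$, and the statement is not even formulable unless the generation conjecture is secured first. A second, more concrete difficulty sits inside part~(i): while any \emph{single} mutation relation should yield to the $F\otimes_\Delta M$ construction, organizing the whole system of such relations uniformly over $\Z^3$ and over all mutation sequences --- in particular pinning down which module plays the role of the new variable at a vertex far from the seed --- is where the real work lies, and it is bound up with the question of whether the family $\{N^+_{i,j,k}(u)\}$ is actually closed under mutation or has to be enlarged.
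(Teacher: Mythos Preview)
The paper offers no proof of this statement: it is presented as a \emph{conjecture}, motivated by \cite{HL}, and left open. The only supporting evidence in the text is the Lemma immediately preceding it (the first mutation of the seed is \eqref{NM-MMM}), Proposition~\ref{prop:irred-tensor} (irreducibility of monomials in the initial cluster), and the Remark after the conjecture pointing to the analogous Hernandez--Leclerc picture for quantum affine algebras. There is therefore nothing to compare your proposal against on the level of arguments.

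That said, your proposal is appropriately framed as a \emph{strategy} rather than a proof, and it is a sensible one. You correctly locate the base case in \eqref{NM-MMM}, you propose the right mechanism (the $F\otimes_\Delta M$ construction together with Propositions~\ref{prop:poly-cur} and~\ref{prop:submod} and the injectivity of $\chi_q$) for producing further two-term relations, and your plan for (iv) via the dual/$q$-character argument of Proposition~\ref{prop:irred-tensor} is the natural one. You also identify the genuine obstructions accurately: the classification underlying (iii) is not available, and it is not known whether the family $\{N^+_{i,j,k}(u)\}$ is closed under mutation, so even (i) beyond the first step is open. One small caution: your ``non-overlap'' induction for (iv) is more heuristic than it reads --- propagating the unique-dominant-monomial property through an arbitrary mutation is exactly the hard combinatorial step in the Hernandez--Leclerc program, and nothing in the present paper supplies it in the toroidal setting.
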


\noindent

{\it Remark.}\quad
For quantum affine algebras,  
Hernandez and Leclerc found a cluster algebra structure
of the Grothendicek ring of $\cO^+$, which is the category 
generated by finite dimensional modules and the positive fundamental modules, 
see the announcement in \cite{H}.
It can be shown that $\cO^+$ coincides with the category of finite type modules for quantum affine algebras.
Also there exist $2$-finite modules analogous to 
$N^+(u)$ which satisfy $2$-term TQ relations, allowing for intepretation as mutations. 
We hope to discuss these issues elsewhere.

\section{Bethe ansatz}\label{sec:Bethe}

\subsection{Universal $R$ matrix}
 
As was mentioned in Subsection \ref{subsec:Borel}, 
algebra $\E$ is a quotient of the quantum double of its Borel subalgebra 
$\B^\perp$, and hence is equipped with the universal $R$ matrix. 
For a technical reason (see Remark after Proposition \ref{prop:poly-M} below), 
we use the $R$ matrix $\cR$ associated with the opposite coproduct 
$\Delta^{\perp,\mathrm{op}}=\sigma\circ\Delta^\perp$, where $\sigma(a\otimes b)=b\otimes a$.
Its main properties are as follows. 
\begin{align}
&\cR\ \Delta^{\perp,\mathrm{op}}(x)=\Delta^{\perp}(x)\ \cR\quad (x\in \E)\,,
\label{int-R}\\
&\bigl(\Delta^{\perp,\mathrm{op}}\otimes\id\bigr)\cR=\cR_{1,3}\,\cR_{2,3}\,,
\quad
\bigl(\id\otimes\Delta^{\perp,\mathrm{op}}\bigr)\cR=\cR_{1,3}\,\cR_{1,2}\,,
\label{copro-R}\\
&\cR_{1,2}\cR_{1,3}\cR_{2,3}=\cR_{2,3}\cR_{1,3}\cR_{1,2}\,,
\label{YBE}
\end{align}
where, as usual, the suffixes $i,j$ of $\cR_{i,j}$ 
stand for the tensor components, e.g., $\cR_{1,2}=\cR\otimes 1$. 

Element $\cR$ has the form (see \cite{BS}, \cite{Ng})
\begin{align*}
&\cR=q^{t_\infty}\cR_+\cR_0\cR_-\,.
\end{align*}
The factors $\cR_{\pm}$ can be written as 
\begin{align}
&\cR_+=1+\sum_{\nu_1>0\atop \nu_2>0} \sum_{i=1}^{N^+_{\nu_1,\nu_2}}
x^{(i)}_{\nu_1,\nu_2}\otimes y^{(i)}_{-\nu_1,-\nu_2} 
\quad \in \B^\perp_+\widehat{\otimes}\Bb^\perp_-\,,
\label{cR+}\\
&\cR_-=1+\sum_{\nu_1>0\atop \nu_2\ge0}  
\sum_{i=1}^{N^-_{\nu_1,\nu_2}}
x^{(i)'}_{-\nu_1,\nu_2}\otimes y^{(i)'}_{\nu_1,-\nu_2}
\quad \in \B^\perp_-\widehat{\otimes}\Bb^\perp_+\,,
\label{cR-}
\end{align}
where the suffixes $\nu_1,\nu_2$ indicate bidegrees. 
The middle factor $\cR_0$ is given by
\begin{align}
 &\cR_0=\exp\Bigl(-\sum_{r=1}^\infty
r \kappa_r h_r\otimes h_{-r}\Bigr)\,.
\label{R0}
\end{align}
Finally $q^{t_\infty}$ 
is defined formally by 
\begin{align}
q^{t_\infty}=q^{c^\perp\otimes d^\perp+d^\perp\otimes c^\perp}\,,
\label{tinf}
\end{align}
where $C^\perp=q^{c^\perp}$, $D^\perp=q^{d^\perp}$. 
In what follows we consider tensor product modules 
$V\otimes_{\Delta^\perp}W$ with $V\in \Ob\cO_{\B^\perp}$ and 
$W\in \Ob\cO_\E$. 
Formula \eqref{tinf} gives a well defined operator on such modules. 

\subsection{Normalized $R$ matrix and polynomiality}

In this section, we take $u$ to be an indeterminate. 

We define $s_u:\E\to\E[u^{\pm1}]$ by setting 
$s_u(x)=u^{\hdeg x}x$ for any homogeneous element  $x\in \E$.
We set also 
\begin{align*}
 \cR(u)=\bigl(s_u\otimes\id\bigr)\bigl(\cR\bigr)\quad
\in \B^\perp\widehat{\otimes}\Bb^\perp[[u]]\,.
\end{align*}
In formula \eqref{cR+} for $\cR_+$,  
the first tensor component of each term acts as
annihilation operator on modules from $\cO_{\B^\perp}$.
Likewise, in formula \eqref{cR-} for $\cR_-$,  
the second tensor component of each term acts as
annihilation operator on modules from $\cO_{\E}$. 
Therefore, if $V\in \Ob\cO_{\B^\perp}$ and $W\in \Ob\cO_\E$, then
each coefficient of the formal series $\cR(u)$ 
is a well defined operator on $V\otimes_{\Delta^\perp} W$.  

Suppose further that $V$ is a tensor product of highest $\ell$-weight $\B^\perp$ modules,   
and  $W$ is a tensor product of highest $\ell$-weight $\E$ modules.  
Denote by $v_0\in V$ the tensor product of highest $\ell$-weight vectors, and 
by $w_0\in W$ the tensor product of highest $\ell$-weight vectors. 
We write the eigenvalues of $h_r$ on these vectors as $\br{h_r}_V$, $\br{h_r}_W$, respectively. 
Suppose that $C^\perp$ acts on $V$ as a scalar $C_V^\perp$ and on $W$ as a scalar $C_W^\perp$.
From the remark above, we see that 
$\cR(u)\bigl(v_0\otimes w_0\bigr)=f_{V,W}(u)\bigl(v_0\otimes w_0\bigr)$, 
where
\begin{align}
f_{V,W}(u)=(C^\perp_V)^{\pdeg w_0}(C^\perp_W)^{\pdeg v_0}
\exp\bigl(-\sum_{r>0}u^r r\kappa_r \br{h_r}_V\br{h_{-r}}_W\bigr)\,.
\label{fVW}
\end{align}
We have
\begin{align*}
f_{V_1\otimes V_2,W}(u)=f_{V_1,W}(u)f_{V_2,W}(u)\,,
\quad
f_{V, W_1\otimes W_2}(u)=f_{V,W_1}(u)f_{V,W_2}(u)\,.
\end{align*}
For example, 
\begin{align*}
&f_{M^+(a),\F(b)}(u)=\exp\Bigl(\sum_{r>0}\frac{1-q_2^r}{r\kappa_r}\Bigl(\frac{ua}{b}\Bigr)^r\Bigr)
\,,\\
&f_{\F(a),\F(b)}(u)=\exp\Bigl(-\sum_{r>0}\frac{(1-q_2^r)(1-q_2^{-r})}{r\kappa_r}\Bigl(\frac{ua}{b}\Bigr)^r\Bigr)
\,.
\end{align*}

We define the normalized $R$ matrix 
$\Rb_{V,W}(u)\in \End\bigl(V\otimes W\bigr)[[u]]$ by 
\begin{align*}
\Rb_{V,W}(u)\bigl(v\otimes w\bigr)=
f_{V,W}(u)^{-1}\cR(u)\bigl(v\otimes w\bigr)\quad
(v\in V, w\in W).
\end{align*}
In the next two Propositions we study the polynomial nature of the
$\Rb_{V,W}(u)$ and its growth order at $u\to\infty$. 
This will be used in the next subsection
to discuss the polynomiality of the transfer matrix.

For vectors $w_1\in W^*$ and $w_2\in W$, introduce the 
notation $L_{w_1,w_2}(u)$ for the matrix coefficients 
in the second component, 
\begin{align*}
v_1L_{w_1,w_2}(u)v_2
=v_1\otimes w_1\Rb_{V,W}(u)
v_2\otimes w_2
\quad (v_1\in V^*, v_2\in V).
\end{align*} 
We regard $V^*$, $W^*$ as right $\B^\perp$ modules. 
The intertwining property \eqref{int-R} of the $R$ matrix implies that
\begin{align}
L_{w_1, h^\perp_rw_2}(u)
&=(C_V^\perp)^{-r}L_{w_1h^\perp_r,w_2}(u)
+[h^\perp_r,L_{w_1,w_2}(u)]_{(C_W^\perp)^{-r}}\,, 
\label{Lhr}\\
L_{w_1e^\perp_n,w_2}(u)
&=u [L_{w_1,w_2}(u),e^\perp_n]_{(C_W^\perp)^n}+
L_{w_1,e^\perp_{n}w_2}(u)(C_V^\perp)^n
\label{Len}\\
&+\sum_{j\ge 1}L_{w_1,e^\perp_{n-j}w_2}(u)\psi^{+,\perp}_j
(C_V^\perp)^n
-(C_W^\perp)^n u\sum_{j\ge 1}e^\perp_{n-j}L_{w_1\psi^{+,\perp}_{j},w_2}(u)\,,
\nn
\end{align}
for all $r>0$ and $n\in\Z$. 
Here we write $[A,B]_p=AB-pBA$.

We shall say that a linear operator on $V$ is polynomial if 
it acts as a polynomial in $u$ on each subspace $V_n$,  
with degree possibly depending on $n$. 

\begin{prop}\label{prop:poly-M}
Notation being as above, consider the special case
$V=M^+(1)$, $W=\F(v)$, $C^\perp_V=1$, $C_W^\perp=q^{-1}$. Let  
$V=\oplus_{m=0}^\infty M[m]$ be the grading in 
Proposition \ref{prop:gradingM}, and denote by $u^\partial$ the operator 
$u^\partial\bigl|_{M[m]}=u^m\times\id_{M[m]}$. 
Then $L_{w_1,w_2}(u)$ is polynomial 
for any $w_1\in \F(v)^*$, $w_2\in \F(v)$.   
Moreover we have 
\begin{align}
u^\partial L_{w_1,w_2}(u)u^{-\partial} M[m]
\subset u^s\cdot \sum_{j=0}^{m+s}M[j]\otimes\C[u^{-1}]
\quad (s=-\pdeg w_2).
\label{growthM}
\end{align}
\end{prop}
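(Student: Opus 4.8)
The plan is to exploit the recursion relations \eqref{Lhr} and \eqref{Len}, which express the matrix coefficients $L_{w_1,w_2}(u)$ in terms of those with the quantum-space vectors $w_1,w_2$ replaced by vectors of strictly smaller filtration (with respect to the action of $\B^{\perp}_-$ on $\F(v)$ and $\F(v)^*$). Since $\F(v)$ is an irreducible lowest-type module for the Heisenberg algebra generated by the $h^{\perp}_r$, every $w_2\in\F(v)$ is obtained from $\ket{\emptyset}$ by applying creation operators $h^{\perp}_{-r}$, equivalently every $w_1\in\F(v)^*$ is obtained from $\ket{\emptyset}^*$ by applying the $h^{\perp}_r$; so I would set up a double induction on the principal degrees of $w_1$ and $w_2$ (or, more precisely, on the length of a word in the $h^{\perp}$'s needed to reach $w_1$ and $w_2$). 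The base case is $w_1=\ket{\emptyset}^*$, $w_2=\ket{\emptyset}$, for which $L_{\ket{\emptyset}^*,\ket{\emptyset}}(u)$ is the action of the normalized $\cR_-$ piece (the $\cR_+$ and $\cR_0$ pieces act trivially between the highest weight vectors after normalization, by \eqref{fVW}), and one reads off directly that it is $u^s\times\id$ on the appropriate graded piece with $s=0$; more carefully, one should check that $\Rb_{V,\F(v)}(u)(v_0\otimes\ket{\emptyset})=v_0\otimes\ket{\emptyset}$, which is how $f_{V,W}(u)$ was chosen.

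The inductive step has two parts. For \eqref{Lhr}: the term $L_{w_1h^{\perp}_r,w_2}(u)$ and the commutator term $[h^{\perp}_r,L_{w_1,w_2}(u)]$ both involve $L$'s whose quantum-space arguments have smaller filtration, to which the induction hypothesis applies; here I use crucially that $h^{\perp}_r$ for $r>0$ is the \emph{annihilation} side on $\F(v)$ but $h^{\perp}_r$ in $[h^{\perp}_r,\,\cdot\,]$ acts on $M^+(1)=M$ via \eqref{psiM} (equivalently the Remark after Proposition \ref{prop:gradingM}: $\bar h_{r,M}M[m]\subset M[m-r]$), so conjugation by $u^{\partial}$ turns this commutator into $u^r$ times a filtration-shifting operator, compatible with the claimed bound. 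For \eqref{Len} with $n$ arbitrary: note that all terms on the right carry at most one factor of $u$ explicitly; the new feature is the presence of $\psi^{+,\perp}_j=h^{\perp}_j$-type operators (via the expansion of $\psi^{+,\perp}(z)$) acting on both $V$ and $W$. On $V=M$ these act by \eqref{psiM}/\eqref{xM}; on $W=\F(v)$ they are again expressible through creation/annihilation operators. One reorganizes \eqref{Len} to solve for $L_{w_1,w_2}(u)$ when $w_2=e^{\perp}_n w_2'$ — using that $\F(v)$ is generated over $\B^{\perp}$ from $\ket{\emptyset}$ and that the $e^{\perp}_n$ together with the $h^{\perp}_{-r}$ generate enough of the action (cf. the vertex operator formulas \eqref{VO1}, \eqref{VO2}) — and checks term by term that conjugation by $u^{\partial}$ keeps each term inside $u^s\sum_{j=0}^{m+s}M[j]\otimes\C[u^{-1}]$, with $s=-\pdeg w_2$, the single explicit $u$ accounting for the shift $s\mapsto s+1$ matching $\pdeg(e^{\perp}_n w_2')=\pdeg w_2'-1$ appropriately; the shift $-\pdeg w_2$ in the exponent of $u$ is exactly bookkeeping of how many $e^{\perp}$'s (principal degree $-1$ each on the quantum side, after the $s_u$ twist) have been stripped off.

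The main obstacle will be the careful bookkeeping in the $n<0$ cases of \eqref{Len}: the infinite sums $\sum_{j\ge1}$ must be shown to be finite on each graded component (this uses finite-dimensionality of weight spaces of $\F(v)$ together with \eqref{xM}/\eqref{psiM}), and one must verify that the inductive bound \eqref{growthM} is preserved and not merely some weaker polynomial bound — in particular that the power of $u$ does not grow beyond $u^s$ and that the grading shift stays in the range $[0,m+s]$. I would handle this by first proving the weaker statement that each $L_{w_1,w_2}(u)$ is polynomial (ignoring the precise degree), using the recursions to reduce to the base case, and only then running the induction a second time keeping track of the refined estimate \eqref{growthM}; the key structural input throughout is Proposition \ref{prop:gradingM} for $M=M^+(1)$, which controls how $\B^{\perp}_+$, $\overline{\psi}^+(z)$ and $\B^{\perp}_-$ interact with the grading $M[m]$.
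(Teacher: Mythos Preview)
Your overall strategy---induct on $w_1,w_2$ via the recursions \eqref{Lhr} and \eqref{Len}, reducing to the vacuum matrix element---matches the paper's. The genuine gap is in your base case.

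The operator $L_{\bra{\emptyset},\ket{\emptyset}}(u)$ is \emph{not} the identity, and it does \emph{not} come from $\cR_-$. In fact the second tensor component of $\cR_-$ has positive principal degree and annihilates $\ket{\emptyset}$, while the second component of $\cR_+$ has negative principal degree and is killed by $\bra{\emptyset}$ from the left. What survives is $q^{t_\infty}\cR_0$, and after normalizing by $f_{V,W}(u)$ one finds
\[
L_{\bra{\emptyset},\ket{\emptyset}}(u)
= q^{-d^\perp}\exp\Bigl(-\sum_{r>0}(u/v)^{r}(1-q_2^r)\,\bar h_{r,V}\Bigr),
\qquad \bar h_{r,V}=h_r-\br{h_r}_V\,,
\]
an infinite exponential of genuine operators on $M^+(1)$. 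Its polynomiality is precisely the point where the $1$-finiteness of $M^+(1)$ enters: by the Remark after Proposition~\ref{prop:gradingM} one has $\bar h_{r,V}M[m]\subset M[m-r]$, so on each $M[m]$ the exponent is a finite sum of nilpotents and the exponential terminates. The same shift gives $u^\partial\bar h_{r,V}u^{-\partial}M[m]\subset u^{-r}M[m-r]$, whence $u^\partial L_{\bra{\emptyset},\ket{\emptyset}}(u)u^{-\partial}$ is independent of $u$, confirming \eqref{growthM} with $s=0$. Without this computation the induction has no anchor.

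Two smaller corrections. First, \eqref{Len} is a recursion in $w_1$: it expresses $L_{w_1e^\perp_n,w_2}$, not $L_{w_1,e^\perp_n w_2'}$. The paper uses \eqref{Lhr} (with $w_1=\bra{\emptyset}$, so that $\bra{\emptyset}h^\perp_r=0$ for $r>0$ and the first term on the right drops out) to reach all $w_2$, and then \eqref{Len} with $n\le0$ to reach all $w_1$, since $\F(v)^*$ is generated from $\bra{\emptyset}$ by $\{e^\perp_n\}_{n\le0}$. Second, in the case $n=0$ of \eqref{Len} one has $e^\perp_0=h_1\in\B^\perp_0$, and the term $u[L_{w_1,w_2}(u),h_1]=u[L_{w_1,w_2}(u),\bar h_{1,V}]$ must be controlled via the grading shift of $\bar h_{1,V}$ rather than via \eqref{xM}; otherwise the explicit factor of $u$ would spoil the degree bound \eqref{growthM}.
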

\begin{proof}
Consider first the case $w_1=\bra{\emptyset}$ and $w_2=\ket{\emptyset}$. 
We have $r\kappa_r\br{h_{-r}}_W=(1-q_2^r)v^{-r}$.  
From \eqref{R0}, \eqref{fVW} and \eqref{tinf} we obtain
\begin{align*}
L_{\bra{\emptyset},\ket{\emptyset}}(u)=
q^{-d^\perp}\exp\Bigl(-\sum_{r>0}\Bigl(\frac{u}{v}\Bigr)^{r}
(1-q_2^r)\bar{h}_{r,V}\Bigr)\,,
\end{align*}
where $\bar{h}_{r,V}=h_r-\br{h_r}_V$. 
Thanks to \eqref{psiM}, on each degree subspace, the operator  
$\sum_{r>0}(u/v)^r(1-q_2^r)\bar{h}_{r,V}$
is a finite sum and is nilpotent. 
Hence $L_{\bra{\emptyset},\ket{\emptyset}}(u)$ is polynomial. 
Since
\begin{align}
u^\partial \bar{h}_{r,V} u^{-\partial} M[m]\subset u^{-r} M[m-r]\,,
\label{hbarV}
\end{align}
operator 
$u^\partial L_{\bra{\emptyset},\ket{\emptyset}}(u)u^{-\partial}$
is independent of $u$, and 
\eqref{growthM} holds true for this element. 

Since $W=\F(v)$ is generated from $\ket{\emptyset}$
by $\{h^\perp_r\}_{r>0}$, formula \eqref{Lhr}
allows us to compute 
$L_{\bra{\emptyset},w_2}(u)$ for all $w_2\in W$
inductively. 
Formula \eqref{Len} with $n\le 0$ then 
allows us to compute $L_{w_1,w_2}(u)$ for all $w_1\in W^*$
and $w_2\in W$, because
$W^*=\F(v)^*$ is generated from $\bra{\emptyset}$ by
$\{e^\perp_n\}_{n\le 0}$
(note that $e_n$ ($n\ge2$) and $[e_0,e_2]$ 
are generated by $e^\perp_{-r}$ ($r>0$) and $\ad e^\perp_0=\ad h_1$).  
In general, $L_{w_1,w_2}(u)$ is expressed as 
$\sum a_i(u) x_i L_{\bra{\emptyset},\ket{\emptyset}}(u) y_i$
with some $x_i,y_i\in \B^\perp$ and $a_i(u)\in\C[u]$. 
Hence $L_{w_1,w_2}(u)$ is polynomial. 
Using 
\begin{align*}
&u^\partial h^\perp_r u^{-\partial}M[m]\subset 
u^r\cdot \sum_{j=m}^{m+r}M[j]u^{-m-r+j}
\quad (r>0)\,,
\\
&u^\partial e^\perp_n u^{-\partial}M[m]\subset \delta_{n,0}M[m]+M[m-1]u^{-1}
\quad (n\le0)\,,
\end{align*}
we can show \eqref{growthM} by induction. 
When $n=0$, we have to be careful  
with the first term in the right hand side of \eqref{Len}.
In this case we write it 
as $u[L_{w_1,w_2}(u),\bar{h}_{1,V}]$ and use \eqref{hbarV}.
\end{proof}

\medskip

\noindent{\it Remark.}\quad 
While matrix elements of 
$\Rb_{M^+(1),\F(v)}(u)$ are polynomials, its inverse
$\Rb_{M^+(1),\F(v)}(u)^{-1}$ has
infinitely many poles. In fact, its diagonal matrix entry 
on the vector $v_0\otimes \ket{\lambda}$ is given by
$\prod_{\square\in\lambda}\bigl(1-q^\square u/v\bigr)^{-1}$. \qed
\bigskip

Next we consider the $2$-finite module $N^+(1)$. 
Recall the grading 
$N^+(1)=\oplus_{m=0}^\infty N[m]$, 
$N[m]=N_0[m]\oplus N_1[m]$, given in Proposition \ref{prop:gradingN}. 
Let $u^\partial$ be the operator $u^\partial\bigl|_{N[m]}=u^m\cdot \id_{N[m]}$. 

\begin{prop}\label{prop:poly-N}
The operator
$(1-q_2u/v)L_{w_1,w_2}(u)$ is polynomial for any $w_1\in \F(v)^*$
and $w_2\in \F(v)$. Moreover 
we have the expansions as $u\to\infty$ 
\begin{align}
&u^\partial L_{w_1,w_2}(u)u^{-\partial}\cdot N_1[m]
\subset u^{2s} \sum_{j=1}^{m+2s}N_1[j][[u^{-1}]]\,,
\label{LN1}\\
&u^\partial L_{w_1,w_2}(u)u^{-\partial}\cdot N_0[m]
\subset u^{2s} \sum_{j=0}^{m+2s}N_0[j][[u^{-1}]]
+ u^{2s-1} \sum_{j=1}^{m+2s-2}N_1[j][[u^{-1}]]\,
\label{LN0}
\end{align}
where $s=-\pdeg w_2$.
\end{prop}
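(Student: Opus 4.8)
The plan is to follow the proof of Proposition~\ref{prop:poly-M} almost step for step; the only genuinely new ingredient is the appearance of the factor $1-q_2u/v$, which I expect to come entirely from the second $\ell$-weight of $N^+(1)$.

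First I would settle the base case $w_1=\bra{\emptyset}$, $w_2=\ket{\emptyset}$, with $\bra{\emptyset},\ket{\emptyset}$ the highest weight vector of $\F(v)$ and its dual. As in Proposition~\ref{prop:poly-M}, the summands $\cR_\pm$ act trivially between these vectors, so only $q^{t_\infty}$, $\cR_0$ and $f_{V,\F(v)}(u)$ contribute and one gets
\begin{align*}
L_{\bra{\emptyset},\ket{\emptyset}}(u)=q^{-d^\perp}\exp\Bigl(-\sum_{r>0}(u/v)^r(1-q_2^r)\,\bar{h}_{r}\Bigr),\qquad \bar{h}_r=h_r-\br{h_r}_{N_0},
\end{align*}
where $\br{h_r}_{N_0}$ is the eigenvalue of $h_r$ on the highest $\ell$-weight vector. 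On $N_0$ the operator $\bar h_r$ is nilpotent by \eqref{psiN}, so the exponential is a polynomial in $u$ there. On $N_1$ I would write $\bar h_r=\bar h_{r,N_1}+(\br{h_r}_{N_1}-\br{h_r}_{N_0})$ with $\bar h_{r,N_1}=h_r-\br{h_r}_{N_1}$ nilpotent; from $\Psi_1(z)/\Psi_0(z)=\prod_{i=1}^3(1-q_i/z)/(1-q_i^{-1}/z)$ one reads off $\kappa_r(\br{h_r}_{N_1}-\br{h_r}_{N_0})=\tfrac1r\sum_{i=1}^3(q_i^{-r}-q_i^r)$, and this equals $\kappa_r/r$ precisely because $q_1q_2q_3=1$. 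Hence on $N_1$ the scalar part of the exponential is $\exp(-\sum_{r>0}(u/v)^r(1-q_2^r)/r)=(1-u/v)/(1-q_2u/v)$, so $(1-q_2u/v)L_{\bra{\emptyset},\ket{\emptyset}}(u)$ is polynomial. Conjugating by $u^\partial$ and using that $\bar h_r$ and $\bar h_{r,N_1}$ lower the $N^+(1)$-grading by $r$ (again by \eqref{psiN}), the operator $u^\partial L_{\bra{\emptyset},\ket{\emptyset}}(u)u^{-\partial}$ is $u$-independent on $N_0$ and on $N_1$ is $(1-u/v)/(1-q_2u/v)\in q_2^{-1}+u^{-1}\C[[u^{-1}]]$ times a $u$-independent grading-lowering operator. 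Since $h_r$ preserves $N_0$ and $N_1$ individually this yields \eqref{LN1}, \eqref{LN0} when $s=0$.

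Next I would build up a general $w_2$ from $\ket{\emptyset}$ by repeatedly applying the creation operators $h^\perp_r$ ($r>0$) via \eqref{Lhr}, and then build up a general $w_1$ from $\bra{\emptyset}$ by applying $e^\perp_n$ ($n\le 0$) via \eqref{Len}, exactly as in Proposition~\ref{prop:poly-M}. Polynomiality of $(1-q_2u/v)L_{w_1,w_2}(u)$ is then automatic, since each recursion step only multiplies the already-constructed operators by elements of $\C[u]$ and composes them on $N^+(1)$ with the $u$-independent operators $h^\perp_r$, $e^\perp_n$, $\psi^{+,\perp}_j$, none of which creates poles; concretely one obtains an expression $L_{w_1,w_2}(u)=\sum_i a_i(u)\,x_i\,L_{\bra{\emptyset},\ket{\emptyset}}(u)\,y_i$ with $a_i\in\C[u]$ and $x_i,y_i\in\B^\perp$.

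Finally, for the growth estimates \eqref{LN1}, \eqref{LN0} I would rerun the same induction while tracking the two-block $N^+(1)$-grading by means of Proposition~\ref{prop:gradingN}: here $h^\perp_r\in\B^\perp_-$ has $\hdeg=0$, $\pdeg=-r$, and $e^\perp_n$ ($n\le 0$) has $\hdeg=1$, $\pdeg=-n$, so their action on $N_0[m]$, $N_1[m]$ is governed by \eqref{xN0}--\eqref{yN1}, and combined with the explicit factors $u$ in \eqref{Len} this should reproduce the exponents $u^{2s}$, $u^{2s-1}$ and the triangular pattern (the $N_1$-sector never maps into the $N_0$-sector) in the statement. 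I expect the hard part to be exactly this last bookkeeping — propagating the coupled bounds for the $N_0\to N_0$, $N_0\to N_1$, $N_1\to N_1$ blocks through \eqref{Lhr}, \eqref{Len} while simultaneously controlling the homogeneous degree on $N^+(1)$, the principal degrees of $w_1,w_2$, and the power of $u$, and checking in particular that the pole at $u=q_2^{-1}v$ stays simple. This is the analog of the terse "we can show \eqref{growthM} by induction" in the proof of Proposition~\ref{prop:poly-M}, made heavier by the presence of the second block.
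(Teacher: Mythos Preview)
Your proposal is correct and follows essentially the same approach as the paper: compute $L_{\bra{\emptyset},\ket{\emptyset}}(u)$ explicitly, identify the factor $(1-u/v)/(1-q_2u/v)$ as coming from the difference of the two $\ell$-weights of $N^+(1)$, then propagate polynomiality and the growth bounds via \eqref{Lhr}, \eqref{Len} together with the grading relations of Proposition~\ref{prop:gradingN}. The paper, like you, records the relevant conjugated relations $u^\partial x\, u^{-\partial}N_i[m]\subset\cdots$ for $x=h^\perp_r,\ e^\perp_{-n},\ \bar h_{r,i}$ and leaves the detailed inductive bookkeeping implicit.
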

\begin{proof}
The proof is similar to that of Proposition \ref{prop:poly-M}. 

Denote by $\br{h_r}_i$ the eigenvalue of $h_r$ on $N_i$, and set
$\bar{h}_{r,i}=h_r-\br{h_r}_i$. 
Then we have 
\begin{align*}
L_{\bra{\emptyset},\ket{\emptyset}}(u)
&=
q^{-d^\perp}\exp\Bigl(-\sum_{r>0}\Bigl(\frac{u}{v}\Bigr)^{r}
(1-q_2^r)\bar{h}_{r,0}\Bigr)\,,\\
&=\frac{1-u/v}{1-q_2u/v}
q^{-d^\perp}\exp\Bigl(-\sum_{r>0}\Bigl(\frac{u}{v}\Bigr)^{r}
(1-q_2^r)\bar{h}_{r,1}\Bigr)\,.
\end{align*}
It follows from \eqref{psiN} that
$(1-q_2u/v)L_{\bra{\emptyset},\ket{\emptyset}}(u)$ is polynomial,  
and that 
$u^r u^\partial \bar{h}_{r,i}u^{-\partial}$ is 
independent of $u$ on $N_i$. 
Hence \eqref{LN1}, \eqref{LN0} are true in this case. 

The case of general $w_1,w_2$ can be verified by using \eqref{Lhr}, 
\eqref{Len} and the relations which follow from 
Proposition \ref{prop:gradingN},
\begin{align*}
&u^\partial h^\perp_r u^{-\partial}N_0[m]
\subset u^{2r}\sum_{j=0}^{m+2r}N_0[j]\otimes\C[u^{-1}]
+u^{2r-1}\sum_{j=1}^{m+2r-1}N_1[j]\otimes\C[u^{-1}]\,,
\\
&u^\partial h^\perp_r u^{-\partial}N_1[m]
\subset u^{2r}\sum_{j=1}^{m+2r}N_1[j]\otimes\C[u^{-1}]\,,
\\
&u^\partial e^\perp_{-n} u^{-\partial}N_0[m]
\subset u^{-1}N_0[m-1]\,,
\quad (n>0)\\
&u^\partial e^\perp_{-n} u^{-\partial}N_1[m]
\subset u^{-1}N_1[m-1]
+\sum_{j=0}^m N_0[j]\otimes\C[u^{-1}]
\quad (n>0)\,,
\\
&u^\partial \bar{h}_{r,i} u^{-\partial}N_i[m]\subset
u^rN_i[m-r]\,.
\end{align*}
\end{proof}

\subsection{Bethe ansatz}

For an object $V\in \cO_{\B^\perp}$, 
the twisted transfer matrix 
associated with the `auxiliary space' $V$ 
is a formal series defined by
\begin{align*}
\Tb_V(u;p)&=\Tr_{V,1}\left((p^{-d^\perp}\otimes \id)\cdot \cR(u)\right)
\quad \in \Bb^\perp[[u,p]][p^{-1}]\,.
\end{align*}
Here $\Tr_{V,1}$ means that the trace is taken on the first tensor component.
Clearly we have 
\begin{align*}
&\Tb_{V_1\oplus V_2}(u;p)=\Tb_{V_1}(u;p)+\Tb_{V_2}(u;p)\,,
\\
&\Tb_{V_1\otimes V_2}(u;p)=\Tb_{V_2}(u;p)\Tb_{V_1}(u;p)\,,
\end{align*}
hence the assignment
$V\mapsto \Tb_V$ gives a homomorphism of rings from 
$\Rep\,\B^\perp$ to $\Bb^\perp[[u,p]][p^{-1}]$. 

Element $\Tb_V(u;p)$ gives rise to 
a formal series of operators which act 
on any given `quantum space' $W\in\cO_\E$.  
It is convenient to use the normalized $R$ matrix and define
\begin{align*}
T_{V,W}(u;p)
&=\Tr_{V,1}\bigl((p^{-d^\perp}\otimes\id)\Rb_{V,W}(u)\bigr)\,
\quad \in \End(W)[[u,p]]\,,
\end{align*}
so that $\Tb_V(u;p)\bigl|_{W}=f_{V,W}(u)T_{V,W}(u;p)$. 
Note that $T_{V,W}(u;p)$ acts on each subspace of $W$ of fixed principal degree. 

From now on, we choose
\begin{align*}
W=\F(v_1)\otimes_{\Delta^\perp}\cdots\otimes_{\Delta^\perp} \F(v_N).  
\end{align*}
Note that $C^\perp$ acts as $q^{-N}$ on $W$. 
We set 
\begin{align*}
&a(u)=\prod_{i=1}^Nq^{-1}\Bigl(1-\frac{q_2 u}{v_i}\Bigr)\,,
\quad
d(u)=\prod_{i=1}^N\Bigl(1-\frac{u}{v_i}\Bigr)\,,
\end{align*}
and introduce the notation 
\begin{align*}
&Q_W(u;p)=T_{M^+(1),W}(u;p)\,,
\quad
\cT_W(u;p)=a(u)T_{N^+(1),W}(u;p)\,.
\end{align*}
In what follows $W$ is fixed, we drop it from notation and simply write $Q(u;p)$ and $\cT(u;p)$.

\begin{prop}\label{Q pol}
On the subspace $W_{-k}$ of principal degree $-k$, $Q(u;p)$ is a polynomial in $u$
of degree at most $k$. 
\end{prop}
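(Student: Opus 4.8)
The plan is to use the grading on $M^+(1)$ from Proposition \ref{prop:gradingM} together with the polynomiality estimates on the normalized $R$ matrix from Proposition \ref{prop:poly-M}. Recall that $Q(u;p)=T_{M^+(1),W}(u;p)=\Tr_{M^+(1),1}\bigl((p^{-d^\perp}\otimes\id)\Rb_{M^+(1),W}(u)\bigr)$, where $W=\F(v_1)\otimes_{\Delta^\perp}\cdots\otimes_{\Delta^\perp}\F(v_N)$. Since $W$ is a tensor product of $N$ Fock spaces, Proposition \ref{prop:poly-M} (iterated over the tensor factors, using the multiplicativity of $f_{V,W}$ and the fact that the relevant half-currents act polynomially) gives, for each matrix coefficient $L_{w_1,w_2}(u)$ with $w_1\in W^*$, $w_2\in W_{-k}$, the estimate
\begin{align*}
u^\partial L_{w_1,w_2}(u)\,u^{-\partial}\,M[m]\subset u^{k}\cdot\sum_{j=0}^{m+k}M[j]\otimes\C[u^{-1}]\,,
\end{align*}
where $u^\partial|_{M[m]}=u^m\cdot\id$ and $s=-\pdeg w_2=k$ in the notation of Proposition \ref{prop:poly-M}. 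The first step is to write out the trace explicitly in a basis of $M^+(1)$ adapted to the grading $M^+(1)=\oplus_{m\ge0}M[m]$, so that $d^\perp$ is also respected by this basis (recall $M^+(1)=\oplus_{n\le0}M^+(1)_n$ and $M[0]=\C w_M$).

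The key computation is then the following. Fix the subspace $W_{-k}$; on it $Q(u;p)$ acts, and a matrix coefficient of $Q(u;p)$ between $w_2\in W_{-k}$ and $w_1\in W_{-k}^*$ is $\sum_{m\ge0}p^{\,?}\cdot\tr_{M[m]}\bigl(L_{w_1,w_2}(u)|_{M[m]}\bigr)$ (with appropriate powers of $p$ coming from $d^\perp$). By the estimate above, conjugating by $u^\partial$ inside the trace — which is legitimate since the trace is over each finite-dimensional $M[m]$ separately — the operator $u^\partial L_{w_1,w_2}(u)u^{-\partial}$ maps $M[m]$ into $u^k\sum_{j\le m+k}M[j]\otimes\C[u^{-1}]$. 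Hence its \emph{diagonal} block on $M[m]$, which is what contributes to $\tr_{M[m]}$, lies in $u^k\cdot\End(M[m])\otimes\C[u^{-1}]$, i.e. it is a polynomial in $u$ of degree at most $k$. Taking traces and summing over $m$ (the sum converges in the $p$-adic sense because $M[m]\cap M^+(1)_n=0$ for $m$ large when $n$ is fixed, or more directly because $\cO_{\B^\perp}$ membership bounds the $\psi^+$-data) preserves the degree bound: each coefficient of $Q(u;p)$ on $W_{-k}$ is a polynomial in $u$ of degree $\le k$.

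The main obstacle I expect is \textbf{bookkeeping of the two independent filtrations} — the grading by $m$ (eigenvalues $q_2^m$ of $\Phi$) and the principal grading by $n$ — and making sure the trace is well defined and the $u^\partial$-conjugation is valid. Specifically, one must check that for fixed principal degree $-k$ on $W$, only finitely many pairs $(\pdeg w_2,\,m)$ contribute to any fixed power of $u$ and of $p$, so that the formal series $Q(u;p)$ genuinely has polynomial-in-$u$ coefficients rather than merely a formal bound. This follows from property \eqref{M[m]-princ}: $M[m]\cap M^+(1)_n\ne0$ forces $-m\le n\le 0$, so on a given principal-degree subspace of $W$ the grading index $m$ is correlated with the $W$-degree, and the trace over each $M[m]$ is finite-dimensional. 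I would also need the elementary fact that conjugation by the invertible (on each graded piece) operator $u^\partial$ does not change the trace of the diagonal blocks — trivial, but worth stating. Once these finiteness points are nailed down, the degree bound $\deg_u\le k$ on $W_{-k}$ drops out of the estimate \eqref{growthM} immediately.
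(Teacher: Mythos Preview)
Your proposal is correct and follows essentially the same route as the paper: factor $\Rb_{M^+(1),W}(u)$ as the ordered product $\Rb_{M^+(1),\F(v_N)}(u)\cdots\Rb_{M^+(1),\F(v_1)}(u)$, iterate the growth estimate \eqref{growthM} of Proposition \ref{prop:poly-M} so that the total shift is $s=\sum_i(-\pdeg w_2^{(i)})=k$, and read off the degree bound on the trace via the $u^\partial$-conjugation. One small clarification: the trace defining $Q(u;p)$ is taken over the principal-degree subspaces $M^+(1)_{-n}$ (with weight $p^n$ from $p^{-d^\perp}$), not over $M[m]$; within each finite-dimensional $M^+(1)_{-n}$ you then use a basis adapted to the $M[m]$ decomposition, conjugate by $u^\partial$ (well defined and invertible there since only finitely many $m$ occur), and observe that the $M[m]\to M[m]$ diagonal blocks of $u^\partial L_{w_1,w_2}(u)u^{-\partial}$ lie in $u^{k}\,\C[u^{-1}]$ --- this is exactly what the paper's terse sentence ``\eqref{growthM} implies that the degree of $Q_n(u)w$ does not exceed $k$'' encodes.
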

\begin{proof}
Let $Q_n(u)=\Tr_{M^+(1)_{-n}}\Rb_{M^+(1),W}(u)$ denote 
the trace over the subspace  $M^+(1)_{-n}$ of fixed principal degree $-n$. 
We have $Q(u;p)=\sum_{n=0}^\infty p^n Q_n(u)$.  
Since
\begin{align*}
\Rb_{M^+(1),W}(u)= 
\Rb_{M^+(1),\F(v_N)}(u)\cdots \Rb_{M^+(1),\F(v_1)}(u), 
\end{align*} 
each matrix element of $\Rb_{M^+(1),W}(u)$ is a polynomial in $u$ 
by Proposition \ref{prop:poly-M}. 
Moreover \eqref{growthM} implies that  
for each vector $w\in W_{-k}$ the degree of   
$Q_n(u)w$ does not exceed $k$. Therefore 
$Q(u;p)w$ is also a polynomial in $u$ of degree at most $k$. 
\end{proof}

\begin{prop}\label{T pol}
On  each subspace of $W$ of fixed principal degree, 
$\cT(u;p)$ is a polynomial in $u$.
\end{prop}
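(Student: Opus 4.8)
The plan is to mimic the proof of Proposition \ref{Q pol}, replacing the module $M^+(1)$ with the $2$-finite module $N^+(1)$ and using the polynomiality and growth estimates from Proposition \ref{prop:poly-N} together with the grading of Proposition \ref{prop:gradingN}. First I would write $\cT(u;p)=a(u)T_{N^+(1),W}(u;p)=a(u)\sum_{n\ge0}p^n\,\Tr_{N^+(1)[n]}\Rb_{N^+(1),W}(u)$, where the trace is taken over the degree-$n$ piece $N[n]=N_0[n]\oplus N_1[n]$ of the grading. Since $\Rb_{N^+(1),W}(u)=\Rb_{N^+(1),\F(v_N)}(u)\cdots\Rb_{N^+(1),\F(v_1)}(u)$, each matrix element of $\Rb_{N^+(1),W}(u)$ is, by Proposition \ref{prop:poly-N}, a polynomial in $u$ divided by $\prod_{i=1}^N(1-q_2u/v_i)=q^{-N}a(u)$ (here I use that $C^\perp$ acts by $q^{-N}$ on $W$, so the relevant denominator is a product of the single-Fock denominators $1-q_2u/v_i$). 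Multiplying by $a(u)$ therefore clears all denominators, and $\cT(u;p)$ acts by polynomials in $u$ on each fixed-principal-degree subspace of $W$.

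Next I would check that, on a given subspace $W_{-k}$, the series in $p$ actually truncates so that $\cT(u;p)$ is a genuine polynomial rather than a formal power series. This is where the growth estimates \eqref{LN1}--\eqref{LN0} enter: conjugating by $u^\partial$, one sees that for $w\in W_{-k}$ the operator $u^\partial\,\Tr_{N[n]}\Rb_{N^+(1),W}(u)\,u^{-\partial}$ applied to $w$ lands in $u^{2k}\sum_{j}N[j]\otimes\C[u^{-1}]$ with the sum running only over $0\le j\le n+2k$ (from \eqref{LN0} for the $N_0$ part and \eqref{LN1} for the $N_1$ part, with $s=k$). Taking the trace forces $j=n$, so the trace over $N[n]$ contributes nothing once $n>n+2k$ is impossible — rather, the point is that the diagonal blocks $N_i[n]\to N_i[n]$ come with a bounded power of $u$ independent of $n$, and the off-diagonal $N_0\to N_1$ contributions do not affect the trace. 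Combined with the fact that $\dim W_{-k}<\infty$ and $N[n]$ contributes to the trace on $W_{-k}$ in a way controlled uniformly in $n$, one concludes $\cT(u;p)w$ is polynomial in both $u$ and $p$. In fact the degree bound in $u$ should come out to be at most $2k+\deg a$ on $W_{-k}$, though for the statement only finiteness is needed.

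I would organize the argument as: (1) reduce to a single Fock factor via the factorization of $\Rb$; (2) invoke Proposition \ref{prop:poly-N} to get that $(1-q_2u/v)L_{w_1,w_2}(u)$ is polynomial, hence $a(u)$ times the full $R$-matrix element over $W$ is polynomial; (3) use \eqref{LN1}--\eqref{LN0} to bound the $u$-degree of $\Tr_{N[n]}\Rb_{N^+(1),W}(u)$ acting on $W_{-k}$ uniformly in $n$; (4) conclude that $\cT(u;p)$, which a priori lies in $\End(W)[[u,p]]$, restricts to a polynomial operator on each $W_{-k}$. The main obstacle will be step (3): one must track how the two $\ell$-weight blocks $N_0,N_1$ interact under the trace, since unlike the $1$-finite case the $R$-matrix is not block-diagonal — the $N_0\to N_1$ off-diagonal piece in \eqref{LN0} carries an extra factor $u^{-1}$ and does not contribute to the trace, but this needs to be stated carefully, and one must verify that after multiplication by $a(u)$ the remaining denominator $1-q_2u/v$ appearing in $L_{\bra{\emptyset},\ket{\emptyset}}(u)$ on $N_0$ (see the proof of Proposition \ref{prop:poly-N}) is exactly cancelled and not over-cancelled.
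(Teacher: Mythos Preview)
Your overall plan is the right one and matches the paper's (one-line) proof, which simply invokes Proposition~\ref{prop:poly-N}: clear the denominator $\prod_i(1-q_2u/v_i)=q^{N}a(u)$ coming from the polynomiality statement, then use the growth estimates \eqref{LN1}--\eqref{LN0} to bound the $u$-degree of each diagonal matrix element uniformly, exactly as in the proof of Proposition~\ref{Q pol}.

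However, two points in your write-up are genuinely wrong and would derail the argument as written.

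\medskip
\textbf{(a) Wrong grading in the $p$-expansion.} The coefficient of $p^n$ in $T_{N^+(1),W}(u;p)$ is the trace over the \emph{principal} degree piece $N^+(1)_{-n}$, because $p^{-d^\perp}$ detects principal degree; it is \emph{not} the trace over $N[n]=N_0[n]\oplus N_1[n]$ from Proposition~\ref{prop:gradingN}. These two gradings are different (compare with $M^+(1)$, where $M[m]$ and $M_{-n}$ are distinct but compatible, cf.\ \eqref{M[m]-princ}). The Frenkel--Hernandez grading $N_i[m]$ enters only in the second step, to bound the $u$-degree of the diagonal blocks; the $p$-expansion itself is organized by principal degree, exactly as in the proof of Proposition~\ref{Q pol}.

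\medskip
\textbf{(b) The $p$-series does not truncate, and need not.} The statement asserts polynomiality in $u$ only; $\cT(u;p)$ remains a formal power series in $p$, with each coefficient acting on $W_{-k}$ as a polynomial in $u$ of degree bounded \emph{uniformly} in $n$. Your paragraph beginning ``Next I would check that\ldots the series in $p$ actually truncates'' is aiming at the wrong target. What you actually need is: for every $n\ge0$ and every $w\in W_{-k}$, the diagonal matrix elements of $a(u)\,\Rb_{N^+(1),W}(u)$ on $N^+(1)_{-n}$ applied to $w$ have $u$-degree at most $2k+N$. This follows from \eqref{LN1}--\eqref{LN0} exactly as you sketch (the $N_0\leftrightarrow N_1$ off-diagonal pieces do not contribute to the trace, and the diagonal pieces in both blocks are $O(u^{2s})$ with $s=k$), but the conclusion is a uniform degree bound in $u$, not finiteness in $p$.

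\medskip
Finally, your worry about ``over-cancellation'' of the factor $1-q_2u/v$ is a non-issue: Proposition~\ref{prop:poly-N} already asserts that $(1-q_2u/v)L_{w_1,w_2}(u)$ is polynomial on all of $N^+(1)$, so multiplying by $a(u)$ can only help.
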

\begin{proof}
This follows from Proposition  \ref{prop:poly-N}. 
\end{proof}
\medskip

Since the map
$V\mapsto \Tb_V$ is a ring homomorphism, 
relations in $\Rep\,\B^\perp$ implies those for the transfer matrices. 
It allows us to express any transfer matrix 
$T_{V,W}(u;p)$ via the operator $Q(u;p)$.
The recipe is as follows. 
\begin{prop}\label{prop:TQ}
Let $\Psi\in\rg_{\B^\perp}$ and $V=L(\Psi)$. 
Then its $q$-character $\chi_q(V)$ has the form \eqref{1+A}.  
In $\chi_q(V)$, drop $\bar \chi_0^{\max(d(\Psi,0)}$ and replace each $X_a$ by $f_{M^+(1),W}(a)Q(a;p)$, 
each $x_a$ in $\bs m(\Psi)$ by $a^{-d^{\perp}}$, and each $t^{-1}$ by $pq^N$. 
Then the resulting series coincides with $f_{V,W}(u)T_{V,W}(u;p)$. 
\end{prop}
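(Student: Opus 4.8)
The plan is to reduce the statement to the already-established ring homomorphism property of the assignment $V\mapsto\Tb_V$ combined with the injectivity and multiplicativity of the $q$-character, so that the whole proof becomes a matter of matching the two ring maps $\Rep\,\B^\perp\to\Bb^\perp[[u,p]][p^{-1}]$ (via transfer matrices) and $\Rep\,\B^\perp\to\cX$ (via $q$-characters) on the topological generators $M^\pm(u)$. First I would set up the substitution described in the statement as an explicit algebra homomorphism $\Theta:\cX\to\End(W)[[u,p]][p^{-1}]$ (or rather its restriction to the relevant subring), sending $X_a\mapsto f_{M^+(1),W}(a)Q(a;p)$, $x_a\mapsto a^{-d^\perp}$, $t^{-1}\mapsto pq^N$, and check that it is well defined, i.e. compatible with the relations $x_{ab}=x_ax_b$ and the algebraic independence of the $X_a$. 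Here I would use Proposition \ref{Q pol}: since $Q(a;p)$ is polynomial in $a$ on each principal-degree subspace, the substitution $X_a\mapsto f_{M^+(1),W}(a)Q(a;p)$ makes sense pointwise in $W$ and defines an operator-valued function of the remaining formal variable $u$ once we track which $a$'s are set equal to $u$-multiples.

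Next I would establish the statement on the generators. For $V=M^+(u)$ the $q$-character is $X_u\cdot\chi_0$ by Proposition \ref{prop:1-finite}, so the recipe says to drop $\chi_0=\bar\chi_0^{\max(d(\Psi),0)}$ (here $d(\Psi)=1$), replace $X_u$ by $f_{M^+(1),W}(u)Q(u;p)$ and $x_a$ (there is none beyond what sits in $\bm(\Psi)=X_u$, with trivial $x$-part) — giving exactly $f_{M^+(1),W}(u)\,T_{M^+(1),W}(u;p)=\Tb_{M^+(u)}(u;p)\big|_W$ after accounting for the spectral-parameter shift $M^+(1)\leadsto M^+(u)$ via the map $s_u$ in \eqref{s-tau}. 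I would need to be slightly careful about how the shift $s_u$ interacts with $f_{V,W}$ and with the definition of $\cR(u)$, but this is the content of the displayed formula $\Tb_V(u;p)\big|_W=f_{V,W}(u)T_{V,W}(u;p)$ together with $f_{M^+(a),\F(b)}(u)$ computed explicitly in the text. For $V=M^-(u)$, Corollary \ref{q char M-} gives $\chi_q(M^-(u))=X_u^{-1}\sum_{\bs\lambda}t^{-|\bs\lambda|}\prod_\square A_{q^{-\square}u}^{-1}$ with $d(\Psi)=-1\le0$, so no $\chi_0$ is dropped, and the recipe substitutes to produce a specific infinite series in $pq^N$ whose coefficients are built from $Q(q_s a;p)/Q(q_s^{-1}a;p)$; I would match this against $\Tb_{M^-(u)}(u;p)\big|_W$ directly, or, more economically, observe that $[M^-(u)][M^+(u)]$ is computed in $\Rep\,\B^\perp$ (it is the $1$-finite $\otimes$ a module with polynomial highest $\ell$-weight, governed by Proposition \ref{prop:submod} and Lemma \ref{Y}) and deduce the $M^-$ case from the $M^+$ case by the ring homomorphism property.

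Then I would finish by a density/continuity argument. The modules $M^+(u),M^-(u)$ topologically generate $\Rep\,\B^\perp$; both $V\mapsto f_{V,W}(u)^{-1}\Tb_V(u;p)\big|_W=T_{V,W}(u;p)$ and the composite $V\mapsto\Theta(\chi_q(V))$ are ring homomorphisms out of $\Rep\,\B^\perp$ (the first by the multiplicativity of $\Tb$, the second by multiplicativity of $\chi_q$ from Proposition \ref{inj} and of $\Theta$); they agree on generators, hence on a dense subring; and both are continuous for the filtration defining the completion $\Rep\,\B^\perp$ (the $t$-adic/principal-degree filtration corresponds on the transfer-matrix side to the $p$-adic filtration via $t^{-1}\leftrightarrow pq^N$, and on the $\cX$ side to the grading by powers of $t^{-1}$), so they coincide on all of $\Rep\,\B^\perp$, in particular on $V=L(\Psi)$. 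One technical point: I must check that the factor $f_{V,W}(u)$ behaves multiplicatively and matches the $x_a$-substitution $x_a\mapsto a^{-d^\perp}$; this is essentially \eqref{fVW} together with $f_{V_1\otimes V_2,W}=f_{V_1,W}f_{V_2,W}$, and the appearance of $a^{-d^\perp}$ rather than a scalar is because the degree operator $D^\perp$ on $W$ enters through $q^{t_\infty}$ in $\cR$.

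The main obstacle I anticipate is the bookkeeping of spectral parameters and twists: the $q$-character lives in the "universal" variables $X_a$, and turning it into an operator on a fixed $W$ requires consistently identifying which argument $a$ is a free spectral parameter $u$ and which are the fixed evaluation points $v_i$ of the Fock spaces, as well as correctly threading the normalization $f_{V,W}$, the shift functor $s_u$, and the substitution $t^{-1}\mapsto pq^N$ (the $q^N$ coming precisely from $C^\perp$ acting as $q^{-N}$ on $W$, which feeds into $q^{t_\infty}$). Once one checks carefully that $\Theta$ is a well-defined continuous ring homomorphism and that the two maps agree on $M^+(u)$ — where everything is explicit via $f_{M^+(a),\F(b)}(u)$ and Proposition \ref{prop:poly-M} — the rest is formal.
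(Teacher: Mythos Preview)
Your strategy has the right ingredients --- multiplicativity of $\Tb_V$ and of $\chi_q$, and the identification of $Q$ with $\Tb_{M^+}$ --- but there is a genuine gap in the step where you claim that the composite $V\mapsto\Theta(\chi_q(V))$ is a ring homomorphism. The recipe involves first dropping the factor $\chi_0^{\max(d(\Psi),0)}$, and this operation is not compatible with tensor products: $\max(d_1,0)+\max(d_2,0)\neq\max(d_1+d_2,0)$ when $d_1,d_2$ have opposite signs. Put differently, your substitution $t^{-1}\mapsto pq^N$ already forces $\Theta(\chi_0)=\prod_{j\ge1}(1-(pq^N)^j)^{-j}\neq1$, so you cannot simultaneously have $\Theta(\chi_q(M^+(u)))=\Theta(X_u\chi_0)$ equal to $f_{M^+(1),W}(u)Q(u;p)=\Theta(X_u)$. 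Hence ``check on $M^\pm$ and extend by density'' does not go through as stated.

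The paper's proof sidesteps this by never defining a map on $\cX$. For a fixed $V=L(\Psi)$ it uses the injectivity of $\chi_q$ (Proposition~\ref{inj}) to lift the $q$-character formula to a single identity
\[
[V]\equiv[L(a)]\,\frac{\prod_i[M^+(a_i)]}{\prod_j[M^+(b_j)]}\sum_{\mathcal{S}}\Bigl(\prod_{a\in\mathcal{S}}\prod_{s=1}^{3}\frac{[M^+(q_sa)]}{[M^+(q_s^{-1}a)]}\Bigr)\{-\sharp\mathcal{S}\}
\]
in the completed ring $\Rep\,\B^\perp$ (after clearing denominators, and modulo any given $F^m$). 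The $\chi_0$ factors disappear here: they are artifacts of $\chi_q$ and have no counterpart in $\Rep\,\B^\perp$. One then applies the genuinely multiplicative map $\Tb_{(-)}(u;p)\big|_W$ to both sides of this one identity, reading off $[M^+(c)]\mapsto f_{M^+(1),W}(c)Q(c;p)$, $[L(a)]\mapsto a^{-d^\perp}$ (via $q^{t_\infty}$ and $C^\perp_V=a^{-1}$), and $\{-d\}\mapsto(pq^N)^d$. This yields the statement for $V$ directly, with no appeal to topological generators or continuity.
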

\begin{proof}
In general, if $V=L(\Psi)$ with $\Psi(z)=a \prod_i (1-a_i/z)/\prod_j (1-b_j/z)$, 
then by Lemma \ref{lem:q-char V} its $q$-character takes the form 
\begin{align*}
\chi_q(V)=x_a \frac{\prod_i X_{a_i}}{\prod_j X_{b_j}}\Bigl(\sum_{\mathcal{S}} \prod_{a\in\mathcal{S}} (tA_{a})^{-1}\Bigr)
\ \chi_0^{\max(d(\Psi),0)}\,.
\end{align*}
Since $\chi_q$ is injective, we have an identity in $\Rep\,\B^\perp$ 
\begin{align}
[V]\equiv [L(a)]\frac{\prod_i [M^+(a_i)]}{\prod_j [M^+(b_j)]}
\sum_{\mathcal{S}} \Bigl(\prod_{a\in\mathcal{S}} \prod_{s=1}^3 \frac{[M^+(q_s a)]}{[M^+(q_s^{-1} a)]}\Bigr)\{-\sharp \mathcal{S}\}
\,,
\label{[V]}
\end{align}
valid modulo $F^m\subset \Rep_0\, \B^\perp$ for any given $m<0$. This identity means the one obtained by clearing all denominators. 
Upon taking trace over both sides, we note that 
the degree shift $\{-d\}$ produces a power $(pq^N)^d$. Note also that $C^\perp_V=a^{-1}$ brings about $a^{-d^\perp}$ through $q^{t_\infty}$. 
We obtain the assertion.
\end{proof}

\begin{lem}\label{lem:a-fac}
By the above rule, the term $t^{-1}A_{a}^{-1}$ is replaced by
\begin{align*}
\mathfrak{a}(u;p)=p\,
\frac{d(u)}{a(u)}\prod_{s=1}^3\frac{Q(q_su;p)}{Q(q_s^{-1}u;p)} \,.
\end{align*}
\end{lem}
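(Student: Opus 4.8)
The plan is to unwind the substitution rule of Proposition~\ref{prop:TQ} applied to the single monomial $t^{-1}A_u^{-1}$, and then to reduce the claim to a scalar identity which is verified by a direct computation with the explicit formula for $f_{M^+(1),W}(u)$. First I would record that $A_a=\prod_{s=1}^3 X_{q_s^{-1}a}/X_{q_sa}$, hence $A_u^{-1}=\prod_{s=1}^3 X_{q_su}/X_{q_s^{-1}u}$; replacing each $X_b$ by $f_{M^+(1),W}(b)\,Q(b;p)$ and $t^{-1}$ by $pq^N$ according to the recipe gives
\[
t^{-1}A_u^{-1}\ \longmapsto\ p\,q^N\,\Bigl(\prod_{s=1}^3\frac{f_{M^+(1),W}(q_su)}{f_{M^+(1),W}(q_s^{-1}u)}\Bigr)\prod_{s=1}^3\frac{Q(q_su;p)}{Q(q_s^{-1}u;p)}\,.
\]
Comparing with the target $\mathfrak{a}(u;p)=p\,\frac{d(u)}{a(u)}\prod_{s=1}^3 Q(q_su;p)/Q(q_s^{-1}u;p)$, it suffices to prove the scalar identity $q^N\prod_{s=1}^3 f_{M^+(1),W}(q_su)/f_{M^+(1),W}(q_s^{-1}u)=d(u)/a(u)$.

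To establish this, I would use multiplicativity $f_{M^+(1),W}(u)=\prod_{i=1}^N f_{M^+(1),\F(v_i)}(u)$ together with the explicit formula $f_{M^+(1),\F(v_i)}(u)=\exp\bigl(\sum_{r>0}\tfrac{1-q_2^r}{r\kappa_r}(u/v_i)^r\bigr)$ recorded in the excerpt. Taking logarithms, the combination $\sum_{s=1}^3(q_s^r-q_s^{-r})$ multiplies each term. The one genuinely combinatorial point is the identity $\sum_{s=1}^3(q_s^r-q_s^{-r})=-\kappa_r$: since $q_1q_2q_3=1$ one has $q_s^{-1}=\prod_{t\ne s}q_t$, so $\sum_s q_s^{-r}$ and $\sum_s q_s^r$ are respectively the second and first elementary symmetric functions of $q_1^r,q_2^r,q_3^r$, while $\kappa_r=\prod_s(1-q_s^r)=1-\sum_s q_s^r+\sum_s q_s^{-r}-1$ because the third elementary symmetric function equals $(q_1q_2q_3)^r=1$. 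The factors $\kappa_r$ then cancel, and summing $\sum_{r>0}x^r/r=-\log(1-x)$ yields
\[
\prod_{s=1}^3\frac{f_{M^+(1),W}(q_su)}{f_{M^+(1),W}(q_s^{-1}u)}=\prod_{i=1}^N\frac{1-u/v_i}{1-q_2u/v_i}\,.
\]
Multiplying by $q^N$ and comparing with $d(u)=\prod_{i=1}^N(1-u/v_i)$ and $a(u)=\prod_{i=1}^N q^{-1}(1-q_2u/v_i)$ gives $q^N\prod_{i}\frac{1-u/v_i}{1-q_2u/v_i}=d(u)/a(u)$, completing the identity and hence the lemma.

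I do not expect a serious obstacle here: the argument is essentially bookkeeping. The only places requiring a little care are the symmetric-function identity $\sum_{s=1}^3(q_s^r-q_s^{-r})=-\kappa_r$ and keeping track of the overall power $q^N$, which arises from the normalization of $a(u)$, equivalently from $C^\perp$ acting as $q^{-N}$ on $W=\F(v_1)\otimes_{\Delta^\perp}\cdots\otimes_{\Delta^\perp}\F(v_N)$ and the corresponding $x_a\mapsto a^{-d^\perp}$ factor coming through $q^{t_\infty}$.
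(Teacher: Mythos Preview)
Your argument is correct and follows essentially the same route as the paper's proof: both reduce the claim to the scalar identity $q^N\prod_{s=1}^3 f_{M^+(1),W}(q_su)/f_{M^+(1),W}(q_s^{-1}u)=d(u)/a(u)$ and verify it via the symmetric-function identity $\sum_{s=1}^3(q_s^r-q_s^{-r})=-\kappa_r$. The only cosmetic difference is that the paper, instead of plugging in the explicit Fock formula for $f_{M^+(1),\F(v)}$, uses $\kappa_r\langle h_r\rangle_{M^+(1)}=-1/r$ in the general expression \eqref{fVW} and then recognizes the resulting exponential as $\langle(\psi_0^-)^{-1}\psi^-(u)\rangle_W^{-1}$ before evaluating it; your direct summation of the logarithmic series arrives at the same answer.
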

\begin{proof}
We compute the scalar factor.  
Substituting $\kappa_r\br{h_r}_{M^+(1)}=-1/r$ into \eqref{fVW} we find
\begin{align*}
pq^N\prod_{s=1}^3\frac{f_{M^+(1),W}(q_s u)}{f_{M^+(1),W}(q_s^{-1} u)}
&=pq^N\exp\Bigl(-\sum_{r>0}\kappa_r\br{h_{-r}}_Wu^r\Bigr)\\
&=pq^N\br{(\psi_0^-)^{-1}\psi^-(u)}_W^{-1}\\
&=p\frac{d(u)}{a(u)}\,.
\end{align*}
\end{proof}

For example, if $V=\F(1)$ is a Fock space, then from \eqref{F q-char} and Lemma \ref{lem:a-fac}
we obtain
\begin{align*}
T_{\F(1),W}(u;p)=&\frac{Q(q_2^{-1}u;p)}{Q(u;p)}
\sum_{\lambda\in\cP}
\prod_{\square\in\lambda}\mathfrak{a}(q^{-\square} u;p)\,.
\end{align*}

We remark that a formula for the operator $T_{\F(1),W}(u;p)$ in the language of Shuffle algebras
was obtained in \cite{FT2} . 

In particular, Proposition \ref{prop:TQ} 
allows to compute the spectrum of $T(V)$ 
from that of $Q(u;p)$. 

\begin{cor}\label{V Q w}
Let  $w$ be an eigenvector of $Q(u;p)$. 
Then the eigenvalue of $T_{V,W}(u;p)$ is given by the recipe of Proposition \ref{prop:TQ},  
replacing $Q(u;p)$ by its eigenvalue $Q_w(u;p)$ on $w$.
\qed
\end{cor}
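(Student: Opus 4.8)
The plan is to deduce Corollary \ref{V Q w} directly from Proposition \ref{prop:TQ} together with the commutativity and simultaneous diagonalizability of the transfer matrices. First I would recall that, since $\Rep\,\B^\perp$ is a commutative ring (Proposition \ref{inj}) and $V\mapsto \Tb_V$ is a ring homomorphism, all the operators $T_{V,W}(u;p)$, and in particular $Q(u;p)=T_{M^+(1),W}(u;p)$, commute with one another on the finite-dimensional space $W_{-k}$ of each fixed principal degree. Moreover, by Proposition \ref{prop:TQ} the identity \eqref{[V]} holds in $\Rep\,\B^\perp/F^m$ for every $m<0$, and applying the trace functor turns it into an operator identity expressing $f_{V,W}(u)T_{V,W}(u;p)$ as the specified rational (after clearing denominators, polynomial) expression in the operators $Q(aq^{\bullet}u;p)$ and the scalars $f_{M^+(1),W}(\cdot)$, $a^{-d^\perp}$, $pq^N$.

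The key step is then simply to evaluate this operator identity on an eigenvector $w$ of $Q(u;p)$. Here one must be slightly careful: $Q(u;p)$ is really the generating series $\sum_{n\ge0}p^nQ_n(u)$, so an ``eigenvector of $Q(u;p)$'' means a common eigenvector of all the coefficient operators; equivalently, since the $Q_n(u)$ commute and act on the finite-dimensional $W_{-k}$, and commute as well with the shifted operators $Q(cu;p)$ for every $c\in\C^\times$ (all being values of the same commuting family $T_{M^+(1),W}$), any such $w$ is automatically a common eigenvector of $Q(cu;p)$ for all $c$, with eigenvalue $Q_w(cu;p)$. Applying the operator identity of Proposition \ref{prop:TQ} to $w$, every occurrence of $Q(aq^{\square}u;p)$ acts by the scalar $Q_w(aq^{\square}u;p)$, the scalar factors pass through unchanged, and one reads off that $f_{V,W}(u)T_{V,W}(u;p)w$ equals $f_{V,W}(u)$ times the claimed scalar expression times $w$. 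Dividing by $f_{V,W}(u)$ (a nonzero formal power series) gives the eigenvalue of $T_{V,W}(u;p)$ on $w$, which is exactly the recipe of Proposition \ref{prop:TQ} with $Q(u;p)$ replaced by $Q_w(u;p)$.

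The only real point requiring attention — and the step I would expect to be the main obstacle — is the justification that a $Q(u;p)$-eigenvector $w$ is simultaneously an eigenvector for all the shifted operators appearing in the substituted $q$-character, so that the substitution is literally ``plug in the eigenvalue.'' This is where the commutativity of the whole family $\{T_{V',W}(u;p)\}_{V'}$ is essential: since $Q(cu;p)=\bigl(f_{M^+(c),W}(u)\bigr)^{-1}\Tb_{M^+(c)}(u;p)\big|_W \cdot (\text{scalar})$ up to the identification via $s_u$, all the operators $Q(q_s^{\pm1}u;p)$ lie in the commutative algebra generated by the $T_{V',W}(u;p)$, hence are simultaneously diagonalized with $Q(u;p)$ on each $W_{-k}$. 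Once this is in place, the rest is the formal substitution already licensed by Proposition \ref{prop:TQ}, and nothing further needs to be computed.
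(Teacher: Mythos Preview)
Your proof is correct, and in fact more detailed than the paper's: the paper treats this corollary as immediate from Proposition~\ref{prop:TQ} and gives no argument beyond the \qed. Your expansion of the reasoning is sound.

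One simplification: the step you flag as the ``main obstacle'' is actually trivial and does not require invoking commutativity of the family $\{T_{V',W}\}$. Saying that $w$ is an eigenvector of $Q(u;p)$ means, by definition, that $w$ is a common eigenvector of all the coefficient operators in the double expansion $Q(u;p)=\sum_{n\ge0}\sum_{j\ge0}Q_n^{(j)}u^jp^n$. The shifted operator $Q(cu;p)$ is built from the \emph{same} coefficient operators $Q_n^{(j)}$, merely with rescaled scalar weights $c^ju^jp^n$; hence $w$ is automatically an eigenvector of $Q(cu;p)$ with eigenvalue $Q_w(cu;p)$, by pure linear algebra. No appeal to the Yang--Baxter equation, the ring structure of $\Rep\,\B^\perp$, or simultaneous diagonalizability is needed for this particular step. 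The commutativity is of course what guarantees that common eigenvectors exist at all, but once $w$ is given, the substitution is formal.
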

\medskip

Finally, we show that zeroes of eigenvalues of $Q_w(u;p)$ satisfy the Bethe ansatz equation.

Let $w\in W_{-k}$ be an eigenvector of  $Q(u;p)$ of principal degree $-k$. 
By Proposition \ref{Q pol},  $Q_w(u;p)$ is a polynomial of degree at most $k$, so it has the form 
\begin{align*}
\frac{Q_w(u;p)}{Q_w(0;p)}=\prod_{i=1}^k(1-u/\zeta_i(p;w))\,,
\end{align*}
where some of $\zeta_i(p;w)$'s may be $\infty$. 
We conjecture that all roots are in fact finite. 

\begin{thm}\label{thm:eigv}
The zeroes  $\zeta_i(p;w)$, $i=1,\cdots,k$, of the eigenvalue of $Q_w(u;p)$ 
satisfy the Bethe ansatz equations
\begin{align*}
a(\zeta_i(p;w)) \prod_{s=1}^3 Q_w(q_s^{-1}\zeta_i(p;w);p)
+
p d(\zeta_i(p;w)) \prod_{s=1}^3 Q_w(q_s \zeta_i(p;w);p)=0\,.
\end{align*} 
\end{thm}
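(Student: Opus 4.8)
The plan is to derive the Bethe ansatz equation as a direct consequence of the two-term $TQ$ relation \eqref{MMM}, which in turn follows from the short exact sequence \eqref{NM-MMM} in the Grothendieck ring $\Rep\,\B^\perp$. First I would apply the ring homomorphism $V\mapsto \Tb_V$ to the identity \eqref{NM-MMM}. Using the multiplicativity $\Tb_{V_1\otimes V_2}=\Tb_{V_2}\Tb_{V_1}$, the additivity $\Tb_{V_1\oplus V_2}=\Tb_{V_1}+\Tb_{V_2}$, and the fact that a degree shift $\{-1\}$ produces a factor $pq^N$ when taking the trace (as in the proof of Proposition \ref{prop:TQ}), this yields an operator identity
\begin{align*}
\Tb_{N^+(1)}(u;p)\,\Tb_{M^+(1)}(u;p)
=\prod_{s=1}^3\Tb_{M^+(q_s^{-1}u)}(u;p)+pq^N\prod_{s=1}^3\Tb_{M^+(q_su)}(u;p)
\end{align*}
acting on $W$, where I have used $s_u$-equivariance to rewrite $\Tb_{M^+(q_s^{\pm1})}$ evaluated at spectral parameter $u$ as $\Tb_{M^+(1)}$ evaluated at $q_s^{\pm1}u$. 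Dividing through by the scalar factors $f_{V,W}(u)$ and absorbing $a(u)$ into $\cT_W$, this becomes the $TQ$ relation $\cT(u;p)Q(u;p)=a(u)\prod_{s=1}^3Q(q_s^{-1}u;p)+p\,d(u)\prod_{s=1}^3Q(q_su;p)$ among operators on $W$. I would need to keep careful track of the bookkeeping of scalar prefactors here: the $f_{V,W}$ factors combine using $f_{V_1\otimes V_2,W}=f_{V_1,W}f_{V_2,W}$, and the computation in Lemma \ref{lem:a-fac} identifies exactly the residual scalar $p\,d(u)/a(u)$.

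Next I would restrict to the common eigenvector $w\in W_{-k}$ of the commuting family $Q(u;p)$ (commutativity follows from $\Rep\,\B^\perp$ being a commutative ring, Proposition \ref{inj}, together with the Yang--Baxter relation \eqref{YBE}), and also of $\cT(u;p)$, obtaining the scalar $TQ$ relation
\begin{align*}
\cT_w(u;p)Q_w(u;p)=a(u)\prod_{s=1}^3Q_w(q_s^{-1}u;p)+p\,d(u)\prod_{s=1}^3Q_w(q_su;p)\,.
\end{align*}
By Proposition \ref{Q pol} the eigenvalue $Q_w(u;p)$ is a polynomial of degree at most $k$, and by Proposition \ref{T pol} the eigenvalue $\cT_w(u;p)$ is a polynomial in $u$. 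Hence the left-hand side is a polynomial, so it vanishes at every zero $\zeta_i=\zeta_i(p;w)$ of $Q_w(u;p)$. Substituting $u=\zeta_i$ into the right-hand side gives
\begin{align*}
a(\zeta_i)\prod_{s=1}^3Q_w(q_s^{-1}\zeta_i;p)+p\,d(\zeta_i)\prod_{s=1}^3Q_w(q_s\zeta_i;p)=0\,,
\end{align*}
which is precisely the claimed Bethe ansatz equation.

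The main obstacle I anticipate is justifying rigorously that the Grothendieck-ring identity \eqref{NM-MMM}, which is established modulo $F^m$ for arbitrary $m$, transfers to a bona fide operator identity on the fixed quantum space $W$. One must check that applying $\Tb_{(-)}$ converges in the appropriate $(u,p)$-adic sense and that on each principal-degree subspace $W_{-k}$ only finitely many terms of the resulting series contribute at each order, so that the formal manipulations are legitimate; the polynomiality statements of Propositions \ref{Q pol} and \ref{T pol} are exactly what controls this. A secondary technical point is to confirm that $\cT(u;p)$ commutes with $Q(u;p)$ and that $w$ can indeed be chosen to be a simultaneous eigenvector — this again follows from commutativity of $\Rep\,\B^\perp$ via $\chi_q$ (Proposition \ref{inj}) and the universal $R$-matrix axioms, but it should be stated explicitly. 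Once these points are in place, the derivation of the Bethe equation itself is a one-line evaluation.
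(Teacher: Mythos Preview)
Your proposal is correct and follows essentially the same route as the paper's proof: derive the operator $TQ$ relation from the Grothendieck-ring identity \eqref{NM-MMM} via the transfer-matrix homomorphism, invoke the polynomiality of $\cT(u;p)$ from Proposition \ref{T pol}, apply to the eigenvector $w$, and evaluate at $u=\zeta_i(p;w)$. The paper's proof is terser and does not spell out the scalar-factor bookkeeping or the simultaneous-eigenvector issue you flag, but the logical skeleton is identical.
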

\begin{proof}
By Proposition \ref{T pol}, $\cT(u;p)$ is a polynomial.
Equation \eqref{NM-MMM} in $\Rep\,\B^\perp$ implies that 
\begin{align*}
\cT(u;p)Q(u;p)&=a(u) \prod_{s=1}^3 Q(q_s^{-1}u;p)
+
p d(u) \prod_{s=1}^3 Q(q_s u;p)\,.
\end{align*}
We apply this relation to $w$ and substitute $u=\zeta_i(p;w)$.
The theorem follows.
\end{proof}

\appendix
\section{}
We collect here technical lemmas used in the main text. 
Recall that we drop $D$ and set $C=1$ throughout. 

\subsection{Coproduct $\Delta^\perp$ on horizontal generators}

The following is an analog of a standard calculation in quantum affine 
algebras. 

\begin{lem}\label{Delta-psi}
The following equalities hold.
\begin{align}
\Delta^\perp e_n
&\equiv 
\sum_{0\le p<n}e_{n-p}\otimes \psi^+_p +1 \otimes e_n 
\quad 
\bmod \mathcal{E}_{\gge 2}\otimes\mathcal{E}_{\lle-1}
\quad (n>0),
\label{Dperp-e1}\\
\equiv 
&\sum_{n\le p\le 0} e_{n-p}
\otimes \psi^-_{p} +1\otimes e_{n}
\quad 
\bmod \mathcal{E}_{\gge 2}\otimes\mathcal{E}_{\lle-1}
\quad (n\le 0),
\label{Dperp-e2}\\
\Delta^\perp f_n
\equiv 
&\sum_{0\le p\le n} \psi^+_p\otimes f_{n-p}
+f_n\otimes 1
\quad 
\bmod \mathcal{E}_{\gge 1}\otimes\mathcal{E}_{\lle-2}
\quad  (n\ge 0),
\label{Dperp-f1}\\
\equiv 
&\sum_{n< p\le 0} \psi^-_{p} \otimes f_{n-p}
+f_n\otimes 1
\quad 
\bmod \mathcal{E}_{\gge 1}\otimes\mathcal{E}_{\lle-2}
\quad (n< 0),
\label{Dperp-f2}\\
\Delta^\perp\psi_n^+\equiv 
&\sum_{0\le p \le n} \psi^+_{p}\otimes \psi^+_{n-p}
\quad 
\bmod \mathcal{E}_{\gge 1}\otimes\mathcal{E}_{\lle-1}
\quad (n>0),
\label{Delta-hp}\\
\Delta^\perp\psi_{n}^-\equiv 
&\sum_{n\le p\le 0} 
\psi^-_{n-p}\otimes \psi^-_{p}
\quad 
\bmod \mathcal{E}_{\gge 1}\otimes\mathcal{E}_{\lle-1}
\quad (n<0)\,.
\label{Delta-hm}
\end{align}
\end{lem}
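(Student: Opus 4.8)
The plan is to derive every formula from the explicit coproduct $\Delta^\perp$ on the \emph{vertical} generators, which is simply the $\theta^{-1}$-image of \eqref{copro1}--\eqref{copro4}, combined with the fact that $\Delta^\perp$ is an algebra homomorphism and with the dictionary between horizontal and vertical generators recorded after \eqref{th2}. Since $C=1$ we have, in particular, $h_1=\ep_0$, $h_{-1}=\fp_0$, $e_0=\hp_{-1}$, $f_0=\hp_1$, $e_1=\ep_{-1}$, $f_{-1}=\fp_1$. First I would compute $\Delta^\perp$ on these six base elements directly from the vertical coproduct formulas, tracking the bidegree of every term; then propagate to all $e_n$, $f_n$ by induction using $e_{n+1}=-[h_1,e_n]$, $e_{n-1}=[h_{-1},e_n]$, $f_{n+1}=[h_1,f_n]$, $f_{n-1}=-[h_{-1},f_n]$, which follow from the relations for $[h_{\pm1},e_n]$ and $[h_{\pm1},f_n]$ with $C=1$; and finally obtain $\Delta^\perp\psi^+_n$ ($n>0$) and $\Delta^\perp\psi^-_n$ ($n<0$) from $\psi^\pm_n=\pm\kappa_1[e_n,f_0]$ together with the already proven formulas for $\Delta^\perp e_n$ and $\Delta^\perp f_0$.

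The key input for both the base cases and the induction is the refined expansion
\[
\Delta^\perp h_1=h_1\otimes 1+1\otimes h_1+\kappa_1\, e_1\otimes f_0+R_1,\qquad R_1\in\E_{\gge 2}\otimes\E_{\lle -2},
\]
together with its mirror $\Delta^\perp h_{-1}=h_{-1}\otimes 1+1\otimes h_{-1}+\kappa_1\, e_0\otimes f_{-1}+R_{-1}$ with $R_{-1}\in\E_{\gge 2}\otimes\E_{\lle -2}$. I would read these off from $\Delta^\perp\ep_0=\sum_{j\ge0}\ep_{-j}\otimes\psi^{+,\perp}_j+1\otimes\ep_0$ (and the analogous formula for $\Delta^\perp\fp_0$), using $\psi^{+,\perp}_0=1$, $\psi^{+,\perp}_1=\kappa_1\hp_1=\kappa_1 f_0$, $\ep_{-1}=e_1$, and the degree count $\pdeg\ep_{-j}=j$, $\pdeg\psi^{+,\perp}_j=-j$, which puts the tail $\sum_{j\ge 2}$ into $\E_{\gge 2}\otimes\E_{\lle -2}$. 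The remaining base cases $\Delta^\perp e_0=e_0\otimes\psi^-_0+1\otimes e_0$, $\Delta^\perp e_1\equiv e_1\otimes\psi^+_0+1\otimes e_1$, $\Delta^\perp f_0=\psi^+_0\otimes f_0+f_0\otimes 1$, $\Delta^\perp f_{-1}\equiv\psi^-_0\otimes f_{-1}+f_{-1}\otimes 1$ then follow at once from the vertical coproduct formulas together with $\psi^+_0=\Cp^{-1}$ and $\psi^-_0=\Cp$.

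For the inductive step take $e_n$ with $n>0$: I would compute $\Delta^\perp e_{n+1}=-[\Delta^\perp h_1,\Delta^\perp e_n]$ using the inductive hypothesis $\Delta^\perp e_n\equiv\sum_{0\le p<n}e_{n-p}\otimes\psi^+_p+1\otimes e_n$. Commuting with $h_1\otimes 1$ shifts each $e_{n-p}\mapsto e_{n-p+1}$; commuting with $1\otimes h_1$ annihilates every $\psi^+_p$, since $[h_1,h_r]=0$ when $C=1$, and turns $1\otimes e_n$ into $1\otimes e_{n+1}$; and commuting with the correction term $\kappa_1 e_1\otimes f_0$ produces, from its single surviving piece $[\kappa_1 e_1\otimes f_0,1\otimes e_n]=\kappa_1 e_1\otimes[f_0,e_n]$, exactly the missing summand $e_1\otimes\psi^+_n$ (with the correct sign and with normalization $\kappa_1\cdot\kappa_1^{-1}=1$, via $[f_0,e_n]=-\kappa_1^{-1}\psi^+_n$). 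All remaining contributions --- the other pieces of $[\kappa_1 e_1\otimes f_0,\cdot]$, the commutator $[R_1,\cdot]$, and $[\Delta^\perp h_1,\E_{\gge 2}\otimes\E_{\lle -1}]$ coming from the error in the inductive hypothesis --- stay inside $\E_{\gge 2}\otimes\E_{\lle -1}$ by additivity of the principal degree: the first tensor slot acquires principal degree $\ge 2$ while the second stays $\le -1$. This gives $\Delta^\perp e_{n+1}\equiv\sum_{0\le p\le n}e_{n+1-p}\otimes\psi^+_p+1\otimes e_{n+1}$. The cases $n\le 0$ of $e_n$, all cases of $f_n$, and then $\psi^\pm_n$ are handled identically, the boundary summand being supplied each time by the relevant correction term ($\kappa_1 e_1\otimes f_0$ or $\kappa_1 e_0\otimes f_{-1}$) together with the identity $[e_m,f_l]=\kappa_1^{-1}(\psi^+_{m+l}-\psi^-_{m+l})$.

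The hard part is exactly this boundary bookkeeping: working merely modulo the target ideals $\E_{\gge a}\otimes\E_{\lle b}$ is too crude, because the boundary summands $e_1\otimes\psi^+_n$, $\psi^+_{n+1}\otimes f_0$, etc., lie outside those ideals and must be produced explicitly; this is what forces one to retain, and to commute carefully, the single extra term $\kappa_1 e_1\otimes f_0$ (resp. $\kappa_1 e_0\otimes f_{-1}$) of $\Delta^\perp h_{\pm 1}$ rather than discarding it. Everything else is routine: the stability of $\E_{\gge a}\otimes\E_{\lle b}$ under left and right multiplication by the explicit summands of $\Delta^\perp h_{\pm 1}$, and the vanishing $[h_1,\psi^\pm_p]=0$ forced by $C=1$, both immediate from additivity of the principal degree.
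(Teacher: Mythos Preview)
Your proof is correct and follows essentially the same route as the paper's: both extract the refined expansion $\Delta^\perp h_{\pm1}\equiv h_{\pm1}\otimes 1+1\otimes h_{\pm1}+\kappa_1 e_{1\text{ or }0}\otimes f_{0\text{ or }-1}\bmod\E_{\gge2}\otimes\E_{\lle-2}$ from the vertical coproduct, then propagate by induction via $[h_{\pm1},e_n]$ and $[h_{\pm1},f_n]$, with the correction term supplying the boundary summand through $\kappa_1[e_n,f_0]=\psi^+_n$ (and its analogs). The only difference is cosmetic: the paper halves the work by invoking the coalgebra anti-automorphism $\theta^2\otimes\theta^2$ to pass from the $e$ and $\psi^+$ formulas to the $f$ and $\psi^-$ formulas, whereas you run the same induction in all four cases directly.
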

\begin{proof}
Since the calculation is well known for quantum affine algebras, 
we will be brief. 

We use $e_1=e^\perp_{-1}$, 
$e_0=h^\perp_{-1}$, 
$[e_n,h_1]=e_{n+1}$, 
$[h_{-1},e_n]=e_{n-1}$, 
and the relations
\begin{align*}
&\Delta^\perp e^\perp_n=
\sum_{j\ge0}e^\perp_{n-j}\otimes \psi_j^{+,\perp}(C^\perp)^n+
1\otimes e_n^\perp
\quad (n\in \Z)\,,
\\
&\Delta^\perp h_1\equiv h_1\otimes 1+e_1\otimes \kappa_1 f_0
+1\otimes h_1\quad \bmod \E_{\gge2}\otimes\E_{\lle-2}\,,
\\
&\Delta^\perp h_{-1}\equiv h_{-1}\otimes 1+
\kappa_1e_0\otimes f_{-1} 
+1\otimes h_{-1}
\quad \bmod \E_{\gge2}\otimes\E_{\lle-2}\,.
\end{align*}
Noting further that 
$\kappa_1[e_n,f_0]=\psi^+_n$ ($n>0$) and
$\kappa_1[f_{-1},e_{-n+1}]=\psi^-_{-n}$ ($n>0$), 
we obtain \eqref{Dperp-e1} and \eqref{Dperp-e2} by induction. 

The automorphism $\theta^2$ in \eqref{th2} is an anti-automorphism 
of coalgebras with respect to $\Delta^\perp$.  
Applying $\theta^2\otimes \theta^2$ to both sides
of  \eqref{Dperp-e1}, \eqref{Dperp-e2}, 
we obtain \eqref{Dperp-f1}, \eqref{Dperp-f2}. 

From \eqref{Dperp-e1} and \eqref{Dperp-f1} we compute
\begin{align*}
[\Delta^\perp e_n, \Delta^\perp f_0] 
\equiv \sum_{0\le p<n}[e_{n-p},f_0]\otimes\psi^+_p
+\psi^+_0\otimes[e_n,f_0]\quad
\bmod \E_{\gge1}\otimes\E_{\lle-1}\,,
\end{align*}
which leads to \eqref{Delta-hp}. 
Finally we get  \eqref{Delta-hm}
by applying $\theta^2\otimes\theta^2$
to \eqref{Delta-hp}. 
\end{proof}

\subsection{Polynomiality of currents}\label{subsec:poly-cur}

In this subsection we prove Proposition \ref{prop:poly-cur}.

Recall the subalgebras $\B^\perp_\pm$ given in \eqref{Bpm}. 
In the following computation we use also auxiliary subalgebras
\begin{align*}
&\cN^\perp=\langle e^\perp_n\ (n\in \Z),\ C^\perp \rangle\,,
\\
&\cN_{-}^\perp=\langle \bp_{(\nu_1,\nu_2)}\ (\nu_1<0,\nu_2>0)
 \rangle\,,
\quad \cN_{\lle}^\perp=\cN_-^\perp\B_0^\perp\,.
\end{align*}
Algebra $\B^\perp_-$ is generated by $f_n$ ($n\ge0$) and 
$\psi^{+,\perp}_r$ ($r>0$). 
Algebra $\B^\perp_+$ is generated by elements 
$e_n$ ($n>0$), $e^\perp_{-r}$ ($r>0$)  and $[e_0,e_2]$.
Likewise $\cN_{-}^\perp$ is generated by 
$f_n$ ($n>0$), $e^\perp_{r}$ ($r>0$)  and $[f_0,f_2]$.

We make extensive use of formulas expressing
$\psi^{+,\perp}_r$, $e^\perp_n$ in terms of the horizontal ones,  
\begin{align}
&\psi^{+,\perp}_r=\kappa_1
\bigl(-C^\perp\bigr)^{r-1}\cdot \ad f_{-1}(\ad f_0)^{r-2}f_1
\quad (r\ge2)\,,
\label{psi-ff}\\
&e^\perp_r=(-1)^{r-1}\bigl(C^\perp\bigr)^r\cdot (\ad f_0)^{r-1}f_1
\quad (r\ge1)\,,
\label{e-ff}\\
&e_{-r}^\perp=(\ad e_0)^{r-1}e_1\quad (r\ge1)\,.
\label{e-ee}
\end{align}
In addition we have $\psi^{+,\perp}_1=\kappa_1f_0$. 
We should note that 
the generators $e_0,f_{-1}$ do not belong to $\B^\perp$,   
only the commutators with them do.  
Namely we have 
\begin{align}
\ad e_0(\cN^\perp)\subset\cN^\perp\,,
\quad
\ad f_0(\cN^\perp_{\lle})\subset \cN^\perp_{\lle}\,,
\quad
\ad f_{-1}(\cN^\perp_{\lle})\subset \B^\perp\,.
\label{cN}
\end{align}

We begin with a technical lemma. 
\begin{lem}\label{lem:psi-X}
Let $r$ be a positive integer. 

(i) For any $\ell \ge1$, there exists an $N\in \Z_{>0}$ such that for all $m\ge N$ 
and $x_m=e_m,f_m,\psi^+_m$ we have
\begin{align}
[x_m,e^\perp_r] \in 
\sum_{j\ge \ell}\cN^\perp_{\lle}\cdot f_j+\sum_{j\ge \ell}\cN^\perp_{\lle}\cdot\psi^+_j\,.
\label{psi-X}
\end{align}

(ii) For any $a\in\C^\times$ and $\ell \ge1$ we have 
\begin{align}
[\psi^+(a),e^\perp_r]\in \sum_{j\ge \ell}\cN^\perp_{\lle}\cdot \psi^+(a)\cdot \cN^\perp_{\lle}\cdot f_j
+\cN^\perp_{\lle} \cdot \psi^+(a)\cdot 
\bigl(\cN^\perp_{\lle}\cap\E_{\gge -r+1}\bigr)\,. 
\label{psia}
\end{align}
\end{lem}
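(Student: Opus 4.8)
\textbf{Proof plan for Lemma \ref{lem:psi-X}.}
The plan is to prove both parts by induction on $r$, using the explicit expression \eqref{e-ff} of $e^\perp_r$ as an iterated commutator $(\ad f_0)^{r-1}f_1$ (up to a sign and a power of $C^\perp$, which acts as a scalar here). The base case $r=1$ is $e^\perp_1 = C^\perp f_1$, so $[x_m,e^\perp_1]$ is a scalar multiple of $[x_m,f_1]$; one checks directly from the defining relations of $\E$ that $[e_m,f_1]=\kappa_1^{-1}\psi^+_{m+1}$ (for $m$ large this lies in $\sum_{j\ge\ell}\cN^\perp_{\lle}\psi^+_j$ once $m+1\ge\ell$), that $[f_m,f_1]$ can be rewritten via the quadratic Serre-type relation $g(w,z)f(z)f(w)+g(z,w)f(w)f(z)=0$ as a finite linear combination of products $f_{j}f_{j'}$ with increasing indices, hence lies in $\cN^\perp_{\lle}\cdot f_j$ with $j$ as large as we like by taking $m$ large, and that $[\psi^+_m,f_1]$ lies in $\B^\perp[z^{-1}]\psi^+(z)\B^\perp[z^{-1}]$ — more precisely, again by the bilinear relation between $\psi^+(z)$ and $f(z)$, it is a finite sum of terms $\psi^+_{j'}f_{j''}$ and $f_{j''}\psi^+_{j'}$; reordering so that $\psi^+$ stands to the left (which produces only further $\psi^+_j$'s with higher index, by the same bilinear relation) lands us in the required space once $m$ is large.

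For the inductive step in (i), write $e^\perp_{r+1}$ as a scalar times $[f_0,e^\perp_r]$ (absorbing the $C^\perp$ factor). Then
\begin{align*}
[x_m,e^\perp_{r+1}] \sim [x_m,[f_0,e^\perp_r]]
= [[x_m,f_0],e^\perp_r] + [f_0,[x_m,e^\perp_r]].
\end{align*}
In the first term, $[x_m,f_0]$ is again one of $e,f,\psi^+$ with a shifted index (a scalar multiple of $\psi^+_m$, $f_m$, or a reordering as above), so the induction hypothesis applies to $[[x_m,f_0],e^\perp_r]$ after choosing $N$ large enough. For the second term, the induction hypothesis puts $[x_m,e^\perp_r]$ in $\sum_{j\ge\ell'}\cN^\perp_{\lle}f_j + \sum_{j\ge\ell'}\cN^\perp_{\lle}\psi^+_j$ for any prescribed $\ell'$; now $\ad f_0$ preserves $\cN^\perp_{\lle}$ by \eqref{cN}, and $[f_0,f_j]$, $[f_0,\psi^+_j]$ can be expanded (again by the quadratic $f$-relation and the bilinear $\psi^+$–$f$ relation) into finite sums of $\cN^\perp_{\lle}\cdot f_{j'}$ and $\cN^\perp_{\lle}\cdot\psi^+_{j'}$ with $j'\ge j-1$; choosing $\ell'=\ell+1$ at the start of the step keeps all surviving indices $\ge\ell$. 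This gives \eqref{psi-X} for $r+1$.

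For (ii), assemble the generating series: $[\psi^+(a),e^\perp_r]=\sum_{m\ge0}a^{-m}[\psi^+_m,e^\perp_r]$. The key structural input is that from the bilinear relation $g(z,w)\psi^+(z)f(w)+g(w,z)f(w)\psi^+(z)=0$ one gets, for every $k\ge1$, that $(\ad e_0)^k(\psi^+(z))$ — equivalently the repeated commutators with $e^\perp$'s that arise — stays inside $\B^\perp[z^{-1}]\,\psi^+(z)\,\B^\perp[z^{-1}]$ (this fact is used verbatim in the proof of Lemma \ref{lem:SVM}). Running the same induction on $r$ as above but keeping $\psi^+(a)$ unexpanded: the base term $[\psi^+(a),f_1]$ lies in $\cN^\perp_{\lle}\psi^+(a)\cdot(\cN^\perp_{\lle}\cap\E_{\gge 0})$ by the bilinear relation; the inductive commutator with $f_0$ moves $\psi^+(a)$ past at most one more $f$-mode, lowering the homogeneous degree of the trailing $\cN^\perp_{\lle}$-factor by at most $1$, which accounts for the $\E_{\gge -r+1}$ bound, while the first, ``unbounded-index'' piece accumulates into the $\sum_{j\ge\ell}\cN^\perp_{\lle}\psi^+(a)\cN^\perp_{\lle}f_j$ term exactly as in part (i). The main obstacle I anticipate is purely bookkeeping: tracking simultaneously (a) the lower index bound $\ell$, which one must be willing to increase before starting the induction, (b) the homogeneous-degree ceiling $-r+1$ on the trailing factor in \eqref{psia}, and (c) the fact that $e_0$ and $f_{-1}$ themselves are not in $\B^\perp$, so every rewriting must be arranged so that only admissible commutators with them appear — which is guaranteed by \eqref{cN} but needs to be checked at each reordering step.
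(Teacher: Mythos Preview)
Your plan matches the paper's proof closely: both induct on $r$ via $e^\perp_{r+1}\propto[f_0,e^\perp_r]$ and the Jacobi identity, handling the base case and the commutators $[x_m,f_0]$, $[f_0,f_j]$, $[f_0,\psi^+_j]$ through the quadratic relations, and using the stability $\ad f_0(\cN^\perp_{\lle})\subset\cN^\perp_{\lle}$ from \eqref{cN}. Two points need tightening.

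First, when $x_m=f_m$ or $\psi^+_m$, the commutator $[x_m,f_0]$ is not a single generator but a finite sum of \emph{products} $\psi^+_{m-j}f_j$, $f_j\psi^+_{m-j}$ (and their $f$--$f$ analogues); the paper records this as \eqref{psi-fm}. After Leibniz, $[[x_m,f_0],e^\perp_r]$ therefore contains terms of the form $[f_j,e^\perp_r]\,\psi^+_{m-j}$ with $j$ \emph{bounded}, to which the induction hypothesis does not apply. The point there is rather that $[f_j,e^\perp_r]\in\cN^\perp_{\lle}$ for structural reasons (both factors lie in $\cN^\perp_-$ when $j\ge1$), while it is the right factor $\psi^+_{m-j}$ that carries the large index. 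Your sentence ``the induction hypothesis applies to $[[x_m,f_0],e^\perp_r]$'' hides this split. Also the index shift per step is bounded by a constant larger than $1$ (roughly $3$ from \eqref{psi-fm}), so $\ell'=\ell+1$ is not quite enough.

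Second, in (ii) the constraint $\E_{\gge -r+1}$ is on the \emph{principal} degree, not the homogeneous one. The mechanism for the bound is simply that $\ad f_0$ sends $\cN^\perp_{\lle}\cap\E_{\gge k}$ into $\cN^\perp_{\lle}\cap\E_{\gge k-1}$ (since $\pdeg f_0=-1$), combined with the generating-series analogue $[\psi^+(a),f_n]\in\sum_{j\ge\ell}\C\,\psi^+(a)f_j+\sum_{j\ge n+1}\C\,f_j\psi^+(a)$ that keeps $\psi^+(a)$ intact; your description in terms of ``moving $\psi^+(a)$ past one more $f$-mode'' is not the right picture.
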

\begin{proof}
In algebra $\E$, we have the quadratic relations 
\begin{align}
&[\psi^+_m,f_n]=\bar{\xi}(\psi^+_{m-1}f_{n+1}+f_{n+2}\psi^+_{m-2})
-\xi(\psi^+_{m-2}f_{n+2}+f_{n+1}\psi^+_{m-1})
+[\psi^+_{m-3},f_{n+3}]\,,
\label{fpsirel}\\
&[f_m,f_n]=\bar{\xi}(f_{m-1}f_{n+1}+f_{n+2}f_{m-2})
-\xi(f_{m-2}f_{n+2}+f_{n+1}f_{m-1})
+[f_{m-3},f_{n+3}]\,,
\label{ffrel}
\end{align}
which hold for all $m,n\in\Z$. Here 
we set $\xi=\sum_{i=1}^3q_i$, $\bar{\xi}=\sum_{i=1}^3q_i^{-1}$. 
We can apply the same equations repeatedly to terms of the form 
$\psi^+_{m-j}f_{n+j}$, $f_{m-j}f_{n+j}$ 
in the right hand side. 
For any given $\ell\ge n+1$, we obtain as a result 
\begin{align}
&[\psi^+_m,f_n]\in 
\sum_{j=\ell}^{\ell+2}\C \psi^+_{m+n-j}f_j
+\sum_{j=n+1}^{\ell+2}\C f_j\psi^+_{m+n-j}\,
\label{psi-fm}
\end{align}
and a similar equation wherein all $\psi^+_k$'s are replaced by $f_k$'s. 
From these, along with the relation $[e_m,f_n]=\kappa_1^{-1}\psi^+_{m+n}$,
it is clear that \eqref{psi-X} holds for $r=1$, $e_1^\perp=C^\perp f_1$,  
provided $m$ is sufficiently large. 

By induction, suppose statement (i) is true up to $r$, and  
consider $e^\perp_{r+1}=-C^\perp[f_0,e^\perp_r]$. We write
\begin{align}
[x_m,[f_0,e^\perp_r]]=[[x_m,f_0],e^\perp_r]
+[f_0,[x_m,e^\perp_r]]. 
\label{xf0e}
\end{align}
We apply relations of the type \eqref{psi-fm} to the first term in the right hand side of \eqref{xf0e}, 
and use the induction hypothesis. 
Given an $\ell$, we can choose an $\ell'$ so that
\begin{align*}
[[x_m,f_0],e^{\perp}_r]&\in \sum_{j\ge\ell'}\cN_{\lle}^\perp\cdot f_{j}
+\sum_{j\ge\ell'}\cN_{\lle}^\perp \cdot\psi^+_{j}+
\sum_{j\ge\ell'}\cN_{\lle}^\perp \cdot[f_{j},e^\perp_r]
+\sum_{j\ge\ell'}\cN_{\lle}^\perp \cdot[\psi^+_{j},e^\perp_r]
\\
&\subset \sum_{j\ge \ell}\cN_{\lle}^\perp\cdot f_j
+\sum_{j\ge \ell}\cN_{\lle}^\perp \cdot\psi^+_j\,
\end{align*}
holds for $m$ large enough. 
We proceed similarly with the second term of \eqref{xf0e},  
using the induction hypothesis, relation of type \eqref{psi-fm}, 
and \eqref{cN}. With a suitable $\ell''$ we obtain 
\begin{align*}
[f_0,[x_m,e^\perp_r]]&\in 
\sum_{j\ge\ell''}\cN_{\lle}^\perp\cdot f_j
+\sum_{j\ge\ell''}\cN_{\lle}^\perp \cdot\psi^+_j
+\sum_{j\ge\ell''}\cN_{\lle}^\perp \cdot[f_0,f_j]+
\sum_{j\ge\ell''}\cN_{\lle}^\perp \cdot[f_0,\psi^+_{j}]
\\
&\subset \sum_{j\ge \ell}\cN_{\lle}^\perp \cdot f_j
+\sum_{j\ge \ell}\cN_{\lle}^\perp \cdot\psi^+_j\,
\end{align*}
for $m$ large enough. This proves (i). 

To show (ii) for $r=1$ it is enough to use 
\begin{align}
[\psi^+(a),f_n]\in \sum_{j\ge\ell}\C\psi^+(a)f_j
+\sum_{j\ge n+1}\C f_j\psi^+(a) \,.
\label{psia-rel}
\end{align}
The induction step is also very similar to the argument given
above. The only thing to note is
that $[f_0,\cN^\perp_{\lle}\cap\E_{\gge -r+1}]\subset \cN^\perp_{\lle}\cap\E_{\gge -r}$. 
\end{proof}
The following is the content of Proposition \ref{prop:poly-cur}.
\begin{lem}\label{lem:poly-cur}
Let $M=L(\Psi)$, $\Psi\in\rg_{\B^\perp}\cap\C[z^{-1}]$. 
For any $v\in M$ we have 
\begin{align}
&\text{$e_nv=f_nv=\psi^+_nv=0$ for sufficiently large $n$,} 
\label{poly-st}\\
&\text{if $a\in\C^\times$ is a zero of $\Psi(z)$, then $\psi^+(a)v=0$.}
\label{psi-zero}
\end{align}
\end{lem}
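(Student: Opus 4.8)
The plan is to prove the two statements of Lemma~\ref{lem:poly-cur} together, by a degree/filtration argument using the triangular decomposition $\B^\perp\simeq \B^\perp_-\otimes\B^\perp_0\otimes\B^\perp_+$ and the explicit commutation relations in the Appendix. First I would reduce \eqref{poly-st} to the case where $v$ is obtained from the highest $\ell$-weight vector $v_0$ by applying a single generator of $\B^\perp_-$, and more generally a PBW monomial; for $v_0$ itself the statement is immediate, since $\psi^+(z)v_0=\Psi(z)v_0$ is a polynomial in $z^{-1}$ and $e_nv_0=0$ for $n>0$, while $f_nv_0$ can be nonzero only for finitely many... no: here the key point is that $M$ lies in $\cO_{\B^\perp}$, so $M_{n}=0$ for $n$ sufficiently negative on each generalized weight space — but the principal degree is unbounded below, so this alone does not suffice and I must use the current relations.

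The core of the argument: write $v = y\, v'$ with $y$ a generator of $\B^\perp_-$ (say $y=f_m$, $m\ge 0$, or $y=\psi^{+,\perp}_r$, $r>0$) and $v'$ a vector for which the statement is already known by induction on, say, the length of a PBW word. Then $x_n v = [x_n,y]v' + y\,x_n v'$ for $x_n = e_n, f_n, \psi^+_n$. The second term vanishes for $n\gg0$ by the induction hypothesis. For the first term I would invoke the quadratic relations \eqref{fpsirel}, \eqref{ffrel} and their consequences \eqref{psi-fm}, together with Lemma~\ref{lem:psi-X}, to rewrite $[x_n,y]$, for $n$ large, as a sum of terms each of which ends (on the right) in $f_j$ or $\psi^+_j$ with $j\ge\ell$ for an arbitrarily prescribed $\ell$. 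Applying such a term to $v'$ gives $0$ once $\ell$ exceeds the (finite) bound from the induction hypothesis for $v'$. The case $y=\psi^{+,\perp}_r$ or $y=e^\perp_r$ is handled the same way via \eqref{psi-X}. This establishes \eqref{poly-st}; note this simultaneously shows $e_>(z)v, f_{\ge}(z)v \in M\otimes\C[z^{-1}]$, the first two assertions of Proposition~\ref{prop:poly-cur}.

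For \eqref{psi-zero}, and hence the divisibility claim $\psi^+(z)w\in\Psi(z)\cdot M\otimes\C[z^{-1}]$, let $a$ be a zero of $\Psi(z)$. On $v_0$ we have $\psi^+(a)v_0=\Psi(a)v_0=0$. I would then propagate this down the PBW filtration: if $\psi^+(a)v'=0$, then for a generator $y\in\B^\perp_-$, $\psi^+(a)\,y v' = [\psi^+(a),y]v' + y\,\psi^+(a)v' = [\psi^+(a),y]v'$, and the commutator is rewritten using \eqref{psia-rel} (for $y=f_n$) or \eqref{psia} (for $y=e^\perp_r$, $\psi^{+,\perp}_r$) into a sum of terms, each of which contains a factor $\psi^+(a)$ sitting to the \emph{left} of a generator of $\B^\perp_-$ of degree $\ge$ some bound — but applying such a generator to $v'$ produces a vector on which $\psi^+(a)$ still acts by $0$ (again by the induction on PBW length, since that vector is obtained from $v'$ by one more generator), or into terms ending in $f_j$ with $j$ large, which kill $v'$ by \eqref{poly-st}. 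The bookkeeping needed to keep $\psi^+(a)$ in a position where the inductive hypothesis applies is the delicate point, and this is exactly why Lemma~\ref{lem:psi-X}(ii) is stated in the form $\sum \cN^\perp_{\lle}\cdot\psi^+(a)\cdot\cN^\perp_{\lle}\cdot f_j + \cN^\perp_{\lle}\cdot\psi^+(a)\cdot(\cN^\perp_{\lle}\cap\E_{\gge -r+1})$.

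The main obstacle I anticipate is organizing the induction cleanly: one must choose the right filtration on $M$ (by PBW monomials in $\B^\perp_-$ applied to $v_0$, ordered so that the inductive step genuinely decreases complexity) and verify that when the commutators in Lemma~\ref{lem:psi-X} are expanded, the ``$f_j$ with $j\ge\ell$'' tails can be made to have $\ell$ larger than any pre-assigned vanishing threshold coming from the finitely many vectors $v'$ of lower complexity that arise. Since $M$ is not finitely generated over $\B^\perp_-$ in a degree-bounded way, the statement ``for $n$ sufficiently large'' depends on $v$, and one must be careful that the threshold stays finite at each inductive step — which it does, because each step only involves finitely many lower monomials and Lemma~\ref{lem:psi-X} allows $\ell$ to be taken arbitrarily large.
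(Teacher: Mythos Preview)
Your approach is essentially the paper's: induction via the triangular decomposition, with the inductive step handled by rewriting $[x_n,y]$ and $[\psi^+(a),y]$ through the quadratic relations and Lemma~\ref{lem:psi-X}. Two points need attention.

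\textbf{The base case for $f_n v_0$ is a genuine gap.} You correctly notice that polynomiality of $\psi^+(z)v_0$ and vanishing of $e_nv_0$ are immediate, and that the vanishing of $f_nv_0$ for $n\gg0$ is not. But you leave it at ``I must use the current relations'' without saying which ones or how. The current relations alone do not give this; the paper uses the \emph{irreducibility} of $M$. Concretely, for $n>\deg_{z^{-1}}\Psi$ one checks that $f_nv_0$ is singular for $\B^\perp_+$: $e_mf_nv_0=\kappa_1^{-1}\psi^+_{m+n}v_0=0$ for $m\ge1$, while $e^\perp_{-r}f_nv_0$ ($r\ge2$) and $[e_0,e_2]f_nv_0$ vanish for principal-degree reasons. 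A nonzero singular vector in degree $-1$ would contradict simplicity of $M$. This step is short but essential, and nothing in your outline supplies it.

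\textbf{The induction parameter.} PBW length is workable for the step $y=f_m$, but it becomes awkward for $y=\psi^{+,\perp}_r$. When you invoke Lemma~\ref{lem:psi-X}(ii), the residual term lands in $\cN^\perp_{\lle}\cdot\psi^+(a)\cdot(\cN^\perp_{\lle}\cap\E_{\gge -r+1})\,v'$, and $(\cN^\perp_{\lle}\cap\E_{\gge -r+1})v'$ can have arbitrarily long PBW expressions; your hypothesis does not apply to it. The paper inducts on $-\pdeg v$ instead: then $(\cN^\perp_{\lle}\cap\E_{\gge -r+1})v'$ has strictly larger principal degree than $v=\psi^{+,\perp}_r v'$, and the hypothesis applies directly. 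Relatedly, in your \eqref{psi-zero} sketch you say that $\psi^+(a)f_jv'=0$ ``by induction on PBW length, since that vector is obtained from $v'$ by one more generator'' --- but that is the wrong direction for the induction. The correct reason, which you also mention, is that $f_jv'=0$ for $j$ large by the already-established \eqref{poly-st} for $v'$; stick with that.
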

\begin{proof}
We prove the assertion by induction on $-\pdeg v$.

When $v=v_0$ is the highest $\ell$-weight vector, \eqref{poly-st} and 
\eqref{psi-zero} are evident except for $f_nv$. 
Let us show $f_n v_0=0$ for $n>\deg_{z^{-1}} \Psi(z)$.  
To see this, note that 
$e_m f_nv_0=\kappa_1^{-1} \psi^+_{m+n}v_0=0$ ($m\ge1$), and that 
$e^\perp_{-m}f_nv_0=0$ ($m\ge2$), $[e_0,e_2]f_nv=0$
 for degree reasons. 
Hence $f_n v_0=0$ by the irreducibility of  $M$. 

Assume that \eqref{poly-st}, \eqref{psi-zero}
hold true for $v'\in M$ with $\pdeg v'>-l$, and take $v\in M$ with $\pdeg v=-l$.
There are two cases to consider, $v=f_rv'$ ($r\ge0$), 
or $v=\psi^{+,\perp}_rv'$ ($r\ge2$), 
for some $v'$. 
In the following let $x_n$ stand for one of $e_n,f_n,\psi^+_n$.  

If $v=f_rv'$, then $x_nv=[x_n,f_r]v'+f_rx_nv'$. 
For large $n$ this vanishes by \eqref{psi-fm}, its analog for $[f_m,f_n]$, 
and the induction hypothesis. 
We have also
\begin{align*}
&\psi^+(a)f_rv'\in f_r\psi^+(a)v'+\sum_{j\ge \ell}\C \psi^+(a)f_j v'+\sum_{j\ge r+1}\C f_j  \psi^+(a)v'
\end{align*}
for any $\ell$. 
The right hand side vanishes if $\ell$ is chosen large enough. 

Consider the case $v=\psi^{+,\perp}_r v'$. 
Since 
$(C^\perp)^{r-1}\psi^{+,\perp}_r=-\kappa_1[f_{-1},e^\perp_{r-1}]$, we have 
\begin{align*}
(C^\perp)^{r-1}[x_n,\psi^{+,\perp}_r]\in
\C[[x_n, f_{-1}],e^\perp_{r-1}]+\C[f_{-1},[x_n,e^\perp_{r-1}]]\,.
\end{align*}
Take any $\ell\ge1$. 
If $x_n=e_n$ then $[x_n,f_{-1}]\in\C\psi^+_{n-1}$. 
If  $x_n=f_n$ or $\psi^+_n$ then 
$[x_n,f_{-1}]\in \sum_{j> n/2}\C x_{n-j-1}f_j+\sum_{j\le n/2}\C f_jx_{n-j-1}$.
In either case, using 
Lemma \ref{lem:psi-X} (i) 
we see that if $n$ is large enough then 
\begin{align}
[[x_n, f_{-1}],e^\perp_{r-1}]v'
\in\sum_{j\ge \ell} \cN_{\lle}^\perp \cdot f_jv'
+\sum_{j\ge \ell} \cN_{\lle}^\perp \cdot \psi^+_jv'\,.
\label{xn-f-ep1}
\end{align}

Since $[f_{-1},\cN_{\lle}^\perp]\subset \B^\perp$, we have also
\begin{align}
[f_{-1},[x_n,e^\perp_{r-1}]]v'
&\in \sum_{j\ge\ell}\B^\perp f_jv'+\sum_{j\ge\ell}\B^\perp \psi^+_jv'
+\sum_{j\ge\ell}\cN_{\lle}^\perp [f_{-1},f_j]v'+\sum_{j\ge\ell}\cN_{\lle}^\perp [f_{-1}, \psi^+_j]v'
\,.
\label{xn-f-ep2}
\end{align}
We can choose $\ell$ large enough so that each term in the right hand side of 
\eqref{xn-f-ep1}, \eqref{xn-f-ep2} vanishes. 
Hence $x_n\psi^{+,\perp}_rv'=0$ for sufficiently large $n$.  

It remains to show \eqref{psi-zero}. 
By using quadratic relations and \eqref{psia-rel},
and arguing as above, 
we find 
\begin{align*}
&[\psi^+(a),\psi^{+,\perp}_r]v'
\in \sum_{j\ge \ell}\B^\perp \psi^+(a)\B^\perp\cdot f_jv'
+\B^\perp\psi^+(a) \bigl(\B^\perp\cap \E_{\gge -r+1}\bigr)v'\,
\end{align*}
for any $\ell$. 
Hence if $\ell$ is large enough, then 
the right hand side vanishes by the induction hypothesis. 
\end{proof}

\subsection{Coproduct $\Delta$ and $1$-finite modules}\label{subsec:Delta}

We restate and prove Proposition \ref{prop:Delta} as the following lemma. 

\begin{lem}\label{lem:Delta}

(i) Algebra $\B^\perp$ is a left coideal of $\E$ 
with respect to $\Delta$: $\Delta(\B^\perp)\subset \E\widehat{\otimes} \B^\perp$, where $\widehat{\otimes}$ means the 
completed tensor product with respect to the homogeneous degree.

(ii) Let $V$ be an $\E$ module, and let $M=L(\Psi)$ with $\Psi\in\rg_{\B^\perp}\cap\C[z^{-1}]$. 
Then $\Delta(\B^\perp)$ has a well-defined action on $V\otimes M$.  
\end{lem}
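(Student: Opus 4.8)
The plan is to prove the two assertions of Lemma \ref{lem:Delta} in turn, first the coideal property and then the well-definedness of the action, since (ii) reduces to (i) together with the polynomiality results of Section \ref{sec:finite-type}. For part (i), I would verify $\Delta(\B^\perp)\subset \E\widehat{\otimes}\B^\perp$ on a set of algebra generators of $\B^\perp$, namely the vertical generators $e_n^\perp$ ($n\in\Z$), $h_r^\perp$ ($r>0$), together with $C,C^\perp,D^\perp$. The point is that $\Delta$ is the coproduct attached to the \emph{horizontal} Hopf structure, so one must re-express these vertical generators in terms of horizontal ones via the formulas $e_0^\perp=h_1$, $h_1^\perp=f_0$, $e_{-1}^\perp=e_1C^{-1}$, $e_1^\perp=f_1C^\perp$, etc., and more generally use $e_{-r}^\perp=(\ad e_0)^{r-1}e_1$, $e_r^\perp=(-1)^{r-1}(C^\perp)^r(\ad f_0)^{r-1}f_1$, $\psi_r^{+,\perp}=\kappa_1(-C^\perp)^{r-1}\ad f_{-1}(\ad f_0)^{r-2}f_1$ from \eqref{psi-ff}--\eqref{e-ff}. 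The key is then to examine the explicit coproducts \eqref{copro1}--\eqref{copro4}: in $\Delta e_n$ the second tensor factor is $\psi_j^+C^n$ or $e_n$, both of which have principal degree $\ge 0$ and indeed lie in $\B^\perp\cap\B$ or $\B^\perp$; in $\Delta f_n$ the second factor is $1$ or $f_{n+j}$. The subtlety is that $f_{n+j}\in\B^\perp$ only requires $f_m\in\B^\perp$ for all $m\in\Z$, which is true since $\B^\perp=\langle f_m, h_{-r}^\perp,\dots\rangle$ contains all $f_m$ (as $\bp_{(-1,m)}$ with $\nu_1=-1<0$ — wait, one must double-check this against the definition $\B^\perp=\theta^{-1}(\B)$ and the lattice description). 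Concretely I expect the cleanest route is: show that the right tensor factors appearing in $\Delta$ of every horizontal generator all lie in the half-plane subalgebra defining $\B^\perp$, using that $\B^\perp$ is the image under $\theta^{-1}$ of the horizontal Borel $\B$, and that $\Delta$ intertwines appropriately, or else argue directly on a generating set closed under the relevant commutators.

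The cleanest argument for (i), I think, is the following: it suffices to check $\Delta(x)\in\E\widehat\otimes\B^\perp$ for $x$ in a generating set, and to observe that the set of such $x$ is a subalgebra — this follows because if $\Delta(x),\Delta(y)\in\E\widehat\otimes\B^\perp$ then $\Delta(xy)=\Delta(x)\Delta(y)\in\E\widehat\otimes\B^\perp$ since $\B^\perp$ is closed under multiplication, and the completed tensor product is compatible with the homogeneous grading (here we use that $\B^\perp$-components in fixed homogeneous degree are finite-dimensional, or at least that the relevant sums converge). So I would reduce to checking the generators $e_n$ ($n\ge 1$), $e_{-r}^\perp$ ($r\ge 1$), $[e_0,e_2]$ of $\B^\perp_+$, the generators $f_n$ ($n\ge 0$), $\psi_r^{+,\perp}$ ($r>0$) of $\B^\perp_-$, the $h_r$ ($r>0$) of $\B^\perp_0$, and $C,C^\perp,D^\perp$. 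For $e_n$ ($n\ge 1$): \eqref{copro1} gives right factors $\psi_j^+C^n$ and $e_n$, all in $\B^\perp$. For $f_n$ ($n\ge 0$): \eqref{copro2} gives right factors $1$ and $f_{n+j}$ ($j\ge 0$), all in $\B^\perp$. For $h_r$ ($r>0$): \eqref{copro3} gives $1$ and $h_r$, in $\B^\perp$. The group-like generators are trivial. For $e_{-r}^\perp$, $\psi_r^{+,\perp}$, $[e_0,e_2]$, I would use that they are iterated commutators of the elements already handled (plus the elements $e_0,f_{-1},f_0$ whose adjoint actions preserve the relevant subalgebras by \eqref{cN}), together with the fact that $\ad$ of anything with $\Delta$-image in $\E\widehat\otimes\E$ sends $\E\widehat\otimes\B^\perp$ into $\E\widehat\otimes\B^\perp$ provided the inner derivation, applied in the second factor, preserves $\B^\perp$ — which it does since $\B^\perp$ is a subalgebra and the relevant elements $e_0$, $f_0$, $f_{-1}$ act on $\B^\perp$-pieces landing back in $\B^\perp$ by \eqref{cN}. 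This is where I expect the main bookkeeping effort to lie: one must carefully track which commutators produce factors like $\ad f_{-1}(\text{something})$ in the right tensor slot and confirm via \eqref{cN} that these remain in $\B^\perp$.

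For part (ii), once (i) is established, the action of $\Delta(\B^\perp)$ on $V\otimes M$ makes sense \emph{formally} as an action of $\E\widehat\otimes\B^\perp$; the issue is convergence, i.e.\ that for $v\otimes w\in V\otimes M$ and $x\in\B^\perp$, the expression $\Delta(x)(v\otimes w)$ is a finite sum. Here I would invoke Proposition \ref{prop:poly-cur} (equivalently Lemma \ref{lem:poly-cur}): since $M=L(\Psi)$ has polynomial highest $\ell$-weight, for each $w\in M$ only finitely many $e_n w$, $f_n w$, $\psi_n^+ w$ are nonzero, and $\psi^+(z)w\in\Psi(z)\cdot M\otimes\C[z^{-1}]$. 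Feeding this into \eqref{copro1}--\eqref{copro4}: in $\Delta e_n(v\otimes w)=\sum_{j\ge 0}(e_{n-j}v)\otimes(\psi_j^+C^n w)+v\otimes e_n w$, the sum over $j$ is finite because $\psi_j^+ w=0$ for $j$ large; similarly $\Delta f_n(v\otimes w)=(f_n v)\otimes w+\sum_{j\ge 0}(\psi_{-j}^-C^n v)\otimes(f_{n+j}w)$ is finite because $f_{n+j}w=0$ for $j$ large; and $\Delta h_{\pm r}$ has only two terms. Since $e_n^\perp$, $\psi_r^{+,\perp}$, $[e_0,e_2]$ are iterated commutators and each commutator of two operators with well-defined finite action on $V\otimes M$ again has well-defined finite action, the action extends to all of $\B^\perp$. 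I would also remark briefly that the module axioms are then automatic since $\Delta$ is a (topological) algebra homomorphism and the relations hold in $\E\widehat\otimes\B^\perp$ acting on $V\otimes M$. The main obstacle throughout is not conceptual but organizational: making the generation-and-commutator reduction airtight, in particular ensuring that the adjoint actions of the "illegal" elements $e_0$, $f_{-1}$, $f_0$ (which are not in $\B^\perp$) nevertheless preserve $\E\widehat\otimes\B^\perp$ when applied in the second tensor slot — this rests squarely on \eqref{cN} and on the polynomiality statement \eqref{poly-cur}, and I would state these dependencies explicitly rather than re-deriving them.
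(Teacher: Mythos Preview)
Your proposal is correct and follows essentially the same route as the paper: check $\Delta$ on the horizontal generators of $\B^\perp$ directly from \eqref{copro1}--\eqref{copro4}, reduce the vertical generators $e^\perp_{\pm r}$ and $\psi^{+,\perp}_r$ to iterated commutators via \eqref{psi-ff}--\eqref{e-ee} and control the second tensor factor using \eqref{cN}, then deduce (ii) from Proposition \ref{prop:poly-cur}. One small slip: $f_0=\kappa_1^{-1}\psi^{+,\perp}_1$ \emph{does} lie in $\B^\perp$; only $e_0$ and $f_{-1}$ are the genuinely external elements, so in your convergence argument for (ii) you cannot literally treat $\Delta(e_0)$ as an operator on $V\otimes M$---you must (as the paper does, via the explicit expansion \eqref{XYXY}) first pass to the algebraic identity in $\E\widehat{\otimes}\B^\perp$ established in (i) and then apply that expression termwise.
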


\begin{proof}
The images of horizontal generators by $\Delta$ are given by 
\begin{align*}
&\Delta e_n =\sum_{j\ge0}e_{n-j}\otimes \psi^+_j+1\otimes e_n\,\quad (n>0)
\\
&\Delta f_n=f_n\otimes 1+\sum_{j\ge0}\psi^-_{-j}\otimes f_{n+j}\,\quad (n\ge0).
\end{align*}
Thanks to Proposition \ref{prop:poly-cur},
these series terminate on each vector of $V\otimes M$.  
We study the images of the vertical generators $e^\perp_r,\psi^{+,\perp}_r$.
 
If $r>0$, then $\Delta e^\perp_{-r}=(\ad \Delta e_0)^{r-1}\Delta e_1$ by \eqref{e-ee}. It is  
the coefficient of $z_1^0\cdots z_{r-1}^0z_r^{-1}$ in
the multiple commutator
$(\ad A_1+\ad B_1)\cdots (\ad A_{r-1}+\ad B_{r-1})(A_r+B_r)$, 
where $A_j=e(z_j)\otimes \psi^+(z_j)$ and $B_j=1\otimes e(z_j)$. 
We write $\ad X=L(X)-R(X)$ where $L(X)$ (resp. $R(X)$) means multiplication by $X$ from the left
(resp. from the right). Expanding the product, and taking coefficients of terms of the type $B_j$, 
we obtain 
a linear combination of terms of the following form:
\begin{align}
&e(z_{j'_1})\cdots e(z_{j'_k})\otimes X_0Y^\pm_1X_1 Y^\pm_2\cdots Y^\pm_{k-1}X_{k-1}\times
\begin{cases}
Y^\pm_{k}X_{k}(e_1) & (j_k<r)\\
\psi^+(z_r)& (j_k=r)\,.
\end{cases}
\label{XYXY}
\end{align}
Here $0\le k\le r$, $1\le j_1<\cdots<j_k\le r$, 
$j_1',\cdots,j_k'$ is a permutation of $j_1,\cdots,j_k$, 
$X_s=(\ad e_0)^{j_{s+1}-j_{s}-1}$ ($j_0=0$, $j_{k+1}=r$), 
$Y_s^+=L(\psi^+(z_{j_s}))$, $Y_s^-=R(\psi^+(z_{j_s}))$.  
Since $(\ad e_0)(\cN^\perp)\subset \cN^\perp$, coefficients of \eqref{XYXY} in the remaining $z_i$'s
belong to $\E\widehat{\otimes} \B^\perp$. 

There is nothing to show about $e^\perp_0=h_1$ and $\psi^{+,\perp}_1=\kappa_1f_0$. 

We show next that $\Delta e^\perp_{r}$ with $r>0$ belongs to $\E\widehat{\otimes} \cN_{\lle}^\perp$.
If $r=1$ it is clear from $e^\perp_1=C^\perp f_1$. 
If $r\ge2$, then $\Delta e^\perp_{r}$ is proportional (with $C^\perp$ as coefficients) to 
\begin{align*}
[f_0\otimes 1+\psi^-_0\otimes f_0+\sum_{j\ge1}\psi^{-}_{-j}\otimes f_{j}, \Delta e^\perp_{r-1}].
\end{align*}
Since $\ad f_0(\cN^\perp_{\lle})\subset\cN^\perp_{\lle}$, we see by induction that all terms belong to 
$\E\otimes \cN_{\lle}^\perp$.

Finally $\Delta \psi^{+,\perp}_r$ ($r\ge2$) is proportional to
\begin{align*}
[f_{-1}\otimes 1+\psi^-_0\otimes f_{-1}+\sum_{j\ge0}\psi^{-}_{-j-1}\otimes f_{j}, \Delta e^\perp_{r-1}].
\end{align*}
Since  $\ad f_{-1}(\cN^\perp_{\lle})\subset\B^\perp$, all terms belong to $\E\otimes \B^\perp$.
\end{proof}

\subsection{Coproduct $\Delta$ and $\Delta^\perp$}
The following is an analog of Proposition 3.8 of \cite{EKP}, 
which relates the Drinfeld coproduct with 
the standard coproduct for quantum affine algebras. 

\begin{lem}\label{Delta-Delta}
In the completed tensor product $\E\widehat{\otimes}\E$, 
we have the identity
\begin{align*}
\Delta^{op}(x)=
(q^{t_\infty}\cR_{+}\cR_0)^{-1}
\cdot
\Delta^\perp(x) \cdot
q^{t_\infty}\cR_{+}\cR_0
=\cR_- 
\cdot
\Delta^{\perp,op}(x)\cdot
\cR_-^{-1}
\end{align*}
for any $x\in \E$.  
\end{lem}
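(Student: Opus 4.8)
The plan is first to observe that the two right-hand sides coincide, so that only one identity has to be proved. Indeed, the intertwining property \eqref{int-R} gives $\Delta^\perp(x)\,\cR=\cR\,\Delta^{\perp,op}(x)$, and substituting the factorization $\cR=q^{t_\infty}\cR_{+}\cR_0\cR_-$ and cancelling $q^{t_\infty}\cR_{+}\cR_0$ on the left yields
\[
(q^{t_\infty}\cR_{+}\cR_0)^{-1}\,\Delta^\perp(x)\,(q^{t_\infty}\cR_{+}\cR_0)=\cR_-\,\Delta^{\perp,op}(x)\,\cR_-^{-1}.
\]
So it suffices to prove $\Delta^{op}(x)=\cR_-\,\Delta^{\perp,op}(x)\,\cR_-^{-1}$ for all $x\in\E$. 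Both sides are algebra homomorphisms $\E\to\E\widehat{\otimes}\E$ (conjugation by $\cR_-$ is well defined in the homogeneous completion since, in each fixed bidegree, only finitely many powers of $\cR_--1$ contribute, exactly as in the discussion around \eqref{cR-} and Lemma \ref{lem:Delta}), so it is enough to check the identity on a set of generators of $\E$.

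Next I would take the generators $e_0,f_0,h_1,h_{-1}$ (the identity on the group-likes $C^\perp,D^\perp$ being immediate). The key feature of this choice is that these four horizontal generators are at the same time vertical generators, namely $h^\perp_{-1}=e_0$, $h^\perp_1=f_0$, $e^\perp_0=h_1$, $f^\perp_0=h_{-1}$; hence $\Delta$ is read off from \eqref{copro1}--\eqref{copro4} and $\Delta^\perp$ from the same formulas written in the vertical generators. In particular $\Delta^{\perp,op}(e_0)=C^\perp\otimes e_0+e_0\otimes1$ and $\Delta^{\perp,op}(f_0)=1\otimes f_0+f_0\otimes(C^\perp)^{-1}$, while $\Delta^{\perp,op}(h_1),\Delta^{\perp,op}(h_{-1})$ are the topological series obtained by opposing the vertical analogues of \eqref{copro1}, \eqref{copro2}. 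One then expands $\cR_-\,\Delta^{\perp,op}(x)\,\cR_-^{-1}$ and compares it, bidegree by bidegree, with the known $\Delta^{op}(x)$. For instance, inspecting principal degrees shows that only the part of $\cR_-$ whose first tensor factor has principal degree $-1$ — a series of the form $\sum_{m\ge0}c_m\,f_m\otimes e_{-m}$ — can create a first tensor factor of principal degree $0$ when conjugating $e_0\otimes1$; feeding $[f_m,e_0]=-\kappa_1^{-1}\psi^+_m$ ($m\ge1$) and $[f_0,e_0]=\kappa_1^{-1}\bigl(C^\perp-(C^\perp)^{-1}\bigr)$ into this conjugation and combining with the surviving term $C^\perp\otimes e_0$ reproduces, term by term, the principal-degree-$0$ part $(C^\perp)^{-1}\otimes e_0+\sum_{m\ge1}\psi^+_m\otimes e_{-m}$ of $\Delta^{op}(e_0)$, provided the structure constants $c_m$ of $\cR_-$ take their expected values. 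The remaining principal degrees, and the three other generators, are treated in the same way.

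The step I expect to be the main obstacle is exactly this last matching of infinite series: one must show that all the corrections produced by $\cR_-$ — including those coming from $(\cR_--1)^k$ with $k\ge2$ — assemble, in every bidegree, into the exact closed forms $\Delta^{op}(e_0),\Delta^{op}(f_0),\Delta^{op}(h_{\pm1})$, rather than merely into expressions congruent to them modulo lower-order terms as in Lemma \ref{Delta-psi}. To organize this I would pin down the relevant components of $\cR_-$ either from the hexagon relations \eqref{copro-R} restricted to the graded piece occupied by $\cR_-$, or from the explicit description of $\cR$ in \cite{BS}, \cite{Ng}, and combine this with the standard vertex-operator calculus for the Heisenberg generators $h_r$; the resulting computation runs parallel to the one comparing the Drinfeld-new and Drinfeld--Jimbo coproducts of a quantum affine algebra, cf.\ \cite{EKP}.
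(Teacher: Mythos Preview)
Your setup is correct: the equality of the two conjugations follows from \eqref{int-R}, and it then suffices to check the remaining identity on the generators $e_0,f_0,h_{\pm1}$. But at that point you discard the information you have just won and try to compute $\cR_-\,\Delta^{\perp,op}(x)\,\cR_-^{-1}$ directly. That is exactly the hard calculation you identify as the ``main obstacle'', and the paper avoids it entirely.

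The paper's trick is to use \emph{both} conjugations at once. For $x=e_0=h^\perp_{-1}$, one looks at each form separately and only keeps track of the principal degree of the first tensor factor. On the $\cR_-$ side, since the first tensor components of $\cR_-$ lie in $\B^\perp_-$, conjugating $h^\perp_{-1}\otimes1+C^\perp\otimes h^\perp_{-1}$ gives
\[
\cR_-\,\Delta^{\perp,op}(e_0)\,\cR_-^{-1}\ \in\ e_0\otimes1+\E_{\lle0}\otimes\E\,.
\]
On the $q^{t_\infty}\cR_+\cR_0$ side, the first components of $\cR_+$ lie in $\B^\perp_+$, so the conjugation by $\cR_+^{-1}q^{-t_\infty}$ of $\Delta^\perp(e_0)$ lies in $(C^\perp)^{-1}\otimes e_0+\E_{\gge1}\otimes\E$; then $\cR_0$, being built from the Heisenberg generators, preserves this filtration, and the explicit identity $\cR_0^{-1}\bigl((C^\perp)^{-1}\otimes e(z)\bigr)\cR_0=\psi^+(z)\otimes e(z)$ gives
\[
(q^{t_\infty}\cR_+\cR_0)^{-1}\,\Delta^\perp(e_0)\,(q^{t_\infty}\cR_+\cR_0)\ \in\ \sum_{j\ge0}\psi^+_j\otimes e_{-j}+\E_{\gge1}\otimes\E\,.
\]
You have already shown these two expressions are equal. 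The first pins down the part with first factor in $\E_{\gge1}$ (it is $e_0\otimes1$) and forces the part in $\E_{\gge2}$ to vanish; the second pins down the part in $\E_{0,*}$ (it is $\sum_{j\ge0}\psi^+_j\otimes e_{-j}$) and forces the part in $\E_{\lle-1}$ to vanish. Hence the common value is exactly $e_0\otimes1+\sum_{j\ge0}\psi^+_j\otimes e_{-j}=\Delta^{op}(e_0)$. The other three generators are handled the same way, using in addition $\cR_0^{-1}\bigl(f(z)\otimes C^\perp\bigr)\cR_0=f(z)\otimes\psi^-(z)$.

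So there is no need to know any structure constants of $\cR_-$, no higher-order corrections to control, and no appeal to the explicit description in \cite{BS}, \cite{Ng}. Your direct computation might be pushed through, but the paper's argument replaces it with two one-line filtration estimates.
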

\begin{proof}
The second equality is a consequence of the intertwining property
\eqref{int-R} of $\cR$. 
Since both sides are algebra homomorphisms, it suffices to check the identity 
on the generators $x=e_0,f_0,h_{\pm1}$. 

Let us consider the case $x=e_0=h^\perp_{-1}$. 
Recall that the factor $\cR_-$ has the form \eqref{cR-}. 
Since $\mathrm{pdeg}\,h^\perp_{-1}=1$,  we find 
\begin{align}
\cR_-\Delta^{\perp,op}(h^\perp_{-1}) \cR_-^{-1}
&
=\cR_-\Bigl(h^\perp_{-1}\otimes 1+C^\perp\otimes h^\perp_{-1}\Bigr)
\cR_-^{-1}
\label{RDel1}\\
&\in e_0\otimes 1+\E_{\lle0}\otimes \E\,.
\nn
\end{align}
Similarly, using \eqref{cR+} we compute
\begin{align}
\cR_0^{-1}\cR_{+}^{-1}q^{-t_\infty} \Delta^\perp(h^\perp_{-1})
q^{t_\infty}\cR_{+}\cR_0
&
\ \in\ \cR_0^{-1}\Bigl((C^\perp)^{-1}\otimes h^\perp_{-1}+\E_{\gge1}\otimes\E
\Bigr)\cR_0
\label{RDel2}\\
&=\sum_{j\ge0}\psi^{+}_j\otimes e_{-j}+\E_{\gge1}\otimes\E\,.
\nn
\end{align}
In the last line we use the identity
\begin{align*}
&\cR_0^{-1}\cdot (C^{\perp})^{-1}\otimes e(z)\cdot \cR_0
=\psi^+(z)\otimes e(z)\,,
\end{align*}
which follows from \eqref{R0}. 
Comparing these equations we find that \eqref{RDel1}, \eqref{RDel2}
are both equal to 
\begin{align*}
\sum_{j\ge0}\psi^{+}_j\otimes e_{-j}
+e_0\otimes 1=\Delta^{op}(e_0)\,.
\end{align*}
The other cases are proved in a similar manner, using 
\begin{align*}
&
\cR_0^{-1}\cdot f(z)\otimes C^\perp\cdot \cR_0
=f(z)\otimes \psi^-(z)\,.
\end{align*}
\end{proof}

\begin{cor}\label{cor:Delta-Delta}
Let $V$ be an object of $\cO_\E$, and let 
$M=L(\Psi)$ with $\Psi\in\rg_{\B^\perp}\cap\C[z^{-1}]$. Then 
we have an isomorphism of $\B^\perp$ modules
\begin{align*}
 V\otimes_{\Delta}M\simeq V\otimes_{\Delta^\perp}M\,.
\end{align*}
\end{cor}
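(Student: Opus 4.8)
The statement to prove is Corollary \ref{cor:Delta-Delta}: for $V\in\Ob\cO_\E$ and $M=L(\Psi)$ with $\Psi\in\rg_{\B^\perp}\cap\C[z^{-1}]$, there is an isomorphism of $\B^\perp$ modules $V\otimes_{\Delta}M\simeq V\otimes_{\Delta^\perp}M$.

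My plan is to use Lemma \ref{Delta-Delta} directly, exactly as the numbering of the corollary suggests. That lemma identifies the three coproducts $\Delta^{op}$, $\Delta^\perp$, $\Delta^{\perp,op}$ as mutual conjugates by the factors of the universal $R$-matrix inside $\E\widehat\otimes\E$. The cleanest route is: first pass from $\Delta^\perp$ to $\Delta^{\perp,op}$ by applying $\sigma$, or rather work with the conjugation relation $\Delta^{\perp,op}(x)=\cR_-^{-1}\Delta^{op}(x)\cR_-$. Actually the most economical choice is the first equality in Lemma \ref{Delta-Delta}, $\Delta^{op}(x) = (q^{t_\infty}\cR_+\cR_0)^{-1}\Delta^\perp(x)\, q^{t_\infty}\cR_+\cR_0$, combined with a flip. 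Let me instead use the version that lands on $\Delta$ itself rather than $\Delta^{op}$: we want to compare the module structure given by $\Delta$ on $V\otimes M$ with that given by $\Delta^\perp$ on the same underlying space. Conjugating Lemma \ref{Delta-Delta} by $\sigma$ (equivalently, applying $\sigma$ to both sides and using $\sigma\circ\Delta^{op}=\Delta$, $\sigma\circ\Delta^\perp=\Delta^{\perp,op}$), one gets $\Delta(x)=\cR_-^{op}\cdot\Delta^\perp(x)\cdot(\cR_-^{op})^{-1}$ or the analogous identity with $(q^{t_\infty}\cR_+\cR_0)$ flipped. Thus the operator $J=\sigma\bigl((q^{t_\infty}\cR_+\cR_0)^{-1}\bigr)$, or whichever twist the flip produces, intertwines the two actions: $\Delta(x)\circ J = J\circ\Delta^\perp(x)$ for all $x\in\E$, hence a fortiori for all $x\in\B^\perp$.

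The one genuine point to check — and I expect this to be the main obstacle — is that the intertwining operator $J$ is a \emph{well-defined} linear automorphism of the specific space $V\otimes M$. The factors $\cR_+\in\B^\perp_+\widehat\otimes\Bb^\perp_-$ and $\cR_0=\exp(-\sum_r r\kappa_r h_r\otimes h_{-r})$ and $q^{t_\infty}$ are only formal series in general; convergence on $V\otimes M$ must be justified. This is exactly where the hypothesis $\Psi\in\C[z^{-1}]$ enters: by Proposition \ref{prop:poly-cur} (i.e. Lemma \ref{lem:poly-cur}), on $M$ the positive currents $e_n, f_n, \psi^+_n$ annihilate each vector for $n$ large, and $\bar h_{r,M}$ is locally nilpotent; on $V\in\Ob\cO_\E$ the annihilation operators act locally finitely by condition (iii) and the category axioms. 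Concretely: in $\cR_+$ the first tensor leg lies in $\B^\perp_+$ and acts on $M$ via operators that, by Lemma \ref{lem:Delta}, have a well-defined (terminating) action on $V\otimes M$; in $\cR_0$ the second leg involves $h_{-r}$ acting on $M$ as $\bar h_{-r,M}$ up to scalars, which is nilpotent on each graded piece — wait, one must be careful about which leg sits on which factor after the flip. Whichever way the flip falls, the point is that on $V\otimes M$ every factor of $J$ reduces, on each vector, to a finite expression: the grading arguments of Section \ref{sec:finite-type} (Propositions \ref{prop:poly-cur}, \ref{prop:Delta}) guarantee the formal series truncate. So I would spend the bulk of the proof verifying that $q^{t_\infty}$, $\cR_0$, and $\cR_+$ each act as honest invertible operators on $V\otimes M$, with inverses given by the corresponding inverse series (which also truncate), so that $J$ is a linear isomorphism.

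Once well-definedness of $J$ is in hand, the rest is immediate: Lemma \ref{Delta-Delta} is an identity in the completed tensor product $\E\widehat\otimes\E$, and applying both sides to a vector of $V\otimes M$ makes sense by the preceding paragraph, so $J\circ\Delta^\perp(x)=\Delta(x)\circ J$ holds as operators on $V\otimes M$ for every $x\in\E$, in particular for $x\in\B^\perp$. Hence $J\colon V\otimes_{\Delta^\perp}M\to V\otimes_{\Delta}M$ is a morphism of $\B^\perp$ modules, and being a linear isomorphism it is an isomorphism of $\B^\perp$ modules. I would state the proof in essentially the two sentences: ``By Lemma \ref{Delta-Delta}, $\Delta(x)$ and $\Delta^\perp(x)$ are conjugate by a product of the $R$-matrix factors; by Proposition \ref{prop:poly-cur} and Lemma \ref{lem:Delta} this product and its inverse act as well-defined operators on $V\otimes M$, giving the required intertwiner.''
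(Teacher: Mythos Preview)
Your overall strategy—use Lemma \ref{Delta-Delta} to produce an explicit intertwiner from a factor of the universal $R$ matrix—is exactly the paper's approach. However, you commit to the wrong factor. Applying $\sigma$ to the \emph{first} equality in Lemma \ref{Delta-Delta} gives $\Delta(x)=\sigma\bigl((q^{t_\infty}\cR_+\cR_0)^{-1}\bigr)\,\Delta^{\perp,\mathrm{op}}(x)\,\sigma(q^{t_\infty}\cR_+\cR_0)$, which relates $\Delta$ to $\Delta^{\perp,\mathrm{op}}$, not to $\Delta^\perp$; so your $J$ would not intertwine the two module structures you want to compare. Worse, $\sigma(\cR_0)=\exp\bigl(-\sum_{r>0}r\kappa_r\,h_{-r}\otimes h_r\bigr)$ is \emph{not} a well-defined operator on $V\otimes M$: already on $v\otimes w_M$ the second leg contributes only the scalars $\br{h_r}_M$, and one is left with $\exp\bigl(-\sum_{r>0}r\kappa_r\br{h_r}_M\,h_{-r}\bigr)v$, an infinite sum of creation operators on $V\in\Ob\cO_\E$ which does not converge in $V$. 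So the choice $J=\sigma\bigl((q^{t_\infty}\cR_+\cR_0)^{-1}\bigr)$ cannot work.

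The paper instead applies $\sigma$ to the \emph{second} equality and takes $F=\sigma(\cR_-)\in\Bb^\perp_+\widehat{\otimes}\B^\perp_-$. The well-definedness of $F$ on $V\otimes M$ is not a consequence of Proposition \ref{prop:poly-cur} or Lemma \ref{lem:Delta} as you suggest; those control the horizontal currents $e_n,f_n,\psi^+_n$, but the second leg of $\sigma(\cR_-)$ ranges over all of $\B^\perp_-$ with unbounded $\hdeg$ at fixed $\pdeg$. What is actually needed is the grading $M=\oplus_{m\ge0}M[m]$ of Proposition \ref{prop:gradingM}, specifically property \eqref{yM}: for $w\in M[m]$ and $y\in\B^\perp_-$ homogeneous, $yw\neq0$ forces $\hdeg y\le m-N\,\pdeg y$. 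Combined with the bound $\nu_1\le -\pdeg v$ coming from $V\in\Ob\cO_\E$ on the first leg, this makes $\sigma(\cR_-)(v\otimes w)$ a finite sum. You should replace your hand-waving about ``truncation'' with this precise grading argument.
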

\begin{proof}
We show that the element $\sigma(\cR_-)$, 
where $\sigma(a\otimes b)=b\otimes a$, has a well defined action on 
$V\otimes M$. To see this, let $M=\oplus_{m\ge0}M[m]$ 
be the grading of vector space mentioned in 
Remark at the end of Subsection \ref{subsec:positive}. 
Take vectors $v\in V$ and $w\in M[m]$. 
Then $\sigma(\cR)v\otimes w$ is a sum of terms of the form 
$y'_{\nu_1,-\nu_2}v\otimes x'_{-\nu_1,\nu_2}w$ (see \eqref{cR-}).
For them to be non-zero we must have 
$\nu_1+\pdeg v\le 0$ and $m-\nu_2+N\nu_1\ge0$, where
$N=\deg_{z^{-1}}\Psi$. Hence the sum is finite. 

Setting $F=\sigma(\cR_-)$
we have $F\Delta(x)=\Delta^\perp(x)F$ ($x\in\B^\perp$)
by Lemma \ref{Delta-Delta}. 
Therefore $F: V\otimes_{\Delta}M\to V\otimes_{\Delta^\perp}M$ 
gives the desired isomorphism.   
\end{proof}

\subsection{Submodules of $V \otimes_{\Delta}M$}

In this subsection we prove Proposition \ref{prop:submod}.  
In the following, we assume that 
\begin{align}
&V=L(\Psi_V)\,,\quad \Psi_V\in \rg_\E\,,
\label{assV}
\\
&M=L(\Psi_M)\,,\quad \Psi_M\in \rg_{\B^\perp}\cap\C[z^{-1}]. 
\label{assM}
\end{align}
Note that $M$ is $1$-finite 
(see Corollary \ref{q char 1}). 

\begin{lem}\label{lem:different}
Let $v\in V_{\Psi}$ be a non-zero vector of $\ell$-weight $\Psi$. 
Then $e_m v$, $f_mv$ are sums of $\ell$-weight vectors 
with $\ell$-weight different from $\Psi$. 
\end{lem}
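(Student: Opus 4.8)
The plan is to exploit the structure of the $q$-character combined with Lemma~\ref{Y}. Since $V=L(\Psi_V)$ is an irreducible $\E$ module with $\Psi_V\in\rg_\E$, its $q$-character has the form $\chi_q(V)=\bm(\Psi_V)(1+\sum_i\bs m_i)$ where each $\bs m_i$ is a nontrivial product of $t^{-1}A_a^{-1}$'s (this is the $d(\Psi)\le 0$ case of Lemma~\ref{lem:q-char V}, using that $\rg_\E$ consists of functions with $\Psi(0)\Psi(\infty)=1$). The key point is that the only $\ell$-weight with top power of $t$ is the highest one, $\Psi=\Psi_V$ itself, and that every other $\ell$-weight $\Phi$ of $V$ satisfies $\bm(\Phi)=\bm(\Psi_V)\cdot\prod_a A_a^{-1}$ with a strictly positive total number of $A$-factors. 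In other words, the $\ell$-weights of $V$ are partially ordered, and applying $e(z)$ or $f(z)$ moves between $\ell$-weights that differ by exactly one factor $A_a^{\pm1}$.

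First I would recall precisely what Lemma~\ref{Y} (and its $e(z)$ analog) gives: if $\bigl(f(z)V_{(\Psi,n)}\bigr)\cap V_{(\Phi,m)}\neq 0$ then $m=n-1$ and $\bm(\Phi)=\bm(\Psi)A_a^{-1}$ for some $a\in\C^\times$; symmetrically for $e(z)$ one gets $\bm(\Phi)=\bm(\Psi)A_a$. So for a fixed nonzero $v\in V_{(\Psi,n)}$, write $f_mv=\sum_{(\Phi,\ell)}(f_mv)_{(\Phi,\ell)}$ as a sum of its $\ell$-weight components over $\elwt(V)$. By Lemma~\ref{Y} the only $\ell$-weights $\Phi$ that can occur with nonzero component are those of the form $\bm(\Phi)=\bm(\Psi)A_a^{-1}$; since $\{A_a\}_{a\in\C^\times}$ are algebraically independent monomials in the $X_a$'s, no such $\Phi$ equals $\Psi$ itself. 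Hence every $\ell$-weight component of $f_mv$ has $\ell$-weight $\ne\Psi$. The same argument with the $e(z)$ analog of Lemma~\ref{Y} handles $e_mv$, the relevant $\ell$-weights being $\bm(\Psi)A_a$, again never equal to $\bm(\Psi)$.

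One subtlety I would address is the interplay between the grading by $D^\perp$ (the index $n$) and the $\ell$-weight: a priori $f_mv$ could have a nonzero component in $V_{(\Psi,n-1)}$ even though $\Psi$ is the $\ell$-weight of $v$ sitting in degree $n$, if $\Psi$ also happens to be an $\ell$-weight in degree $n-1$. But Lemma~\ref{Y} forbids exactly this: the target $\ell$-weight $\Phi$ is forced to satisfy $\bm(\Phi)=\bm(\Psi)A_a^{-1}$, so $\Phi\ne\Psi$ regardless of the degree. So there is really no obstruction here; the statement is essentially immediate once Lemma~\ref{Y} is granted. The main (minor) thing to get right is the bookkeeping: decomposing $f_mv$ into $\ell$-weight components indexed by $\elwt(V)$, invoking Lemma~\ref{Y} componentwise, and concluding via algebraic independence of the $A_a$'s that none of those components lie in $V_\Psi$. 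I would phrase the proof in two short parallel paragraphs, one for $f_m$ using Lemma~\ref{Y} directly and one for $e_m$ using its stated analog, and that completes the argument.
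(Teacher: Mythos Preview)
Your proposal is correct and takes essentially the same approach as the paper: the paper's proof is a single sentence, ``This follows from Lemma~\ref{Y} and its analog for $e(z)$,'' and your argument simply unpacks that citation, observing that $\bm(\Psi)A_a^{\pm1}\neq\bm(\Psi)$ by algebraic independence of the $A_a$. Your first paragraph about the global shape of $\chi_q(V)$ is unnecessary scaffolding---the lemma is local to a single $\ell$-weight space and Lemma~\ref{Y} alone does the work---but it does no harm.
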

\begin{proof}
 This follows from Lemma \ref{Y} and its analog for $e(z)$.
\end{proof}

\begin{lem}\label{lem:submod1} 
Let $W\subset V\otimes_{\Delta}M$ be a submodule. 
Then we have
\begin{align}
&(1\otimes x)W\subset W\quad (\forall x\in \B^\perp_+)\,,
\label{1otimesx}
\\
&\bigl(f_p\otimes 1\bigr)W\subset W\,,
\quad
\bigl(\sum_{j\ge0}\psi^-_{-j}\otimes f_{p+j}\bigr) W\subset W 
\quad (\forall p\ge0)\,.
\label{psi-otimes-f}
\end{align}
\end{lem}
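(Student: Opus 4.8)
The plan is to exploit the structure of the coproduct $\Delta$ on vertical/horizontal generators together with the $1$-finiteness of $M$. First I would recall the formulas
\begin{align*}
&\Delta e_n=\sum_{j\ge0}e_{n-j}\otimes\psi^+_j+1\otimes e_n\quad(n>0),\\
&\Delta f_n=f_n\otimes1+\sum_{j\ge0}\psi^-_{-j}\otimes f_{n+j}\quad(n\ge0),
\end{align*}
which make sense on $V\otimes_\Delta M$ by Proposition \ref{prop:poly-cur}. For \eqref{psi-otimes-f}, observe that $\Delta(f_p)W\subset W$ because $f_p\in\B^\perp_-\subset\B^\perp$ acts on $W$; and since $\psi^+(z)$ has a \emph{single} eigenvalue $\Psi_M(z)$ on the $1$-finite module $M$, we have $\psi^-_{-j}=\Psi^-_{M,-j}\cdot\id+(\text{nilpotent corrections})$, so I would argue that the operator $\sum_{j\ge0}\psi^-_{-j}\otimes f_{p+j}$ can be separated from $f_p\otimes1$ inside $\Delta(f_p)$ using that on $M$ the generalized eigenspace decomposition for $\psi^+(z)$ is the whole module (degree reasons make each $\psi^-_{-j}$-action a finite sum). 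Concretely, writing $\overline{\psi}^-(z)=\Psi_M(z)^{-1}\psi^-(z)$ which acts with $\overline\psi^-_{-j}$ nilpotent, one disentangles $\Delta(f_p)$ into $(f_p\otimes1)$ plus a part lying in $\E\widehat\otimes\B^\perp_-$, and a downward induction on the nilpotency degree shows each piece preserves $W$; the cleanest route is the one below.

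Actually the cleaner approach, which I would carry out first, is to use the grading $M=\oplus_{m\ge0}M[m]$ from Proposition \ref{prop:gradingM} (extended to the relevant $M$ via Corollary \ref{q char 1}, since here $M$ has polynomial highest $\ell$-weight). Decompose $W=\oplus_m (W\cap (V\otimes M[m]))$ — this works because the operators $D^\perp$ and the horizontal $\psi^+_j$ are block-triangular with respect to the grading (see \eqref{psiM}, \eqref{xM}), so one can filter $W$ by the $M[m]$-grading and pass to the associated graded. On the associated graded, $\Delta\psi^-_{-j}$ acts diagonally by $\Psi^-_{M,-j}$, and then $\Delta(f_p)=f_p\otimes1+\sum_j\psi^-_{-j}\otimes f_{p+j}$ splits as a sum of two operators of distinct $\pdeg$ on the $V$-factor (namely $\pdeg$ of $f_p\otimes1$ on $V$ is $0$ — wait, $f_p\in\B^\perp$ has $\pdeg=-1$; rather the split is by the $M$-factor being untouched vs.\ acted on by $f_{p+j}$). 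I would instead separate by noting $f_p\otimes1$ and $\psi^-_{-j}\otimes f_{p+j}$ ($j\ge0$) have different bidegree in the second tensor slot only through $M$'s principal grading shift, and since $W$ is a submodule, projecting $\Delta(f_p)W\subset W$ onto the graded pieces where the $M$-component is unshifted yields $(f_p\otimes1)W\subset W$, and the complementary projection yields $(\sum_j\psi^-_{-j}\otimes f_{p+j})W\subset W$.

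For \eqref{1otimesx}: since $\B^\perp_+$ is generated by $e_n$ ($n\ge1$), $e^\perp_{-r}$ ($r\ge2$) and $[e_0,e_2]$, it suffices to treat these. For $x=e_n$ ($n\ge1$), from $\Delta(e_n)=\sum_{j\ge0}e_{n-j}\otimes\psi^+_j+1\otimes e_n$ and the fact that $\psi^+_j$ acts on $M$ with a single eigenvalue (so $\psi^+(z)=\Psi_M(z)\cdot(1+\text{nilpotent})$), one uses: $\Delta(e_n)W\subset W$, and the piece $\sum_j e_{n-j}\otimes\Psi^+_{M,j}$ is $e(z)$-like in the first slot times a known scalar series — actually the right move is to use the automorphism $\theta$ or the PBW ordering to reduce $1\otimes e_n$ to $1\otimes e^\perp$'s and Heisenberg generators, for which $\Delta(e^\perp_{-r})\in\E\widehat\otimes\B^\perp_+$ by Lemma \ref{lem:Delta}, hence the first-slot terms act and subtracting leaves $1\otimes(\cdot)$-terms. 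Concretely: by Lemma \ref{lem:Delta}(i), $\Delta(e^\perp_{-r})\in\E\widehat\otimes\B^\perp$, and its ``top'' term (first slot $=1$) is $1\otimes e^\perp_{-r}$; the lower terms have first slot in $\E_{\gge1}$, which decreases principal degree of the $V$-factor. Since $V$ is in $\cO_\E$, we can do downward induction on $\pdeg$ of the $V$-component: the lower terms of $\Delta(e^\perp_{-r})$ applied to $W$ lie in $W$ by induction, hence so does $(1\otimes e^\perp_{-r})W$. The same argument with $\Delta([e_0,e_2])$ and $\Delta(e_n)$ ($n\ge1$) handles the remaining generators; the main obstacle is organizing this induction cleanly, i.e.\ setting up the correct filtration on $V\otimes M$ (by principal degree of the $V$-factor, bounded below on each homogeneous-degree slice since $V\in\cO_\E$) so that the ``lower order terms'' genuinely live in an earlier stage and the induction closes. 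That bookkeeping — especially checking that the series $\sum_j\psi^-_{-j}\otimes f_{p+j}$ truly decouples as a single operator preserving $W$ rather than only its partial sums — is where I expect the real work to be.
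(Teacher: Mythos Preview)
Your proposal has a genuine gap: none of the separations you attempt is known to be compatible with the submodule $W$.

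For \eqref{psi-otimes-f}, your first idea misreads the formula: in $\Delta f_p=f_p\otimes1+\sum_{j\ge0}\psi^-_{-j}\otimes f_{p+j}$ the factor $\psi^-_{-j}$ sits in the \emph{first} slot and acts on $V$, not on $M$; so nilpotency of $\psi^\pm$ on $M$ is irrelevant here. Your second idea, decomposing $W=\oplus_m\bigl(W\cap(V\otimes M[m])\bigr)$ or separating by the principal degree of the $M$-factor, presupposes that $W$ is graded by the $M$-side alone. But $W$ is only known to be homogeneous for the \emph{total} principal degree (via $D^\perp$) and for the $\ell$-weight (via $\psi^+(z)$); both pieces $f_p\otimes1$ and $\sum_j\psi^-_{-j}\otimes f_{p+j}$ shift the total principal degree by $-1$, so total degree cannot separate them, and there is no operator in $\B^\perp$ whose action on $V\otimes_\Delta M$ detects the $M$-grading separately. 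The same objection applies to your approach to \eqref{1otimesx} via ``downward induction on $\pdeg$ of the $V$-component'': you have no projection of $W$ onto $V_n\otimes M$.

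What actually works is the $\ell$-weight decomposition, using Lemma \ref{lem:different}. Since $M$ is $1$-finite, the $\ell$-weights of $V\otimes_\Delta M$ are exactly $\Psi\cdot\Psi_M$ for $\Psi$ an $\ell$-weight of $V$, and the generalized $\ell$-weight space is $V_\Psi\otimes M$. The submodule $W$ is automatically $\ell$-weighted because $\psi^+(z)$ preserves it. Now take an $\ell$-weight vector $w\in W$. In $\Delta e_{>}(z)\cdot w=(e(z)\otimes\psi^+(z))_{>}w+(1\otimes e_{>}(z))w$, the first term lands in $\ell$-weight spaces \emph{different} from that of $w$ (Lemma \ref{lem:different}: $e_m$ changes the $\ell$-weight on $V$), while the second term stays in the \emph{same} $\ell$-weight space. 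Hence both pieces lie in $W$ separately. The same argument with $\Delta f_{\ge}(z)=f_{\ge}(z)\otimes1+(\psi^-(z)\otimes f(z))_{\ge}$ gives \eqref{psi-otimes-f}, and the analogous analysis of $\Delta e^\perp_{-r}$, $\Delta[e_0,e_2]$ (where all terms with a nontrivial first tensor factor involve some $e_i$ acting on $V$, hence change the $\ell$-weight) finishes \eqref{1otimesx}. The missing ingredient in your write-up is precisely this use of Lemma \ref{Y}/\ref{lem:different}.
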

\begin{proof}
Without loss of generality we may assume $W\neq 0$. 
To see \eqref{1otimesx} for $x=e_m$ ($m>0$), 
let $w\in W$ be a non-zero $\ell$-weight vector. 
We have
\begin{align*}
W\ni \Delta e_{>}(z)\cdot w=\bigl(e(z)\otimes\psi^+(z)\bigr)_{>}\cdot w+
\bigl(1\otimes e_{>}(z)\bigr) \cdot w\,.
\end{align*}
By Lemma \ref{lem:different}, the first term is a sum of terms
of $\ell$-weight different from that of $w$, 
while the second term has the same $\ell$-weight 
because $M$ is $1$-finite.  
Hence $\bigl(e(z)\otimes\psi^+(z)\bigr)_{>}\cdot w$, 
$\bigl(1\otimes e_{>}(z)\bigr) \cdot w$ both belong to $W$. 
Similarly, for $r\ge2$ we have
\begin{align*}
W\ni\, \Delta\Bigl(\bigl(\ad e_0\bigr)^{r-1}e_1\Bigr) w
=1\otimes \bigl(\ad e_0\bigr)^{r-1}e_1\cdot w+\cdots,
\end{align*}
where $\cdots$ is a sum of terms which involve at least one $e_i$ in
the first component. In view of Lemma \ref{lem:different}, 
we conclude that $(1\otimes e^\perp_{-r})w$ belongs to $W$. 
Furthermore it is clear that if $(1\otimes x)W\subset W$ then 
$(1\otimes [h_1,x])W\subset W$. This proves \eqref{1otimesx}. 

Proof of \eqref{psi-otimes-f} is similar
where we use
$\Delta f_{\ge}(z)
=f_{\ge}(z)\otimes 1+\bigl(\psi^-(z)\otimes f(z)\bigr)_{\ge}$. 
\end{proof}

\begin{lem}\label{lem:submod} 
We retain the assumptions \eqref{assV}, \eqref{assM}. 
Let $m_0$ be the highest $\ell$-weight vector of $M$. 
Then any submodule $W$ of $V\otimes_{\Delta}M$
must have the form $W=V^{(0)}\otimes M$ where  
\begin{align*}
V^{(0)}=\{v\in V\mid v\otimes m_0\in W\}.
\end{align*}
The highest $\ell$-weight vector $v_0\in V$ belongs to 
$V^{(0)}$ if and only if $V=V^{(0)}$. 
\end{lem}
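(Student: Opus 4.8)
The plan is to analyze a submodule $W$ of $V\otimes_\Delta M$ by projecting onto the one-dimensional $M[0]$-component and using $1$-finiteness of $M$ together with the operators furnished by Lemma \ref{lem:submod1}. First I would fix the grading $M=\oplus_{m\ge0}M[m]$ from Proposition \ref{prop:gradingM} (with $M[0]=\C m_0$), and let $\pi:V\otimes M\to V$ be the linear projection defined by $\pi(v\otimes m_0)=v$ and $\pi(v\otimes w)=0$ for $w\in M[m]$, $m\ge1$. Set $V^{(0)}=\pi\bigl(W\cap(V\otimes M[0])\bigr)=\{v\in V\mid v\otimes m_0\in W\}$; this is clearly a subspace of $V$, and it is $\ell$-weighted because the $\ell$-weight decomposition of $V$ is compatible with the action of the commuting family $\psi^+(z)$, which acts on $v\otimes m_0$ through the highest $\ell$-weight of $M$ (so $V^{(0)}\otimes m_0$ is $\psi^+$-stable). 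The containment $V^{(0)}\otimes M\subseteq W$ should follow by showing that once $v\otimes m_0\in W$, applying $\B^\perp_-$ generates $v\otimes M$: indeed $M$ is irreducible, generated from $m_0$ by $\B^\perp_-\B^\perp_0$, and by \eqref{psi-otimes-f} the operators $f_p\otimes 1$ and $\sum_{j\ge0}\psi^-_{-j}\otimes f_{p+j}$ preserve $W$; combining these with the fact that on $v\otimes(\text{anything})$ the first-component action is controlled, one pushes $v\otimes m_0$ down to all of $v\otimes M$ while staying inside $W$.

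The reverse containment $W\subseteq V^{(0)}\otimes M$ is where the main work lies, and I expect this to be the principal obstacle. The idea is: take any $w\in W$, write $w=\sum_{m\ge0}w_m$ with $w_m\in V\otimes M[m]$, and show by descending induction on the top degree $m_{\max}$ appearing that each slice lies in $V^{(0)}\otimes M[m]$. To kill the top slice, I would apply elements of $1\otimes\B^\perp_+$: by \eqref{1otimesx} these preserve $W$, and by \eqref{xM} the operator $1\otimes x$ for homogeneous $x\in\B^\perp_+$ sends $V\otimes M[m]$ into $V\otimes M[m-\hdeg x]$, strictly lowering degree for $\hdeg x>0$. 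Because $M$ is irreducible, for the top component $w_{m_{\max}}=\sum_i v_i\otimes w^{(i)}$ (with $\{w^{(i)}\}$ a basis of $M[m_{\max}]$) one can, for each index $i_0$, choose $x\in\B^\perp_+$ with $x\,w^{(i_0)}=m_0$ and $x\,w^{(i)}\in\oplus_{m<m_{\max}}M[m]$ for... wait — more carefully, since $M$ is a simple highest-weight module and $M^*$ is generated from $m_0^*$ by $\B^\perp_+$ (as in the proof of Lemma \ref{lem:gradation}), for any nonzero functional one can separate basis vectors; so after applying a suitable $1\otimes x$ and projecting to the degree-$0$ part, one extracts $v_{i_0}\otimes m_0\in W$, forcing $v_{i_0}\in V^{(0)}$. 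Then $w_{m_{\max}}\in V^{(0)}\otimes M[m_{\max}]\subseteq V^{(0)}\otimes M\subseteq W$, so $w-w_{m_{\max}}\in W$ has strictly smaller top degree and the induction proceeds.

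The delicate point in the above is controlling the ``lower-order tails'': applying $1\otimes x$ to $w$ produces contributions not only from $w_{m_{\max}}$ but from every $w_m$, and the degree bound \eqref{xM} guarantees these all land in degrees $<m_{\max}$ only when $\hdeg x\ge1$; one must also ensure that the projection to $V\otimes M[0]$ of $(1\otimes x)w$ picks out exactly the intended term $v_{i_0}\otimes m_0$, which requires choosing $x$ homogeneous of degree exactly $m_{\max}$ and using that the pairing between $M[m_{\max}]$ and the degree-$m_{\max}$ part of $\B^\perp_+$ acting on $m_0^*$ is non-degenerate — this is the content I would borrow from the diagonalizability argument in Lemma \ref{lem:gradation}. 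Once both inclusions are established we get $W=V^{(0)}\otimes M$.

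Finally, for the last sentence: if $v_0\in V^{(0)}$ then $v_0\otimes m_0\in W$, and since $v_0\otimes m_0$ generates $V\otimes_\Delta M$ over $\B^\perp$ (because $V$ is generated by $v_0$ over $\E$, $M$ by $m_0$ over $\B^\perp$, and $\Delta(\B^\perp)\subset\E\widehat\otimes\B^\perp$ by Lemma \ref{lem:Delta}(i), so that the $\B^\perp$-submodule generated by $v_0\otimes m_0$ surjects onto everything via the arguments of Lemma \ref{lem:submod1}), we get $W=V\otimes M$, hence $V^{(0)}=V$; the converse is trivial.
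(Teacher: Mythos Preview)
Your argument for the inclusion $W\subseteq V^{(0)}\otimes M$ via the $M[m]$-grading of Proposition~\ref{prop:gradingM} is a legitimate alternative to the paper's route (which instead uses the principal grading on $M$ and an induction on the number of tensorands). Your version is arguably cleaner: choosing $x\in\B^\perp_+$ with $\hdeg x=m_{\max}$ annihilates every component of lower $M[\cdot]$-degree outright, whereas in the paper's argument the ``lower'' terms survive with scalar coefficients and one must then peel them off by a secondary induction. The non-degeneracy you invoke is exactly the fact (from the proof of Lemma~\ref{lem:gradation}) that $M[m]^*$ is spanned by $m_0^*\cdot x$ with $x\in\B^\perp_+$ of $\hdeg x=m$, so that part is fine. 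There is no circularity: Proposition~\ref{prop:gradingM} does not rely on the present lemma.

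However, your treatment of the reverse inclusion $V^{(0)}\otimes M\subseteq W$ has a genuine gap. You assert that from $v\otimes m_0\in W$ one ``pushes down to all of $v\otimes M$'' using \eqref{psi-otimes-f}, but the operator $\sum_{j\ge0}\psi^-_{-j}\otimes f_{p+j}$ applied to $v\otimes m'$ produces $\sum_j\psi^-_{-j}v\otimes f_{p+j}m'$, which mixes first components: the vectors $\psi^-_{-j}v$ are not scalar multiples of $v$ in general. To isolate $v\otimes f_p m'$ you must simultaneously establish $\psi^-_{-j}v\in V^{(0)}$. The paper does this by a careful double induction: for fixed $m'$, set $p_0=\max\{p\mid f_pm'\neq0\}$, start at $p=p_0$ (where the sum collapses to a single term), then at each lower $p$ apply $1\otimes x$ with $x f_{p_0}m'=m_0$ to extract a new relation $\psi^-_{-(p_0-p)}v\in V^{(0)}$, and feed that back into the previous step. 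You also omit entirely the case where one moves down in $M$ by $\psi^{+,\perp}_r$ rather than by $f_p$; these generators of $\B^\perp_-$ require a separate argument (see the second paragraph of the paper's proof).

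Finally, your justification of the last sentence via cyclicity of $v_0\otimes m_0$ is not substantiated. The paper instead argues directly that $V^{(0)}$ is stable under $f(z)$ (combining $f_pV^{(0)}\subset V^{(0)}$ for $p\ge0$ from \eqref{psi-otimes-f} with the finite-dimensionality trick of Lemma~\ref{lem:simple-to-simple}) and under $\psi^-(z)$ (obtained as a by-product of the double induction above), whence $\Bb\cdot v_0\subset V^{(0)}$ forces $V^{(0)}=V$.
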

\begin{proof}
First we show that 
\begin{align}
V^{(0)}\otimes M_{n}\subset W\quad (\forall n\le0)
\label{V1MW}
\end{align}
by induction on $n$. 
By definition \eqref{V1MW} is true for $n=0$. 
Assuming it for $n>-N$, we take $m\in M_{-N}$. 

Consider first the case $m=f_pm'$. We set
$p_0=\max\{p\mid f_pm'\neq0\}$.
There exists an $x\in \B^\perp$ such that $x f_{p_0}m'=m_0$. 
By \eqref{psi-otimes-f}, we have 
$\sum_{j=0}^{p_0-p}\psi^-_{-j}v\otimes f_{p+j}m'\in W$ for 
$v\in V^{(0)}$. 
Applying $1\otimes x$ to both sides and using \eqref{1otimesx},  
we find inductively for $p=p_0,p_0-1,\cdots$ 
that $\psi^-_{-j}v\in V^{(0)}$ ($0\le j\le p_0$)
and 
$v\otimes f_{p}m'\in W$ ($0\le p\le p_0$). 

Next we consider the case $m=\psi^\perp_rm'$, $\pdeg w'=-N+r$. 
Setting $w'=v\otimes m'$ where $v\in V^{(0)}$ and 
we have
\begin{align*}
W\ni\, \Delta[f_{-1},e^\perp_{r-1}]w'
=[f_{-1}\otimes1,\Delta e^\perp_{r-1}]w'
+[\psi^-_0\otimes f_{-1},\Delta e^\perp_{r-1}]w'
+\sum_{j\ge1}[\psi^-_{-j}\otimes f_{-1+j},\Delta e^\perp_{r-1}]w'\,.
\end{align*}
From the previous paragraph we see that the first and the third terms
belong to $W$. The second term has the form
$[\psi^-_0\otimes f_{-1},(\psi^-_0)^{r-1}\otimes e^\perp_{r-1}]w' 
+\cdots$, 
where $\cdots$ denote terms containing at least one $f_i$ in the first 
component. Arguing similarly as in Lemma \ref{lem:submod1} we obtain that
$(1\otimes [f_{-1},e^\perp_{r-1}])w'\in W$. This shows 
$V^{(0)}\otimes \psi^\perp_r m'\in W$. 

Let us show that $V^{(0)}\otimes M=W$.
Let $w\in W$ be an $\ell$-weight vector, 
and let $w=\sum_{r=1}^Nv_r\otimes m_r$
($v_r\in V$, $m_r\in M$) be an expression where
$\{v_r\}$ and $\{m_r\}$ are linearly independent sets. 
We show that $v_r\in V^{(0)}$ for all $r$ by induction on $N$.  

Assuming $\pdeg m_1\le\pdeg m_r$ ($r\ge2$), we
choose an $x\in \B^\perp_+$ such that $x m_1=m_0$.
By \eqref{1otimesx}, we have $\sum_{r=1}^N v_r\otimes x m_r\in W$. 
For degree reasons we have $x m_r=a_r m_0$ for some $a_r\in \C$, 
hence $\sum_{r=1}^N a_r v_r\in V^{(0)}$ where we set $a_1=1$.   
If $N=1$, then we are done. Suppose $N\ge2$. 
By \eqref{V1MW}
we have $\sum_{r=1}^N a_r v_r\otimes m_1\in W$, so that 
$\sum_{r=2}^Nv_r\otimes (m_r-a_rm_1)\in W$. 
Since $\{v_r\}_{r=2}^N$  and  $\{m_r-a_rm_1\}_{r=2}^N$
are linearly independent,
the induction hypothesis applies and we obtain
$v_r\in V^{(0)}$ for $2\le r\le N$. This in turn implies $v_1\in V^{(0)}$. 

Finally, \eqref{psi-otimes-f} implies $f_{\ge}(z)V^{(0)}\subset V^{(0)}$. 
From the proof of Lemma \ref{lem:simple-to-simple}, 
we obtain that $f(z)V^{(0)}\subset V^{(0)}$.  
Therefore $v_0\in V^{(0)}$ if and only if $V^{(0)}=V$. 

The proof is now complete. 
\end{proof}

\bigskip

\noindent
 {\bf Acknowledgments.}\quad
MJ would like to thank Fedor Smirnov 
for kind invitation and hospitality during his visit to UPMC.
He wishes to thank also David Hernandez and Masato Taki 
for stimulating discussions, and Ludwig Faddeev 
for his interest in this work.

The contribution of BF is within the framework of a subsidy granted to 
the HSE by the Government of the Russian Federation for the 
implementation of the Global Competitiveness Program.


Research of EM is partially supported by the Simons foundation grant.

EM and BF would like to thank Kyoto University
for hospitality during their visits when this work was started.

\end{document}